\definecolor{shadecolor}{gray}{0.875}
\DeclareSymbolFont{timesoperators}{T1}{ptm}{m}{n}
\renewcommand{\operator@font}{\mathgroup\symtimesoperators}
\colorlet{symbolsgrey}{blue!30!black!50}
\colorlet{testcolor}{green!60!black}
\definecolor{purple}{rgb}{0.55,0.05,0.8}
\definecolor{symbols}{rgb}{0.55,0.05,0.8}
\let\oldskull\skull
\def\skull{\mathord{\oldskull}}
\DeclareMathAlphabet{\mathbbm}{U}{bbm}{m}{n}
\DeclareFontFamily{U}{BOONDOX-calo}{\skewchar\font=45 }
\DeclareFontShape{U}{BOONDOX-calo}{m}{n}{
  <-> s*[1.05] BOONDOX-r-calo}{}
\DeclareFontShape{U}{BOONDOX-calo}{b}{n}{
  <-> s*[1.05] BOONDOX-b-calo}{}
\DeclareMathAlphabet{\mcb}{U}{BOONDOX-calo}{m}{n}
\SetMathAlphabet{\mcb}{bold}{U}{BOONDOX-calo}{b}{n}
\setlist{noitemsep,topsep=4pt}
\newcommand*{\bigcdot}{}
\DeclareRobustCommand*{\bigcdot}{%
  \mathbin{\mathpalette\bigcdot@{}}%
}
\newcommand*{\bigcdot@scalefactor}{.5}
\newcommand*{\bigcdot@widthfactor}{1.15}
\newcommand*{\bigcdot@}[2]{%
  \sbox0{$#1\vcenter{}$}
  \sbox2{$#1\cdot\m@th$}%
  \hbox to \bigcdot@widthfactor\wd2{%
    \hfil
    \raise\ht0\hbox{%
      \scalebox{\bigcdot@scalefactor}{%
        \lower\ht0\hbox{$#1\bullet\m@th$}%
      }%
    }%
    \hfil
  }%
}
\def\symbol#1{\textcolor{black}{#1}}
\def\1{\mathbf{\symbol{1}}}
\def\bone{\mathbf{1}}
\def\BG{\textnormal{\scriptsize \textsc{BG}}}
\def\LD{\textnormal{\scriptsize \textsc{LD}}}
\def\sto{\textnormal{\scriptsize \tiny{sto}}}
\def\det{\textnormal{\scriptsize \tiny{det}}}
\def\dash{\leavevmode\unskip\kern0.18em--\penalty\exhyphenpenalty\kern0.18em}
\def\slash{\leavevmode\unskip\kern0.15em/\penalty\exhyphenpenalty\kern0.15em}
\newcommand{\mrd}{\mathrm{d}}
\newcommand{\cmark}{\text{\ding{51}}}
\newcommand{\xmark}{\text{\ding{55}}}
\colorlet{darkblue}{blue!90!black}
\colorlet{darkgreen}{green!50!black}
\colorlet{darkyellow}{yellow!92!black}
\colorlet{darkred}{red!50!black}
\newcommand{\mfc}{\mathfrak{c}}
\newcommand{\mfC}{\mathfrak{C}}
\newcommand{\mfR}{\mathfrak{R}}
\newcommand{\mfM}{\mathfrak{M}}
\newcommand{\mfp}{\mathfrak{p}}
\newcommand{\mfP}{\mathfrak{P}}
\newcommand{\mcC}{\mathcal{C}}
\newcommand{\mcS}{\mathcal{S}}
\newcommand{\mcL}{\mathcal{L}}
\newcommand{\mcF}{\mathcal{F}}
\newcommand{\mcN}{\mathcal{N}}
\newcommand{\mcD}{\mathcal{D}}
\newcommand{\rmT}{\mathrm{T}}
\newcommand{\tI}{\tt{I}}
\newcommand{\tL}{\Tilde{L}}
\newcommand{\tV}{\Tilde{V}}
\tikzset{
	root/.style={circle,fill=testcolor,inner sep=0pt, minimum size=2mm},
	dot/.style={circle,fill=symbols,draw=symbols,inner sep=0pt,minimum size=0.5mm},
	bdot/.style={circle,fill=symbols,draw=symbols,inner sep=0pt,minimum size=1mm},
	bdotsml/.style={circle,fill=symbols,draw=symbols,inner sep=0pt,minimum size=0.75mm},
	square/.style={regular polygon,regular polygon sides=4,fill=black,draw=black,inner sep=0pt,minimum size=1.2mm},
	wsquare/.style={regular polygon,regular polygon sides=4,fill=white,draw=black, inner sep=0pt,minimum size=1.2mm},
	squaresml/.style={regular polygon,regular polygon sides=4,fill=black,draw=black,inner sep=0pt,minimum size=0.9mm},
	wsquaresml/.style={regular polygon,regular polygon sides=4,fill=white,draw=black, inner sep=0pt,minimum size=0.9mm},
	eps/.style={circle,fill=white,draw=symbols,inner sep=0pt,minimum size=1mm},
	int/.style={circle,fill=black,draw=black,inner sep=0pt,minimum size=0.7mm},
	var/.style={circle,fill=black!10,draw=black,inner sep=0pt, minimum size=2mm},
	dotred/.style={circle,fill=black!50,inner sep=0pt, minimum size=2mm},
	generic/.style={semithick,shorten >=1pt,shorten <=1pt},
	dist/.style={ultra thick,draw=testcolor,shorten >=1pt,shorten <=1pt},
	testfcn/.style={ultra thick,testcolor,shorten >=1pt,shorten <=1pt,<-},
	testfcnx/.style={ultra thick,testcolor,shorten >=1pt,shorten <=1pt,<-,
		postaction={decorate,decoration={markings,mark=at position 0.6 with {\drawx}}}},
	keps/.style={semithick,shorten >=1pt,shorten <=1pt,densely dashed,->},
	kprimex/.style={semithick,shorten >=1pt,shorten <=1pt,densely dashed,->,
		postaction={decorate,decoration={markings,mark=at position 0.4 with {\drawx}}}},
	kernel/.style={semithick,shorten >=1pt,shorten <=1pt,->},
	multx/.style={shorten >=1pt,shorten <=1pt,
		postaction={decorate,decoration={markings,mark=at position 0.5 with {\drawx}}}},
	kernelx/.style={semithick,shorten >=1pt,shorten <=1pt,->,
		postaction={decorate,decoration={markings,mark=at position 0.4 with {\drawx}}}},
	kernel1/.style={->,semithick,shorten >=1pt,shorten <=1pt,postaction={decorate,decoration={markings,mark=at position 0.45 with {\draw[-] (0,-0.1) -- (0,0.1);}}}},
	kernel2/.style={->,semithick,shorten >=1pt,shorten <=1pt,postaction={decorate,decoration={markings,mark=at position 0.45 with {\draw[-] (0.05,-0.1) -- (0.05,0.1);\draw[-] (-0.05,-0.1) -- (-0.05,0.1);}}}},
	kernelBig/.style={semithick,shorten >=1pt,shorten <=1pt,decorate, decoration={zigzag,amplitude=1.5pt,segment length = 3pt,pre length=2pt,post length=2pt}},
	rho/.style={dotted,semithick,shorten >=1pt,shorten <=1pt},
	renorm/.style={shape=circle,fill=white,inner sep=1pt},
	labl/.style={shape=rectangle,fill=white,inner sep=1pt},
	xi/.style={circle,fill=symbols!10,draw=symbols,inner sep=0pt,minimum size=1.2mm},
	xix/.style={crosscircle,fill=symbols!10,draw=symbols,inner sep=0pt,minimum size=1.2mm},
	xib/.style={circle,fill=symbols!10,draw=symbols,inner sep=0pt,minimum size=1.6mm},
	xibx/.style={crosscircle,fill=symbols!10,draw=symbols,inner sep=0pt,minimum size=1.6mm},
	not/.style={circle,fill=symbols,draw=symbols,inner sep=0pt,minimum size=0.5mm},
cumu2n/.style={inner sep=3pt},
cumu2/.style={draw=red!80,fill=red!40},
cumu2b/.style={draw=blue!80,fill=blue!40},
cumu2nv/.style={inner sep=3pt},
cumu2v/.style={draw=red!80,fill=white,very thick},
cumu3/.style={regular polygon, regular polygon sides=3,draw=red!80,rounded corners=3pt,fill=red!40,minimum size=5mm},
cumu4/.style={regular polygon, regular polygon sides=4,draw=red!80,rounded corners=3pt,fill=red!40,minimum size=7mm},
cumu5/.style={regular polygon, regular polygon sides=5,draw=red!80,rounded corners=3pt,fill=red!40,minimum size=7mm},
	>=stealth,
	}
\newcommand{\ang}[1]{\langle #1\rangle }
\newcommand{\na}{\ang{\nabla}}
\newcommand{\angp}[2]{\langle #1\rangle^{#2}}
\newcommand{\bbX}{\mathbb{X}}
\newcommand{\G}{\mathbb{G}}
\newcommand{\bH}{\mathbb{H}}
\newcommand{\cC}{\mathcal{C}}
\newcommand{\cG}{\mathcal{G}}
\newcommand{\cL}{\mathcal{L}}
\newcommand{\cB}{\mathcal{B}}
\newcommand{\cA}{\mathcal{A}}
\newcommand{\cI}{\mathfrak{I}}
\newcommand{\cK}{\mathfrak{K}}
\newcommand{\cS}{\mathcal{S}}
\newcommand{\cN}{\mathcal{N}}
\newcommand{\cZ}{\mathcal{Z}}
\newcommand{\cF}{\mathcal{F}}
\newcommand{\bv}{\big|}
\newcommand{\bV}{\Big\Vert}
\newcommand{\hc}{\hat{c}}
\newcommand{\rmd}{\mathrm{d}}
\newcommand{\tA}{\Tilde{A}}
\DeclareRobustCommand{\TitleEquation}[2]{\texorpdfstring{\StrLeft{\f@series}{1}[\@firstchar]$\if%
b\@firstchar\boldsymbol{#1}\else#1\fi$}{#2}}
\begin{document}

\title{A Stochastic Analysis Approach to Tensor Field Theories}

\author{Ajay Chandra$^{1}$, L\'{e}onard Ferdinand$^{2}$}

\institute{Imperial College London, UK 
\and Universit\'{e} Paris-Saclay, France\\[1em]
\email{a.chandra@ic.ac.uk, lferdinand@ijclab.in2p3.fr}}

\maketitle

\newcommand{\noise}{\raisebox{0ex}{\includegraphics[scale=1]{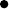}}}
\newcommand{\bnoise}{\raisebox{0ex}{\includegraphics[scale=1]{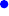}}}
\newcommand{\rnoise}{\raisebox{0ex}{\includegraphics[scale=1]{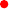}}}
\newcommand{\gnoise}{\raisebox{0ex}{\includegraphics[scale=1]{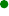}}}
\newcommand{\X}{\raisebox{0ex}{\includegraphics[scale=1]{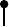}}}
\newcommand{\bX}{\raisebox{0ex}{\includegraphics[scale=1]{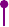}}}
\newcommand{\rX}{\raisebox{0ex}{\includegraphics[scale=1]{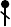}}}
\newcommand{\Xtau}{\raisebox{0ex}{\includegraphics[scale=1]{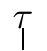}}}
\newcommand{\rXtau}{\raisebox{0ex}{\includegraphics[scale=1]{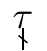}}}
\newcommand{\cherry}{\raisebox{0ex}{\includegraphics[scale=1]{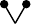}}}
\newcommand{\Xtrident}{\raisebox{-.5 ex}{\includegraphics[scale=1]{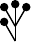}}}
\newcommand{\Xtwofork}{\raisebox{-.5 ex}{\includegraphics[scale=1]{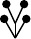}}}
\newcommand{\Xthreeloc}{\raisebox{-.3ex}{\includegraphics[scale=1]{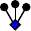}}}
\newcommand{\pXthreeloc}{\raisebox{-.3ex}{\includegraphics[scale=1]{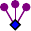}}}
\newcommand{\XtwoPictwo}{\raisebox{-1 ex}{\includegraphics[scale=1]{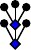}}}
\newcommand{\Pictwo}{\raisebox{-1 ex}{\includegraphics[scale=1]{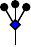}}}
\newcommand{\rPictwo}{\raisebox{-1 ex}{\includegraphics[scale=1]{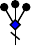}}}
\newcommand{\Picthree}{\raisebox{-1.5 ex}{\includegraphics[scale=1]{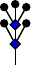}}}
\newcommand{\rPicthree}{\raisebox{-1.5 ex}{\includegraphics[scale=1]{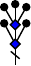}}}
\newcommand{\Xtwom}{\raisebox{0ex}{\includegraphics[scale=1]{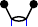}}}
\newcommand{\pXtwom}{\raisebox{0ex}{\includegraphics[scale=1]{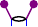}}}
\newcommand{\Xtwonm}{\raisebox{-1 ex}{\includegraphics[scale=1]{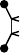}}}
\newcommand{\pXtwonm}{\raisebox{-1 ex}{\includegraphics[scale=1]{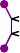}}}
\newcommand{\XdotX}{\raisebox{-1 ex}{\includegraphics[scale=1]{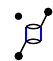}}}
\newcommand{\pXdotX}{\raisebox{-1 ex}{\includegraphics[scale=1]{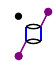}}}
\newcommand{\Sym}{\raisebox{-1 ex}{\includegraphics[scale=1]{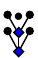}}}
\newcommand{\pSym}{\raisebox{-1 ex}{\includegraphics[scale=1]{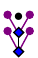}}}
\newcommand{\Xthree}{\raisebox{-1 ex}{\includegraphics[scale=1]{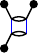}}}
\newcommand{\pXthree}{\raisebox{-1 ex}{\includegraphics[scale=1]{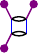}}}
\newcommand{\vXX}{\raisebox{-1 ex}{\includegraphics[scale=1]{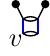}}}
\newcommand{\pvXX}{\raisebox{-1 ex}{\includegraphics[scale=1]{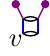}}}
\newcommand{\XvX}{\raisebox{-1 ex}{\includegraphics[scale=1]{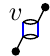}}}
\newcommand{\pXvX}{\raisebox{-1 ex}{\includegraphics[scale=1]{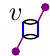}}}
\newcommand{\XXv}{\raisebox{-1 ex}{\includegraphics[scale=1]{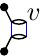}}}
\newcommand{\pXXv}{\raisebox{-1 ex}{\includegraphics[scale=1]{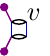}}}
\newcommand{\vvX}{\raisebox{-1 ex}{\includegraphics[scale=1]{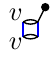}}}
\newcommand{\pvvX}{\raisebox{-1 ex}{\includegraphics[scale=1]{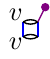}}}
\newcommand{\Xvv}{\raisebox{-1 ex}{\includegraphics[scale=1]{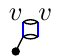}}}
\newcommand{\pXvv}{\raisebox{-1 ex}{\includegraphics[scale=1]{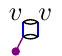}}}
\newcommand{\vXv}{\raisebox{-1 ex}{\includegraphics[scale=1]{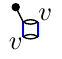}}}
\newcommand{\pvXv}{\raisebox{-1 ex}{\includegraphics[scale=1]{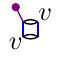}}}
\newcommand{\vvv}{\raisebox{-1 ex}{\includegraphics[scale=1]{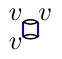}}}
\newcommand{\XXXX}{\raisebox{-1.2 ex}{\includegraphics[scale=1]{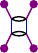}}}
\newcommand{\vXXv}{\raisebox{-1.3 ex}{\includegraphics[scale=1]{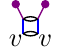}}}
\newcommand{\XXXv}{\raisebox{-1.3 ex}{\includegraphics[scale=1]{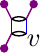}}}
\newcommand{\XvXv}{\raisebox{-1.3 ex}{\includegraphics[scale=1]{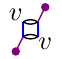}}}
\newcommand{\XXvv}{\raisebox{-1.3 ex}{\includegraphics[scale=1]{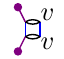}}}
\newcommand{\Xvvv}{\raisebox{-1.3 ex}{\includegraphics[scale=1]{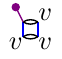}}}
\newcommand{\vvvv}{\raisebox{-1.3 ex}{\includegraphics[scale=1]{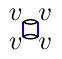}}}
\newcommand{\cXdotXloc}{\raisebox{-.3ex}{\includegraphics[scale=1]{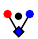}}}
\newcommand{\XdotXloc}{\raisebox{-.3ex}{\includegraphics[scale=1]{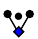}}}
\newcommand{\XXdotloc}{\raisebox{-.3ex}{\includegraphics[scale=1]{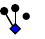}}}
\newcommand{\dotXXloc}{\raisebox{-.3ex}{\includegraphics[scale=1]{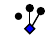}}}
\newcommand{\cXthreeloc}{\raisebox{-.3ex}{\includegraphics[scale=1]{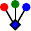}}}
\newcommand{\cdotXXloc}{\raisebox{-.3ex}{\includegraphics[scale=1]{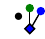}}}
\newcommand{\cXXdotloc}{\raisebox{-.3ex}{\includegraphics[scale=1]{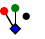}}}

\begin{abstract}
We present two different arguments using stochastic analysis to construct super-renormalizable tensor field theories, namely the $\rmT^4_3$ and $\rmT^4_4$ models. 
The first approach is the construction of a Langevin dynamic \cite{ParisiWu,GH21} combined with a PDE energy estimate while the second is an application of the variational approach of Barashkov and Gubinelli \cite{BG20}. 
By leveraging the melonic structure of divergences, regularising properties of non-local products, and controlling certain random operators, we demonstrate that for tensor field theories these arguments can be significantly simplified in comparison to what is required for $\Phi^4_d$ models. 
 \end{abstract}

\setcounter{tocdepth}{2}
\tableofcontents

\section{Introduction}

A class of heavily studied and paradigmatic models in constructive quantum field theory are the $\Phi^4_d$ measures. 
In finite volume, these are non-Gaussian probability measures $\mu$ supported on distributions (over space) which are formally written
\begin{equ}\label{eq:local_phi_4}
    \mathrm{d}\mu  (\phi) \propto  \exp \Big( - \frac\lambda4 \Vert \phi\Vert_{L^4(\T^d)}^4+\frac{a}{2} \Vert \phi\Vert_{L^2(\T^d)}^2 \Big) \mathrm{d}\mcb{g}(\phi)\,.
\end{equ}
where $\lambda > 0$ and $\mcb{g}$ denotes the Gaussian measure with covariance $(1-\Delta)^{-1}$, that is a massive Gaussian free field on $\T^{d}$. 
When $d=1$ it is straightforward to make \eqref{eq:local_phi_4} rigorous but much less so when $d > 1$ \dash in this case the measure $\mcb{g}$ is supported on distributions so rough that the nonlinear expressions  $\Vert \phi\Vert_{L^4(\T^d)}^4$ and $\Vert \phi\Vert_{L^2(\T^d)}^2$ above are ill-defined. 

When $d=2$ and $d=3$  the measure $\mu$ can be obtained as the weak limit of regularized and appropriately renormalized measures \cite{Nel73,MR0408581,Feldman,MR0416337} \dash for instance the $N \uparrow \infty$ limit of
\begin{equ}\label{eq:local_phi_4_renorm}
    \mathrm{d}\mu_{N}  (\phi) \propto \exp \Big( - \frac\lambda4 \Vert \phi \Vert_{L^4(\T^d)}^4+ \frac{a+a_N}{2} \Vert \phi \Vert_{L^2(\T^d)}^2  \Big) \mathrm{d}\mcb{g}_{N}(\phi)\,.
\end{equ}
where $\mcb{g}_{N}$ is the pushforward of $\mcb{g}$ under $\varrho_{N}$, which is a Fourier multiplier in space with  $0 \leqslant \hat{\varrho} \leqslant 1$, $\hat{\varrho}_{N}(k) = 1$ for $|k| \leqslant N$ and $\varrho_{N} (k) = 0$ for $|k| \geqslant 2N$.

The constant $a_{N}$ in \eqref{eq:local_phi_4_renorm} is a renormalization constant diverging to infinity as $N \uparrow \infty$. 
One can identify the suitable choices of $a_{N}$ using perturbation theory, that is formally calculating moments of $\mu_{N}$ by expanding the exponential on the right hand side of \eqref{eq:local_phi_4_renorm} as a formal series and using Wick's rule for Gaussian moments to integrate out $\phi$ term by term \dash one chooses $a_{N}$ to make these expansions finite order by order in $\lambda$ as $N \uparrow \infty$. 
However this series itself is far from convergent and proving $\mu_{N}$ weakly converges as $N \uparrow \infty$ requires other tools. 

In the case of $d=2$, one can show that $\mu$ is absolutely continuous with respect to $\mcb{g}$, the renormalization procedure is nothing more than passing from ill-defined polynomials of a rough Gaussian field to the corresponding  well-defined Hermite polynomials \slash Wick powers. 
The situation when $d=3$ is much more complicated due to the presence of so-called ``non-local divergences'' \dash it has been shown in \cite{BG21} that $\mu$ is singular with respect to $\mcb{g}$. 
Even though $a_{N}$ is chosen to cancel the divergences of infinitely many terms in the perturbation expansion, when $d=2,3$ one can write $a_{N}$ explicitly as the expectation of a polynomial in $\phi$, $\lambda$ and $a$ under $\mcb{g}$.
For this reason  $\Phi^4_2$ and $\Phi^4_3$ are called \textit{super-renormalizable}.

In the case where $d = 4$ the model is \textit{just-renormalizable}, in particular perturbative renormalization requires including a diverging renormalization counterterm $\lambda_{N} \Vert  \phi \Vert_{L^4(\T^4)}^4$ in the exponential in \eqref{eq:local_phi_4_renorm} along with a multiplicative wave-function renormalisation $Z_{N}$. 
Moreover, perturbation theory gives infinite series $\lambda_{N}$, $a_{N}$, and $Z_{N}$ in $\lambda$ which are not  all summable. 
When $d > 4$ the model is \textit{non-renormalizable}, and in addition to the above perturbative renormalization would require inserting infinitely many other counterterms (such as terms $\Vert \phi\Vert_{L^{2j}(\T^d)}^{2j}$ for instance) inside of the exponential factor in \eqref{eq:local_phi_4_renorm}. 

Switching from questions of perturbative renormalization to the question of the convergence of measures, in \cite{MR678000,MR643591} it was shown that one cannot obtain non-Gaussian limits when $d \geqslant 5$ while the case of $d=4$ is more subtle and was only settled more recently in \cite{ADC21}. 
The analysis of renormalization problems in just-renormalizable models is quite subtle and there are other examples \cite{GKNonLinSigma} where one can obtain non-Gaussian limits. 
An important step of one of the Millenium Problems is investigating whether one can obtain a non-trivial limit for non-abelian Yang-Mills in $4$-dimensions.  
A major frustration has been the lack of a simple scalar field theory to investigate the construction of a just renormalizable model.
However, a newer class of models, so called ``non-local models'' of which tensor field theories are an example, are promising candidates for obtaining non-Gaussian limits in the just renormalizable case. 

\subsection{Tensor field theories}

The regularized, renormalized measures $\mu_{N}$ in \eqref{eq:local_phi_4_renorm} are non-Gaussian because of the quartic $L^{4}$ term. 
The models we study in this article are $\mathrm{T}^4_d$ models, they are examples of \textit{tensor field models} \cite{Bengeloun} and they are obtained by replacing this quartic $L^{4}$ term with a non-local quartic interaction which we introduce now. 

For $d\geqslant 2$ and any fixed $c\in [d] \eqdef \{1,\dots,d\}$, we define the function  
\[
 \T^d\times\T^d \ni (x,y) \mapsto \chi^{c}(x,y) = \big( \chi^{c}(x,y)_{i} \big)_{i=1}^{d} \in \T^{d}
 \]  
by setting, for each $1 \leqslant i \leqslant d$ and $x = (x_j)_{j=1}^{d},\; y= (y_j)_{j=1}^{d} \in \T^{d}$,  
\begin{equ}\label{eq:nonlocalpairing}
    \chi^c(x,y)_i=\begin{cases}x_i \text{ if } i\neq c\;, 
    \\y_c \text{ if } i=c\;.\end{cases}\
\end{equ}

For each $c \in[d]$ we introduce a norm $\Vert \bigcdot \Vert_{M^4_c}$ on scalar functions $\phi$ on $\T^{d}$ by setting 
\begin{equ}\label{eq:1colornonlin}
    \Vert\phi\Vert_{M^4_c(\T^d)}^4\eqdef \int_{\T^d}\int_{\T^d}\phi(x)\phi(y)\phi\big(\chi^c(x,y)\big)\phi \big(\Bar{\chi}^c(x,y) \big)\; \mathrm{d}x \mathrm{d}y \,,
\end{equ} 
where $\Bar{\chi}^c(x,y) \eqdef \chi^c(y,x)$. 
We also write 
\[
\Vert \bigcdot \Vert_{M^4(\T^d)} \eqdef  \Big( \displaystyle\sum_{c=1}^{d} \Vert \bigcdot \Vert_{M_{c}^4(\T^d)}^{4} \Big)^{1/4}\;.
\]
In Appendix~\ref{sec:nonlin_facts} we confirm that $\Vert \bigcdot \Vert_{M^4_c}$ is indeed a norm and why it may be called a tensor norm.  
 
Formally, the $\mathrm{T}_{d}^4$ model is given by
\begin{equ}\label{eq:tensor_phi_4}
    \mathrm{d}\nu  (\phi) \propto  \exp \Big( - \frac\lambda4 \Vert \phi\Vert_{M^4(\T^d)}^4+ \frac{a}{2} \Vert \phi\Vert_{L^2(\T^d)}^2 \Big) \mathrm{d}\mcb{g}(\phi)\;. 
\end{equ}

Like the local $\Phi^4_d$ models, analysis of higher dimensional $\mathrm{T}_{d}^4$ models goes via regularization, introducing renormalization, and then obtaining bounds that are uniform in the regularization.   
Heuristically, the level of renormalization needed for a $\mathrm{T}_{d}^4$ model is comparable to that of the corresponding $\Phi^4_{d-1}$ model. 

In particular, $\mathrm{T}^4_{d}$ is super-renormalizable for $d < 5$ and $\mathrm{T}_{2}^4$ can be defined without renormalization.
For $d  \in \{3,4\}$ the measures $\nu$ can be obtained as weak $N \uparrow \infty $ limits of the measures
\begin{equ}\label{eq:tensor_phi_4_renorm}
    \mathrm{d}\nu_{N}  (\phi) \propto  \exp \Big( - \frac\lambda4 \Vert \phi\Vert_{M^4(\T^d)}^4+ \frac{a+a_N}{2} \Vert \phi\Vert_{L^2(\T^d)}^2 \Big) \mathrm{d} \mcb{g}_{N}(\phi)\,,
\end{equ}
where again $a_{N}$ diverges as $N \uparrow \infty$. In our context of a subcritical measure on a compact space, the renormalized coupling constant $\lambda$ and the renormalized mass $1-a$ can be set to one, which we enforce in the sequel. 

Moreover, $\mathrm{T}_{3}^4$ is absolutely continuous with respect to $\mcb{g}$ and only requires Wick type renormalization while the renormalization of $\mathrm{T}_{4}^4$ is more difficult and its limit is expected to be singular with respect to  $\mcb{g}$. 
The models $\mathrm{T}_{3}^4$  and $\mathrm{T}_{4}^4$ \cite{T44} are also both super-renormalizable and were constructed and shown to be Borel summable in \cite{T43} and \cite{T44}. 
However, the technique used in these constructions seems to break-down in $d=4$, thereby not allowing for an exploration of the full subcritical regime. The model $\mathrm{T}_{5}^4$ is just-renormalizable and requires an additional counterterm $\lambda_{N} \Vert \phi\Vert_{M^4(\T^4)}^4$ along with a multiplicative wave-function renormalization $Z_{N}$. 
However, in contrast with $\Phi^4_4$, there is evidence \cite{RV21} obtaining a non-Gaussian limit for $\mathrm{T}^4_5$  might be possible.

Another feature of the $\mathrm{T}^4_{d}$ models that makes them seem more tractable is a topological constraint that must be satisfied for a Feynman graph to be divergent, such graphs must be ``melonic''. 
For the $\mathrm{T}^{3}_{4}$ and $\mathrm{T}^{4}_{4}$ models the melonic constraint plays a role in greatly simplifying the stochastic analysis approach to these models. 
For $\mathrm{T}^{4}_{5}$ the melonic constraint imposes that the infinite collection of divergent Feynman graphs in this model, when organized by the number of their edges, proliferate like trees instead of connected graphs \dash in particular one could hope that perturbative formula for renormalization constants would be summable for small coupling $\lambda$.

Finally, we mention that tensor field theories are not the only examples of toy non-local field theories that have been investigated. 
For instance, Hartree type nonlinearities can be defined by substituting the $L^4$ norm with $\Vert\phi^2\Vert^2_{H^{-\frac{\beta}{2}}}$ where for the Sobolev space $H^{-\beta/2}$ one takes $\beta\in(0,d)$.
 However, the Hartree nonlinearity is closer to being a regularized version of the local product while the tensor field theory interaction behaves very differently from the local product in its renormalization. 
Another non-local model which is closer to our setting and which has been shown to give a non-Gaussian limit in the just-renormalizable case is the Moyal model \cite{GW14}. 
\subsection{Main results}
Much of the earlier work on both local and non-local field theories constructs obtains controls over measures by showing convergence of their moments as cut-offs are removed using expansions and computations of expectations under $\mcb{g}$. 
Broadly speaking, the stochastic analysis approach to studying these problems takes a different point - the field theory is again analysed as a perturbation of a gaussian measure, but the aim is to show convergence of the non-Gaussian field as a random field which is coupled to the underlying Gaussian field. 

In this paper we construct the $\mathrm{T}^{4}_{3}$ and $\mathrm{T}^{4}_{4}$ measures, employing two approaches \dash the first approach is a Langevin-type dynamical stochastic quantization \cite{ParisiWu,GH21} while the other is a variational approach introduced in \cite{BG20} reminiscent of the Gibbs variational principle. 
The two approaches give two independent ways of constructing the $\mathrm{T}^{4}_{3}$ and $\mathrm{T}^{4}_{4}$ measures. 

\subsubsection{Langevin dynamic}
A natural Langevin dynamic for the $\Phi^4_d$ model \eqref{eq:local_phi_4} is formally given by
\begin{equ}\label{eq:LD}
\partial_{t} \phi = (\Delta-1) \phi -  \lambda \phi^3 -  a  \phi + \sqrt{2} \xi\;,
\end{equ}
where $\xi$ is a $d$-dimensional space-time white noise on a probability space $(\Omega,\cB,\P)$ and now $\phi$ is also a distribution over space and a new ``fictitious''  time $t$. 
When $d \geqslant 2$,  \eqref{eq:LD} is an example of singular stochastic partial differential equation \dash  the roughness of the noise $\xi$ prevents us from using classical arguments for local well-posedness since $\phi$ belongs to spaces where $\phi^3$ is not well-defined. 
Local well-posedness for \eqref{eq:LD} was first obtained for $d=2$ in \cite{DPD2} and was one of the first examples in combining probablistic estimates with path-wise analysis to solve a singular SPDE. The case $d=3$ was much more difficult and remained open until the development of more sophisticated path-wise methods such as regularity structures and paracontrolled calculus \cite{Hai14,CC13}. 

At a formal level, the $\Phi^4_d$ measure should be invariant for the dynamic \eqref{eq:LD} and this can be made rigorous for the regularized dynamic and measure.
The corresponding Langevin dynamic for the $\mathrm{T}^{4}_{d}$ model is given (with $\lambda$ and $1-a$ set to one) by 
\begin{equation}\label{eq:tensorLD}
 \partial_{t} \phi= (\Delta-1) \phi - \cN(\phi,\phi,\phi)+\sqrt{2}\xi\,.
\end{equation}
where, $\xi$ is as in \eqref{eq:LD} and, for  $f,g,h: \T^{d} \times \R \rightarrow \R$, we have 

\begin{equs}
\cN(f,g,h)(x,t) &\eqdef \displaystyle\sum_{c=1}^d\cN^c(f,g,h)(x,t)\;,  \enskip \text{where}\\
\cN^c(f,g,h)(x,t) &\eqdef \int_{\T^d}f(\chi^c(x,y),t)g(y,t)h(\Bar{\chi}^c(x,y),t)\; \mathrm{d}y\;.
\end{equs}

The regularized dynamic we study is given by
\begin{equ}\label{eq:regTensorLD}
\partial_{t} \phi= (\Delta-1) \phi -  \Pi_N \cN(\phi,\phi,\phi)+ a_{N} \phi+\sqrt{2} \Pi_N\xi\;,
\end{equ}
where $\Pi_{N}=\1\{  |\nabla|_{\infty}\leqslant N \}$ projects onto Fourier modes $k$ with $|k|_{\infty} \leqslant N$ and $a_{N}$ again refers to a renormalization constant. 
Note if one takes $\varrho_{N} = \Pi_N$ in \eqref{eq:tensor_phi_4_renorm} then the dynamic \eqref{eq:regTensorLD} keeps the $\Pi_N$ marginal of \eqref{eq:tensor_phi_4_renorm} invariant. 
A main step in showing solutions to \eqref{eq:regTensorLD} converge is writing $\phi = \bX + v$ where $\bX$ is an explicit, rough space-time stochastic process and $v$ is the local in time solution to a better behaved remainder equation. 
Our main result on the Langevin dynamics is given by the following theorem.
\begin{theorem}\label{dynamic_theorem1}
Let $d \in \{3,4\}$, then there exist a choice of the constants $a_{N}$ such that one has the $N \uparrow \infty$ convergence of local (in time) solutions to \eqref{eq:regTensorLD} when the dynamic is started from data that is appropriately close to equilibrium, see Theorem~\ref{thm:3d} and Proposition~\ref{prop:4dfixedpoint} for a precise statement.

Moreover, one has an $L^2$-coming down from infinity bound for $v = \phi - \bX$ , see Proposition~\ref{prop:comingdown} for an precise statement. 
\end{theorem}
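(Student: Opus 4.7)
The natural plan is a Da Prato--Debussche type decomposition adapted to the tensor nonlinearity. First I would decompose $\phi = \bX + v$, where $\bX$ is the stationary solution to the regularized Ornstein--Uhlenbeck equation $\partial_t \bX = (\Delta-1)\bX + \sqrt{2}\Pi_N \xi$, so that $v$ solves
\begin{equ}
\partial_t v = (\Delta-1)v - \Pi_N \cN(\bX+v,\bX+v,\bX+v) + a_N(\bX+v)\;.
\end{equ}
Expanding $\cN$ trilinearly produces eight terms organized by the number of factors of $\bX$. The purely stochastic term $\Pi_N\cN(\bX,\bX,\bX)$ and the ``contracted'' terms linear in $v$ are divergent as $N\uparrow\infty$; I would choose $a_N$ by imposing that the melonic tadpole contractions match the Wick-type renormalization of these terms. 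The role of the melonic constraint is to ensure that the set of graphs requiring renormalization is extremely small \dash essentially only mass-type counterterms \dash so that no wave-function or coupling renormalization is needed in $d\in\{3,4\}$.

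Next I would construct the relevant stochastic data as a family of random multilinear operators (rather than pointwise functions). Concretely, for each color $c$ the renormalized trilinear object $\cN^c(\bX,\bX,\bX) - \text{(counterterms)}$ should be shown to converge in a Besov/parabolic space of low regularity, and the operators $v \mapsto \Pi_N \cN^c(\bX,\bX,v)$, $v \mapsto \Pi_N \cN^c(\bX,v,\bX)$, etc., should be controlled as bounded random operators on appropriate Sobolev spaces (with suitable loss of regularity). Here I would exploit the regularising property of the non-local product highlighted in the introduction: integrating one of the arguments over $\T^d$ gains extra regularity in the pairing variable relative to a local product, which is precisely what keeps $d=3$ in the Wick-renormalizable regime and makes $d=4$ tractable with only mass renormalization.

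With the stochastic data in hand, the remainder equation for $v$ becomes a well-posed semilinear heat equation with random forcing and random coefficients. A standard contraction fixed-point argument in $C_t \cH^\alpha_x$ (for an appropriately chosen $\alpha>0$ large enough to close the products but small enough to be consistent with Schauder estimates) gives local-in-time existence and uniqueness, and the convergence of $v_N$ as $N\uparrow\infty$ follows from the convergence of the stochastic inputs together with continuity of the fixed-point map. This yields the precise statements of Theorem~\ref{thm:3d} and Proposition~\ref{prop:4dfixedpoint}.

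Finally, for the $L^2$ coming-down-from-infinity bound of Proposition~\ref{prop:comingdown}, I would test the equation for $v$ against $v$ in $L^2$. The deterministic cubic term $\langle \cN(v,v,v),v\rangle$ decomposes (by the definition of $\chi^c$ and a change of variables) into a sum over colors of squared $M^4_c$-norms, providing coercive damping of order $\|v\|_{M^4}^4$. The mixed terms pairing $v$ with one or two copies of $\bX$ are then absorbed via Hölder and Young's inequality against this coercive term and the mass term, with the polynomial in norms of stochastic data being almost surely finite uniformly in $N$. A Gronwall-type argument then produces the $t^{-\beta}$ decay characteristic of coming down from infinity. The main obstacle throughout is controlling the operator norms of the random bilinear maps coming from two-$\bX$ contractions in $d=4$: this is precisely where the melonic bookkeeping and the non-local regularising effect must be combined to avoid introducing further counterterms beyond $a_N$.
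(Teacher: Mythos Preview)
Your proposal is essentially correct for $d=3$ and matches the paper's approach there: a single Da Prato--Debussche shift by the stationary Ornstein--Uhlenbeck process, random-operator estimates for the mixed terms (in particular treating $v\mapsto\cN(\X,v,\X)$ as a random operator rather than via deterministic product bounds), and a contraction in $C_T\cC^{3/2-\epsilon}$.

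For $d=4$, however, your plan has a genuine gap. You define $\bX$ as the OU process, but with that shift the purely stochastic term $\cN(\X,\X,\X)-\mfC^1\X$ has regularity $-2-\epsilon$, so the remainder $v$ cannot be taken of positive regularity and the mixed products with $\X$ (regularity $-1-\epsilon$) are ill-defined. The paper's $\bX$ is a \emph{three-term} shift $\bX=\X-\Pictwo+\Picthree$: the second shift by $\Pictwo=\underline{\cL}^{-1}\Xthreeloc$ removes the $-2-$ term but creates new melonic pairings between $\X$ and $\Pictwo$ that require a \emph{second} mass counterterm $\mfC^2$ (the melonic snowball), so $a_N=\mfC^1_N-\mfC^2_N$ rather than just the tadpole you describe. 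The third shift by $\Picthree$ is needed so that the remaining purely stochastic term $\cS$ has regularity $-\tfrac12-\epsilon$, which is exactly what allows the pairing $(\cS,v)$ to be controlled by $\|v\|_{H^1}$ in the $L^2$ energy estimate; with only two shifts the leftover stochastic term is $-1-\epsilon$ and your testing argument would fail. Finally, even after these shifts the fixed point does \emph{not} close directly: the random operator $\pXdotX$ maps $C_T\cC^\alpha\to C_T\cC^{\alpha-\epsilon}$ for $\alpha\in(\tfrac12,\tfrac32)$, so one loses $\epsilon$ at every iteration. The paper overcomes this by splitting $v=X+Y$ and reinjecting the equation for $X$ into itself once, which replaces $\pXdotX$ by the better-behaved composite operator $\pSym=\pXdotX\circ\cL^{-1}\circ\pXdotX$ whose stochastic estimate gains back the lost regularity.

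For the coming-down bound, your outline is broadly right (test against $v$, use coercivity of $\|v\|_{M^4}^4$, absorb mixed terms), but note two points: the mixed term $(\cN(\bX,v,\bX),v)$ is handled not by a direct operator bound but via the Cauchy--Schwarz inequality $|(\cN(\bX,v,\bX),v)|\leqslant(\cN(\bX,\bX,v),v)$, crucially exploiting that the non-melonic pairing needs no renormalization; and the final step is not a Gronwall argument but the Mourrat--Weber comparison lemma applied to $F(t)=\|v(t)\|_{H^1}^{3/2}+\|v(t)\|_{M^4}^3$, since the energy inequality has the form $\int_s^t F^{4/3}\lesssim C+F(s)$ rather than a differential inequality for $\|v\|_{L^2}^2$.
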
 

In contrast with known results for the $\Phi^4_d$ theory for $d=2,3$ \cite{DPD2,tsatsoulis2016spectral,MW17Phi43}, we are not able to obtain global in time solutions. 
However, the local well-posedness result and $L^2$ bounds are sufficient to use the dynamic to obtain tightness for the measures $\nu_{N}$.
\begin{theorem} 
 For $d=3,4$ the family $(\nu_N)_{N\in\N}$ is tight on $H^{-\frac{d-2}{2}-\epsilon}(\T^d)$. Moreover, for all $p\in[1,\infty)$, any subsequential limit $\nu$ satisfies the bounds: 
   \begin{equs}
       \E_\nu[\Vert\phi\Vert_{H^{-\frac{d-2}{2}-\epsilon}(\T^d)}^p]<\infty \quad \text{and} \quad
       \E_\nu[\Vert \phi-\bX\Vert_{L^2(\T^d)}^p]<\infty\;.
   \end{equs} 
\end{theorem}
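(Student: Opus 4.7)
The plan is to exploit the invariance of $\nu_N$ under the regularized Langevin dynamic \eqref{eq:regTensorLD}, for which $\phi$ lives in a finite-dimensional Fourier truncation, combined with the decomposition $\phi = \bX + v$ supplied by Theorem~\ref{dynamic_theorem1}. Uniformity in $N$ of moments under $\nu_N$ is thereby reduced to uniformity of moments of $\phi_N(t)$ at a fixed positive time $t$ for the stationary dynamic, where $\bX_N$ contributes only classical Wick-type bounds and $v_N$ is controlled by the $L^2$ coming-down-from-infinity bound of Proposition~\ref{prop:comingdown}.

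Concretely, for each $N$ I would start the dynamic with $\phi_N(0) \sim \nu_N$ (which is a bona fide finite-dimensional probability measure, since only finitely many Fourier modes are active) so that stationarity yields $\phi_N(t) \sim \nu_N$ for every $t \ge 0$. Decomposing $\phi_N(t) = \bX_N(t) + v_N(t)$ as in the proof of Theorem~\ref{dynamic_theorem1}, with $\bX_N$ the stationary Ornstein--Uhlenbeck process $\partial_t \bX_N = (\Delta - 1)\bX_N + \sqrt{2}\,\Pi_N \xi$ whose one-time marginal is $\mcb{g}_N$, standard Gaussian moment estimates give
\begin{equ}
\sup_N \, \E\bigl[\|\bX_N(1)\|_{H^{-\frac{d-2}{2}-\epsilon}(\T^d)}^p\bigr] < \infty
\end{equ}
for every $p \in [1,\infty)$ and $\epsilon > 0$. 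Combined with Proposition~\ref{prop:comingdown} evaluated at $t=1$ and the embedding $L^2(\T^d) \hookrightarrow H^{-\frac{d-2}{2}-\epsilon}(\T^d)$, the triangle inequality gives
\begin{equ}
\sup_N \, \E_{\nu_N}\bigl[\|\phi\|_{H^{-\frac{d-2}{2}-\epsilon}(\T^d)}^p\bigr] = \sup_N \, \E\bigl[\|\phi_N(1)\|_{H^{-\frac{d-2}{2}-\epsilon}(\T^d)}^p\bigr] < \infty\;.
\end{equ}
Applying the same estimate with $\epsilon$ replaced by $\epsilon/2$, the compact Sobolev embedding $H^{-\frac{d-2}{2}-\epsilon/2}(\T^d) \hookrightarrow H^{-\frac{d-2}{2}-\epsilon}(\T^d)$ together with Chebyshev's inequality delivers tightness of $(\nu_N)_{N \in \N}$ on $H^{-\frac{d-2}{2}-\epsilon}(\T^d)$.

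To transfer the moment bounds to a subsequential weak limit $\nu = \lim_k \nu_{N_k}$, I would couple all regularized dynamics on a common noise $\xi$ and apply Skorokhod's representation theorem to obtain a joint almost sure convergence $(\phi_{N_k}(1), \bX_{N_k}(1)) \to (\phi, \bX)$ in $H^{-\frac{d-2}{2}-\epsilon}(\T^d) \times H^{-\frac{d-2}{2}-\epsilon}(\T^d)$. Since subtraction is continuous in this topology and the $L^2$ norm is lower semi-continuous with respect to it, Fatou's lemma applied to the uniform bounds on $\|\bX_{N_k}(1)\|_{H^{-\frac{d-2}{2}-\epsilon}}$ and $\|v_{N_k}(1)\|_{L^2}$ produces both $\E_\nu[\|\phi\|_{H^{-\frac{d-2}{2}-\epsilon}}^p] < \infty$ and $\E_\nu[\|\phi - \bX\|_{L^2}^p] < \infty$.

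The main obstacle is to guarantee that the coming-down bound of Proposition~\ref{prop:comingdown} is genuinely uniform in $N$ and independent of the initial data, so that it can be meaningfully averaged against the stationary law $\nu_N$; this is precisely the role of that coming-down estimate, and all other ingredients (Gaussian moments of $\bX_N$, compact embeddings, Skorokhod/Fatou) are standard once this is in hand.
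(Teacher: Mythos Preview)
Your approach is correct and in fact more direct than the paper's, but the mechanism is genuinely different. The paper does \emph{not} use the coming-down bound (Proposition~\ref{prop:comingdown}) for tightness. Instead, in Lemma~\ref{dynamic_theorem2} it uses only the weaker a priori estimate of Proposition~\ref{prop:aprioriL2andH1} (which \emph{does} depend on the initial condition) together with a time-averaging trick: since $v_N+\Picthree_N$ is stationary in time and $\Picthree_N(0)=0$, one has $\E[\Vert v_N(0)\Vert_{L^2}^p]=\frac{1}{T}\int_0^T\E[\Vert v_N(r)+\Picthree_N(r)\Vert_{L^2}^p]\,\rmd r$, and the a priori estimate then gives $\E[\Vert v_N(0)\Vert_{L^2}^p]\lesssim \E[\Vert\bbX\Vert_T^\gamma]+\tfrac{1}{T}\E[\Vert v_N(0)\Vert_{L^2}^p]$, which can be closed by taking $T$ large. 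Your route bypasses this by invoking the stronger coming-down estimate at $t=1$, which is independent of initial data; this is cleaner but uses a strictly harder input.

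Two points to tidy up. First, for $d=4$ the rough shift $\bX_N=\X_N-\Pictwo_N+\Picthree_N$ is \emph{not} the stationary Ornstein--Uhlenbeck process, and is not stationary in time (because $\Picthree_N(0)=0$); your claimed Gaussian moment bound still holds via Lemma~\ref{lem:stoob}, but the description is wrong and you should note that the pair $(\phi_N(1),\bX_N(1))$ gives a coupling of $\nu_N$ with a law that differs from $(\phi_N(0),\bX_N(0))$ only by $\Picthree_N(1)$, which is uniformly bounded in $L^2$. Second, applying Proposition~\ref{prop:comingdown} at $t=1$ requires knowing $\Bar{T}>1$ almost surely for \emph{arbitrary} initial data at fixed $N$; this is supplied by the Khasminskii argument (Corollary~\ref{coro:globaltime}), which you should invoke explicitly.
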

While the power counting of $\mathrm{T}^{4}_{4}$ resembles that of $\Phi^4_{3}$, it turns out we can bypass the use of more advanced analytic theories for singular SPDE (such as the theory of regularity structures or paracontrolled calculus) when proving local well-posedness of dynamical  $\mathrm{T}^{4}_{4}$. 
In fact, if one sets up the model with the dimension as a real parameter (say by changing the regularity of the noise $\xi$ or using fractional Laplacians) we conjecture that Da Prato - Debussche style arguments (with progressively more terms in $\bX$)  can be used to obtain local well-posedness for the Langevin dynamic in the entire subcritical \slash super-renormalizable regime. 
While the result proven in \cite{T44} for $\rmT^4_4$ is stronger than ours, the machinery is heavier and it seems unlikely that the arguments of \cite{T44} generalize as easily to the full subcritical \slash super-renormalizable regime.

The following technical remark, which assumes some familiarity with Feynman diagrams in field theory and perturbative tree expansions for singular SPDE, gives some intuition for why the Da Prato - Debussche argument suffices for $\mathrm{T}^4_{4}$ . 
 \begin{remark}\label{remark:DPD}
We first give more detail about the power counting for Feynman graphs in $\Phi^4_d$ and $\mathrm{T}^4_{d}$.
The superficial degree of divergence of a Feynman graph $G$ in the the local $\Phi^4_d$ theory is given by 
\begin{equs}
    \omega_{\Phi^4_d}(G)=d-(4-d)|V(G)|-\frac{d-2}{2}|L^{ext}(G)|\,,
\end{equs}
where $|V(G)|$ and $|L^{ext}(G)|$ denote the number of vertices and external edges of $G$. 

In the critical/just-renormalizable dimension $d=4$, the power counting of a graph does not depend on its number of vertices, while in the subcritical/super-renormalizable regime only a finite number of graphs are superficially divergent. 
More precisely, apart from the vacuum graphs, in two dimensions, only the so-called tadpole graph (the one-vertex two-point graph) is superficially divergent, while in three dimensions, the sunset graph and the snowball graph (the two two-vertex two-point graphs) are superficially divergent. 

Moreover, in the case of $\Phi^4_3$, the snowball graph does not require the introduction of a counterterm, since it already contains a nested tadpole graph whose renormalized amplitude is equal to zero. Renormalizing the tadpole graph thus renormalizes the snowball graph too. Therefore, in the local case, one only has to add two counterterms, one for the tadpole and the other for the sunset.

The superficial degree of divergence of the $\rmT^4_d$ theories is given by 
\begin{equs}\label{eq:perturbative_degree_tensor}
    \omega_{\rmT^4_d}(G)&=d-(5-d)|V(G)|-\frac{d-3}{2}|L^{ext}(G)|-\delta(G)-C(\partial G)\\&= \omega_{\Phi^4_{d-1}}(G)-\delta(G)-\big(C(\partial G)-1\big)
    \,,
\end{equs}
where $C(\partial G)$, the number of connected components of the boundary graph, and $\delta(G)$, the degree of the graph which is a positive integer, are defined in Section~\ref{sec:diagram}. This power counting strongly suggests that $\rmT^4_d$ is to be compared with $\Phi^4_{d-1}$. However a difference with $\Phi^4_{d-1}$ is that (apart the vacuum graphs) only graphs $G$ with $\delta(G)  = 0$ can be superficially divergent. Such graphs $G$ are called melonic graphs.

The non-local vertex for $\T^4_{d}$ is less symmetric than the local vertex, so in the non-local theory there are many different Feynman graphs corresponding to the local tadpole, sunset, and snowball graphs.
Some of these are melonic, some of them are not - in particular there are tadpole and snowball type non-local graphs that can be melonic but none of the sunset type non-local graphs are. Another difference in the non-local case is that the renormalized amplitude of the tadpole is no longer equal to zero (the tadpole is no longer a completely local divergence since the vertex itself is non-local) which is why one has to add two counterterms, one for the (melonic) tadpole and the other one for the (melonic) snowball. 
We summarise this in Figure~\ref{fig:powercounting} below.

\begin{figure}[H]
    \centering
    \tikzsetnextfilename{fig1}
    \begin{tikzpicture}
\draw[black] (0,0)..controls(-1,1)and(1,1)..(0,0);
\draw[black] (-.2,-.2)--(0,0);
\draw[black] (.2,-.2)--(0,0);
\draw[black] (2,0.5)..controls(1,1.5)and(3,1.5)..(2,0.5);
\draw[black] (2,0.5)..controls(2.35,0.25)and(2.35,.05)..(2,-.2);
\draw[black] (2,0.5)..controls(1.65,0.25)and(1.65,.05)..(2,-.2);
\draw[black] (1.8,-.4)--(2,-.2);
\draw[black] (2.2,-.4)--(2,-.2);
\draw (3.5,0.25)--(4.5,0.25);
\draw (4.3,.25)arc(0:180:.3);
\draw (3.7,.25)arc(180:360:.3);
\node at (0,-.8){tadpole};
\node at (2,-.8){snowball};
\node at (4,-.8){sunset};
\node at (-1.3,-1.5){$\Phi^4_3$};
\node at (0,-1.5){$\cmark$};
\node at (2,-1.5){$\xmark$};
\node at (4,-1.5){$\cmark$};
\node at (-1.3,-2.5){$\rmT^4_4$};
\node at (0,-2.5){$\cmark$};
\node at (2,-2.5){$\cmark$};
\node at (4,-2.5){$\xmark$};
\node at (-1.3,-3.5){Tree};
\node at (0,-3.5){$\E[\X^2]$};
\node at (2,-3.5){$\E[\Xtrident]$};
\node at (4,-3.5){$\E[\Xtwofork]$};
\end{tikzpicture}
    \caption{Comparison between the primary divergences of the $\Phi^4_3$ and $\mathrm{T}^4_4$ measures. The top row lists divergent Feynman graphs and the  bottom row lists the expectations of the stochastic trees that generate these graphs in the dynamic picture. }
    \label{fig:powercounting}
\end{figure}
The main takeaway is that the divergence $\Xtwofork$, which is the significant non-local divergence of $\Phi^4_3$, does not pose a problem for $\mathrm{T}^4_4$ and instead we must be careful with $\Xtrident$ (which doesn't directly pose an issue in $\Phi^4_3$ if the tadpole has already been renormalized) - but $\Xtrident$  is ``missing'' a branch at the root rather than the top internal vertex which allows us to use a Da Prato - Debussche argument.
\end{remark}

\subsubsection{The variational approach}
Our second approach using the variational method of \cite{BG20} proceeds by representing the Laplace transform of the regularized and renormalized $\mathrm{T}^4_{3}$ and $\mathrm{T}^4_{4}$ measures as a stochastic control problem.

In this approach instead of introducing a time that corresponds to evolution under a Langevin dynamic we instead introduce a time that represents scale. 
More precisely one constructs a martingale field $(\X_{t})_{t \in [0,\infty]}$ where for each fixed $t$, $\X_{t}$ is a Gaussian random field (over space) whose law is given by $\varrho_{t} \phi$ where $\phi \sim \mcb{g}$ and $\varrho_{t}$ is a specially chosen (but now $t$ is a continuous parameter and one enforces additional constraints on how $\varrho_{t}$ depends on $t$ given in Section~\ref{sec:BG}). Consider the measure $\nu_t$ defined in \eqref{eq:tensor_phi_4_renorm}, with $\varrho_t$ as given in Section~\ref{sec:BG}. The work \cite{BG20} uses the Bou\'e-Depuis formula, for each $t > 0$ and $f :\cC^{-\frac{d-2}{2}-\epsilon}(\T^d)\rightarrow\R$ Lipschitz, to obtain the identity
\begin{equs}
{}&  - \log
  \E_{\nu_t}[ e^{-f(\phi)}] =\log\cZ_t +\inf_{u \in \mathbb{H}_{a}} \E_{\P}
  \Bigg[ 
\begin{array}{c} 
\| \X_{t} +I_t( u) \|_{M^4(\T^d)}^4 
    -a_{t}  \| \X_{t} +I_t (u) \|_{L^2(\T^d)}^2-b_t \\
+ f(\X_t+I_t(u)) + 
\mathrm{Ent}(u)
\end{array} 
\Bigg] \nonumber
\end{equs}
where $\log\cZ_t$ is a constant uniformly bounded in $t$, $\mathbb{H}_{a}$ is a certain space of adapted random drifts $u: \R_{\geqslant 0}\times\T^{d}  \rightarrow \R$ which should be thought of as a shift of an underlying white noise process, $I_{\bullet}(u)$ is the corresponding shift of the free field process $\X_{t}$, $\P$ is the law of the entire process $(\X_{t})_{t \in [0,T]}$, $\mathrm{Ent}(u)$ is a relative entropy term which is a quadratic in $u$, and $b_{t}$ is a constant independent of $f$. 
This variational representation can be used to obtain bounds on the Laplace transform of $\nu_t$ uniform in $t$, which gives the following main result (proven at the end of Section~\ref{sec:BG}).  
\begin{theorem}\label{BG_theorem1} 
Let $d\in\{2,3,4\}$ and $f:\cC^{-\frac{d-2}{2}-\epsilon}(\T^d)\rightarrow\R$ have at most linear growth. 
Then, uniform in $t \geqslant 0$ one has  
   \begin{equs}\label{eq:laplace_bound}
       \E_{\nu_t}[e^{-f(\phi)}]\lesssim 1\,,
   \end{equs} 
Since the embedding $\cC^{-\frac{d-2}{2}- 2\epsilon}\hookrightarrow \cC^{-\frac{d-2}{2}-\epsilon}$ is compact, the family $\big(\nu_t\big)_{t\geqslant0}$ is tight on $\cC^{-\frac{d-2}{2}-2\epsilon}(\T^d)$. 
\end{theorem}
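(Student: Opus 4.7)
The plan is to show that the right-hand side of the Boué--Dupuis formula stated above the theorem is bounded uniformly from below in $t$, which is equivalent to the desired upper bound on the Laplace transform. Since $\log \cZ_t$ is uniformly bounded, it suffices to exhibit a constant $C$ independent of $t$ such that for every admissible $u \in \mathbb{H}_a$,
\begin{equs}
   \E_{\P}\Big[ \|\X_t + I_t(u)\|_{M^4(\T^d)}^4 - a_t \|\X_t + I_t(u)\|_{L^2(\T^d)}^2 - b_t + f(\X_t + I_t(u)) + \mathrm{Ent}(u) \Big] \geqslant -C.
\end{equs}
The general strategy is the standard one for variational constructions: isolate the coercive quantities $\|I_t(u)\|_{M^4(\T^d)}^4$ and $\mathrm{Ent}(u)$, and absorb all remaining contributions into them at the cost of adding random constants whose expectations are controlled uniformly in $t$ by stochastic tree estimates.

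First I would polarise $\|\X_t + I_t(u)\|_{M^4(\T^d)}^4$ by fully expanding the non-local quartic $\cN$ in $\X_t$ and $I_t(u)$, producing a pure-noise term $\|\X_t\|_{M^4(\T^d)}^4$, a pure-drift term $\|I_t(u)\|_{M^4(\T^d)}^4$, and mixed terms of type $(k,4-k)$ in $(\X_t, I_t(u))$ for $k=1,2,3$. The counterterm $a_t \|\X_t + I_t(u)\|_{L^2(\T^d)}^2$ is then expanded similarly. The constants $a_t$ and $b_t$ are chosen precisely so that $\|\X_t\|_{M^4(\T^d)}^4 - a_t \|\X_t\|_{L^2(\T^d)}^2 - b_t$ becomes a Wick-renormalised polynomial of uniformly bounded expectation, and so that each mixed term of order $(k, 4-k)$ reduces to a pairing of a renormalised stochastic tree built from $\X_t$ with the appropriate tensor power of $I_t(u)$. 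For $d\in\{3,4\}$ the melonic constraint identified in Remark~\ref{remark:DPD} guarantees that the list of trees one has to control is short and that each such tree has bounded moments in a suitable negative regularity space, uniformly in $t$.

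Next, for each mixed term I would apply a Young-type inequality
\begin{equs}
    \big| \langle \mathrm{tree}_k(\X_t),\, I_t(u)^{\otimes (4-k)} \rangle \big| \leqslant \delta\,\|I_t(u)\|_{M^4(\T^d)}^4 + C(\delta)\, \|\mathrm{tree}_k(\X_t)\|_{\star}^{4/k}\,,
\end{equs}
where $\|\cdot\|_{\star}$ denotes the dual norm of the space in which $I_t(u)^{\otimes(4-k)}$ lives with respect to the $M^4$ pairing; taking $\delta$ sufficiently small, the coercive quartic $\|I_t(u)\|_{M^4(\T^d)}^4$ absorbs all mixed contributions. The term $f(\X_t + I_t(u))$ is treated the same way by using linear growth to bound it below by $-C(1 + \|\X_t\|_{\cC^{-(d-2)/2-\epsilon}} + \|I_t(u)\|_{\cC^{-(d-2)/2-\epsilon}})$ and absorbing $I_t(u)$ into the quartic (or into $\mathrm{Ent}(u)$ via an interpolation / Cameron--Martin bound) while controlling $\X_t$ through its uniform moments. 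Taking expectations and using uniform $L^p$ bounds on all the renormalised trees then yields the required lower bound.

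The main obstacle is precisely the stochastic analysis of the trees built from $\X_t$ through the non-local vertex $\cN$: one must verify, uniformly in the scale parameter $t$, the estimates on the melonic renormalised objects that enter the above Young estimate. This is where the tensor structure becomes essential, since only the melonic divergences of $\X_t^4$ survive after the Wick subtraction determined by $a_t$ and $b_t$, and the bad non-local tree $\Xtwofork$ of $\Phi^4_3$ is replaced in $\mathrm{T}^4_4$ by the more benign $\Xtrident$ that one can estimate directly in Fourier space without paracontrolled or regularity-structures machinery. Once these uniform tree bounds are in hand, \eqref{eq:laplace_bound} follows, and tightness on $\cC^{-(d-2)/2-2\epsilon}(\T^d)$ is obtained by applying \eqref{eq:laplace_bound} with $f(\phi) = -\lambda \|\phi\|_{\cC^{-(d-2)/2-\epsilon}}$ for small $\lambda>0$ to produce an exponential moment, and then combining Markov's inequality with the compact embedding $\cC^{-(d-2)/2-\epsilon}\hookrightarrow \cC^{-(d-2)/2-2\epsilon}$.
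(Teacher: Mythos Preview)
Your overall strategy (Boué--Dupuis + coercivity of $\cI(I_t(u))$ and $\mathrm{Ent}(u)$ + Young absorption of mixed terms) is the right template, but in $d=4$ the direct expansion around $\X_t$ that you describe has a genuine gap at the cubic-in-noise level.

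After polarising $\|\X_t+I_t(u)\|_{M^4}^4$ and subtracting the Wick counterterms, the $(3,1)$ mixed term is the pairing $\big(\Xthreeloc_t,\,I_t(u)\big)$. In $d=4$ the renormalised cubic $\Xthreeloc_t$ has regularity $-\tfrac{3d-8}{2}-\epsilon=-2-\epsilon$ (Lemma~\ref{lem:stoob}), whereas the only control on $I_t(u)$ coming from $\mathrm{Ent}(u)$ is $\|I_t(u)\|_{H^{1-\epsilon}}\lesssim\|u\|_{L^2}$ (Lemma containing~\eqref{eq:J}); the $M^4$ norm provides no additional regularity. Hence the duality pairing $(\Xthreeloc_t,I_t(u))$ is \emph{not} well defined, and no Young inequality of the form you wrote can save it: there is no $\|\cdot\|_\star$ norm for $\Xthreeloc_t$ that is both finite (uniformly in $t$) and dual to a norm dominated by $\cI(I_t(u))+\mathrm{Ent}(u)$. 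The same obstruction reappears one order lower: the $(\X,\Pictwo,\X)$-type quintic trees have regularity $-1-\epsilon$, again just beyond $H^{1-\epsilon}$.

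The paper circumvents this by a \emph{shift of the drift} (Proposition~\ref{prop:BG_energy}): setting $l^t_s(u)=u_s+\mathbf{1}_{s\leqslant t}(\rPictwo_s-\rPicthree_s)$ one rewrites $\X_t+I_t(u)=\bX_t+K_t(u)$ with $\bX_t=\X_t-\Pictwo_t+\Picthree_t$. Expanding around $\bX_t$ instead of $\X_t$, the problematic $\Xthreeloc_t$ and $\XtwoPictwo_t$ cancel against the shift contributions in the entropy term, and the remaining purely stochastic piece is $\cS_t$, which has regularity $-\tfrac12-\epsilon$ (Lemma~\ref{lem:randfi4}) and \emph{can} be paired with $K_t(u)\in H^{1-\epsilon}$. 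The quadratic mixed terms $\vXXv,\XvXv,\XXvv$ are then handled not by a bare Young inequality but via the random operator bounds of Lemma~\ref{lem:randop2} together with the $M^4$--$H^1$ interpolation and the Cauchy--Schwarz trick \eqref{eq:CS1} (Appendix~\ref{sec:mixedterms}). Your sketch would go through for $d\in\{2,3\}$ (where $\Xthreeloc_t\in\cC^{-\frac12-}$), but for $d=4$ the drift shift is the missing ingredient.

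A secondary point: you take the uniform bound on $\log\cZ_t$ as given, but this is itself part of what must be shown; the paper obtains it by running the same argument with $f=0$ (lower bound) and then plugging in the explicit drift $u_s=-\mathbf{1}_{s\leqslant t}(\rPictwo_s-\rPicthree_s)$ so that $l^t(u)=0$ (upper bound).
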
 
\subsection*{Acknowledgements}
{\small
The authors thank Chunqiu Song and Hendrik Weber for pointing out a mistake in an earlier version of the paper. 
AC gratefully acknowledges partial support by the EPSRC through EP/S023925/1 through the ``Mathematics of Random Systems'' CDT EP/S023925/1.
AC also thanks MPI MiS in Leipzig for their hospitality, where part of this work was completed during an extended research visit. AC and LF thank Fabien Vignes-Tourneret, who was involved at the early stage of this project. AC and LF also thank Vincent Rivasseau for having suggested to them this investigation. }
\subsection{Notation and conventions}
Given $n \in \N$ we write $[n] = \{1,\dots,n\}$. 
For Banach spaces $(\cA,\Vert\bigcdot\Vert_{\cA})$ and $(\cB,\Vert\bigcdot\Vert_{\cB})$, we write $\cL(\cA,\cB) $ for the Banach space of bounded linear operators $\cA\rightarrow\cB$ equipped with the operator norm:
\begin{equs}
    \Vert L\Vert_{\cL(\cA,\cB)}=\sup_{ u :\Vert u \Vert_{\cA}=1}\Vert L( u )\Vert_{\cB}\,.
\end{equs}

For any $\theta > 0$, Banach space $(\cB,\Vert\bigcdot\Vert_{\cB})$, and $T \geqslant 0$, we write $C^\theta_T\cB\eqdef C^\theta([0,T],\cB)$ for the Banach space of $\theta$-H\"older continuous functions from $[0,T]$ into $\cB$ (with respect to the parabolic scaling) and equip this space with the norm $\Vert\bigcdot\Vert_{C_T^\theta\cB}\eqdef \Vert  \bigcdot \Vert_{C_T\cB}+ \Vert  \bigcdot \Vert_{{\dot{C}}^\theta_T\cB}$ where
 \begin{equs}
 \Vert  u  \Vert_{C_T\cB}\eqdef \sup_{ t\in [0,T]}\Vert  u (t)\Vert_{\cB}\;\;\text{ and }\;\;   \Vert  u  \Vert_{\dot{C}^\theta_T\cB}\eqdef \sup_{\substack{0 \leqslant s, t \leqslant T\\|t-s|\leqslant1}}\frac{\Vert  u (t)- u (s)\Vert_{\cB}}{|t-s|^{\frac\theta2}}\,.
 \end{equs}

We write $C_T\cB$ for the Banach space of bounded continuous $\cB$-valued functions on  $[0,T]$ with norm $   \Vert  \bigcdot \Vert_{C_T\cB}$. 

For any $d\geqslant2$ and $x,y \in \R^d$ we use the scalar product $x\cdot y=\sum_{c=1}^d x_cy_c$,   $\ell^2$ norm $ |x|\eqdef \sqrt{x\cdot x}$, $\ell^\infty$ norm $ |x|_\infty\eqdef \max_{i\in[d]}|x_i|$, and the ``bracket'' norm $\ang{x}\eqdef \sqrt{1+x\cdot x}$.
  
We often work on the torus $\T^d=\big(\R/\Z\big)^d$ and denote by $|\cdot|_\infty$ the distance on $\T^{d}$ induced by the $\ell^\infty$ norm on $\R^{d}$. We also write, for $N \in \N$, $\Z^{(d)}_N\eqdef\{k\in\Z^{(d)}:|k|_\infty\leqslant N\}=\{-N,\dots,N\}^{(d)}$.

For any spatial function $ u :\T^d\rightarrow\R$, we use the (spatial) Fourier transform to define a function $\hat{u}$ on $\Z^{d}$ by setting
\begin{equs}
    \hat{ u }_m=\cF_x( u )(m)\eqdef\int_{\T^d}e^{\imath x\cdot m} u (x) \rmd x\,,
\end{equs}
whose inverse is given by 
\begin{equs}
     u (x)=\cF_x^{-1}(\hat{ u })(x)=\sum_{m\in\Z^d}e^{-\imath x\cdot m}\hat{ u }_m\,.
\end{equs}

Note that we only perform Fourier transforms in space, and extend the above notation to space-time functions $u(x,t)$ by writing $\CF_x(u(\cdot,t))(m) = \hat{u}_m(t)$. 

The Littlewood-
Paley blocks $(\Delta^j)_{j\geqslant-1}$ that we define in Appendix~\ref{app:besov} act in Fourier space by multiplication with $\hat\Delta^j_m=\1_{[2^{-1},1)}(2^{-j}|m|_\infty)$ for $i\geqslant0$, which is why for any sequence $v:\Z^d\rightarrow\C$ we introduce the notation
\begin{equs}
    \sum_{m\sim 2^i} v_m\eqdef\sum_{m\in\Z^d}\hat\Delta^i_m v_m =\sum_{m\in\Z^d}\1_{[2^{-1},1)}(2^{-j}|m|_\infty) v_m \,,
\end{equs}
with the usual understanding when $i=-1$.

For $p,q \in [1,\infty]$ and $\alpha \in \R$ we denote by $B^\alpha_{p,q}(\T^d)$  the corresponding Besov spaces on $\T^d$ 
\dash see Appendix~\ref{app:besov}.
In the special cases $(p,q) = (\infty,\infty)$ and $(p,q) = (2,2)$ we denote by $\cC^\alpha\eqdef B^\alpha_{\infty,\infty}$ the corresponding  H\"{o}lder-Besov space and $H^\alpha\eqdef B^\alpha_{2,2}$ the Sobolev spaces. 
We denote the $L^2$ pairing of two functions $ u , v \in L^2(\T^d)$ as
 \begin{equs}
     ( u , v )_{L^2(\T^d)}\eqdef \int_{\T^d} u (x) v (x)\rmd x\,,
 \end{equs}
often dropping the subscript $L^2(\T^d)$. 

For $c\in[d]$, we allow $\cN^c:(f,g,h)\mapsto\cN^c(f,g,h)$ to act on a function $f$ (resp. $h$) depending only on $y_c$ (resp. $y_{\hc}$). When this is the case, $\cN^c(f,g,h)(x,t)$ is independent of $x_{\hc}$ (resp. $x_c$), and we write that $f:\T_c\rightarrow\R$ (resp. $h:\T^{d-1}_{\hc}\rightarrow\R$).

We make frequent use of Kolmogorov estimates that reference various parameters $p,T,\epsilon$ where $\epsilon > 0$ is an exponent drop to turn a statement in expectation into a pathwise one, $p \in 2\N$ is a degree of stochastic integrability, and $T \in (0,1]$ is a finite time cut-off on the relevant process. 
It is always implied that $p,T > 0$ can be taken arbitrarily large and $\epsilon > 0$ arbitrarily small, with this only changing implicit or explicit constants.   

We introduce the shorthand $\cI(\bigcdot)\eqdef\Vert\bigcdot\Vert^4_{M^4(\T^d)}$ for the tensor interaction and, for $c\in[d]$, write $\cI^c(\bigcdot)\eqdef\Vert\bigcdot\Vert^4_{M^4_c(\T^d)}$.
We fix $\epsilon > 0$, which later will be taken sufficiently small, and write $\cK(\bigcdot)\eqdef\Vert\bigcdot\Vert^2_{H^{1-\epsilon}(\T^d)}$ for kinetic term (the Gaussian action). 
 
For $t\geqslant0$, we denote by $P_t\eqdef e^{-t(1-\Delta)}$ the massive heat operator on $\T^{d}$, we view $P_{t}$ as an operator on functions\slash distributions over space.
For nice enough space-time distributions $F$ we write
\[
\cL^{-1}F(x,t) \eqdef\int_0^t P_{t-s}(F(\cdot,s)\big)(x) \mathrm{d}s\;,\;
\underline{\cL}^{-1}F(x,t)  \eqdef\int_{-\infty}^t P_{t-s}\big(F(\cdot,s)\big)(x)  \mathrm{d}s\;.
\]

\subsubsection{Some graphical notation}\label{subsec:graphical_notation}
We use a variety of graphical representations in the article, some of which might be unfamiliar even if one is familiar with diagrams in local field theories. 
A more careful explanation of our graphical notation can be found in Section~\ref{subsubsec:strandedgraphs}.

Suppose $\phi \in C^{\infty}(\T^d)$, and let us write $\hat{\phi}_{m} = \hat{\phi}(m)$ viewing the Fourier mode as an index. 
Then we can write the local interaction as 
 \begin{equ}\label{eq:fourier_local_vertex}
 \Vert \phi\Vert^4_{L^4}=\sum_{m,n,p,q\in\Z^d}\hat{\phi}_{m}\hat{\phi}_{n}\hat{\phi}_{p}\hat{\phi}_{q} \delta_{m+n+p+q,0}\;.
 \end{equ}
In Figure~\ref{fig:local_vertex} we draw the local $ \Vert \phi\Vert^4_{L^4}$ interaction\slash vertex as a diagram on the left (in Fourier space) and the corresponding dynamical\slash stochastic vertex $\phi^{3}(x)$ on the right (in direct space). 
\begin{figure}
\centering
    \tikzsetnextfilename{fig2}
    \begin{tikzpicture}
\draw[densely dotted, thick](0,0)--(1,0);
\node[black] at (-.3,.0){$m$};
\node[black] at (.5,.7){$n$};
\node[black] at (1.25,.0){$p$};
\node[black] at (.5,-.7){$-(m+n+p)$};
\draw[densely dotted, thick](0.5,-.5)--(.5,.5);
\draw[densely dotted, thick](3,-.25)--(3,.5);
\draw[densely dotted, thick](3,-.25)--(2.5,.235);
\draw[densely dotted, thick](3,-.25)--(3.5,.235);
\node[black] at (3,-.4){$x$};
\node[black] at (3,.6){$\phi$};
\node[black] at (3.5,.4){$\phi$};
\node[black] at (2.5,.4){$\phi$};
\end{tikzpicture}
\caption{Local vertices}\label{fig:local_vertex}
\end{figure}


In Figure~\ref{fig:local_vertex} the dashed lines on the left correspond to fields $\phi$, and the vertex in the middle represents the delta function imposed on the sum of incoming momentum from the dashed lines. 

For $c \in [d]$ and $m\in\Z^d$, we write $m_{\hc}=\chi^c(m,0)$. We have the following formula for the monochrome interaction of the non-local theory
\begin{equ}\label{eq:fourier_nonlocal_vertex}
\cI^c(\phi)=
\sum_{m,n,p,q \in \Z^{d}}\delta_{m_{c},-n_{c}}
\delta_{p_{c},-q_{c}}
\delta_{m_{\hc},-q_{\hc}}
\delta_{n_{\hc},-p_{\hc}}
\hat{\phi}_{m}\hat{\phi}_{n}\hat{\phi}_{p}\hat{\phi}_{q}\,.
\end{equ}
The corresponding vertex for the non-local interaction $\cI^c(\phi)$ is drawn on the left below (in Fourier space) and we draw the dynamical  \slash stochastic vertex $\cN^{c}(f,g,h)(x)$ on the right (in direct space). 
\begin{figure}
\centering 
 \tikzsetnextfilename{fig3}
 \begin{tikzpicture}
 \draw[gray] (0,-1)--(0,0);
\draw[gray] (1,-1)--(1,0);
\draw[gray] (0,-1)--(1,-1);
\draw[gray] (0,0)--(1,0);
\draw[gray] (0,-1)..controls(.3,-.8)and(.7,-.8)..(1,-1);
\draw[gray] (0,0)..controls(.3,-.2)and(.7,-.2)..(1,0);
\draw[gray] (0,0)..controls(.3,.2)and(.7,.2)..(1,0);
\draw[gray] (0,-1)..controls(.3,-1.2)and(.7,-1.2)..(1,-1);
\draw[thick,densely dotted](0,0)--(-.3,.3);
\draw[thick,densely dotted](1,0)--(1.3,.3);
\draw[thick,densely dotted](0,-1)--(-.3,-1.3);
\draw[thick,densely dotted](1,-1)--(1.3,-1.3);
\node[gray] at (1.12,-.5){$c$};
\node[gray] at (-.12,-.5){$c$};
\node[black] at (1.3,-1.5){$m$};
\node[black] at (1.3,.5){$(-n_{\hc},-m_c)$};
\node[black] at (-.3,.5){$n$};
\node[black] at (-.3,-1.5){$(-m_{\hc},-n_c)$};
\draw[gray] (4,-1)--(4,0);
\draw[gray] (5,-1)--(5,0);
\draw[gray] (4,-1)--(5,-1);
\draw[gray] (4,0)--(5,0);
\draw[gray] (4,-1)..controls(4.3,-.8)and(4.7,-.8)..(5,-1);
\draw[gray] (4,0)..controls(4.3,-.2)and(4.7,-.2)..(5,0);
\draw[gray] (4,0)..controls(4.3,.2)and(4.7,.2)..(5,0);
\draw[gray] (4,-1)..controls(4.3,-1.2)and(4.7,-1.2)..(5,-1);
\draw[thick,densely dotted](4,0)--(3.7,.3);
\draw[thick,densely dotted](5,0)--(5.3,.3);
\draw[thick,densely dotted](4,-1)--(3.7,-1.3);
\node[black] at (3.6,.-1.5){$f$};
\node[black] at (3.6,.4){$g$};
\node[black] at (5.4,.45){$h$};
\node[gray] at (5.12,-.5){$c$};
\node[gray] at (3.88,-.5){$c$};
\node[black] at (5.2,-1.2){$x$};
\end{tikzpicture} 
\caption{Non-local vertices}\label{fig:non_local_vertex}
\end{figure}
For the picture on the right, we have labeled the dashed lines corresponding to fields $f,g,h$ to clarify the asymmetric nature of the vertex.

One way to introduce a diagrammatic approach for the tensor field theory is to start with the graphs of the local theory and replace the local vertices of Figure~\ref{fig:local_vertex}
 with the colored non-local vertices of Figure~\ref{fig:non_local_vertex} \dash we will call the result graphs tensor graphs.

Note that the non-local vertex has less symmetries than the local vertex, in particular a single Feynman graph in the local theory gives rise to many different graphs in the non-local theory.  
\begin{definition}
We define the \textit{melonic pairing} of $\phi$ and $\psi$ as $\cN^c(\bigcdot,\phi,\psi)$  \dash for $\phi, \psi \in C^{\infty}(\T^{d})$, it can be viewed as an operator $C^{\infty}(\T^d) \ni h \mapsto  \cN^c(h,\phi,\psi) \in C^{\infty}(\T^d)$ (which also acts naturally on $h\in C^\infty(\T_c)$. 
We draw a melonic pairing as
\begin{figure}[H]
    \centering
   \tikzsetnextfilename{fig4}
   \begin{tikzpicture}
\draw[gray] (0,-.5)--(0,0);
\draw[gray] (1,-.5)--(1,0);
\draw[gray] (0,0)--(1,0);
\draw[gray] (0,0)..controls(.3,-.2)and(.7,-.2)..(1,0);
\draw[gray] (0,0)..controls(.3,.2)and(.7,.2)..(1,0);
\node[gray] at (1.12,-.35){$c$};
\node[gray] at (-.12,-.35){$c$};
\node[black] at (0,.25){$\phi$};
\node[black] at (1,.25){$\psi$};
\end{tikzpicture}
    \caption{The melonic pairing of $\phi$ and $\psi$}  
\end{figure}
\noindent
Similarly, $\cN^c(\phi,\psi,\bigcdot)$ is the \textit{non-melonic pairings} of $\phi$ and $\psi$ that acts on $h\in C^\infty(\T^d)$ (and also $h\in C^\infty(\T_{\hc}^{d-1})$) and  $\cN^c(\phi,\bigcdot,\psi)$ is the \textit{exterior pairing} that acts on $h\in C^\infty(\T^d)$. Note that contrarily to the melonic and non-melonic pairings, the exterior pairing can be extended to an operator $C^{\infty}(\T^d) \ni h \mapsto  \cN^c(\phi,h,\psi) \in C^{\infty}(\T^d)$ for $\phi,\psi\in\mcD'(\T^d)$.
\end{definition}
We also talk about melonic pairings in the context of the particular pairings in Wick's formula that lead to divergences, this again refer to pairings that maximize the number of Wick pairings between terms sitting in the second and third arguments of $\cN^{c}(\bigcdot,\bigcdot,\bigcdot)$ \dash see the discussion above \eqref{eq:Wick_constant} and at the beginning of Section~\ref{subsect:secondrenorm}. 
\begin{definition}
Following the discussion in Figure~\ref{fig:powercounting}, we define the primary divergent subgraphs for the $\rmT^4_4$ measure. 
For $c \in [d]$,  $\mfM^{1,c}$ is the melonic tadpole of color $c$ and for $c,c'\in[d]$, $c\neq c'$,  $\mfM^{2,c,c'}$is the melonic snowball of colors $c,c'$. 
The color index can be dropped when the color of the graph is of no importance. They are pictured as
\begin{figure}[H]
    \centering
   \tikzsetnextfilename{fig5}
   \begin{tikzpicture}
\draw[gray] (0,-1)--(0,0);
\draw[gray] (1,-1)--(1,0);
\draw[gray] (0,-1)--(1,-1);
\draw[gray] (0,0)--(1,0);
\draw[gray] (0,-1)..controls(.3,-.8)and(.7,-.8)..(1,-1);
\draw[gray] (0,0)..controls(.3,-.2)and(.7,-.2)..(1,0);
\draw[gray] (0,0)..controls(.3,.2)and(.7,.2)..(1,0);
\draw[gray] (0,-1)..controls(.3,-1.2)and(.7,-1.2)..(1,-1);
\draw[black,thick,densely dotted] (1,0)arc(-45:225:.71);
\draw[black,thick,densely dotted] (0,-1)--(-.3,-1.3);
\draw[black,thick,densely dotted] (1,-1)--(1.3,-1.3);
\node[gray] at (-.12,-.5){$c$};
\node[gray] at (1.12,-.5){$c$};

\draw[gray] (3,0)--(3,1);
\draw[gray] (4,0)--(4,1);
\draw[gray] (3,0)--(4,0);
\draw[gray] (3,1)--(4,1);
\draw[gray] (3,0)..controls(3.3,.2)and(3.7,.2)..(4,0);
\draw[gray] (3,1)..controls(3.3,.8)and(3.7,.8)..(4,1);
\draw[gray] (3,1)..controls(3.3,1.2)and(3.7,1.2)..(4,1);
\draw[gray] (3,0)..controls(3.3,-.2)and(3.7,-.2)..(4,0);
\draw[gray] (3,-2)--(3,-1);
\draw[gray] (4,-2)--(4,-1);
\draw[gray] (3,-2)--(4,-2);
\draw[gray] (3,-1)--(4,-1);
\draw[gray] (3,-2)..controls(3.3,-1.8)and(3.7,-1.8)..(4,-2);
\draw[gray] (3,-1)..controls(3.3,-1.2)and(3.7,-1.2)..(4,-1);
\draw[gray] (3,-1)..controls(3.3,-.8)and(3.7,-.8)..(4,-1);
\draw[gray] (3,-2)..controls(3.3,-2.2)and(3.7,-2.2)..(4,-2);
\draw[black,thick,densely dotted] (4,1)arc(-45:225:.71);
\draw[black,thick,densely dotted] (3,-2)--(2.7,-2.3);
\draw[black,thick,densely dotted] (4,-2)--(4.3,-2.3);
\draw[black,thick,densely dotted] (3,-1)..controls(2.7,-.7)and(2.7,-.3)..(3,0) ;
\draw[black,thick,densely dotted] (4,-1)..controls(4.3,-.7)and(4.3,-.3)..(4,0) ;
\node[gray] at (2.88,-1.5){$c$};
\node[gray] at (4.12,-1.5){$c$};
\node[gray] at (2.85,.5){$c'$};
\node[gray] at (4.15,.5){$c'$};
\node[black] at(.5,-2){$\mfM^{1,c}$};
\node[black] at(3.5,-3){$\mfM^{2,c,c'}$};
\end{tikzpicture}
    \caption{The two primary divergent graphs $\mfM^{1,c}$ and $\mathfrak \mfM^{2,c,c'}$}
\end{figure}
\end{definition}
\section{ Local well-posedness for dynamical \TitleEquation{\rmT^4_3}{T43}  }\label{sec:3d}
In this section we prove the following local (in both time and space) well-posedness result for the parabolic stochastic quantization of the $\rmT^4_{d}$ equation for $d\in\{2,3\}$. 
This will also be a warm-up for describing some of the power-counting and stochastic estimates needed for tensor field theories before we treat the $\rmT^4_4$ model.

Recall from the introduction that we define a regularized noise $\xi_N\eqdef \Pi_{N}\xi$ where $\Pi_{N}$ is the Fourier multiplier that projects onto the Fourier modes of size lower or equal to $N$. We introduce the stationary (in both space and time) process $\X_{N}$ over $\T^{d} \times \R$ given by 
\[  
\X_{N}\eqdef\sqrt{2}\,\underline{\cL}^{-1}\xi_{N}\,.
\]
\begin{theorem}\label{thm:3d}
For $d = 2,3$,  there exist constants $a_{N}$ such that one has uniform in $N$ control of the local in time solutions to \eqref{eq:regTensorLD} with initial condition of the form $\X_N(0)+ \Pi_{N}v(0)$ with $v(0) \in \cC^{\frac{3}{2}-\epsilon}$. In particular, using the ansatz $\phi=\X_N+v$ for the equation with cut-off $N$, one has that $v$ converges in 
$C\big([0,\Bar{T}),\cC^{\frac{3}{2}-\epsilon}(\T^3)\big)$ where $\Bar{T} \in (0,\infty]$ is a random blow-up time. 
\end{theorem}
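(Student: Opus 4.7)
The plan is to implement a Da Prato--Debussche decomposition using $\X_N$ as the Gaussian reference field. Substituting $\phi = \X_N + v$ into \eqref{eq:regTensorLD} and subtracting the linear equation solved by $\X_N$, the remainder satisfies
\begin{equs}
\partial_t v = (\Delta - 1) v - \Pi_N \cN(\X_N + v, \X_N + v, \X_N + v) + a_N (\X_N + v)\,,
\end{equs}
with initial data $v(0) \in \cC^{3/2-\epsilon}(\T^d)$. I would expand the trilinear non-linearity into the eight trees obtained by choosing $\X_N$ or $v$ in each slot (for each colour $c\in[d]$) and classify them by the number of $\X_N$ insertions: the fully stochastic tree $\cN^c(\X_N,\X_N,\X_N)$, the three ``two-$\X_N$'' trees which realise the melonic, non-melonic and exterior pairings of the pair $(\X_N,\X_N)$ in the sense of the definition following Figure~\ref{fig:non_local_vertex}, the three linear-in-$\X_N$ trees, and the purely deterministic $\cN^c(v,v,v)$. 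I would then rewrite the equation in mild form, $v(t) = P_t v(0) + \cL^{-1}(\cdots)(t)$, and set up a fixed point on a ball in $C_T\cC^{3/2-\epsilon}(\T^d)$ for $T>0$ sufficiently small.

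The stochastic step constructs each of these trees uniformly in $N$. Using that $\X_N \in \cC^{-\frac{d-2}{2}-\epsilon}$ uniformly in $N$, a tree-by-tree regularity count (via the Fourier representation \eqref{eq:fourier_nonlocal_vertex} and standard Wick\slash Kolmogorov moment estimates) shows that the only divergent tree for $d \in\{2,3\}$ is the melonic tadpole $\cN^c(\bigcdot,\X_N,\X_N)$, and that even then only logarithmically for $d=3$ (with $a_N=0$ sufficing in $d=2$). Concretely, one sets
\begin{equs}
c_N^c \eqdef \E\big[\cN^c(\mathbf{1},\X_N,\X_N)(0)\big]\,,
\end{equs}
and chooses $a_N = \sum_{c\in[d]} c_N^c$ (up to the combinatorial factor coming from the three slot positions in which $v$ can sit against the melonic pairing), so that the Wick-renormalised random operator
\begin{equs}
\mcR_N^c : v \longmapsto \Pi_N \cN^c(v,\X_N,\X_N) - c_N^c\, v
\end{equs}
converges in probability in $\cL(\cC^{3/2-\epsilon},\cC^{-1/2-\epsilon})$, locally uniformly in time, to a limiting random operator as $N \uparrow \infty$. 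All other trees converge without subtraction, with moments uniformly controlled in $N$.

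With these stochastic inputs the remainder equation is a standard parabolic fixed-point problem. Using that $\cL^{-1}$ gains two derivatives of Besov regularity, the Schauder\slash paraproduct estimates summarised in Appendix~\ref{app:besov}, and a factor $T^\alpha$ obtained from time integration, the mild map contracts on a small ball in $C_T \cC^{3/2-\epsilon}$ provided $T = T(\|v(0)\|_{\cC^{3/2-\epsilon}},\text{stochastic norms}) > 0$, giving local well-posedness at each $N$. Uniform in $N$ bounds on the stochastic data then transfer to $v$ by Lipschitz dependence of the fixed point on its data, yielding the advertised convergence of $v$ in $C([0,\Bar T),\cC^{3/2-\epsilon}(\T^3))$ up to a random blow-up time $\Bar T > 0$. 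The main obstacle, and the essential novelty compared to $\Phi^4_d$, is control of $\mcR_N^c$: in the local theory the tadpole renormalisation is a scalar multiple of the identity (the Wick contraction of $\phi\cdot\phi$ simply produces a constant), whereas here $\cN^c(v,\X_N,\X_N)$ is genuinely a non-trivial random operator on $v$, and the cancellation against the scalar $c_N^c$ succeeds only because the melonic pairing forces both $\X_N$-legs to contract through a single colour-$c$ fibre of the tensor vertex. Extracting this cancellation from the Fourier structure while preserving operator bounds between the correct Besov spaces, with moments uniform in $N$, is the heart of the stochastic estimate.
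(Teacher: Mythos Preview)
Your overall architecture is right — Da~Prato--Debussche ansatz, renormalise the melonic tadpole, close a contraction in $C_T\cC^{3/2-\epsilon}$ — and this matches Section~\ref{sec:3d}. But you have misidentified where the analytical difficulty lies, and this creates a real gap.

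The renormalisation of the melonic pairing $\cN^c(v,\X_N,\X_N)$ is \emph{not} the main obstacle; it is essentially routine once the counterterm $\mfC^{1,c}_N$ is correctly chosen. The term that actually threatens the fixed point is the \emph{exterior} pairing $\cN^c(\X_N,v,\X_N)$, which you list among the ``two-$\X_N$'' trees and then dismiss with ``all other trees converge without subtraction''. Convergence is not the issue. The issue is that if you only use the deterministic datum $\X\otimes\X\in\cC^{-1-}(\T^{2d})$, the map $v\mapsto\cN^c(\X,v,\X)$ lands in $\cC^{-1-}$, so $\cL^{-1}$ only returns $\cC^{1-}$; but pairing $v$ against something of regularity $-1-$ then requires $v$ to be strictly better than $\cC^{1}$, and the loop never closes. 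This is exactly the obstruction the paper flags in Section~\ref{sec:3d} before Lemma~\ref{lem:randop1}, and it is the direct analogue of the $\cherry\,v$ problem in dynamical $\Phi^4_3$.

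The resolution, which your proposal does not contain, is to promote $\cN(\X,\bigcdot,\X)$ itself to a random operator $\XdotX$ and prove a stochastic operator bound $\XdotX:C_TH^\alpha\to C_T\cC^{\min(0,\alpha-1)-\epsilon}$ (Lemma~\ref{lem:randop1}, estimate~\eqref{eq:XdotX}). This is genuinely probabilistic: the integration in $y$ averages the isotropic roughness of the two free fields and gains half a derivative over what any deterministic argument could give. Only with this bound does $\XvX\in C_T\cC^{-1/2-}$ for $v\in C_T\cC^{3/2-\epsilon}$, and only then does the Schauder step close. Your write-up should make the exterior pairing, not the melonic one, the centrepiece of the stochastic estimates. (A minor related point: there is no combinatorial factor of three on $a_N$ — only the single slot $\cN^c(v,\X,\X)$ is melonic, the other two orderings need no subtraction.)
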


As mentioned earlier the local theory for the dynamical $\rmT^4_3$ is quite similar to that of dynamical $\Phi^4_2$, which will allow us to use an analog of the Da Prato - Debussche \cite{DPD2} argument. The idea of the argument is that $\lim_{N\uparrow \infty} \X_{N} = \X$ is rough, but explicit and Gaussian, while $v$ is an inexplicit remainder that should have better regularity. One then derives a remainder PDE for $v$ involving Wick powers/Hermite polynomials of $\X$ (constructed via a probabilistic argument) and show this PDE for $v$ is locally well-posed (with stability as the regularization is removed).

We start with the following standard estimate. 
\begin{lemma}
For any fixed $T > 0$ and $\epsilon > 0$,
$\X_{N}$ converges to a limit $\X = \X_{\infty}$ in $C_T\cC^{-\frac{d-2}{2}-\epsilon}$. 
Moreover, for any $N$ and $t_1,t_2 \in \R_{\geqslant 0}$, 

\begin{equs}  \label{eq:covariance} 
 \E[\X_{N}(t_1)\X_{N}(t_2)] = \Pi_{N} \frac{P_{|t_1-t_2|}}{1-\Delta}\,.
\end{equs}
In particular, for fixed $t$ and as a random field on $\T^{d}$, one has $\X_{\infty}(t)\overset{law}{=}\mcb{g}$. 
\end{lemma}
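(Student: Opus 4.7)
The plan has three parts: compute the covariance \eqref{eq:covariance} directly from the Duhamel representation, read off the fixed-time law from it, and then use a Kolmogorov-type continuity argument combining Gaussian hypercontractivity with a Besov embedding to upgrade to convergence in $C_T\cC^{-\frac{d-2}{2}-\epsilon}$. For the covariance, I would start from $\X_N(t)=\sqrt{2}\int_{-\infty}^t P_{t-s}\Pi_N\,\xi(s)\,ds$, use that $\Pi_N$ commutes with $P_t$, and apply It\^o's isometry against the cylindrical noise $\xi$. For $t_1\leqslant t_2$, viewed as operators,
\begin{equs}
\E\big[\X_N(t_1)\otimes\X_N(t_2)\big] \;=\; 2\int_{-\infty}^{t_1}\Pi_N\,P_{t_2-s}P_{t_1-s}\,ds \;=\; \Pi_N\,P_{t_2-t_1}(1-\Delta)^{-1}\,,
\end{equs}
using $P_{t_2-s}P_{t_1-s}=P_{t_2+t_1-2s}$, the substitution $u=t_2+t_1-2s$, and $\int_0^\infty P_u\,du=(1-\Delta)^{-1}$. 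This is \eqref{eq:covariance}. Setting $t_1=t_2=t$ and letting $N\uparrow\infty$ yields covariance $(1-\Delta)^{-1}$, and since $\X_\infty(t)$ is a centered Gaussian field (a linear functional of $\xi$) this gives $\X_\infty(t)\overset{\mathrm{law}}{=}\mcb{g}$.

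For the convergence, I would fix $N\leqslant M\leqslant\infty$, apply a Littlewood--Paley block $\Delta^j$, and use the covariance identity to obtain the spatial $L^2$ bound
\begin{equs}
\E\big[|\Delta^j(\X_M-\X_N)(t,x)|^2\big] \;\lesssim\; \sum_{\substack{m\sim 2^j\\|m|_\infty>N}}\frac{1}{1+|m|^2} \;\lesssim\; N^{-2\kappa}\,2^{j(d-2+2\kappa)}\,,
\end{equs}
uniformly in $t,x$ for any $\kappa\in[0,1]$. Splitting the stochastic integral at time $s$ and using $1-e^{-x}\lesssim x^\theta\wedge 1$, an analogous computation gives the time-increment bound
\begin{equs}
\E\big[|\Delta^j(\X_M-\X_N)(t,x)-\Delta^j(\X_M-\X_N)(s,x)|^2\big] \;\lesssim\; N^{-2\kappa}\,|t-s|^\theta\,2^{j(d-2+2\kappa+2\theta)}\,,
\end{equs}
for $\theta\in[0,1]$. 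Gaussian hypercontractivity (the $\Delta^j(\X_M-\X_N)$ lie in the first, fixed Wiener chaos) upgrades these $L^2$ bounds to $L^p$ bounds for any $p\in 2\N$ with the right-hand sides raised to $p/2$. Plugging into the Besov norm $\|\bigcdot\|^p_{B^s_{p,p}}=\sum_j 2^{jsp}\|\Delta^j\bigcdot\|_{L^p(\T^d)}^p$ with $s=-\tfrac{d-2}{2}-\epsilon/2$, choosing $p$ large enough that $B^s_{p,p}\hookrightarrow\cC^{-\frac{d-2}{2}-\epsilon}$ with slack, and running a Kolmogorov continuity argument in $t$, gives $\E\big[\|\X_M-\X_N\|_{C_T\cC^{-\frac{d-2}{2}-\epsilon}}^p\big]\lesssim N^{-\kappa p}$. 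Hence $(\X_N)_N$ is Cauchy in $L^p(\Omega;C_T\cC^{-\frac{d-2}{2}-\epsilon})$ and, via Borel--Cantelli along a subsequence, almost surely; the limit must coincide with $\X_\infty$ because the Fourier modes converge in $L^2(\Omega)$.

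This is a fairly standard stochastic heat argument and no substantive obstacle arises. The only bookkeeping to watch is choosing $\kappa,\theta,p,\epsilon$ jointly so that the spatial Besov embedding is strict, the Kolmogorov time criterion is met with exponent strictly greater than one, and the $j$-sum $\sum_j 2^{jp(s+(d-2)/2+\kappa+\theta)}$ converges. All three conditions are simultaneously met by first taking $\epsilon,\kappa,\theta>0$ small and then $p\in 2\N$ large.
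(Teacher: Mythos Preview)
Your proposal is correct and follows the same route the paper takes. The paper calls this lemma a ``standard estimate'' and does not prove it in place; the machinery it relies on is developed later in Section~\ref{sec:stochasticobjects}, where Lemma~\ref{lem:stoobsobolev} packages precisely the combination you use (hypercontractivity, stationarity, the Besov embedding $B^\beta_{k,k}\hookrightarrow\cC^{\beta-\epsilon}$, Kolmogorov in time) into a criterion on $\sum_m\angp{m}{2\beta}\E[\delta_{s,t}\hat\tau_m\delta_{s,t}\hat\tau_{-m}]$, and Section~6.4 verifies this sum for $\X$ using exactly the covariance increment bound $\E[\delta_{s,t}\hat\X_m\delta_{s,t}\hat\X_{-m}]\lesssim\angp{m}{-2+2\theta}|t-s|^\theta$ you derive. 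Your explicit Duhamel/It\^o computation of \eqref{eq:covariance} and your treatment of the Cauchy property via $\X_M-\X_N$ are the natural fleshing-out of what the paper leaves implicit.
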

In what follows we write  $\cN_N^{(c)}\eqdef\Pi_N\cN^{(c)}$. 
A natural first question is how to renormalize $\cN_{N}(\X_{N},\X_{N},\X_{N})$ so it has a meaningful limit as $N \uparrow \infty$. 
In $\Phi^4_d$ the corresponding (Wick) renormalization is $\X_{N}^3(z) \mapsto \X_{N}^3(z) - 3 \E[ \X_{N}^2(z)] \X_{N}(z)$ which cancels the divergence $E[ \X_{N}^2(z)]$ produced when any two of the three copies of $\X_{N}$ Wick contract.

The product $\cN$ is less symmetric, each choice of which two copies of $\X_{N}$ Wick contract, that is $\cN_N(\bigcdot,\X_{N}, \X_{N})$, $\cN_N(\X_{N},\bigcdot, \X_{N})$ or $\cN_N(\X_{N},\X_{N}, \bigcdot)$, should be treated differently. 
The second two terms are well-defined without renormalization as $N\uparrow\infty$ in any dimension $d$, while the first term is melonic \dash it is well-defined for $d=2$ but requires renormalization for all $d \geqslant 3$.

Our ``Wick'' renormalization is given by setting, for any $d \geqslant 2$ and $c \in [d]$, 
\begin{equ}\label{eq:Wick_constant}
   \mfC^{1,c}_{N}(d)\eqdef  \E[\cN_N^c(1,\X_{N}(t),\X_{N}(t))(x)]\;.
\end{equ} 
Note that by stationarity $ \mfC^{1,c}_{N}(d)$ does not depend on $(t,x)$. 

\begin{notation}
Recall that we can view $\cN^{c}$ and $\cN$ as either acting on functions/distributions over space and returning a function over space, or acting on and producing functions/distributions over space-time. 
We jump between the two viewpoints often.
\end{notation}

We can then write the promised renormalized product by setting, for any  $\psi: \T^{d} \rightarrow \R$,  
\begin{equ}\label{eq:renorm_melonic_square}
\cN_N^{c}(\psi,\X_{N}, \X_{N})(x)
-
\mfC^{1,c}_{N}(d) \psi(x)\;.
\end{equ}
The analysis required to prove the above object is well-defined as $N \uparrow \infty$ is more complicated than the corresponding Wick renormalization in the local theory. 
Our cancellation of the divergence of the expectation of $\cN_N^{c}(\psi,\X_{N}, \X_{N})(x)$ is less exact than the local theory since we have replaced $\psi$ with $1$ in \eqref{eq:Wick_constant}.
In \eqref{eq:renorm_melonic_square} we compensate the non-local divergence in the first term with a local counterterm. 
Recall (see Remark~\ref{remark:DPD}) the non-locality of Wick divergences for our model has  We renormalization of this divergence in the following lemma. 
\begin{lemma}\label{lem:CT1}
For $d\geqslant2$ and $c\in[d]$, one has
\[   
\mfC^{1,c}_{N}(d)=\sum_{m_{\hc}\in\Z_N^{d-1}}\frac{1}{\langle m_{\hc}\rangle^2}\,.
\]
In particular $\mfC_{N}^{1,c}(d)$ is independent of $c \in [d]$. 
Writing 
\begin{equ}\label{eq:CT1_eq2}
\mfC^1_{N}(d)\eqdef\sum_{c=1}^d \mfC^{1,c}_{N}(d)=d\mfC^{1,c}_{N}(d)\;,
\end{equ}
we have, for $d\in\{2,3,4\}$ and any $\psi,f \in C^{\infty}(\T^d)$,
\begin{equs}\label{eq:first_renorm_bound}
\sup_{N\in\N} \E[ \big( \cN_N(\psi,\X_{N},\X_{N})-\mfC^1_{N}\psi \big)(f)^2]  &< \infty\\
 \sup_{N\in\N} \E[ \big( \cN_N(\X_{N}, \psi, \X_{N})\big)(f)^2]  \vee \E[ \big( \cN_N(\X_{N},\X_{N}, \psi)\big)(f)^2] &< \infty\;.
\end{equs}
\end{lemma}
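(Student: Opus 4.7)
The plan is to work entirely in Fourier coordinates. Writing $\hat X^N_m \eqdef \cF_x(\X_N(t))_m$ for the Fourier modes of $\X_N$ at time $t$, stationarity together with \eqref{eq:covariance} gives $\E[\hat X^N_m \hat X^N_{m'}] = \1_{\{|m|_\infty \leqslant N\}}\,\delta_{m+m',0}\,\langle m\rangle^{-2}$. Combined with the Fourier representation of the non-local vertex coming from \eqref{eq:fourier_nonlocal_vertex},
\begin{equ}
\cF_x\bigl(\cN^c(f,g,h)\bigr)_k \;=\; \sum_{m_c \in \Z,\, n_{\hc} \in \Z^{d-1}} \hat f_{(m_c,\,k_{\hc})}\, \hat g_{(-m_c,\, n_{\hc})}\, \hat h_{(k_c,\, -n_{\hc})}\,,
\end{equ}
plugging $\hat f_m = \delta_{m,0}$ (i.e.\ $f \equiv 1$) and pairing the two $\X_N$ factors forces $m_c = 0$ and returns $\mfC^{1,c}_N(d) = \sum_{n_{\hc} \in \Z_N^{d-1}} \langle n_{\hc}\rangle^{-2}$; the independence from $c$ is then manifest and \eqref{eq:CT1_eq2} follows by summing over $c$.

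Since $\cN = \sum_c \cN^c$, the three second moment bounds reduce to the analogous bounds for each colour $c \in [d]$. I would Wick-decompose the product of the two $\X_N$ factors as the sum of its Wick-ordered part and the covariance kernel $K_N(y-z) \eqdef \E[\X_N(y)\X_N(z)]$, splitting each expression in \eqref{eq:first_renorm_bound} into a fluctuating (Wick) part and a deterministic trace part. For the melonic pairing $\cN^c_N(\psi, \X_N, \X_N)$, a short Fourier computation shows that the trace part acts on $\hat\psi_k$ as multiplication by $\1_{\{|k|_\infty \leqslant N\}}\,S_N(k_c)$ where $S_N(k_c) \eqdef \sum_{|n_{\hc}|_\infty \leqslant N} \langle (k_c, n_{\hc})\rangle^{-2}$, and the counterterm $\mfC^{1,c}_N \psi$ precisely cancels the $k_c = 0$ value of $S_N$; the residual $(S_N(k_c) - \mfC^{1,c}_N)\hat\psi_k$ is then controlled uniformly in $N$ by the rapid decay of $\hat\psi$. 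For the exterior and non-melonic pairings $\cN^c_N(\X_N, \psi, \X_N)$ and $\cN^c_N(\X_N, \X_N, \psi)$, the asymmetric index structure of the vertex forces the two contracted Fourier indices onto a $(d-1)$-dimensional slice, which renders the trace part already uniformly finite in $N$ with no renormalization needed.

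The second moment of the Wick-ordered part is then computed by Isserlis' theorem as a sum over the two crossed pairings of the four $\X_N$ factors between the two copies. Each pairing collapses the six Fourier summation variables to three free ones, yielding a sum of schematic form
\begin{equ}
\sum_{k,\,m_c,\,n_{\hc}} \frac{|\hat f_{-k}|^2\, |\hat\psi_{(m_c, k_{\hc})}|^2 \,\1_{\{\,\cdot\,\leqslant N\}}}{\langle (m_c, n_{\hc}) \rangle^2\, \langle (k_c, n_{\hc}) \rangle^2}
\end{equ}
(with analogous expressions for the other two bounds). The inner $n_{\hc}$-sum over $\Z^{d-1}$ has integrand of Fourier degree $-4$ and therefore converges uniformly in $N$ whenever $d - 1 < 4$, that is, for $d \leqslant 4$; the remaining sum in $k$ and $m_c$ is absorbed by the rapid decay of $\hat f$ and $\hat\psi$.

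The main technical point, and the feature distinguishing the tensor setting from the local one, is that the counterterm $\mfC^1_N \psi$ is a local multiplication while the divergence $S_N(k_c)\hat\psi_k$ it is meant to cancel is non-local in $k$. The residual $S_N(k_c) - \mfC^{1,c}_N$ is finite for each fixed $k_c$ but grows mildly in $|k_c|$ (logarithmically for $d=3$, linearly for $d=4$); for the smooth test functions considered here this growth is harmless, but taming it for less regular $\psi$ is precisely what will drive the more delicate decomposition of the nonlinearity performed in later sections.
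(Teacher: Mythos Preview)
Your proof is correct and takes essentially the same approach as the paper's: both work in Fourier space, decompose each $\cN_N^c(\cdot,\cdot,\cdot)$ into its zeroth and second homogeneous Gaussian chaos components, compute the zeroth-chaos (trace) part explicitly for each of the three argument orderings to see that only the melonic one needs renormalization, and observe that the second chaos is uniformly summable for $d\leqslant 4$. The paper's proof is itself a sketch (deferring the detailed second-chaos bounds to Lemma~\ref{lem:randop1}) and differs only cosmetically in that it derives the formula for $\mfC^{1,c}_N$ at the end by inserting $\psi=1$ into the explicit zeroth-chaos computation \eqref{eq:first_renorm_comp}, whereas you do it first.
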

We sometimes suppress $d$ from the notation, just writing $\mfC^1_{N}$.
\begin{proof}
We start with \eqref{eq:first_renorm_bound}, but we only sketch this bound in order to demonstrate the need for renormalization. We state more detailed estimates in Lemma~\ref{lem:randop1}, steps skipped over here can be reviewed in the proof of that lemma. 
In what follows we sometimes drop $N$ from our notation and write Fourier variables as subscripts. 

Thanks to hypercontractivity and orthogonality of the homogeneous Gaussian chaoses it suffices to bound the second moments of projections onto each homogeneous chaos
We first deal with $\cN_N(\X,\psi,\X)=\cN_N(\X,\psi,\X)^{(2)}+\cN_N(\X,\psi,\X)^{(0)}$ \dash superscripts indicate the Gaussian chaos. 
As in the local case, super-renormalizability, one doesn't need renormalization to obtain uniform in $N$ bounds on the highest chaos.
Regarding the $\cN_N(\X,\psi,\X)^{(0)}$ \dash the expectation of $\cN_N(\X,\psi,\X)$ \dash we have 
\begin{equs}
    \cN_N^c(\X(t),\psi,\X(t))^{(0)}(x)&=\sum_{m\in\Z_N^d}e^{-\imath x\cdot m}\sum_{n\in\Z^d}
    \hat{\psi}_n\E[
    \hat{\X}_{-m_{\hc},-n_c}(t)\hat{\X}_{-n_{\hc},-m_c}(t)]\\&=
   \sum_{m\in\Z_N^d}e^{-\imath x\cdot m}\sum_{n\in\Z^d}
    \hat{\psi}_n\delta_{m,-n} \frac{1}{\langle m\rangle^2}\\&= \Pi_N(1-\Delta)^{-1}\psi(x)\,,
\end{equs}
which is is clearly well behaved as $N\uparrow \infty$. 
Turning to $\cN_N(\X(t),\X(t),\psi)$, we have
\begin{equs}
   \cN_N^c(\X(t),\X(t),\psi)^{(0)}(x)&=
   \sum_{m\in\Z_N^d}e^{-\imath x\cdot m}\sum_{n\in\Z^d}
    \hat{\psi}_{-n_{\hc},-m_c}\E[\hat{\X}_{-m_{\hc},-n_c}(t)
    \hat{\X}_{n}(t)]\\&=
    \sum_{m\in\Z_N^d}e^{-\imath x\cdot m}\sum_{n\in\Z^d}
    \hat{\psi}_{-n_{\hc},-m_c}\delta_{m_{\hc},n_{\hc}} \frac{\Pi_N(m_{\hc},n_c)}{\langle (m_{\hc},n_c)\rangle^2}\\
    &= \sum_{m\in\Z_N^d}e^{-\imath x\cdot m}\hat{\psi}_{-m}\sum_{n_c\in\Z}
   \frac{\Pi_N (m_{\hc},n_c)}{\langle (m_{\hc},n_c)\rangle^2}\,,
\end{equs}
which poses no problem as $N\uparrow \infty$ since the sum over $n_c$ above is convergent. 
Now we turn to $\cN_N(\psi,\X(t),\X(t))$ where we see the need for renormalization. 
\begin{equs}\label{eq:first_renorm_comp}
     \cN_N^c(\psi,\X(t),\X(t))^{(0)}(x)&=
  \sum_{m\in\Z_N^d}e^{-\imath x\cdot m}\sum_{n\in\Z^d}
    \hat{\psi}_{-m_{\hc},-n_c}\E[ \hat{\X}_{n}(t)\hat{\X}_{-n_{\hc},-m_c}(t)
   ]
   \\&=
   \sum_{m\in\Z_N^d}e^{-\imath x\cdot m}\sum_{n\in\Z^d}
    \hat{\psi}_{-m_{\hc},-n_c}\delta_{m_{c},n_{c}} \frac{\Pi_N (n_{\hc},m_c)}{\langle (n_{\hc},m_c)\rangle^2}\\
    &=\sum_{m\in\Z_N^d}e^{-\imath x\cdot m}\hat{\psi}_{-m}\sum_{n_{\hc}\in\Z^{d-1}}
   \frac{\Pi_N (n_{\hc},m_c)}{\langle (n_{\hc},m_c)\rangle^2}\,.
\end{equs}
Observe that in $d\geqslant3$ the sum over $n_{\hc}$ becomes divergent when we take  $N\uparrow\infty$. 
We renormalize to compensate this divergence, and since we want to accomplish this with a local counter-term we only subtract the divergent term evaluated at $m_c=0$ rather cancelling the whole quantity above. 
Indeed, subtracting $\mfC^{1,c}_{N}\psi$ yields
\begin{equs}\label{eq:renorm1}
     \cN_N^c(\psi,\X_{N},\X_{N})^{(0)}(x)-\mfC^{1,c}_{N}\psi(x)&=
    \sum_{m\in\Z_N^d}e^{-\imath x\cdot m}\hat{\psi}_{-m}\sum_{n_{\hc}\in\Z^{d-1}}\mathfrak R^{1,c}_{N}(\chi^c(n,m))\,,
     \end{equs}
where 
\begin{equ}\label{eq:R1}
    \mfR^{1,c}_{N}(m)\eqdef 
   \frac{\Pi_N( m)}{\langle m\rangle^2}-\frac{\Pi_N( m_{\hc})}{\langle m_{\hc}\rangle^2}\;.
\end{equ}
We see that \eqref{eq:renorm1} is uniformly bounded as we take $N\uparrow \infty$ when $d\leqslant4$. 

Finally, the formula for $\mfC^1_{N}$ in \eqref{eq:CT1_eq2} directly comes from inserting  $\psi=1$ in \eqref{eq:first_renorm_comp} since the Fourier transform of $1$ is $\delta_{m,0}$.
\end{proof}
\begin{remark}
$\mfC^1_\infty$ is finite when $d=2$ and we see that the dynamic $\mathrm T^4_2$ model, while being singular\footnote{$\cN(\phi,\phi,\phi)$  is not defined for $ \phi \in \cC^{0-}$ so the equation is indeed singular, but this term can be made sense of when  $\phi = \X$ with a stochastic estimate without renormalization.}, does not require renormalization.   
$\mfC^1_{N}$ diverges like $ \log N$ when $d=3$ and like $N^{d-3}$ when $d \geqslant 4$. 
Viewing  $\mfC^1_{N}$ as the analog of the ``Wick constant'' in the local theory,  this is consistent with the idea that divergences in the $\rmT^4_d$ model resemble those in the $\Phi^4_{d-1}$ model for $d \in \{3,4,5\}$.  
\end{remark}

We introduce some symbolic notation, writing, for $N \in\N$, 
\begin{equ}\label{def:cube}
\Xthreeloc_{N} = \Xthree_{N} 
\eqdef
\cN(\X_{N},\X_{N},\X_{N}) - \mfC^1_{N} \X_{N}\;.
\end{equ}
In Lemma~\ref{prop:stochastic_1} we prove that, for $d \in \{3,4\}$, $\Xthreeloc_{N}$ converges to a limiting distribution over space-time as $N \uparrow \infty$. Note that $\Xthreeloc_{N}$ is not in the homogeneous third Gaussian chaos associated to $\xi$, since only one of the three Wick self-contractions is canceled, and it is not even fulled canceled.


With this notation at hand, we are going to study the following regularized equation:

\begin{equs}
\label{eq:T^4_3}
    \cL\phi_{N}=-\cN_N(\phi_{N},\phi_{N},\phi_{N})+\mfC^1_{N}\phi_{N}+\sqrt{2}\xi_{N}\,,\,\,\phi_{N}(0)=\X_{N}(0)+\Pi_Nv_0\,.
\end{equs} 
Given that we can expect that the noise is the most singular term of the equation, in order to solve the equation, we first have to perform an expansion around the solution to the linear equation, which is known as Da Prato-Debussche method. Let $v_{N}\eqdef \phi_{N}-\X_{N}$ that solves
\begin{equs}\label{eq:DPD1}\nonumber
    \cL v_{N}= &- \cN_{N}(v_{N},v_{N},v_{N})-\cN_{N}(\X_{N},v_{N},v_{N}) -\cN_{N}(v_{N},\X_{N},v_{N}) \\
    &-\cN_{N}(v_{N},v_{N},\X_{N})-\cN_{N}(\X_{N},\X_{N},v_{N})-\cN(\X_{N},v_{N},\X_{N})\\&-\big(\cN_{N}(v_{N},\X_{N},\X_{N})-\mfC_{N}^1 v_{N}\big)-\big(\cN_{N}(\X_{N},\X_{N},\X_{N})-\mfC_{N}^1 \X_{N}\big)\,,
\end{equs}
with initial condition $\Pi_Nv_0$. 
In the rest of the section, the cut-off $N$ is often suppressed from notation \dash for instance we write $\cN(v,\X,v)$ to denote $\cN_{N}(v_{N},\X_{N},v_{N})$. 
However, all estimates in this section will be uniform in $N$.  
We now state regularity estimates for the stochastic objects appearing in \eqref{eq:DPD1}, all of our proofs of stochastic estimates are deferred to Section~\ref{sec:stochasticobjects}. 

\begin{lemma}[Random fields 1]\label{prop:stochastic_1}
 Let
\begin{equs}
 \Xtwom^c_{N}(x_c,y_c)& \eqdef \int_{\T^{d-1}}\X_{N}(y_{\hc},y_c)\X_{N}(y_{\hc},x_c)\rmd y_{\hc}-\mfC^{1,c}_{N}(d)\delta(x_c-y_c)\,,\\
\Xtwonm^c_{N}(x_{\hc},y_{\hc})&\eqdef \int_{\T}\X_{N}(x_{\hc},y_c)\X_{N}(y_{\hc},y_c)\rmd y_{c}\,, 
\quad  \enskip
\Xthree_{N}\eqdef \cN_N(\X_{N},\X_{N},\X_{N})-\mfC^1_{N}(d)\X_{N}\,.
\end{equs}
Then, in any dimension $d\geqslant2$, 
\begin{equ}
 \sup_{N\in\N}\E[\Vert \Xtwonm^c_{N}\Vert_{C_T\cC^{-\frac{2d-5}{2}-\epsilon}(\T^{2d-2})}^p]<\infty\,.
\end{equ}
Furthermore, for $d\in\{2,3,4\}$,
\begin{equs}
 \label{reg:nonlocal_wicksquare}  \sup_{N\in\N} &\E[\Vert \Xtwom^c_{N}\Vert_{C_T\cC^{-\frac{2d-5}{2}-\epsilon}(\T^2)}^p] <\infty\,,\\  
\label{reg:nonlocal_wickcube} \sup_{N\in\N} &\E[\Vert \Xthree_{N}\Vert_{C_T\cC^{-\frac{3d-8}{2}-\epsilon}(\T^{d})}^p]<\infty\,.
\end{equs}
\end{lemma}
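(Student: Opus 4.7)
The plan is to prove each bound by the standard Wiener-chaos strategy: decompose each object into its homogeneous Gaussian chaos components, bound Besov--$L^p$ moments by their second moments via Nelson's hypercontractivity, evaluate the latter via Fourier representation and Wick's formula, and finally pass from moment bounds to $C_T\cC^\alpha$ regularity by a Kolmogorov argument applied jointly in space and time. Throughout, the spatial covariance $\E[\hat{\X}_N(t,k)\overline{\hat{\X}_N(t,k)}] = \Pi_N(k)/\langle k\rangle^2$ obtained from \eqref{eq:covariance} reduces every stochastic estimate to an explicit sum over momenta weighted by propagators $\langle\cdot\rangle^{-2}$.

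For $\Xtwonm^c_N$, which lies purely in the second chaos, I would express the second moment of a Littlewood--Paley block as a sum of the schematic form
\begin{equs}
\sum_{|k_{\hc}|,|\ell_{\hc}|\sim 2^j}\,\sum_{n_c \in \Z}\frac{\Pi_N(k_{\hc},n_c)\,\Pi_N(\ell_{\hc},n_c)}{\langle(k_{\hc},n_c)\rangle^2\,\langle(\ell_{\hc},n_c)\rangle^2}\,,
\end{equs}
which converges uniformly in $N$ in any $d \geqslant 2$ because the contracted momentum $n_c$ runs over a single dimension; a power count gives a $2^{j(2d-5)}$ growth, and Besov embedding for large $p$ together with hypercontractivity yields $\cC^{-(2d-5)/2-\epsilon}$ regularity. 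For $\Xtwom^c_N$ the contracted momentum is $(d-1)$-dimensional so the naive expectation diverges; the subtracted counterterm $\mfC^{1,c}_N(d)\delta(x_c-y_c)$ cancels exactly the diagonal singularity, leaving a residual sum summable precisely for $d \leqslant 4$ and producing the same scaling $2^{j(2d-5)}$. For $\Xthree_N$ I would split according to the inhomogeneous chaos decomposition: the pure third-chaos part is a triple stochastic integral with three propagators summed against the momentum conservation encoded by \eqref{eq:fourier_nonlocal_vertex}, giving the claimed $-(3d-8)/2-\epsilon$ regularity for $d \leqslant 4$; the first-chaos residual comes from the three possible Wick self-contractions inside $\cN_N(\X,\X,\X)$, of which the non-melonic ones $\cN_N(\X,\bigcdot,\X)$ and $\cN_N(\X,\X,\bigcdot)$ are automatically $N$-uniformly controlled (exactly as in the proof of Lemma~\ref{lem:CT1}), and only the melonic one $\cN_N(\bigcdot,\X,\X)$ diverges and is absorbed by $\mfC^1_N \X_N$, leaving the convergent kernel $\mfR^{1,c}_N$ of \eqref{eq:R1}.

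Time regularity follows by repeating the second-moment computations with one factor $\X_N$ replaced by an increment $\X_N(t_1)-\X_N(t_2)$. The heat-kernel structure in \eqref{eq:covariance} produces a spectral factor bounded by $|t_1-t_2|^{\theta}\langle k\rangle^{2\theta}$ for any small $\theta > 0$, which trades a vanishing amount of spatial regularity for $\theta/2$ of temporal Hölder regularity; a joint Kolmogorov criterion in $(t,x)$ then upgrades the $L^p$ Besov moments to almost-sure $C_T\cC^{\alpha}$ continuity with arbitrarily large polynomial integrability in $p$.

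The main obstacle is the bookkeeping for $\Xthree_N$: one must carefully match each of the three Wick self-contractions inside $\cN_N(\X,\X,\X)$ against the single counterterm $\mfC^1_N \X_N$, identify which contractions are melonic versus non-melonic, and verify that the residual first-chaos kernel after the subtraction equals $\mfR^{1,c}_N$ (times a remaining $\X_N$), so that the dimensional constraint $d \leqslant 4$ suffices for $N$-uniform Besov bounds. This is the non-local analogue of the usual Wick cancellation in the local $\Phi^4$ theory and is precisely what ties the regularity exponents to the melonic power counting of \eqref{eq:perturbative_degree_tensor}.
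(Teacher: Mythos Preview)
Your approach is correct and is essentially the paper's own: the paper likewise reduces everything to second-moment Fourier bounds via a Kolmogorov criterion plus hypercontractivity (Lemmas~\ref{lem:stoobsobolev} and~\ref{lem:randomopsob}), then bounds those Fourier sums by power counting --- the more elaborate stranded-graph and multiscale machinery of Section~\ref{sec:diagram} is developed mainly for the larger objects appearing later, and the paper itself remarks that $\Xtwom$ ``is small enough for the bound \eqref{eq:A(G)} to be estimated without multiscale analysis''. Your identification of the chaos decomposition for $\Xthree_N$ and the role of $\mfR^{1,c}_N$ in the first-chaos residual is exactly right and matches the computation already sketched in the proof of Lemma~\ref{lem:CT1}.

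One slip worth flagging: $\Xtwonm^c_N$ is \emph{not} purely in the second chaos. Its expectation
\[
\E\big[\Xtwonm^c_N(x_{\hc},y_{\hc})\big]=\int_{\T}\E\big[\X_N(x_{\hc},y_c)\X_N(y_{\hc},y_c)\big]\,\rmd y_c = \Pi_N(1-\Delta)^{-1}\big(x_{\hc}-y_{\hc},0\big)
\]
is nonzero and must be handled separately. This zeroth-chaos piece is deterministic, so there is nothing probabilistic left to do, but you still need to check that it lies in the claimed Besov space uniformly in $N$; this is not automatic and you should carry out that bound explicitly rather than fold it silently into the second-chaos estimate.
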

\begin{remark}
Comparing \eqref{reg:nonlocal_wicksquare} and \eqref{reg:nonlocal_wickcube} with local \slash pointwise products, we see that the regularity in \eqref{reg:nonlocal_wicksquare} shows an improvement of $1/2$, while that of \eqref{reg:nonlocal_wickcube} is improved by $1$. 
 \end{remark}
\begin{notation}
We often use a pictorial representation of the non-linearity $\cN(v,v,v)$ by writing $\vvv$, and will use the same notation for the mixed terms of $v$ and the stochastic objects. 
For instance, We let $\Xvv$ stand for $\cN(\X,v,v)$, and $\XXv$ for $\cN(\X,\X,v)$. 
We stress the fact that with this notation we let $\vXX=\cN(v,\X,\X)-\mfC^1v=\sum_{c=1}^d\int_{\T}\Xtwom^c(x_c,y_c)v(x_{\hc},y_c)\rmd y_c$.
\end{notation}
We can then rewrite \eqref{eq:DPD1} as
\begin{equs}\label{eq:DPD2}
    \cL v=-\vvv-\Xvv-\vXv-\vvX-\XXv-\XvX-\vXX-\Xthree\,,
\end{equs}
with initial condition $v(0)=\Pi_Nv_0$.

\subsection{Random operators for \TitleEquation{\rmT^4_3}{T43}}

In $d=3$, the fact that $\X$ is of regularity  $-\frac{1}{2}-$ implies, via deterministic estimates, that the same can be inferred for $\Xtwom$ and $\Xtwonm$ and that  
\begin{equ}\label{eq:T43_mixedterms}
\cN(\X, v, v)\;,\enskip
\cN(v,\X, v)\;,\enskip
\cN(v, v,\X)\;,\enskip
\cN(v,\X,\X) -\mfC^1v\,
\text{  and  }
\cN(\X,\X,v)
\end{equ} 
are all well-defined in the limit as $N \uparrow \infty$ as long as $v$ can be taken to be of regularity better than $\frac{1}{2}$, and are of regularity at worst $-\frac12-$.
However, the term $\cN(\X,v,\X) = \XvX$ remains problematic. 
We get for free that the exterior product of two copies of the free field, denoted $\X\otimes\X$, is well-defined without any renormalization but we cannot assume regularity better than $-1-$.
Feeding only this into a deterministic argument would give $\XvX$ of regularity $-1-$ which means the equation for $v$ could only be solved in $C_T\cC^{1-\epsilon}$, but pairing $v$ with $\X\otimes\X$ to obtain $\cN(\X,v,\X)$ requires $v$ of regularity better than $1$. 

The same issue is encountered in dynamical $\Phi^4_3$ where the product $\cherry  v$ is problematic \dash it requires that $v$ is of regularity greater than $1$, which is hindered by the regularity $-1-$ of $\cherry $. 
However we can overcome this issue for $\rmT^4_3$ with much less work - since the roughness of the two free fields appearing is (in law) isotropic, we could hope the integration in the non-local product $\XvX$ compensates their irregularity. 
We make this intuition rigorous by promoting $\X\otimes \X$ to a random operator, and viewing $\XvX$ as this random operator acting on $v$. 
To simplify our presentation, we adopt the same language for all the mixed terms in \eqref{eq:T43_mixedterms}.

\begin{lemma}[Random operators 1]\label{lem:randop1}
For $k\in [d-1]$, define linear random operators $\X^{(k)}$:
\begin{align*}  
 \X^{(k)}_{N}(f)&\eqdef \int_{\T^{d-k}} \X_{N}(\cdot,y) f(y)\rmd y\,,&&\text{ for }f:\R_{\geqslant 0}\times\T^{d-k}\rightarrow\R\,.
\end{align*}

Then, in any dimension $d\geqslant2$ and for all $\alpha>\frac{d-k-2}{2}$,
\begin{equ}    \label{eq:Xd3}
\sup_{N\in\N}  \E[\Vert\X^{(k)}_{N}\Vert^p_{\cL(C_TH^{\alpha}(\T^{d-k}),C_T\cC^{\beta-\epsilon}(\T^k))}]<\infty\,
\text{ with $\beta=\min\big(-\frac{k-2}{2},\alpha-\frac{d-2}{2}\big)$}\;.
\end{equ}
We also define for $c\in[d]$ the three quadratic random operators 
\begin{align*}
    \Xtwom^c_{N}(f)&\eqdef \cN^c_N(f,\X_{N},\X_{N})-\mfC^{1,c}_N(d)f
    \,,&&\text{ for }f:\R_{\geqslant 0}\times\T_c\rightarrow\R\,,
    \\ \Xtwonm^c_{N}(f)&\eqdef \cN^c_N(\X_{N},\X_{N},f)\,,&&\text{ for }f:\R_{\geqslant 0}\times\T_{\hc}^{d-1}\rightarrow\R\,,\\ \XdotX_{N}(f)&\eqdef \cN_N(\X_{N},f,\X_{N})\,,&&\text{ for }f:\R_{\geqslant 0}\times\T^{d}\rightarrow\R\,.
\end{align*}
Then, in any dimension $d\geqslant2$, for all $c\in[d]$ and $\alpha>\frac{d-3}{2}$,
\begin{equs}
\sup_{N\in\N} & \E[\Vert\Xtwonm^c_{N}\Vert^p_{\cL(C_TH^{\alpha}(\T^{d-1}),C_T\cC^{\beta-\epsilon}(\T^{d-1}))}]<\infty  \text{ with $\beta=\min\big(-\frac{d-3}{2},\alpha-\frac{2d-5}{2}\big)$}\,,\\
\sup_{N\in\N} &  \E[\Vert\XdotX_{N}\Vert^p_{\cL(C_TH^{\alpha}(\T^d),C_T\cC^{\beta-\epsilon}(\T^d))}]
    <\infty \text{ with $\beta=\min\big(-\frac{d-3}{2},\alpha-(d-2)\big)$}\,.\label{eq:XdotX}
\end{equs}  
Finally, for $d\in\{2,3,4\}$, for all $c\in[d]$ and $\alpha>-\frac12$,
\begin{equ}  \label{eq:XvXd3}   \sup_{N\in\N}\E[\Vert\Xtwom^c_{N}\Vert^p_{\cL (C_TH^{\alpha}(\T),C_T\cC^{\beta-\epsilon}(\T))}]<\infty \text{ with $\beta=\min\big(\frac12,\alpha-\frac{2d-5}{2}\big)$}\,.
\end{equ}
\end{lemma}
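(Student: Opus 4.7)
The plan is to represent each random operator by a Fourier-space kernel, prove second-moment bounds on a suitable norm of that kernel via Wick's rule, and then combine hypercontractivity with a Kolmogorov argument to obtain the pathwise operator-norm estimate. Concretely, for each operator $O_N$ in the statement I would write $O_N f$ as a convolution-type sum $\sum_{m,n}\hat K_N(m,n;\omega) \hat f_n e^{-\imath x\cdot m}$, where $\hat K_N$ is a single Fourier coefficient of $\X_N$ in the linear case and a Wick polynomial of degree two in $\hat \X_N$ in the quadratic cases. Introducing a Littlewood-Paley block $\Delta^i$ on the output and pairing against $f$ in the dual norm $H^{-\alpha}$ yields the bound
\begin{equs}
\|O_N\|_{\cL(H^\alpha, \cC^{\beta-\epsilon})} \lesssim \sup_{i \ge -1} 2^{i(\beta-\epsilon)} \sup_x \|\Delta^i K_N(x,\cdot;\omega)\|_{H^{-\alpha}}\,,
\end{equs}
which reduces the problem to a second-moment estimate on $\|\Delta^i K_N(x,\cdot;\omega)\|_{H^{-\alpha}}^2$, uniformly in $x$, with sufficient decay in $i$.

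For the linear operator $\X^{(k)}_N$ the covariance formula \eqref{eq:covariance} converts this expected squared norm into a Fourier sum of the form $\sum_{m\sim 2^i}\sum_{n\in\Z^{d-k}}\langle n\rangle^{-2\alpha}\langle(m,n)\rangle^{-2}$. The $n$-sum converges exactly when $\alpha > (d-k-2)/2$, and the two regimes of $\beta = \min(-(k-2)/2, \alpha-(d-2)/2)$ correspond to whether this inner sum saturates to a constant (yielding the $\X$-regularity ceiling $-(k-2)/2$) or is still controlled by $\langle m\rangle^{d-k-2-2\alpha}$ (yielding the shifted exponent $\alpha-(d-2)/2$). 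For $\XdotX_N$ and $\Xtwonm^c_N$, Wick's formula produces two contractions, each individually convergent, and a parallel case analysis produces the stated $\beta$-thresholds as the minimum of a free-field ceiling and a regime dictated by $\alpha$. For the melonic operator $\Xtwom^c_N$ one of the two Wick contractions is exactly the divergent one already identified in \eqref{eq:first_renorm_comp}; the counterterm $\mfC^{1,c}_N f$ replaces its summand by the renormalized remainder $\mfR^{1,c}_N$ defined in \eqref{eq:R1}, and the explicit cancellation of $\langle m\rangle^{-2}$ against $\langle m_{\hc}\rangle^{-2}$ produces a gain by discrete Taylor expansion in $m_c$ around $0$. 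This restores summability for $d\le 4$ and gives the improved ceiling $\beta=1/2$ in \eqref{eq:XvXd3}.

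The remaining ingredients are standard. Hypercontractivity in the first or second Wiener chaos upgrades the second-moment bounds to arbitrary $L^p$ moments, a Kolmogorov argument in $(i,x)$ converts pointwise-in-$(i,x)$ moment bounds to pathwise sup bounds at the cost of a further $\epsilon$ drop in the regularity, and time continuity is obtained by estimating $\X_N(t) - \X_N(s)$ using the $|t-s|^{1/2}$ continuity of the heat covariance, giving a parabolic Kolmogorov estimate in $t$. Uniformity in $N$ follows immediately since every summand carries a factor of $\Pi_N$ and is pointwise dominated by its $N = \infty$ counterpart. The main technical obstacle is proving operator norm bounds uniform in $f \in H^\alpha$ rather than pointwise-in-$f$ bounds; the kernel-based approach above handles this in one stroke by factoring $\|f\|_{H^\alpha}$ out of the estimate, reducing everything to a Kolmogorov problem about the (now deterministic-in-$f$) kernel norm, which is bypassable via density since the operators in question are Fourier multipliers up to multiplicative random kernels.
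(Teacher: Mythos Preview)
Your proposal is correct and follows essentially the same route as the paper. The paper's proof is distributed across Sections~\ref{sec:stochasticobjects} and~\ref{sec:diagram}: Lemma~\ref{lem:randomopsob} gives precisely your kernel-based Kolmogorov criterion (Cauchy--Schwarz in the inner variable to factor out $\|f\|_{H^\alpha}$, stationarity to diagonalise, hypercontractivity, then Kolmogorov in time), and the warm-up in Section~6.4 for $\X^{(k)}$ matches your Fourier-sum analysis verbatim, including the three conditions \eqref{eq:lollipopproofBis} that produce the $\min$ in the exponent.

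The only substantive difference is in how the covariance sums for the quadratic operators are organised. You propose a direct case analysis of the Wick contractions (there are three per operator, not two --- see Figures~\ref{fig:Xtwom} and the analogous pictures in Sections~\ref{subsubsec:regX2nm}--\ref{subsubsec:regQuadratic}), while the paper embeds these computations in a systematic stranded/tensor graph framework with a multiscale bound (Corollary~\ref{coro:multiscale}) that reduces each estimate to checking that the superficial degree of divergence $\omega(\cG^i_k)$ of every subgraph is negative. For the small objects in this lemma your direct approach is equivalent and arguably lighter; the paper's machinery pays off for the larger objects ($\XtwoPictwo$, $\cS$, $\pSym$) where enumerating contractions by hand becomes unwieldy and one needs a uniform criterion to identify the worst subgraph.
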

\begin{notation}\label{not:cnotc} 
From now on, the notations $\Xtwom^c$ and $\Xtwonm^c$ always stand for the random operators (and not the random fields defined in Lemma~\ref{prop:stochastic_1}), and we sometimes omit the superscript $c$ when it is understood that the color $c$ is summed on, writing for instance $\Xtwom=\sum_{c\in[d]}\Xtwom^c$. Overloading the notation, we will write
\begin{equs}    \|\Xtwom\|_{\cL (C_TH^{\alpha}(\T),C_T\cC^{\beta-\epsilon}(\T))}=\max_{c\in[d]}\|\Xtwom^c\|_{\cL (C_TH^{\alpha}(\T),C_T\cC^{\beta-\epsilon}(\T))}\,,    
\end{equs}
and
\begin{equs}\|\Xtwonm\|_{\cL(C_TH^{\alpha}(\T^{d-1}),C_T\cC^{\beta-\epsilon}(\T^{d-1}))}=\max_{c\in[d]}\|\Xtwonm^c\|_{\cL(C_TH^{\alpha}(\T^{d-1}),C_T\cC^{\beta-\epsilon}(\T^{d-1}))}\,.
\end{equs}
\end{notation}
\begin{remark}
   The previous estimates show that while $\Xtwom$ and $\Xtwonm$ benefit the $+1/2$ regularizing effect of the non-local product, this is not true of $\XdotX$. 
We obtain the stochastic regularizing effect of the nonlinearity $\cN(f,g,h)$ only when two consecutive arguments of $\cN$ \dash that is $(f,g)$ or/and $(g,h)$ \dash are explicit stochastic objects.
   In $d=4$ this makes $\XdotX$ especially problematic (more so than $\Xtwom$ or $\Xtwonm$).
\end{remark}
\subsection{Closing the fixed point problem}
We now formulate \eqref{eq:DPD2} as a fixed point problem in $C_T\cC^{\frac{3}{2}-}$. 
We will use the following lemma regarding mixed terms involving the solution and stochastic objects. 
\begin{lemma}\label{lem:mixednonlin}
Let $d\in\{3,4\}$. The mixed nonlinearities obey the following bounds:
\begin{equs}
     \Vert \Xvv\Vert_{C_T\cC^{-\frac{1+\epsilon}{2}}}&\lesssim
      \Vert \X^{(d-1)}\Vert_{\cL(C_TH^{\frac32-3\epsilon}(\T),C_T\cC^{-\frac{1}{2}-\frac{\epsilon}{4}}(\T^{d-1}))} \Vert v\Vert_{C_TL^\infty}\Vert v\Vert_{C_T\cC^{\frac{3}{2}-2\epsilon}}\,,\label{eq:upXvv}\\
       \Vert \vXv\Vert_{C_T\cC^{-\frac{1+\epsilon}{2}}}&\lesssim
      \Vert \X^{(1)}\Vert_{\cL(C_TH^{\frac32-3\epsilon}(\T^{d-1}),C_T\cC^{-\frac{1}{2}-\frac{\epsilon}{4}}(\T))} \Vert v\Vert_{C_TL^\infty}\Vert v\Vert_{C_T\cC^{\frac{3}{2}-2\epsilon}}\,,\label{eq:upvXv}
      \\
 \Vert \vvX\Vert_{C_T\cC^{-\frac{1+\epsilon}{2}}}&\lesssim
      \Vert \X^{(1)}\Vert_{\cL(C_TH^{\frac32-3\epsilon}(\T^{d-1}),C_T\cC^{-\frac{1}{2}-\frac{\epsilon}{4}}(\T))} \Vert v\Vert_{C_TL^\infty}\Vert v\Vert_{C_T\cC^{\frac{3}{2}-2\epsilon}}\,,\label{eq:upvvX}
      \\
 \Vert \XXv \Vert_{C_T\cC^{-\frac{1+\epsilon}{2}}}&\lesssim \Vert\Xtwonm\Vert_{\cL(C_TH^{\frac32-3\epsilon},C_T\cC^{-\frac{1}{2}-\frac{\epsilon}{4}})}\Vert 
 v\Vert_{C_T\cC^{\frac{3}{2}-2\epsilon}}
 \,,\label{eq:upXXv}    
 \\
 \Vert \vXX \Vert_{C_T\cC^{-\frac{1+\epsilon}{2}}}&\lesssim \Vert\Xtwom\Vert_{\cL(C_TH^{\frac32-3\epsilon},C_T\cC^{-\frac{1}{2}-\frac{\epsilon}{4}})}\Vert 
 v\Vert_{C_T\cC^{\frac{3}{2}-2\epsilon}}
 \,.\label{eq:upvXX} 
\end{equs}
The $2\epsilon$'s in the regularity of $v$ in the RHS are here for convenience, for a reason that will become clear in when we study $\rmT^4_4$.\end{lemma}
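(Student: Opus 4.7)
For each of the five bounds the strategy is the same: realise the mixed term as the action of one of the random operators of Lemma~\ref{lem:randop1} on an auxiliary function built from $v$, bound the Sobolev norm of that auxiliary function in terms of $\|v\|_{L^\infty}$ and $\|v\|_{\cC^{3/2-2\epsilon}}$, apply the random operator bound slice by slice in the spectator variable, and finally lift the resulting per-slice $\cC^{-1/2-\epsilon/4}$ estimate to a genuine $\cC^{-1/2-\epsilon/2}(\T^d)$ estimate via an anisotropic Littlewood-Paley argument.

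\textbf{Decomposition.} For $\XXv$ and $\vXX$, freezing the spectator coordinate $x_c$ or $x_{\hc}$ gives the direct representations
\begin{equs}
\cN^c(\X, \X, v)(x_{\hc}, x_c) &= \Xtwonm^c\big[v(\cdot, x_c)\big](x_{\hc})\,,\\
\cN^c(v, \X, \X)(x_{\hc}, x_c) - \mfC^{1,c} v(x_{\hc}, x_c) &= \Xtwom^c\big[v(x_{\hc}, \cdot)\big](x_c)\,,
\end{equs}
summed over $c$; the Wick counterterm is absorbed into $\Xtwom^c$. For $\Xvv$, $\vXv$, $\vvX$, one first contracts two copies of $v$ in the unused direction to form a one-dimensional auxiliary function; e.g., for $\Xvv$,
\begin{equ}
\cN^c(\X, v, v)(x_{\hc}, x_c) = \X^{(d-1)}\big[w^c_{x_c}\big](x_{\hc})\,,\quad w^c_{x_c}(y_c) \eqdef \int_{\T^{d-1}} v(y_{\hc}, y_c)\, v(y_{\hc}, x_c)\, \rmd y_{\hc}\,,
\end{equ}
with analogous representations for $\vXv$ and $\vvX$ built on the operator $\X^{(1)}$.

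\textbf{Sobolev bound and operator application.} A Plancherel-in-$y_c$ plus Cauchy-Schwarz-in-$y_{\hc}$ computation yields, uniformly in $x_c$,
\begin{equ}
\|w^c_{x_c}\|_{H^{3/2-3\epsilon}(\T)} \lesssim \|v\|_{L^\infty} \|v\|_{H^{3/2-3\epsilon}(\T^d)}\,,
\end{equ}
and the Besov embedding $\cC^{3/2-2\epsilon}(\T^d) \hookrightarrow H^{3/2-3\epsilon}(\T^d)$ absorbs the $H$-norm into $\|v\|_{\cC^{3/2-2\epsilon}}$; the analogous contracted functions for $\vXv$ and $\vvX$ are treated identically. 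For the direct slices appearing in $\XXv$ and $\vXX$, the uniform-in-spectator Sobolev bound follows from the embedding alone. Applying the random operator bounds of Lemma~\ref{lem:randop1} then gives, uniformly in the spectator variable, a $\cC^{-1/2-\epsilon/4}$ estimate on the output, with the operator norm factor matching the one in each RHS.

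\textbf{Anisotropic lift and main obstacle.} Taking differences in the spectator variable and using that $v \in \cC^{3/2-2\epsilon}$ is Lipschitz in each coordinate (for $\epsilon$ small) shows that the spectator-parameterized map into $\cC^{-1/2-\epsilon/4}$ of the output variable is itself Lipschitz, with norm controlled by the right-hand side of the corresponding bound. A Littlewood-Paley decomposition on $\T^d$, splitting each dyadic block $\Delta^j$ into pieces with dominant frequency in the output direction (estimated via the uniform slice bound) versus in the spectator direction (estimated via the Lipschitz bound, yielding rapid decay), then yields the target $\cC^{-1/2-\epsilon/2}(\T^d)$ bound, the $\epsilon/4$ to $\epsilon/2$ gap providing the required margin. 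This final anisotropic lift is the only non-routine step: the per-slice random operator bound provides regularity only in the output direction, and one must combine it with the smoothness of $v$ in the spectator direction to recover a full $\cC^{-1/2-\epsilon/2}(\T^d)$ estimate.
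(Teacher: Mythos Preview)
Your decomposition and operator-application steps are correct and coincide with the paper's argument: each mixed term is written as a random operator from Lemma~\ref{lem:randop1} acting on a slice of $v$ (or a contracted bilinear in $v$), and the per-slice $\cC^{-1/2-\epsilon/4}$ bound follows exactly as you describe.

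The divergence is in your final ``anisotropic lift''. Your Lipschitz claim is not correct as stated: from $v\in\cC^{3/2-2\epsilon}(\T^d)$ you only get that $x_c\mapsto v(\cdot,x_c)$ is Lipschitz into $L^\infty(\T^{d-1})$, not into $H^{3/2-3\epsilon}(\T^{d-1})$. Interpolation between $\|v(\cdot,x_c)-v(\cdot,x_c')\|_{L^\infty}\lesssim|x_c-x_c'|$ and the uniform $\cC^{3/2-2\epsilon}$ bound only yields $C^{O(\epsilon)}$-H\"older regularity of the spectator map into $\cC^{3/2-3\epsilon}$, hence of the output into $\cC^{-1/2-\epsilon/4}$. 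Your Littlewood--Paley lift can be made to work with that weaker input, but the argument you wrote does not establish what you claim.

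More importantly, the whole regularity-in-spectator step is unnecessary. The paper handles the lift by two clean bilocal Besov embeddings proved in Appendix~\ref{app:bilocbesov}: on the output side, \eqref{eq:bilocbesov-} gives $L^\infty_{x_c}\cC^{-1/2-\epsilon/4}_{x_{\hc}}\hookrightarrow\cC^{-(1+\epsilon)/2}_{x_c,x_{\hc}}$, so mere uniform boundedness in the spectator variable suffices; on the input side, \eqref{eq:bilocbesov+} gives $\cC^{3/2-2\epsilon}_{y_c,y_{\hc}}\hookrightarrow L^\infty_{y_{\hc}}\cC^{3/2-3\epsilon}_{y_c}$, which is exactly the uniform-in-spectator slice bound you need. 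These two embeddings replace your entire last paragraph.
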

\begin{proof}
We only prove the inequalities for one term quadratic in $v$, $\Xvv$,  and one term linear in $v$, $\XXv$. The proofs for the three other terms are very similar (taking into account the renormalization in $\vXX$) and left to the reader. 
The bounds here rely on the random operators estimates from Lemma~\ref{lem:randop1} as well as the bilocal Besov embeddings given in Appendix~\ref{app:bilocbesov}, equations~\eqref{eq:bilocbesov+} and \ref{eq:bilocbesov-}. 

We first prove \eqref{eq:upXvv}. Using the embedding $L^\infty_{x_{c}}\cC_{x_{\hc}}^{-\frac{1}{2}-\frac{\epsilon}{4}}\hookrightarrow\cC_{x_c,x_{\hc}}^{-\frac{1+\epsilon}{2}}$ given in \eqref{eq:bilocbesov-}, we have 
\begin{equs}
    \Vert \Xvv\Vert_{C_T\cC^{-\frac{1+\epsilon}{2}}}&=\sum_{c=1}^d\Vert \cN^c(\X,v,v)(x_c,x_{\hc}) \Vert_{C_T\cC_{x_c,x_{\hc}}^{-\frac{1+\epsilon}{2}}}\\&\lesssim
\sum_{c=1}^d\Vert \cN^c(\X,v,v)(x_c,x_{\hc}) \Vert_{C_TL^\infty_{x_{c}}\cC_{x_{\hc}}^{-\frac{1}{2}-\frac{\epsilon}{4}}}    \\
&\lesssim \Vert v\Vert_{C_TL^\infty}
\sum_{c=1}^d\Vert\X(v(\cdot,y_{\hc}))(x_{\hc}) \Vert_{C_TL^\infty_{y_{\hc}}\cC_{x_{\hc}}^{-\frac{1}{2}-\frac{\epsilon}{4}}} \,.   
\end{equs}
Here, we can use the estimate \eqref{eq:Xd3} on the random tensor $\X^{(d-1)}$:
\begin{equs}
     \Vert \Xvv\Vert_{C_T\cC^{-\frac{1+\epsilon}{2}}}&\lesssim \Vert \X^{(d-1)}\Vert_{\cL(C_TH^{\frac32-3\epsilon},C_T\cC^{-\frac{1}{2}-\frac{\epsilon}{4}})} \Vert v\Vert_{C_TL^\infty}\Vert v(y_c,y_{\hc})\Vert_{C_TL^\infty_{y_{\hc}}\cC_{y_{c}}^{\frac{3}{2}-3\epsilon}}\,.
\end{equs}
Finally, we can conclude using the embedding $\cC^{\frac{3}{2}-2\epsilon}_{y_c,y_{\hc}}\hookrightarrow L^\infty_{y_{\hc}}\cC_{y_{c}}^{\frac{3}{2}-3\epsilon}$ \eqref{eq:bilocbesov+}.

Let us now deal with \eqref{eq:upXXv}. Once again, using the embedding $L^\infty_{x_{c}}\cC_{x_{\hc}}^{-\frac{1}{2}-\frac{\epsilon}{4}}\hookrightarrow\cC_{x_c,x_{\hc}}^{-\frac{1+\epsilon}{2}}$ yields:
\begin{equs}
    \Vert &\XXv \Vert_{C_T\cC^{-\frac{1+\epsilon}{2}}}=\sum_{c=1}^d\Vert \cN^c(\X,\X,v)(x_c,x_{\hc}) \Vert_{C_T\cC_{x_c,x_{\hc}}^{-\frac{1+\epsilon}{2}}}  \\
&\lesssim
\sum_{c=1}^d\Vert \cN^c(\X,\X,v)(x_c,x_{\hc}) \Vert_{C_TL^\infty_{x_{c}}\cC_{x_{\hc}}^{-\frac{1}{2}-\frac{\epsilon}{4}}}  \lesssim
\sum_{c=1}^d\Vert\Xtwonm^c(v(x_{c},\cdot))(x_{\hc}) \Vert_{C_TL^\infty_{x_{c}}\cC_{x_{\hc}}^{-\frac{1}{2}-\frac{\epsilon}{4}}} \,.
\end{equs}
Using \eqref{eq:boundpXtwonm}, we have
\begin{equs}
     \Vert \XXv \Vert_{C_T\cC^{-\frac{1+\epsilon}{2}}}&\lesssim \Vert\Xtwonm\Vert_{\cL(C_TH^{\frac32-3\epsilon},C_T\cC^{-\frac{1}{2}-\frac{\epsilon}{4}})}\Vert v(x_c,y_{\hc})\Vert_{C_TL_{x_c}^\infty\cC_{y_{\hc}}^{\frac32-3\epsilon}}\,,
\end{equs}
and we can conclude using \eqref{eq:bilocbesov+}.
\end{proof}

Define $\bbX^{\LD}_{f,3}$ the tuple of all the random fields defined in Lemma~\ref{prop:stochastic_1} and $\bbX^{\LD}_{o,3}$ the tuple of all the random operators defined in Lemma~\ref{lem:randop1}. 
We define our enhanced noise in $d=3$, $\bbX_3^{\LD}=\bbX^{\LD}_{f,3}\sqcup\bbX^{\LD}_{o,3}$ to be the concatenation of these tuples. It is an element of the product of the Banach spaces in which the random fields and random operators live. In particular, we introduce the following norm:
\begin{equs}\label{eq:normbbX}
     \Vert \bbX_3^{\LD} \Vert_{T}=\max\Big(  \max_{\tau\in\bbX^{\LD}_{f,3}}\Vert \tau\Vert_{C_T\cC^{\beta_\tau-\epsilon}},\max_{\tau\in\bbX^{\LD}_{o,3}}\Vert \tau\Vert_{\cL(C_TH^{\alpha_\tau},C_T\cC^{\beta_\tau-\epsilon})} \Big)\,,
 \end{equs}
where for any stochastic object $\tau$, $\alpha_\tau$ and $\beta_\tau$ are the inner and outer regularities of $\tau$ as stated in Lemmas~\ref{prop:stochastic_1} and \ref{lem:randop1}. Note that if $T<\infty$, then $\Vert\bbX_3^{\LD}\Vert_T$ is in $L^p(\Omega)$ for all $1\leqslant p<\infty$. We write $A\lesssim_{\bbX^{\LD}_3}B$ whenever there exists two positive constants $C$ and $c$ such that $A \leqslant C \Vert \bbX^{\LD}_3\Vert_T^c B$. We this notation, we have the following proposition: 
\begin{proposition}\label{prop:3dfixedpoint}
For any $v_0\in\cC^{\frac32-\epsilon}(\T^3)$, there exists a random blow-up time $\Bar{T}$ such that equation \eqref{eq:DPD2} with initial condition $v(0)=\Pi_Nv_0$ admits uniformly in $N$ a unique solution $v$ in $C\big([0,\Bar{T}),\cC^{\frac32-\epsilon}(\T^3)\big)$. This solution is maximal, and we have $\lim_{t\uparrow \Bar{T}} \Vert v(t)\Vert_{\cC^{\frac32-\epsilon}} =+\infty$. Moreover, if $T<\Bar{T}$, the solution on $[0,T]$ depends continuously on $x$ and on the enhanced noise set $\bbX^{\LD}_3$ (w.r.t the topology of $\Vert  \cdot \Vert_{T}$). 
\end{proposition}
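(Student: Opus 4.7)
The plan is to recast \eqref{eq:DPD2} in mild form as
\[
v(t) = P_t \Pi_N v_0 \;-\; \cL^{-1}\bigl[\vvv + \Xvv + \vXv + \vvX + \XXv + \XvX + \vXX + \Xthree\bigr](t),
\]
and run a Banach fixed-point argument on a ball $B_R \subset C_T\cC^{3/2-\epsilon}(\T^3)$ for $T$ small enough, chosen as a function of $\|v_0\|_{\cC^{3/2-\epsilon}}$ and $\|\bbX_3^{\LD}\|_T$. The key analytic tool is the standard parabolic Schauder estimate
\[
\|\cL^{-1}F\|_{C_T\cC^{\alpha+2-2\theta}} \;\lesssim\; T^{\theta}\,\|F\|_{C_T\cC^{\alpha}},
\]
valid for any small $\theta>0$, which converts a bound on the right-hand side in $\cC^{-(1+\epsilon)/2}$ into a bound in $\cC^{3/2-\epsilon}$ (adjusting $\epsilon$ harmlessly) together with a free factor $T^{\epsilon/4}$ to close the contraction.

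The second step is to estimate every nonlinear term of \eqref{eq:DPD2} in $C_T\cC^{-(1+\epsilon)/2}$, polynomially in $\|v\|_{C_T\cC^{3/2-2\epsilon}}$ and $\|\bbX_3^{\LD}\|_T$. The five mixed terms $\Xvv,\vXv,\vvX,\XXv,\vXX$ are covered directly by Lemma~\ref{lem:mixednonlin}. The sensitive term $\XvX=\XdotX(v)$ is handled by the random operator estimate \eqref{eq:XdotX}: for $d=3$ and $\alpha = 3/2-\epsilon$ the output regularity is $\min(0,\,1/2-\epsilon)=0$, so $\XvX \in C_T \cC^{-\epsilon} \hookrightarrow C_T\cC^{-(1+\epsilon)/2}$. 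The cubic $\vvv$ is purely deterministic: the non-local pairing $\cN(\bigcdot,\bigcdot,\bigcdot)$ maps $(\cC^{3/2-\epsilon})^{3}$ into a space of regularity at least $\cC^{1-\epsilon}$ because all three factors have positive regularity and the pairing involves a non-local integration in one variable, so no renormalisation nor random-operator trick is needed. Finally, the purely stochastic driver $\Xthree$ lies in $C_T\cC^{-1/2-\epsilon}$ by \eqref{reg:nonlocal_wickcube}.

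Assembling these ingredients, the mild-formulation map $\Phi\colon v \mapsto \mathrm{RHS}$ satisfies
\[
\|\Phi(v)\|_{C_T\cC^{3/2-\epsilon}} \;\leqslant\; C\|v_0\|_{\cC^{3/2-\epsilon}} \;+\; C\,T^{\epsilon/4}\bigl(1 + \|\bbX_3^{\LD}\|_T^{c}\bigr)\bigl(1 + \|v\|_{C_T\cC^{3/2-\epsilon}}^{3}\bigr),
\]
for constants $C,c$ independent of $N$. Setting $R = 2C\|v_0\|_{\cC^{3/2-\epsilon}}$ and choosing $T$ small enough depending on $R$ and $\|\bbX_3^{\LD}\|_T$, the map $\Phi$ sends $B_R$ into itself. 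The same multilinear estimates applied to differences $v_1-v_2$ (every nonlinearity is at most cubic in $v$ and affine in the stochastic data) give contractivity after possibly shrinking $T$. All constants are uniform in $N$ because the bounds of Lemmas~\ref{prop:stochastic_1}, \ref{lem:randop1} and \ref{lem:mixednonlin} are. Maximality is the standard blow-up alternative: the local existence time depends only on $\|v(t_0)\|_{\cC^{3/2-\epsilon}}$ and $\|\bbX_3^{\LD}\|_{T}$ (the latter being finite a.s.), so the solution can be iteratively extended until $\|v(t)\|_{\cC^{3/2-\epsilon}}$ blows up. Continuous dependence on $v_0$ and on $\bbX_3^{\LD}$ (in the topology of $\|\bigcdot\|_T$) on any $[0,T]$ with $T<\bar T$ follows from a Gronwall-type stability estimate applied to the difference of two solutions with different data, using exactly the same multilinear bounds together with the local Lipschitz dependence of $\Phi$ on $\bbX_3^{\LD}$.

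The genuinely delicate point is the $\XvX$ term. A naive Da Prato--Debussche approach, which would try to define $\X\otimes\X$ as a distribution on $\T^{3}\times\T^{3}$ and then pair it with $v$, would require $v\in\cC^{1+}$ because $\X\otimes\X$ has regularity only $-1-$; this is strictly worse than the $\cC^{3/2-\epsilon}$ class in which we wish to close the fixed point. The workaround is precisely \textbf{not} to materialise $\X\otimes\X$ as a distribution, but to keep the non-local integration inside a random linear operator $\XdotX\colon H^\alpha\to\cC^{\beta-\epsilon}$ controlled by Lemma~\ref{lem:randop1}. Once this conceptual point is accepted, every remaining estimate is routine parabolic fixed-point mechanics.
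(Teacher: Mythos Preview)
Your proposal is correct and follows essentially the same approach as the paper: mild formulation, Schauder estimate to extract a factor $T^{\epsilon/4}$, Lemma~\ref{lem:mixednonlin} for the five mixed terms, the random operator bound \eqref{eq:XdotX} for $\XvX$, a trivial pointwise bound for $\vvv$, and \eqref{reg:nonlocal_wickcube} for $\Xthree$, followed by the standard contraction/iteration/blow-up alternative. The paper's only cosmetic difference is that it bounds $\vvv$ in $L^\infty$ rather than invoking any positive-regularity statement, and it targets $C_T\cC^{-(1+\epsilon)/2}$ directly for $\XvX$ rather than going through $\cC^{-\epsilon}$, but these are immaterial.
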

\begin{proof}
    Let us consider the fixed point map of \eqref{eq:DPD2} in its mild formulation:
   \begin{equs}
       \Phi(v)&=P_tx-\int_0^tP_{t-s}\big(\vvv+\Xvv+\vXv+\vvX+\XXv+\XvX+\vXX-\Xthree\big)(s)\rmd s\,.
   \end{equs}
   We aim to show that it maps a ball of $C_T\cC^{\frac{
    3
    }{2}-\epsilon}$ to itself, and that it is a contraction. While the proof of the former is left to the reader, the latter is shown by evaluating the $C_T\cC^{\frac{3}{2}-\epsilon}$ norm of $\Phi(v_1)-\Phi(v_2)$. For the sake of notation, in the proof, we always let $v$ stand for $v_1-v_2$. Using the Schauder estimate \eqref{eq:schauder2}, one has 
\begin{equs}
    \Vert \Phi(v_1)-&\Phi(v_2)\Vert_{C_T\cC^{\frac{3}{2}-\epsilon}}   \lesssim T^{\frac{\epsilon}{4}}\Vert \vvv+\Xvv+\vXv+\vvX+\XXv+\XvX+\vXX\Vert_{C_T\cC^{-\frac{1+\epsilon}{2}}}\,.
\end{equs}
Let us first deal with the non-local third power of $v$:
\begin{equs}\label{eq:boundvvv}
 \nonumber  \Vert \vvv\Vert_{C_T\cC^{-\frac{1+\epsilon}{2}}} &\lesssim  \Vert \vvv\Vert_{C_TL^\infty}\lesssim \Vert v\Vert_{C_TL^\infty}^3\\&\lesssim\big(1+\Vert v_1\Vert_{C_T\cC^{\frac{3}{2}-\epsilon}}^2+\Vert v_2\Vert_{C_T\cC^{\frac{3}{2}-\epsilon}}^2\big)\Vert v_1-v_2\Vert_{C_T\cC^{\frac{3}{2}-\epsilon}}\,.
\end{equs}
We can now turn to the mixed terms. Using \eqref{eq:upXvv} to \eqref{eq:upvXX}, we obtain
\begin{equs}
    \Vert 
    \Xvv+\vXv+\vvX+\XXv+\vXX
    \Vert_{C_T\cC^{-\frac{1+\epsilon}{2}}}&\lesssim_{\bbX^{\LD}_3}\big(1+\Vert v \Vert_{C_TL^\infty}\big)\Vert v\Vert_{C_T\cC^{\frac{3}{2}-\epsilon}}\\&\lesssim_{\bbX^{\LD}_3}
   \big(1+\Vert v_1\Vert_{C_T\cC^{\frac{3}{2}-\epsilon}}+\Vert v_2\Vert_{C_T\cC^{\frac{3}{2}-\epsilon}}\big)\Vert v_1-v_2\Vert_{C_T\cC^{\frac{3}{2}-\epsilon}}\,,
\end{equs}
Finally, by the random operator estimate \eqref{eq:XvXd3}, 
\begin{equs}
       \Vert \XvX\Vert_{C_T\cC^{-\frac{1+\epsilon}{2}}}&\lesssim \Vert\XdotX\Vert_{C_T\cL(H^{\frac32-\epsilon},\cC^{-\frac{1+\epsilon}{2}})}  \Vert v_1-v_2\Vert_{C_T\cC^{\frac32-\epsilon}}\,.
\end{equs}
We have thus obtained that 
\begin{equs}
       \Vert \Phi(v_1)-\Phi(v_2)\Vert_{C_T\cC^{\frac{3}{2}-\epsilon}}
       \lesssim_{\bbX^{\LD}_3}
     T^{\frac{\epsilon}{4}}     
     \big(1+\Vert v_1\Vert_{C_T\cC^{\frac32-\epsilon}}^2+\Vert v_2\Vert_{C_T\cC^{\frac32-\epsilon}}^2\big)\Vert v_1-v_2\Vert_{C_T\cC^{\frac32-\epsilon}}\,.
\end{equs}
Hence, by choosing $T$ small enough, depending only on $\Vert\bbX^{\LD}_3\Vert_T$ and $\Vert v_0\Vert_{\cC^{\frac32-\epsilon}}$, $\Phi$ is indeed a contraction. We can iterate the argument until the solution blows up. Continuity in the data set can be proven in the same way, by studying the difference between two solutions with different data sets, and the proof that $\Phi$ maps a ball of $C_T\cC^{\frac{3}{2}-\epsilon}$ into itself is totally similar. 
\end{proof}

\section{Local well-posedness and for dynamical \TitleEquation{\rmT^4_4}{T44} and tightness of the measure}\label{sec:T44}
While shifting $\phi$ by $\X$ was enough in the previous section, many of the estimates break down when $d=4$.  
The worst stochastic term in \eqref{eq:DPD2} is $\Xthreeloc$ which in $d=4$ is of regularity $-2-$, this stops us from solving \eqref{eq:DPD2} in a space of positive regularity. 
There are two steps we take to extend our analysis to $\rmT^4_4$:
\begin{itemize}
\item We can remove the term $\Xthreeloc$ by doing a second shift of $\phi$ by $\underline{\cL}^{-1}\Xthreeloc$, this introduces new stochastic nonlocal products of the form $\cN(\bigcdot,\X,\underline{\cL}^{-1}\Xthreeloc$) and reveals to us the need to introduce a second renormalization constant (see Lemma~\ref{def:CT2}).  
   
\item Even after we introduce this additional renormalization to control these new products, we will see the renormalized products of the form $\X\times\X\times\underline{\mcL}^{-1}\Xthreeloc$ have regularity $-1-$.
This turns out to be a problem for our $L^2$ estimates used for tightness since the pairing of this object with $v$ requires more regularity from $v$ than is given by its $H^1$ norm \dash this good term only allows us to pair $v$ with something of regularity $-1+$ see the proof of \eqref{eq:Sv}.
To overcome this we introduce third shift (see \eqref{eq:third_tree}) after which the remaining purely stochastic terms are of regularity $-1/2-$.
\item We have one more challenge for local well-posedness, the estimate \eqref{eq:XdotX} in $d=4$ shows that if $v$ is of regularity $\alpha\in(\frac{1}{2},\frac{3}{2})$, then $\cL^{-1}\cN(\X,v,\X)$ is of regularity $\alpha-\epsilon$ for $\epsilon>0$, so that one can not close the fixed point. This is why we also reinject the remainder equation into itself to make it a well-posed remainder equation (see the discussion at the beginning of Section~\ref{subsec:4dfixedpoint}, and in Remark~\ref{rem:12}).
\end{itemize}

This second shift mentioned above is given by 
\begin{equs}
   \Pictwo_{N}\eqdef\underline{\cL}^{-1}\Xthreeloc_{N}\,.
\end{equs}
Note that $\Pictwo$ is stationary in space and time, and by using control over $\Xthreeloc$ and the Schauder estimates \eqref{eq:schauder2} it can be controlled, as $N\uparrow \infty$, as a space-time random field with regularity $-\frac{3d-12}{2}-\epsilon = -\epsilon$. We also sometimes use the notation $\Pictwo^c=\underline{\mcL}^{-1}\big(\cN^c(\X,\X,\X)-\mfC^{1,c}\X\big)$.

The main result of this section is then the following. 
\begin{theorem}\label{thm:4d} 
For $d = 4$, there exist constants $a_{N}$ such that one has uniform in $N$ control of the local in time solutions to \eqref{eq:regTensorLD} with initial condition of the form $\X_N(0) - \Pictwo_N(0) +  \Pi_{N}v(0)$ with $v(0) \in \cC^{\frac{3}{2}-\epsilon}$. 
In particular, using the ansatz $\phi=\X_N-\Pictwo_N+\Picthree_N+X+Y$ for the solution to equation with cut-off $N$, where $\Picthree_{N}$ is the explicit stochastic object \eqref{eq:third_tree} with regularity (uniform in $N$) of $1-$, one has that inexplicit remainders $(X,Y)$ converge, as $N\uparrow \infty$, in $C\big([0,\Bar{T}),\cC^{\frac{3}{2}-2\epsilon}(\T^4)\big)\times C\big([0,\Bar{T}),\cC^{\frac{3}{2}-\epsilon}(\T^4)\big)$ where $\Bar{T} \in (0,\infty]$ is a random existence time.  
\end{theorem}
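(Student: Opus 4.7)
The plan is to follow the strategy of Section~\ref{sec:3d} but with two further shifts beyond the free field and a careful split $v = X + Y$ of the remainder, designed to overcome the three difficulties listed before the statement. Plugging $\phi_N = \X_N - \Pictwo_N + \Picthree_N + X + Y$ into \eqref{eq:regTensorLD} and expanding $\cN_N(\phi_N,\phi_N,\phi_N)$ produces a long list of terms that I would sort by the number of occurrences of the inexplicit remainder $v = X+Y$. The purely stochastic cubic $\cN(\X,\X,\X) - \mfC^1_N \X = \Xthreeloc$ is produced exactly as in Section~\ref{sec:3d} and is absorbed into the equation by the second shift, since $\Pictwo_N = \underline{\cL}^{-1}\Xthreeloc_N$ by definition. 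The terms of the form $\cN(\X,\X,\Pictwo)$ and permutations then contain a new logarithmic divergence coming from a melonic (snowball-type) Wick pairing that I would cancel by a second renormalization constant $\mfC^2_N$, analogous to the snowball counterterm in Figure~\ref{fig:powercounting}. Once this second renormalization is made, the remaining purely stochastic forcing of regularity $-1-$ is absorbed by the third shift $+\Picthree_N$, whose explicit form is chosen precisely so that the leftover purely stochastic forcing has regularity $-1/2-$.

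After these absorptions, the equation for $v$ has the schematic form
\begin{equation*}
\cL v \;=\; -\,\vvv \;-\; \sum_{\ast,\ast\ast} \cN(\ast,v,\ast\ast) \;-\; \sum_{\ast}\cN(v,\ast,v) \;-\;\cdots\;-\;(\text{purely stochastic of regularity }-1/2-),
\end{equation*}
where $\ast,\ast\ast$ range over $\{\X,\Pictwo,\Picthree\}$ and each symbol may appear in any slot. Every mixed non-linearity with at least one $v$ in it, and every doubly-stochastic operator acting on a single $v$, is controlled by a random-operator estimate in the spirit of Lemma~\ref{lem:randop1}; the necessary bounds for the new stochastic fields and operators built from $\Pictwo_N$ and $\Picthree_N$, uniform in $N$, would be supplied in a separate section of stochastic estimates. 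The decisive obstruction is $\XdotX$ in $d=4$: by \eqref{eq:XdotX} this operator loses two derivatives, so that if $v\in C_T\cC^{\alpha}$ then $\cL^{-1}\XdotX(v)$ is again only of regularity $\alpha$ with no Schauder gain, and a direct contraction on a single space cannot close. The split $v = X + Y$ together with the reinjection trick described in the third bullet before the theorem are designed to fix this: one reinjects the mild formulation of $v$ once into $\XdotX(v)$, converting it into a sum of terms $\XdotX(\cL^{-1}(\cdots))$ in which the inner argument carries an extra $\cL^{-1}$ and hence gains $2-\epsilon$ regularity, at the cost of higher-multilinearity random objects. The component $X\in C_T\cC^{3/2-2\epsilon}$ is defined to carry this reinjected branch, while $Y\in C_T\cC^{3/2-\epsilon}$ carries the rest.

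With this setup, the proof is completed by a Banach fixed-point argument essentially identical to Proposition~\ref{prop:3dfixedpoint}. Writing the coupled system for $(X,Y)$ in mild form and combining the Schauder estimate \eqref{eq:schauder2}, the bilocal Besov embeddings of Appendix~\ref{app:bilocbesov}, the mixed-product bounds of Lemma~\ref{lem:mixednonlin} (extended to the enlarged enhanced noise), and an enhanced-noise bound $\Vert\bbX^{\LD}_4\Vert_T<\infty$ packaged as in \eqref{eq:normbbX} but enlarged to include the new fields and random operators coming from $\Pictwo_N$ and $\Picthree_N$, the short-time factor $T^{\epsilon/4}$ produces a strict contraction on a ball of $C_T\cC^{3/2-2\epsilon}(\T^4)\times C_T\cC^{3/2-\epsilon}(\T^4)$ for $T$ depending only on $\Vert\bbX^{\LD}_4\Vert_T$ and the norm of $v(0)$. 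Iteration yields the maximal existence time $\bar T$, the usual blow-up criterion, and continuity in the enhanced noise delivers the $N\uparrow\infty$ convergence of $(X,Y)$.

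The hardest step I expect is the bookkeeping around the reinjection and the third shift: one has to check that substituting the mild form of $v$ into $\XdotX(v)$ and expanding produces only stochastic objects which themselves obey uniform-in-$N$ estimates compatible with the target regularities, and that the resulting enlarged enhanced noise set is finite (rather than generating a new term of the same problematic type at each iteration). The melonic power-counting of Remark~\ref{remark:DPD} and equation~\eqref{eq:perturbative_degree_tensor} is what guarantees that this procedure terminates and that the second renormalization $\mfC^2_N$ suffices; verifying these combinatorial facts for every term produced by the ansatz is the main technical load of the argument.
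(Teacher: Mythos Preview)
Your overall architecture matches the paper's: introduce $\mfC^2_N$, shift by $\Pictwo$ and then $\Picthree$ so that the residual stochastic forcing $\cS$ has regularity $-\tfrac12-$, split $v=X+Y$ with $X$ carrying the $\pXdotX$ branch, reinject once, and close a fixed point on $C_T\cC^{3/2-2\epsilon}\times C_T\cC^{3/2-\epsilon}$. The paper does exactly this (Proposition~\ref{prop:4dfixedpoint}), so at that level you are on the right track.

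However, your stated reason for why the reinjection helps is wrong, and this is the decisive point of the whole argument. You write that after reinjecting, ``the inner argument carries an extra $\cL^{-1}$ and hence gains $2-\epsilon$ regularity.'' If that were the mechanism, the argument would \emph{not} close: by \eqref{eq:boundpXdotX} in $d=4$, $\pXdotX$ loses two derivatives, $\cL^{-1}$ returns $2-\epsilon'$, so $\pXdotX\circ\cL^{-1}\circ\pXdotX$ acting on $X\in\cC^{3/2-2\epsilon}$ deterministically lands in $\cC^{-1/2-2\epsilon-\epsilon'}$, and after Schauder you are at $\cC^{3/2-2\epsilon-2\epsilon'}$, strictly worse than where you started. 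Iterating this loses $\epsilon'$ each time and never closes; this is precisely the obstruction spelled out before Lemma~\ref{lem:randop_4}.

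What actually makes the argument work is Lemma~\ref{lem:randop_4}: the composite $\pSym_N = \pXdotX_N\circ\cL^{-1}\circ\pXdotX_N$, treated as a \emph{single} random operator and estimated stochastically, satisfies $\pSym:C_TH^\alpha\to C_T\cC^{\min(-1/2,\alpha-1)-\epsilon}$, losing only \emph{one} derivative rather than two. This is a genuine probabilistic gain over composing the deterministic operator bounds, and it is what allows $\cL^{-1}\pSym$ to map $\cC^{3/2-2\epsilon}$ back into itself with the small factor $T^{\epsilon/4}$. Your proposal never isolates $\pSym$ as the key object or states its improved mapping property; the vague phrase ``higher-multilinearity random objects'' does not capture that a specific quartic random operator with a non-trivial stochastic estimate is the crux. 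Without this, the hardest step you flag at the end (checking the reinjected terms have compatible regularities) cannot be completed.
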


\subsection{Second renormalization constant}\label{subsect:secondrenorm}

Since we aim to equally subtract $\Pictwo$ from the solution, as we did for $\X$, we have to take into account some potential divergences between $\X$ and $\Pictwo$. Indeed, it turns out that nonlinearities of the form $\cN(\bigcdot,\X,\Pictwo)$ or $\cN(\bigcdot,\Pictwo,\X)$ are divergent. To renormalize them, let us introduce a second renormalization constant to tame the divergence caused by the melonic pairings of $\X$ with $\Pictwo$. 

We now make precise what we mean by the contribution from the melonic pairing. 
We write
\begin{equs}
    \E[\cN^c(\phi,\X,\Pictwo^{c'})]&=    \E_{(1234)}[\cN^c(\phi,\X^1,\underline{\mcL}^{-1}\big(\cN^{c'}(\X^2,\X^3,\X^4)-\mfC^{1,c'}\X^2\big))]
    \\&= \E_{(12)(34)}[\cN^c(\phi,\X,\Pictwo^{c'})]+\E_{(13)(24)}[\cN^c(\phi,\X,\Pictwo^{c'})]+\E_{(14)(23)}[\cN^c(\phi,\X,\Pictwo^{c'})]\,,
\end{equs}
In the first equality above we view the four instances of the white noise as four different random variables (distinguished by their superscripts) that are equal almost surely. 
In the second equality, we work with two independent copies of the white noise \dash in $\E_{(12)(34)}$ we have $\xi^{i} = \xi^{j}$ almost surely for $(ij) \in \{ (12),(34)\}$ but $\xi^{1} \slash \xi^{2}$ is independent of $\xi^{3}\slash \xi^{4}$.
The notation $\E_{(13)(24)}$ is defined analogously \dash the second equality is then just a cumbersome way to write Wick's rule. 
The term with subscript $(12)(34)$ above is the \textit{melonic part} of $\E[\cN^c(\phi,\X,\Pictwo^{c'})]$. 

We then define, for $d\geqslant2$ and $c\in[d]$,
\begin{equs}    \mfC^{2,c}_{N}(d)&\eqdef 
\E_{(12)(34)}[\cN_N^c(1,\X_{N}(t),\Pictwo_{N}(t))(x)]+ 
 \E_{(12)(34)}[\cN_N^c(1,\Pictwo_{N}(t),\X_{N}(t))(x)]\,,
\end{equs}
where for the second term we are using the convention
\[
 \E[\cN_N^c(\phi,\Pictwo_{N}(t),\X_{N}(t))(x)]
=
 \E_{(1234)}[\cN^c(\phi,\underline{\mcL}^{-1}\big(\cN^{c'}(\X^2,\X^3,\X^4)-\mfC^{1,c'}\X^2\big),\X^1)]\;.
\]
By stationarity $\mfC^{2,c}_N (d)$ does not depend on $(t,x)$. 
The renormalized
products are given by setting for any $\phi$ (which one can view as deterministic)
\begin{equs}
    \cN^c_N(\phi,\X_N,\Pictwo_N)-\frac{1}{2}\mfC^{2,c}_N (d)\phi\, \text{  and  }\,\cN^c_N(\phi,\Pictwo_N,\X_N)-\frac{1}{2}\mfC^{2,c}_N (d)\phi\,.
\end{equs}

\begin{lemma}\label{def:CT2}
For $d\geqslant2$ and $c\in[d]$, 
\begin{equs}
 \mfC^{2,c}_{N}(d)= \sum_{c'\neq c}\sum_{m_{\hc}\in\Z_N^{d-1}}
 \sum_{n_{\hc}\in\Z^{d-1}}
\frac{1}{\langle m_{\hc}\rangle^4}\mfR^{1,c'}_{N}(\chi^{c'}(n,m))\,,
\end{equs}
where $\mfR^1_{N}$ is as defined in equation~\eqref{eq:R1}. Unlike $\mfC^{1,c}_{N}$,  $\mfC^{2,c}_{N}(d)$ does depend on $c$. Writing
\begin{equ}\label{def:CT2_formula2}
\mfC^2_{N}(d)\eqdef \sum_{c=1}^d \mfC^{2,c}_{N}(d)\,,
\end{equ}
we have, for $d\in\{2,3,4\}$ and any $\phi,f\in\cC^\infty(\T^d)$, 
\begin{equs}
    \sup_{N\in\N}\E[ \big(\cN_N(\phi,\X_{N},\Pictwo_{N})-\frac12\mfC^2_{N}\phi \big) (f)^2 ]&<\infty\,,\\
    \sup_{N\in\N}\E[\big(\cN_N(\phi,\Pictwo_{N},\X_{N})-\frac12\mfC^2_{N}\phi \big) (f)^2  ]&<\infty\,.
\end{equs}
Moreover, the same statement holds for the four other ways of substituting $\phi$, $\X_{N}$ and $\Pictwo$ into $\cN_{N}$ (with no renormalization).
We sometimes suppress $d$ from the notation, just writing $\mfC^2_N$.
\end{lemma}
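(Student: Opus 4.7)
The plan is to mirror the proof of Lemma~\ref{lem:CT1}: first verify the Fourier formula for $\mfC^{2,c}_{N}(d)$ directly, then obtain the uniform-in-$N$ second-moment bounds by a Wiener-chaos decomposition together with hypercontractivity, cancelling only the melonic chaos-$0$ divergence with the local counterterm $\tfrac12\mfC^{2}_{N}\phi$.

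For the Fourier identity, I would write $\Pictwo^{c'}_{N}$ in Fourier space as
\[
\widehat{\Pictwo^{c'}_{N}}_{q}(t) = \int_{-\infty}^{t} e^{-(t-s)\langle q\rangle^{2}}\, \widehat{\bigl(\cN^{c'}_{N}(\X,\X,\X) - \mfC^{1,c'}_{N}\X\bigr)}_{q}(s)\, \mathrm{d}s,
\]
and apply Wick's rule to $\cN^{c}_{N}(1,\X^{1}(t),\Pictwo^{c'}(t))(x)$ along the melonic pairing $(12)(34)$. The internal $(34)$ contraction, combined with the built-in subtraction $-\mfC^{1,c'}_{N}\X^{2}$ inside $\Pictwo^{c'}$, should reproduce the renormalized kernel $\mfR^{1,c'}_{N}$ of \eqref{eq:R1} evaluated at $\chi^{c'}(n,m)$, exactly as in the computation leading to \eqref{eq:renorm1}, with $n_{\hc}$ as the free internal sum. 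The outer $(12)$ contraction produces the covariance $e^{-(t-s)\langle q\rangle^{2}}/\langle q\rangle^{2}$ from \eqref{eq:covariance}, and combining this with the heat kernel $e^{-(t-s)\langle q\rangle^{2}}$ from $\underline{\mcL}^{-1}$ and integrating in $s$ yields a factor $1/(2\langle q\rangle^{4})$. The color combinatorics of the nested non-local vertices $\cN^{c}, \cN^{c'}$ should force $q=m_{\hc}$ and restrict the color sum to $c'\neq c$, while adding the symmetric contribution from $\cN^{c}_{N}(1,\Pictwo(t),\X(t))$ cancels the remaining $\tfrac12$, producing the claimed formula. The $c$-dependence of $\mfC^{2,c}_{N}(d)$ then comes from the asymmetric role of $c$ (outer vertex) versus $c'$ (inner vertex) in the color structure, and \eqref{def:CT2_formula2} follows by summation over $c$.

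For the second-moment bounds I would decompose $\cN_{N}(\phi,\X,\Pictwo) = \sum_{k=0,2,4} \cN_{N}(\phi,\X,\Pictwo)^{(k)}$ into homogeneous Wiener chaoses; by orthogonality and hypercontractivity it suffices to bound $\E[(\tau^{(k)}(f))^{2}]$ for each $k$. For $k=2,4$ the Fourier integrands should be uniformly summable in $N$ for $d\leqslant 4$ by the non-melonic power-counting \eqref{eq:perturbative_degree_tensor} (since any Wick pairing contributing to these chaoses is non-melonic, giving $\delta(G)\geqslant 1$ and extra decay), exactly as for $\cN_{N}(\X,\psi,\X)^{(2)}$ in Lemma~\ref{lem:CT1}. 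For the chaos-$0$ piece, the previous step identifies the sole divergence as the melonic one, which is precisely cancelled by subtracting $\tfrac12\mfC^{2}_{N}\phi$; the residual admits a Fourier representation involving the kernel $\mfR^{1,c'}_{N}$ multiplied by $\langle m_{\hc}\rangle^{-4}$, summable for $d\leqslant 4$ by the same counting used in \eqref{eq:renorm1} (the extra $\langle m_{\hc}\rangle^{-4}$ decay comfortably accommodates the additional internal sum over $n_{\hc}$). The bound for $\cN_{N}(\phi,\Pictwo,\X) - \tfrac12\mfC^{2}_{N}\phi$ follows by the same argument after relabelling pairings.

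For the four remaining orderings of $(\phi,\X,\Pictwo)$ into $\cN^{c}$ the external $\phi$ no longer occupies the melonic slot of the outer vertex relative to the $(\X,\Pictwo)$ pair, so the would-be divergent Wick pairings correspond to Feynman graphs with $\delta(G)\geqslant 1$, hence superficially convergent by \eqref{eq:perturbative_degree_tensor}; these contributions are uniformly bounded in $N$ without renormalization by direct Fourier estimates as in the non-melonic cases of Lemma~\ref{lem:CT1}. The main obstacle I expect is the combinatorial bookkeeping in the Fourier step: correctly isolating the single melonic Wick pairing that produces the divergent chaos-$0$ contribution, tracking the color indices so that the subtraction $-\mfC^{1,c'}_{N}\X$ baked into $\Pictwo$ recombines with the outer $(12)$ contraction into the clean kernel $\mfR^{1,c'}_{N}$, and verifying the constraints $c'\neq c$ and $q=m_{\hc}$ imposed by the nested colored vertex structure. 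Once this Fourier identification is pinned down, the remaining summability and second-moment estimates reduce to routine adaptations of Lemma~\ref{lem:CT1}.
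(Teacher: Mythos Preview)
Your proposal follows the same approach as the paper's proof: compute the chaos-$0$ expectation of $\cN^c_N(\phi,\Pictwo,\X)$ in Fourier space, split via Wick's rule into melonic and non-melonic pairings, show the non-melonic contributions ($\mathfrak{A}_2,\mathfrak{A}_3$ in the paper's notation) are convergent, and subtract the melonic divergence at zero external mode $m_c=0$; the formula then drops out by setting $\phi=1$. Your sketch of the second-moment bounds via chaos decomposition and hypercontractivity is in fact more explicit than what the paper records here (the paper details only the expectation and defers the full stochastic estimates to later sections), but the structure you outline is correct.

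One small imprecision worth flagging: you say the color combinatorics ``force'' the restriction $c'\neq c$. In the paper's computation the sum over $c'$ initially includes $c'=c$, but in that case the external mode entering $\mfR^{1,c'}_N$ has vanishing $c'$-component, and by the very definition \eqref{eq:R1} one has $\mfR^{1,c'}_N(p_{\hat c'},0)=0$. So the $c'=c$ term is present but identically zero---the first renormalization already kills the melonic self-pairing of $\X$ with $\Pictwo^c$---rather than being excluded by a combinatorial constraint. This affects only the narrative, not the final formula.
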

\begin{proof}
Let us pause one moment to prove that $\mfC^{2}$ indeed renormalizes products of the form $\cN(\phi,\X,\Pictwo)$ and $\cN(\phi,\X,\Pictwo)$. This will be the occasion to verify that it is correctly defined, and that its second expression is correct. In the proof, to lighten the notations, we drop the dependence of the stochastic objects in $N$. Let us start from 
\begin{equs}
     \E&[\cN_N^c(\phi,\Pictwo(t),\X(t))(x)]\\
     &=
\sum_{m,n\in\Z_N^{d}}e^{-\imath
x\cdot m}\hat{\phi}_{-m_{\hc},-n_c}\E[\hat{\Pictwo}_{n}(t)\hat{\X}_{-n_{\hc},-m_c}(t)]  \\&=\sum_{m,n,p\in\Z_N^{d}}\sum_{c'=1}^de^{-\imath
x\cdot m}\hat{\phi}_{-m_{\hc},-n_c}\int_{-\infty}^t\rmd s\, e^{-(t-s)\angp{n}{2}}\\&\quad\E[\big(
\hat{\X}_{p_{\hat{c}'},n_{c'}}(s)\hat{\X}_{-p}(s)
\hat{\X}_{n_{\hat{c}'},p_{c'}}(s)-\mfC^{1,c'}_{N}\hat{\X}_{n}(s)
\big)\hat{\X}_{-n_{\hc},-m_c}(t)]
\\&=\sum_{m,n,p\in\Z_N^{d}}\sum_{c'=1}^de^{-\imath
x\cdot m}\hat{\phi}_{-m_{\hc},-n_c}\int_{-\infty}^t\rmd s\, e^{-(t-s)\angp{n}{2}}\big(\mathfrak{A}_{1}+\mathfrak{A}_{2}+\mathfrak{A}_{3}\big)\,,
\end{equs}
We expand the expectation as the sum of three terms $\mathfrak{A}_{1}$, $\mathfrak{A}_{2}$ and $\mathfrak{A}_{3}$ according to whether the fourth noise contracts with respectively third one, the second one, or the first one. $\mathfrak{A}_{1}$ corresponds to the melonic part of the expectation and includes the counterterm $\mfC^{1,c'}$. 
More explicitly, 
\begin{equs}    \mathfrak{A}_{1}=\delta_{n_{c},m_{c}}\delta_{p_{c'},n_{c'}}
    \frac{e^{-(t-s)\angp{n}{2}}}{\angp{n}{2}}\bigg(
    \frac{\Pi_N(p_{\hc'},n_{c'})}{\angp{(p_{\hc'},n_{c'})}{2}}  -
    \frac{ \Pi_N(p_{\hc'})}{\angp{p_{\hc'}}{2}}
    \bigg) 
    \,,
\end{equs}
while $\mathfrak{A}_{2}$ and $\mathfrak{A}_{3}$ are given by 
\begin{equs}
    \mathfrak{A}_{2}=\delta_{p,-n}\delta_{n_c,m_c}
    \frac{e^{-(t-s)\angp{n}{2}}}{\angp{n}{4}}\,,
   \quad   \mathfrak{A}_{3}&=\delta_{p_{\hc'},n_{\hc'}}\delta_{n_c,m_c}\frac{e^{-(t-s)\angp{n}{2}}}{\angp{n}{2}}\frac{\Pi_N(n_{\hc'},p_{c'})}{\angp{(n_{\hc'},p_{c'})}{2}}\,.
\end{equs}
The contributions from $\mathfrak{A}_{2}$ and $\mathfrak{A}_{3}$ are convergent while $\mathfrak{A}_{1}$ gives 
\begin{equs}
    &\sum_{m\in\Z^d;n_{\hc},p_{\hc'}\in\Z^{d-1}}\Pi_N(n_{\hc},m_c)
    \sum_{c'=1}^d
    e^{-\imath
x\cdot m}\hat{\phi}_{-m}
    \int^t_{-\infty}
    \frac{e^{-(t-s)\angp{(n_{\hc},m_c)}{2}}}{\angp{(n_{\hc},m_c)}{2}}
    \mfR^{1,c'}_{N}(\chi^{c'}(p,n))\\
    &\qquad=\frac{1}{2}\sum_{m\in\Z^d;n_{\hc},p_{\hc}\in\Z^{d-1}}\sum_{c'=1}^d e^{-\imath
x\cdot m}\hat{\phi}_{-m}
\frac{\Pi_N(n_{\hc},m_c)}{\angp{(n_{\hc},m_c)}{4}}
\mfR^{1,c'}_{N}(\chi^{c'}(p,n))\,.
\end{equs}
As we did we the first renormalization, we only subtract the value of the divergent expression at null external Fourier mode,  that is $m_{c} = 0$, since this is sufficient in order to obtain a convergent expression. Thus, the quantity we subtract from $ \E[\cN_N^c(\phi,\Pictwo_{N}(t),\X_{N}(t))(x)]$ is given by
\begin{equs}
\frac{1}{2}\phi(x)  
\sum_{n_{\hc}\in\Z_N^{d-1}}\sum_{p_{\hc}\in\Z^{d-1}}\sum_{c'=1}^d 
\frac{1}{\angp{n_{\hc}}{4}}
\mfR^{1,c'}_{N}(\chi^{c'}(p,n))\,.
\end{equs}
Here, we can observe that when $c'=c$, $\mfR_{N}^{1,c'}$ is evaluated at $(p_{\hc'},0)$ and we recall from \eqref{eq:R1} that $\mfR_{N}^{1,c'}(p_{\hc'},0)=0$. This means that the first renormalization totally renormalizes the melonic product of $\X$ times $\Pictwo^c$. Therefore, we have finally obtained that
\begin{equs}
 \E_{(12)(34)}[\cN_N^c(\phi,\Pictwo_{N}(t),\X_{N}(t))(x)]=\frac{1}{2}\mfC^{2,c}_{N}\phi(x)+\mathcal{O}(1)\,,
    \end{equs}
where the quantity $\mathcal{O}(1)$ is a convergent sum uniformly in $N$, while the non melonic part involving  $\E_{(13)(24)}$ and $\E_{(14)(23)}$ is finite. 
The computations for $\E[\cN_N^c(\phi,\X_{N}(t),\Pictwo_{N}(t))(x)]$ are similar.
Finally, setting $\phi=1$ proves \eqref{def:CT2_formula2}.
\end{proof}
Inspired by the previous discussion, we define for $d\geqslant4$ and $c,c'\in[d]$, $c\neq c'$ the second renormalized amplitude:
\begin{equs}
    \mfR^{2,c,c'}_{N}(m,n)\eqdef
    &\frac{1}{2}\bigg(
    \frac{\Pi_N(m)}{\angp{m}{4}}-
     \frac{\Pi_N(m_{\hc})}{\angp{m_{\hc}}{4}}
    \bigg)
    \mfR^{1,c'}(n) \,.
\end{equs}
The renormalized melonic pairing $\sum_{n\in\Z^d}\hat\phi_{-m_{\hc},-n_c}\E[\hat{\Pictwo}_{n}(t)\hat{\X}_{-n_{\hc},-m_c}(t)]-\frac12\mfC^{2}\hat\phi_m$ (as well as its counterpart with $\X$ and $\Pictwo$ switched) thus contains
\begin{equs}
\hat\phi_m \sum_{n_{\hc}\in\Z^{d-1}} \sum_{p_{\hc}\in\Z^{d-1}}\sum_{c'\neq c}\mfR_{N}^{2,c,c'}(\chi^c(n,m),\chi^{c'}(p,n))\,.
\end{equs}
We denote its value at $N=\infty$ by $ \mfR^{2,c,c'}(m,n)\eqdef\mfR^{2,c,c'}_\infty(m,n)$.
\begin{remark}
$\mfC^2_\infty$ is finite  when $d=3$ so we didn't encounter it in Section~\ref{sec:3d}.
$\mfC^2_{N}$ diverges like $\log N$ when $d=4$, while it diverges like $N^{2d-8}$ for $d \geqslant 5$. 
 \end{remark}
With the second renormalization constant $\mfC^2_{N}$ defined, our regularised and renormalized Langevin dynamic for $\mathrm{T}^4_4$ is given by
\begin{equs}
\label{eq:T^4_4}
    \cL\phi_{N}&=-\cN_N(\phi_{N},\phi_{N},\phi_{N})+(\mfC^1_{N}-\mfC^2_{N})\phi_{N}+\sqrt{2}\xi_{N}\,.
\end{equs}

We now define the third contribution to our shift, first stating its key stochastic estimate.
\begin{lemma}[Random fields 2]\label{lem:randfi2}
Let
 \begin{equs}
   \XtwoPictwo_{N}\eqdef &\big(\cN_N(\Pictwo_{N},\X_{N},\X_{N})-\mfC_{N}^1(d)\Pictwo_{N}\big)+\big(\cN_N(\X_{N},\Pictwo_{N},\X_{N})+\cN_N(\X_{N},\X_{N},\Pictwo_{N})-\mfC_{N}^2(d)\X_{N}\big)\,.
 \end{equs}
 Then, in $d=4$, 
 \begin{equs}
  \sup_{N\in\N} \E[\Vert \XtwoPictwo_{N}\Vert_{C_T\cC^{-1-\epsilon}(\T^4)}^p]<\infty\,.
 \end{equs}
\end{lemma}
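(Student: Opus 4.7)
The plan is to decompose $\XtwoPictwo_N$ into homogeneous Wiener chaoses, bound the second moment of each Littlewood--Paley block uniformly in $N$, and then promote these fixed-time bounds to the $C_T \cC^{-1-\epsilon}$ norm via a standard Kolmogorov continuity argument.

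Since $\Pictwo_N = \underline{\cL}^{-1}(\cN_N(\X_N,\X_N,\X_N) - \mfC^1_N \X_N)$ lies in the direct sum of the first and third homogeneous Gaussian chaoses of $\xi_N$, the composite field $\XtwoPictwo_N$ \dash obtained by pairing $\Pictwo_N$ with two additional copies of $\X_N$ through $\cN_N$ \dash lies in the direct sum of the first, third, and fifth chaoses. Orthogonality of chaoses together with Gaussian hypercontractivity reduce the proof of the stated $L^p(\Omega)$ bound to a chaos-by-chaos $L^2(\Omega)$ estimate. I would sort the possible Wick pairings by the number of surviving noises and separate melonic pairings (those which would diverge as $N \uparrow \infty$) from non-melonic ones. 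The fifth chaos has no Wick contractions and is handled by the same type of power counting that underlies Lemma~\ref{prop:stochastic_1}, using the $+1/2$ regularization per pair of consecutive arguments in $\cN_N$.

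The crux is that both renormalizations have been designed precisely to cancel every divergent pairing in the lower chaoses. For $\cN_N(\Pictwo_N,\X_N,\X_N)$, the only divergent Wick pairing between the two outer $\X_N$'s produces exactly $\mfC^{1,c}_N \Pictwo_N$ by the computation in the proof of Lemma~\ref{lem:CT1}; after this subtraction the remaining symbol involves the renormalized amplitude $\mfR^{1,c}_N$ from \eqref{eq:R1}, which provides the needed decay. For $\cN_N(\X_N,\Pictwo_N,\X_N) + \cN_N(\X_N,\X_N,\Pictwo_N)$, the divergent contributions come from pairing the central $\X$ sitting inside $\Pictwo_N$ melonically with one of the two external $\X_N$'s, and the computation in the proof of Lemma~\ref{def:CT2} shows that these are exactly compensated by $\mfC^2_N \X_N$, leaving the doubly renormalized symbol $\mfR^{2,c,c'}_N$. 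All other Wick pairings \dash non-melonic ones, or melonic contractions that do not produce an external divergence \dash give rise to absolutely convergent Fourier sums without further renormalization.

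With these cancellations in hand, the remaining task is bookkeeping. For each Littlewood--Paley block I would sum the square of the renormalized symbol over a dyadic annulus in external momentum, using the decay $|\mfR^{1,c}_N(p)| \lesssim |p_c|\langle p\rangle^{-3}$ gained from the subtraction of the value at $p_c=0$, together with the additional propagator factor $\langle m_{\hc}\rangle^{-4}$ inside $\mfR^{2,c,c'}_N$ and the $+1/2$ gain of the non-local vertex as in Lemma~\ref{prop:stochastic_1}. A careful accounting of momenta shows that the resulting sums are uniformly bounded in $N$ and produce a block estimate compatible with outer regularity $-1-\epsilon$ in $d=4$. The time regularity is handled in parallel by bounding the increment $\XtwoPictwo_N(t) - \XtwoPictwo_N(s)$ via the factor $1 - e^{-|t-s|\langle m\rangle^2}$, trading a small amount of spatial regularity for a positive power of $|t-s|$, after which Kolmogorov's continuity theorem together with the Besov embedding $B^{-1-\epsilon/2}_{2,2}\hookrightarrow \cC^{-1-\epsilon}$ yields the announced bound. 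The main obstacle is the degree-$1$ component, which involves two simultaneous Wick contractions among five noises, has the richest combinatorics of melonic pairings, and is precisely where the interplay between $\mfC^1_N$ and $\mfC^2_N$ is essential.
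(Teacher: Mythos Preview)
Your outline matches the paper's strategy: hypercontractivity reduces everything to second moments, which are organised by Wick pairings and controlled using the renormalised amplitudes $\mfR^{1,c}_N$ and $\mfR^{2,c,c'}_N$, with a Kolmogorov argument for time continuity. The paper carries this out through the stranded\slash tensor-graph formalism and the multiscale bound of Corollary~\ref{coro:multiscale}: rather than estimating each chaos by hand, it encodes all Wick contractions as graphs in $\G_\tau$ and identifies the worst subgraphs among them (the melonic $(4,0,0)$ vacuum graph and the $(2,2,0)$ subgraphs containing $l_\beta$, both forcing $\beta<-1$ in $d=4$; see Section~\ref{subsubsec:regQuintic}). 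This graph-theoretic power counting is precisely the ``careful accounting of momenta'' you leave implicit, and is what guarantees that no overlooked sub-divergence spoils the naive count.

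One concrete slip: the embedding $B^{-1-\epsilon/2}_{2,2}\hookrightarrow \cC^{-1-\epsilon}$ you invoke at the end is false in dimension $4$, since going from $L^2$ to $L^\infty$ costs $d/2=2$ derivatives, not $\epsilon/2$. The fix is already in your hands: use the hypercontractivity you quoted to upgrade the second-moment bound to $L^k$ moments for $k>4/\epsilon$, and then apply $B^{-1}_{k,k}\hookrightarrow\cC^{-1-\epsilon}$, exactly as in the paper's Lemma~\ref{lem:stoobsobolev}.
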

We then set
\begin{equs}\label{eq:third_tree}
  \Picthree_{N}={\cL}^{-1}\XtwoPictwo_{N}\,.
\end{equs}
$\Picthree$ can be controlled by $\XtwoPictwo$ using the Schauder estimates \eqref{eq:schauder2}, and is of regularity $1-\epsilon$ in $d=4$.
We now introduce the total shift $\bX$ needed for us to solve the $\rmT^4_4$ dynamic, it is given by 
\begin{equs}
    \bX\eqdef \X-\Pictwo+\Picthree\,.
\end{equs}
The rough shift $\bX$ shares the regularity of the free field but for probabilistic estimates it is a more complicated non-Gaussian object. 
Moreover, $\bX$ is not stationary in time. 

As we did in Section~\ref{sec:3d} for the free field $\X$, we define the random operators $\bX^{(k)}$, $\pXtwom$, $\pXtwonm$ and $\pXdotX$ (see Lemma~\ref{lem:randop2}) \dash one just replaces the relevant occurrences of $\X$ with $\bX$.
We also define the random field
\begin{equs}
  \pXthreeloc_N\eqdef\cN_N(\bX_N,\bX_N,\bX_N)-(\mfC^1_N-\mfC^2_N)\bX_N\,.
\end{equs} 
They share the regularity properties of $\bX^{(k)}$, $\Xtwom$, $\Xtwonm$, $\XdotX$ and $\Xthreeloc$ stated in Lemmas~\ref{prop:stochastic_1} and \ref{lem:randop1}. In particular, we recall that in $d=4$, $\pXthreeloc$ is of regularity $-2-$.
\begin{notation}
In the end of the present section, we use again the pictorial notations introduced in the three dimensional case. In particular, recall that $\pvXX$ stands for $\cN(v,\bX,\bX)-(\mfC^1-\mfC^2)v$ and that we systematically drop the dependencies in $N$.
\end{notation}
Shifting the unknown by $\bX$ introducing $v = v_{N}\eqdef \phi_N-\bX_{N}$ in \eqref{eq:T^4_4} gives
\begin{equs}\label{eq:DPD3}
    \cL v=-\vvv-\pXvv-\pvXv-\pvvX-\pXXv-\pXvX-\pvXX-\cS\,,\,\,v(0)=\Pi_Nv_0\,,
\end{equs}
where the random field
\begin{equs}
    \cS_{N}\eqdef&\;\pXthreeloc_{N} -\Xthreeloc_{N}+\XtwoPictwo_{N}\\ =
    &\;\big(\cN_N(\Picthree_{N},\X_{N},\X_{N})-\mfC^1_N\Picthree_{N}\big)+  
    \big(\cN_N(\Pictwo_{N},\X_{N},\Pictwo_{N})-\frac12\mfC^2_N\Pictwo_{N}\big)+\cN_N(\X_{N},\X_{N},\Picthree_{N})+20 \text{ terms ,}
\end{equs}
collects the sum of all the at least $\cS$eptic remaining purely stochastic terms. Note that $\cS$ can be defined without any additional renormalization, and that in particular the paring of $\X$ with $\Picthree$ is not divergent. We have the following result on the regularity of $\cS$:
\begin{lemma}[Random fields 3]\label{lem:randfi4}
In $d=4$,
\begin{equs}
 \sup_{N\in\N} \E[\Vert \cS_N \Vert^p_{C_T\cC^{-\frac12-\epsilon}(\T^4)}]<\infty\,.
\end{equs}
\end{lemma}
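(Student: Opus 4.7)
The plan is to identify $\cS_N$ as an explicit finite sum of stochastic monomials in $\{\X_N, \Pictwo_N, \Picthree_N\}$ (each possibly carrying a Wick-type subtraction) and verify that each monomial belongs to $C_T \cC^{-\frac12-\epsilon}$ with $p$-th moments uniformly bounded in $N$.

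First I would substitute $\bX = \X - \Pictwo + \Picthree$ into $\pXthreeloc = \cN(\bX,\bX,\bX) - (\mfC^1 - \mfC^2)\bX$ and expand via trilinearity, generating $27$ monomials $\cN(\tau_1,\tau_2,\tau_3)$ with $\tau_i \in \{\X,\Pictwo,\Picthree\}$. Subtracting $\Xthreeloc$ eliminates the purely cubic-in-$\X$ term, and adding $\XtwoPictwo$ eliminates the three monomials that contain exactly two $\X$'s and one $\Pictwo$. Tracking the residual counterterms $-(\mfC^1-\mfC^2)\bX + \mfC^1\X - \mfC^1\Pictwo - \mfC^2 \X = -\mfC^1 \Picthree + \mfC^2 \Picthree - \mfC^2\Pictwo$ and pairing them with the surviving melonic contractions yields precisely the ``at least septic'' family claimed in the excerpt: the explicit terms $\cN(\Picthree,\X,\X) - \mfC^1 \Picthree$, $\cN(\Pictwo,\X,\Pictwo) - \tfrac12 \mfC^2 \Pictwo$ and $\cN(\X,\X,\Picthree)$ (plus their permutations), together with all monomials containing at least two elements from $\{\Pictwo,\Picthree\}$.

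Next I would control each monomial by the standard route. Because $\X_N$ is in the first Wiener chaos and $\Pictwo_N,\Picthree_N$ lie in finite sums of chaoses, every monomial sits in a fixed finite chaos; Gaussian hypercontractivity reduces all $L^p$ moments to $L^2$ moments. Combined with Kolmogorov's continuity criterion and the Besov--Littlewood--Paley characterisation of $\cC^\alpha$, this reduces the claim to the pointwise-in-$(x,t)$ bounds
\begin{equs}
\sup_N \E\bigl[|\Delta_j \tau_N(x,t)|^2\bigr] &\lesssim 2^{j(1+\epsilon)}\,,\\
\sup_N \E\bigl[|\Delta_j(\tau_N(x,t)-\tau_N(x,s))|^2\bigr] &\lesssim 2^{j(1+2\epsilon)} |t-s|^{\epsilon/2}\,,
\end{equs}
for each monomial $\tau_N$ in the decomposition of $\cS_N$; the time bound is obtained via the usual multiplier estimate $|1 - e^{-(t-s)\langle m\rangle^2}| \lesssim ((t-s)\langle m\rangle^2)^{\epsilon/2}$. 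Each such bound then reduces, via the Fourier representation of the building blocks and the covariance formula \eqref{eq:covariance}, to the absolute convergence of explicit sums over $\Z^d$ of products of propagators $\langle m\rangle^{-2}$.

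The hard part will be the subfamily $\cN(\Picthree,\X,\X) - \mfC^1 \Picthree$, $\cN(\X,\Picthree,\X)$, $\cN(\X,\X,\Picthree)$ and their $\Pictwo$-analogues, where divergences coming from the Wick-renormalized internal structure of $\Picthree$ (resp.\ $\Pictwo$) could a priori interact with the outer melonic contraction of the two $\X$'s. What must be checked is that no new counterterm beyond $\mfC^1$ and $\mfC^2$ is required. Concretely, the Fourier kernels of these terms are governed by products of the renormalized amplitudes $\mfR^{1,c}_N$ and $\mfR^{2,c,c'}_N$ from Lemmas~\ref{lem:CT1} and \ref{def:CT2}, and one must verify that after collecting these cancellations the remaining sum over $\Z^d$ converges absolutely with the $+\tfrac12$ gain provided by the non-local integration at the outer vertex. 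This is ultimately a consequence of the tensor power-counting of Remark~\ref{remark:DPD}: in $\rmT^4_4$ the only primary divergent subgraphs are $\mfM^{1,c}$ and $\mfM^{2,c,c'}$, both already absorbed, and no nested sub-sunset divergence can arise by the melonic constraint. All remaining monomials contain at least two elements of $\{\Pictwo,\Picthree\}$, and their regularity budget together with the non-local smoothing comfortably clears the threshold $-\tfrac12-\epsilon$ without any stochastic cancellation, completing the proof.
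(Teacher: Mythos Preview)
Your plan has a genuine gap, precisely at the term you flag as the ``hard part''. A pure stochastic (diagrammatic) estimate does \emph{not} give the regularity $-\tfrac12-\epsilon$ for $\cN(\Picthree,\X,\X)-\mfC^1\Picthree$. In the paper's multiscale framework (Section~\ref{subsubsec:S}), the skeleton graph of this term admits a melonic two-vertex two-point subgraph $(2,2,0)$ containing $l_\beta$, whose superficial degree of divergence forces $\beta<-(d-3)=-1$ in $d=4$. The ``$+\tfrac12$ gain'' you invoke is already accounted for in the $(2,4,0)$ subgraphs that give the correct $-\tfrac12$ threshold for the other $22$ terms, but it cannot rescue this additional $(2,2,0)$ obstruction. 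This is exactly the phenomenon flagged in the remark following Lemma~\ref{lem:randfi4}: in a direct moment bound, an object of positive regularity inserted into the nonlinearity effectively behaves as if it had regularity~$0$ \dash the analogue of the need for positive renormalization in the local theory.

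The paper's remedy is to split $\cS=\cS^{\det}+\cS^{\sto}$, with $\cS^{\det}=\cN(\Picthree,\X,\X)-\mfC^1\Picthree$ singled out. The $22$ terms in $\cS^{\sto}$ are handled stochastically as you propose (the position of $l_\beta$ there forbids any $(2,2,0)$ subgraph). For $\cS^{\det}$ one instead writes it as the random operator $\Xtwom^c$ acting on $\Picthree$, i.e.\ $\cS^{\det}(x)=\sum_c\Xtwom^c\big(\Picthree(\cdot,x_{\hc})\big)(x_c)$, and combines the operator bound \eqref{eq:regXtwom} with the regularity $1-\epsilon$ of $\Picthree$ via the bilocal Besov embeddings of Appendix~\ref{app:bilocbesov}. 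This deterministic step is what actually exploits the positive regularity of $\Picthree$ and recovers the target $-\tfrac12-\epsilon$. Your proposal, as written, misses this split and would stall at regularity $-1-\epsilon$ for that one term.
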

\begin{remark}
The regularity of $\cS$ is not what we could naively expect from a power counting argument, see Notation~\ref{not:notation}. Indeed, once we try to insert an object of positive regularity (like $\Picthree$ in $d=4$) inside the nonlinearity, this objects rather acts like an object of null regularity, and we do not gain anything from its better behavior. This phenomenon is the same that makes positive renormalization necessary in the local case. In the non-local case, we deal with this difficulty differently, by mixing some stochastic and deterministic estimates, see the proof of Lemma~\ref{lem:randfi4} in Section~\ref{subsubsec:S}.
\end{remark}
\begin{remark}
We have not discussed yet our choice of initial condition. One way to show that the $\rmT^4_4$ measure is invariant under the $\rmT^4_4$ equation would be to show that the solution to the equation defines a Markov process on a state space on which the measure is supported -  this strategy was adopted in \cite{tsatsoulis2016spectral} in order to construct the $\Phi^4_2$ measure. 
However, for the deterministic equation $\cL\phi=-\cN(\phi,\phi,\phi)$, $s=-\frac{2}{3}$ is a critical value below which it is not possible to solve the equation with initial condition in $\cC^s$ and this makes it more difficult to adopt the same strategy, and so we instead simply consider an initial condition of the form dictated by our ansatz. 
Note that similar difficulties appear in the local theory \dash the threshold $-\frac{2}{3}$ is sufficient to deal with dynamical $\Phi^4_2$ and $\Phi^4_3$ but becomes a problem when trying to cover the full subcritical regime.
\end{remark}
At this stage, in contrast with the local case, \eqref{eq:DPD3} is fully renormalized in that it does not involve any explicit renormalization counterterm (they have all been consumed in the definition of stochastic objects). 
The pairings with $\bX$, $\pXtwom$, $\pXtwonm$ and $\pXdotX$ introduced below act on $v\in\cC^{1-}$, so if we could close \eqref{eq:DPD3}  in $v \in \cC^{1-}$ we would be done, 
However \eqref{eq:DPD3} cannot be solved in $\cC^{1-}$ but we will show we are able to overcome this by a re-injection of our equation into itself.

\subsection{Random operators for \TitleEquation{\rmT^4_4}{T44}}
As in three dimensions, we view the mixed terms in \eqref{eq:DPD3} as random operators acting on the solution. 
\begin{lemma}[Random operators 2]\label{lem:randop2}
Let $k\in[3]$ and $c\in[4]$. The random operators made with the rough shift $\bX$ are defined as
\begin{align*}
    \bX^{(k)}_{N}(f)&\eqdef \int_{\T^{4-k}} \bX_{N}(\cdot,y) f(y)\rmd y\,,&&\text{ for }f:\R_{\geqslant 0}\times\T^{4-k}\rightarrow\R\,,\\
     \pXtwom^c_{N}(f)&\eqdef \cN^c_N(f,\bX_N,\bX_N)-(\mfC^{1,c}_N-\mfC^{2,c}_N)f \,,&&\text{ for }f:\R_{\geqslant 0}\times\T_c\rightarrow\R\,,\\ \pXtwonm^c_{N}(f)&\eqdef \cN^c_N(f,\bX_N,\bX_N)\,,&&\text{ for }f:\R_{\geqslant 0}\times\T_{\hc}^3\rightarrow\R\,,\\ \pXdotX_{N}(f)&\eqdef \cN_N(\bX_{N},f,\bX_{N}) &&\text{ for }f:\R_{\geqslant 0}\times\T^4\rightarrow\R\,.
\end{align*}
Then, in $d=4$, for all $\alpha>\frac{2-k}{2}$, 
\begin{equs}\label{eq:boundpX}   
 \sup_{N\in\N} \E[\Vert\bX^{(k)}_{N}\Vert^p_{\cL(C_TH^{\alpha}(\T^{4-k}),C_T\cC^{\beta-\epsilon}(\T^k))}]<\infty \text{ with $\beta=\min(-\frac{k-2}{2},\alpha-1)$\;.}
\end{equs}
Moreover, in $d=4$, for all $c\in[4]$ and $\alpha>0$, 
\begin{equs}\label{eq:boundpXtwom}
    \sup_{N\in\N}\E[\Vert\pXtwom^c_{N}\Vert^p_{\cL(C_TH^{\alpha}(\T),C_T\cC^{\beta-\epsilon}(\T))}]<\infty \text{ with $\beta=\min(0,\alpha-\frac{3}{2})$ 
 .}
  \end{equs}  
Finally, in $d=4$, for all $c\in[4]$ and $\alpha>\frac{1}{2}$, 
\begin{equs}
{}& \sup_{N\in\N}   \E[\Vert\pXtwonm^c_{N}\Vert^p_{\cL (C_TH^{\alpha}(\T^3),C_T\cC^{\beta-\epsilon}(\T^3))}]<\infty \text{ with $\beta=\min(-\frac{1}{2},\alpha-\frac{3}{2})$ ,}  \label{eq:boundpXtwonm} \\
{}& \sup_{N\in\N} \E[\Vert\pXdotX_{N}\Vert^p_{\cL(C_TH^{\alpha}(\T^4),C_T\cC^{\beta-\epsilon}(\T^4))}]<\infty\, \text{ with $\beta=\min(-\frac{1}{2},\alpha-2)$ .} \label{eq:boundpXdotX}
\end{equs}
\end{lemma}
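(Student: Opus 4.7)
The plan is to expand the rough shift as $\bX = \X - \Pictwo + \Picthree$ and reduce each of the four operator bounds to a combination of (a) the corresponding $\X$-only bounds from Lemma~\ref{lem:randop1} specialized to $d=4$, (b) the renormalization analysis of Lemma~\ref{def:CT2}, lifted from a scalar pairing to an operator bound, and (c) deterministic tensor/Besov estimates exploiting the much better regularity of $\Pictwo \in C_T\cC^{-\epsilon}$ and $\Picthree \in C_T\cC^{1-\epsilon}$. In each case, since $\bX$ belongs to a finite sum of Wiener chaoses (through first chaos for $\X$, first plus third for $\Pictwo$, and up to fifth for $\Picthree$), moment bounds on the operator norms can be upgraded to $L^p$ via hypercontractivity and then to pathwise $C_T\cC^{\beta-\epsilon}$ control via a Kolmogorov continuity argument in time, with the stated $\epsilon$-drop absorbing the continuity exponent.

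For $\bX^{(k)}$, the linear decomposition $\bX^{(k)} = \X^{(k)} - \Pictwo^{(k)} + \Picthree^{(k)}$ reduces \eqref{eq:boundpX} to the $\X^{(k)}$ bound from Lemma~\ref{lem:randop1} at $d=4$, which already carries the claimed exponents, plus two analogous partial-integral random operators whose kernels $\Pictwo$ and $\Picthree$ have better regularity than $\X$; rerunning the same Fourier argument used for $\X^{(k)}$ (now summing over the finitely many chaoses in which $\Pictwo$, $\Picthree$ live) gives bounds that are no worse than the one claimed.

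For the quadratic operators $\pXtwom^c$, $\pXtwonm^c$ and $\pXdotX$, I would expand each into nine terms via $\bX = \X - \Pictwo + \Picthree$. The pure-$\X$ terms reproduce (up to sign) the operators of Lemma~\ref{lem:randop1} and inherit its bounds. For $\pXtwom^c$ only, there is a class of ``one-$\X$, one-$\Pictwo$'' terms whose melonic divergence is cancelled by the $\mfC^{2,c}$ piece of the counterterm $(\mfC^{1,c}-\mfC^{2,c})f$; handling these is precisely the operator-valued version of Lemma~\ref{def:CT2}. Every remaining term involves either two copies of $\Pictwo$ or at least one copy of $\Picthree$; in each such case the combined inner regularity of the two stochastic slots is strictly positive, so the non-local product is defined without further renormalization and the operator bound follows from a deterministic multilinear Besov estimate plus the a~priori control on $\Pictwo$ and $\Picthree$.

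The main obstacle will be this ``one-$\X$, one-$\Pictwo$'' class, i.e.\ lifting the scalar renormalization of Lemma~\ref{def:CT2} to a full operator bound for $\pXtwom^c$ uniform in $f$ over a Sobolev ball. Concretely, the Fourier-space analysis leading to the kernel $\mfR^{2,c,c'}$ must be re-run with $\hat f_m$ kept as a free argument, tested against a Littlewood--Paley decomposition of $f$; the $L^2$ kernel bounds then combine with hypercontractivity as above. This is also what accounts for the regularity drop $\beta = 1/2 \mapsto \beta = 0$ in passing from $\Xtwom^c$ in Lemma~\ref{lem:randop1} to $\pXtwom^c$: the cross terms built from one factor of $\X$ and one factor of $\Pictwo$ produce an outer H\"older regularity strictly worse than that of the purely Gaussian Wick square $\cN^c(f,\X,\X) - \mfC^{1,c}f$, and it is these cross terms that cap the outer regularity of the full melonic operator at $0$ rather than $1/2$.
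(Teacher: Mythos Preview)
Your overall strategy matches the paper's: decompose $\bX=\X-\Pictwo+\Picthree$, recover the pure-$\X$ operators from Lemma~\ref{lem:randop1}, isolate the $\X$--$\Pictwo$ cross terms in $\pXtwom^c$ as the only place where the $\mfC^{2,c}$ counterterm is consumed, and treat all other cross terms as ``better behaved''. The paper packages this as a splitting $\tau=\tau^{\sto}+\tau^{\det}$ (see Section~\ref{subsubsec:regRoughAnsatz}), with $\pXtwom^{\sto}$ defined exactly to absorb the $\X$--$\Pictwo$ pieces plus the $\mfC^2$ counterterm, and the rest pushed into $\tau^{\det}$.

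There is, however, a gap in your treatment of the ``remaining'' terms. You claim that every leftover term has two stochastic slots with ``combined inner regularity strictly positive'', so that a deterministic multilinear Besov estimate suffices. This is false: for instance $\cN^c(f,\X,\Picthree)$ or $\cN^c(f,\Pictwo,\Pictwo)$ have combined slot regularity $(-1-\epsilon)+(1-\epsilon)=-2\epsilon$ and $(-\epsilon)+(-\epsilon)=-2\epsilon$ respectively, which is negative. A purely deterministic pairing in the $y_{\hat c}$ variable (via Lemma~\ref{lem:A2}) therefore just fails. The paper does \emph{not} treat these terms by a deterministic product estimate; instead it writes each $\tau^{\det}$ contribution as a composition of smaller random operators such as $\X^{(k)}$ and $(\bX-\X)^{(k)}$ (the latter controlled in \eqref{eq:bX-X_rand_op}), and chains together their stochastic operator bounds using the bilocal Besov embeddings of Appendix~\ref{app:bilocbesov}. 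This is still morally ``use the better regularity of $\Pictwo,\Picthree$'', but the mechanism is stochastic-then-compose, not deterministic-multilinear.

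Your final paragraph also misattributes the drop $\beta=\tfrac12\mapsto\beta=0$ for $\pXtwom^c$. The paper's explicit lemma for $\pXtwom^{\sto}-\Xtwom$ (the $\X$--$\Pictwo$ cross terms with their counterterm) gives $\beta=\min(\tfrac12,\alpha-\tfrac12)$, which is \emph{better}, not worse, than the pure-$\X$ contribution $\beta=\min(\tfrac12,\alpha-\tfrac32)$; indeed Lemma~\ref{lem:ranop} asserts that the purple operators obey the same bounds as their black counterparts. The weaker thresholds stated in Lemma~\ref{lem:randop2} are simply what is needed downstream, not an intrinsic loss forced by the cross terms.
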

In the sequel, we also apply the notation introduced in Notation~\ref{not:cnotc} to the objects made with the rough ansatz $\pXtwom$ and $\pXtwonm$.

The previous estimate allows us to control the mixed terms $\pvvX$, $\pvXv$, $\pXvv$, $\pvXX$ and $\pXXv$ in $\cC^{-\frac{1}{2}-\epsilon}$ if $v$ is of regularity $1+$. 
However, the term $\pXvX$ remains problematic. 
Regardless of the our assumption on the regularity of $v$, the best regularity estimate we can hope for $\pXvX$ is $-\frac{1}{2}-$, which means we can't expect $v$ to better than  $-\frac{3}{2}-\epsilon$ in regularity. 
However, if $v\in\cC^{\frac{3}{2}-\epsilon}$, then $\pXvX\in\cC^{-\frac{1}{2}-2\epsilon}$, so that the argument never closes. 

We see that even with our random operator estimates for the \eqref{eq:T^4_4}, the term $\pXvX$ is too singular to be dealt with by a simple fixed point argument. 
This term in the $\rmT^4_4$ dynamic is the one that requires a bigger deparature from the approach we used for $\rmT^4_3$, in that sense it is analogous the product $\cherry v$ in dynamical $\Phi^4_3$. 
\subsection{Closing the fixed-point problem}\label{subsec:4dfixedpoint}
To further simplify the problem, let us perform the following change of variable, and rewrite \eqref{eq:DPD3} using the following ansatz: we split the solution $v$ as $v=X+Y$ where $X$ and $Y$ solve the system
\begin{subequations}
  \begin{empheq}[left=\empheqlbrace]{alignat=2}
   \label{eq:X}
    \cL X&=-\pXdotX(X)-\pXdotX(Y)\,,\\  \cL Y &=-\vvv-\pXvv-\pvXv-\pvvX-\pXXv-\pvXX-\cS\,,\label{eq:Y}
  \end{empheq}
\end{subequations}
with initial conditions $(X(0),Y(0))=(0,\Pi_Nv_0)$. 
Note that $v$ still stands for $X+Y$ in \eqref{eq:Y}. 
The RHS of \eqref{eq:Y} gathers all the terms that don't pose any problem for closing the fixed point problem, that is we fix some choice of $X \in C_T\cC^{1+}$ then we could close the equation \eqref{eq:Y} for  $Y \in C_T\cC^{\frac{3}{2}-\epsilon}$. 

Turning to \eqref{eq:X}, we see that if we fix $Y\in C_T\cC^{\frac{3}{2}-\epsilon}$ then we cannot close \eqref{eq:X} for $X$ \dash the map $X\mapsto -\cL^{-1}\pXdotX(X+Y)$ is well defined for $X\in C_T\cC^{\alpha}$ with $\alpha > \frac{1}{2}$ and that takes $C_T\cC^{\alpha}$ to $ C_T\cC^{\min(\frac{3}{2},\alpha)-\epsilon}$. 
We see that, whatever the value of $\alpha$, it is mapping $C_T\cC^{\alpha}$ into a larger space \dash while we can iterate this mapping and we can't use it to post a fixed point problem. 
While this slightly resembles how $\cherry v$ poses a problem for the $\Phi^4_3$ dynamic, the difficulty there is different in that it is impossible to compose $v \mapsto \cL^{-1} ( \cherry v)$ for generic $v$ of any fixed regularity. 

It turns out that we can overcome our difficulty with \eqref{eq:X} by injecting this problematic term into itself and then performing a stochastic estimate on this term, see the next lemma.  

\begin{lemma}[Random operators 3]\label{lem:randop_4}
Defining the operator $\pSym$ acting on $f:\R_{\geqslant 0}\times\T^4\rightarrow\R$ as
\begin{equs}
    \pSym_{N}(f)&\eqdef
    \pXdotX_{N}(\cL^{-1}\ 
    \pXdotX_{N}(f))=      
    \cN_N(\bX_{N},\cL^{-1}\cN_N(\bX_{N},f,\bX_{N}),
    \bX_{N})\,,
\end{equs}
then, in $d=4$ and for all $\alpha>\frac{1}{2}$,
\begin{equs}\label{eq:boundpSym}
 \sup_{N\in\N}     \E[\Vert\pSym_{N}\Vert^p_{\cL(C_TH^{\alpha}(\T^4),C_T\cC^{\beta-\epsilon}(\T^4))}]<\infty\text{ with $\beta=\min(-\frac{1}{2},\alpha-1)$}\,.
\end{equs}
\end{lemma}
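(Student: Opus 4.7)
The main point is that we cannot prove Lemma~\ref{lem:randop_4} by iterating the bound on $\pXdotX_N$ from \eqref{eq:boundpXdotX}: if we tried, applying $\pXdotX_N$ sends $C_TH^\alpha$ into $C_T\cC^{\min(-1/2,\alpha-2)-\epsilon}$, which after Schauder gives at best $C_T\cC^{\min(3/2,\alpha)-\epsilon} \subset C_TH^{\alpha-\epsilon}$, a strict loss of regularity preventing another application of $\pXdotX_N$. The plan is therefore to treat $\pSym_N$ as a single random operator and perform a direct stochastic estimate on it, exploiting the fact that the two outer copies of $\bX_N$ in the outer vertex can Wick-contract with the two copies of $\bX_N$ in the inner vertex, producing additional cancellation that is invisible to a deterministic iteration.

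First I would decompose $\bX_N = \X_N - \Pictwo_N + \Picthree_N$ and expand $\pSym_N(f)$ as a finite sum of terms indexed by the assignment of $\X_N$, $\Pictwo_N$, $\Picthree_N$ to each of the four outer/inner slots. By Lemmas~\ref{prop:stochastic_1}--\ref{lem:randfi2} each occurrence of $\Pictwo_N$ or $\Picthree_N$ brings extra positive regularity uniformly in $N$, so the dominant, worst-behaved contribution is the purely Gaussian piece
\begin{equs}
\pSym^{\X}_N(f) \eqdef \cN_N(\X_N,\cL^{-1}\cN_N(\X_N,f,\X_N),\X_N)\,,
\end{equs}
all other pieces being treatable by the same argument with a strictly better outer regularity exponent. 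For $\pSym^{\X}_N(f)$ I would further project onto the homogeneous Wiener chaoses; after Wick reordering one obtains contributions in chaoses $0$, $2$ and $4$, and by Gaussian hypercontractivity together with the Kolmogorov criterion it suffices to establish $L^2(\Omega)$ bounds on Littlewood--Paley blocks of each chaos component.

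The concrete step is to write, for fixed colors $c, c' \in [4]$, the Fourier representation of the contribution to $\pSym^{\X}_N(f)$,
\begin{equs}
\widehat{\pSym^{\X,c,c'}_N(f)}(m) \;=\; \sum_{n,p,q \in \Z^4} \Pi_N\, \delta_{c,c'}(m,n,p,q)\, \frac{1}{\ang{p}^2}\, \prod_{i=1}^{4} \hat\X_{k_i(m,n,p,q)}\, \hat f(q)\,,
\end{equs}
where the Kronecker--type factor $\delta_{c,c'}$ encodes the four non-local pairings of the outer $\cN^c$ vertex with the inner $\cN^{c'}$ vertex, the factor $\ang{p}^{-2}$ comes from $\cL^{-1}$ (with its time history absorbed into a heat kernel factor irrelevant to the regularity count), and the $k_i$ are affine functions of $m,n,p,q$. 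Taking expectation and using Wick's formula one reduces each chaos block to a sum of squares of explicit kernels; the desired bound $\pSym_N \in \cL(C_TH^\alpha, C_T\cC^{\beta-\epsilon})$ with $\beta=\min(-1/2,\alpha-1)$ then follows from a Schur-type estimate on these kernels, in much the same spirit as the proofs of \eqref{eq:boundpXdotX} and \eqref{eq:R1}--\eqref{eq:renorm1}, together with a routine argument handling the time Hölder regularity via stationarity of $\X_N$ and continuity of the heat semigroup.

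The main obstacle is to control the \emph{resonant} contractions, in which an outer $\bX$ at the $c$-vertex pairs with an inner $\bX$ at the $c'$-vertex in a melonic pattern. In such terms one encounters nested sums of the form $\sum_{k}\ang{k}^{-2}\ang{(m_{\hat c},k_c)}^{-2}$ or $\sum_{k}\ang{k}^{-2}\ang{p+k}^{-2}$ that are only marginally convergent; the geometry of the two stranded vertices (see Section~\ref{subsubsec:strandedgraphs}) forces the free summation variable to be suitably ``external,'' and hence controlled by $\ang m$ or by an integration against $\hat f$, provided $\alpha > \tfrac{1}{2}$. The case $c=c'$ is the most delicate because the two melonic tadpole subgraphs of the vertices $\cN^c$ may align, and exactly there one needs to verify that no divergence worse than the one already tamed by the renormalization of $\mfC^1_N$ and $\mfC^2_N$ inside $\bX$ occurs; the cross-color cases $c\neq c'$ are strictly easier and give strictly better regularity, yielding the claimed $\beta = \min(-\tfrac12,\alpha-1)$.
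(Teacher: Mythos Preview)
Your overall strategy matches the paper's: one cannot iterate the $\pXdotX$ bound, so $\pSym$ must be treated as a single random operator via chaos decomposition, Kolmogorov, and a diagrammatic power-counting; and the cross-terms with $\bX-\X$ are better behaved. However, there is a genuine gap in your treatment of time, and two smaller confusions.

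\textbf{The main gap: time non-locality.} Your Fourier formula with a bare factor $\ang{p}^{-2}$ for $\cL^{-1}$, together with the remark that the ``time history'' is ``irrelevant to the regularity count,'' is where the argument breaks. Unlike the other random operators in the paper (which are local in time, cf.\ \eqref{local-in-time}), $\pSym$ acts on $f$ through an integral over all past times: $\pSym(f)(t,\cdot)=\int_0^t \Sym_{u,t}(f(u,\cdot))\,\rmd u$ with $\Sym_{u,t}(g)=\cN(\X_t,P_{t-u}\cN(\X_u,g,\X_u),\X_t)$. Because the heat kernel $P_{t-u}$ blows up as $u\uparrow t$, and because you need the $C_T$-norm (a pathwise supremum in $t$), the standard time-local Kolmogorov criterion (Lemma~\ref{lem:randomopsob}) does not apply. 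The paper addresses this with a separate machinery (Section~\ref{sec:non-local-in-time}, Lemmas~\ref{lem:non-local-bound} and~\ref{lem:randomopsob2}): a two-parameter H\"older norm on $\mathrm{Simp}_T$ with a blow-up weight $|t-u|^{\eta}$, and a corresponding Kolmogorov estimate controlling increments in both the inner time $u$ and the outer time $t$. At the level of power-counting this shows up as $|t-u|^{\eta}P_{t-u}(m)\lesssim\ang{m}^{-2\eta}$ (assumption~\eqref{eq:assumption5}), giving links in $\tilde L^i(G)$ weight $\eta<1$ rather than $1$; this is why the paper's constraint reads $\beta<\alpha+|\Sym|-2(1-\eta)$ with $\eta$ taken close to $1$. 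None of this is routine, and your proposal does not supply it.

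\textbf{Renormalization.} Your worry that one must ``verify that no divergence worse than the one already tamed by $\mfC^1_N$ and $\mfC^2_N$ occurs'' is misplaced: the paper states explicitly (Section~\ref{subsubsec:regQuartic}) that $\Sym$ involves \emph{no} renormalization. The constraints $\beta<-\tfrac12$ and $\alpha>\tfrac12$ do not come from melonic tadpole subgraphs but from the worst $(2,4,0)$ subgraph containing $l_\alpha$ or $l_\beta$ in the skeleton; the positions of $l_\alpha,l_\beta$ (middle slots of both vertices) structurally forbid any melonic $\mfM^1$ or $\mfM^2$ subgraph from arising.

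\textbf{Colors and cross-terms.} The claim that $c=c'$ is the most delicate case and that $c\neq c'$ yields strictly better regularity is not how the paper proceeds; the multiscale bound (Corollary~\ref{coro:multiscale}) is uniform over colors, and the worst subgraph power-counting does not distinguish them. Finally, for the cross-terms with at least one $\bX-\X$, the paper does \emph{not} run ``the same stochastic argument with a better exponent'' as you suggest; instead it factorises them deterministically as compositions of smaller random operators already controlled (Section~\ref{subsubsec:regRoughAnsatz}), e.g.\ $\cN(\bX-\X,\cL^{-1}\XdotX(f),\X)$ via $\X^{(1)}$ and $(\bX-\X)^{(k)}$. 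Your route may also work, but it is not the paper's.
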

\begin{notation}
Our use of symbolic notation in $\pSym$ is inconsistent with that of $\XtwoPictwo$ so we take a momeent to spell out the difference.  
Recalling that these trees are built with a non-commutative product, $\pSym$ contains only one of the three possible products - the one with the two $\bX$ terms in the first and third positions. 
This is different then what we refer to with the symbol $\XtwoPictwo$ which contains 3 terms for the 3 possibles products at the root.  
\end{notation}
Reinjecting $X=-\cL^{-1}\pXdotX(X+Y)$ in \eqref{eq:X} as described above gives us the system
\begin{subequations}    
\label{eq:system2}
\begin{empheq}[left=\empheqlbrace]{alignat=2}
\cL X&= \pSym(X) +  \pSym(Y)- \pXdotX(Y)\,,\label{eq:X2}\\
\cL Y&=-\vvv-\pXvv-\pvXv-\pvvX-\pXXv-\pvXX-\cS\,.\label{eq:Y2}\end{empheq}
\end{subequations}

\begin{remark}\label{rem:12}
This reinjection is a tool for proving the $N\uparrow \infty$ convergence of solutions to the original system with cut-off $N\uparrow \infty$. 
In particular, in the smooth setting (that is, $N$ finite) \textit{both} the original system \eqref{eq:X}+\eqref{eq:Y} and the modified system \eqref{eq:X2}+\eqref{eq:Y2} are well-posed and have same solutions. 
If we write the integral fixed point problem  for \eqref{eq:X}+\eqref{eq:Y} as
\begin{equ}\label{eq:problem1}
(x,y) \mapsto \big(\Theta_{X}(x,y),\Theta_{Y}(x,y) \big)
\end{equ} 
then the integral fixed point map for \eqref{eq:X2} + \eqref{eq:Y2} can be written as 
\begin{equ}\label{eq:problem2}
(x,y) \mapsto \Big(\Theta_{X} \big( \Theta_{X}(x,y) ,y\big),\Theta_{Y}(x,y) \Big)\;.
\end{equ}
Clearly if \eqref{eq:problem1} has a fixed point $(x_{\star},y_{\star})$ and we know \eqref{eq:problem2} has a unique fixed point, then the fixed point of  \eqref{eq:problem2}  must be given by $(x_{\star},y_{\star})$. 
\end{remark}

Define $\bbX^{\LD}_{f,4}$ the collection of all the random fields defined in Lemmas~\ref{lem:randfi2} and \ref{lem:randfi4}, and $\bbX^{\LD}_{o,4}$ the collection of all the random operators defined in Lemmas~\ref{lem:randop2} and \ref{lem:randop_4}. Then, as in three dimensions, the enhanced noise set $\bbX_4^{\LD}=\bbX^{\LD}_{f,4}\cup\bbX^{\LD}_{o,4}$ is an element of the product of the Banach spaces in which the random fields and random operators live. We endow $\bbX^{\LD}_4$ with the norm $\Vert\bbX_4^{\LD}\Vert_{T}$ defined as in \eqref{eq:normbbX} with $3$ replaced by $4$, and recall the notation $A\lesssim_{\bbX_4^{\LD}}B$ introduced in Section~\ref{sec:3d}, and the fact that at finite $T$, $\Vert\bbX^{\LD}_4\Vert_{T}\in L^p(\Omega)$ for every $1\leqslant p<\infty$.
We also define the Banach space $C_T\cC^{\frac{3}{2}-2\epsilon}\times C_T\cC^{\frac{3}{2}-\epsilon}$ with norm $\Vert X,Y\Vert_{sol}\eqdef \Vert X\Vert_{C_T\cC^{\frac{3}{2}-2 \epsilon}}\vee \Vert Y\Vert_{C_T\cC^{\frac{3}{2}-\epsilon}}$. 
 
\begin{proposition}\label{prop:4dfixedpoint} 
We have the following local well-posedness result for the system \eqref{eq:X2}+\eqref{eq:Y2}. 

For any $(X_0,Y_0) \in \cC^{\frac32 - 2 \epsilon}(\T^4) \times \cC^{\frac32-\epsilon}(\T^4)$, then there exists a random blow-up time $\Bar{T}$ such that the system \eqref{eq:system2} with initial condition $(X(0),Y(0))=( \Pi_{N} x_0,\Pi_N y_0)$ admits a unique solution $(X,Y)$ in $C([0,\Bar{T}),\cC^{\frac32-2\epsilon})\times C([0,\Bar{T}),\cC^{\frac32-\epsilon})$. 
This solution is maximal, and we have $\lim_{t\uparrow \Bar{T}} \Vert X(t)\Vert_{\cC^{\frac32-2\epsilon}}\vee \Vert Y(t)\Vert_{\cC^{\frac32-\epsilon}}=+\infty$. Moreover, if $T<\Bar{T}$, the solution on $[0,T]$ depends continuously on $(x_0,y_0)$ and on the enhanced noise set $\bbX^{\LD}_4$ (w.r.t the topology of $\Vert \bbX^{\LD}_4\Vert_{T}$).
\end{proposition}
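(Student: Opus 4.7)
The plan is to run a contraction mapping argument on the Banach space $\mathcal{X}_T \eqdef C_T\cC^{\frac{3}{2}-2\epsilon}(\T^4) \times C_T\cC^{\frac{3}{2}-\epsilon}(\T^4)$ equipped with the norm $\Vert\bigcdot\Vert_{sol}$, using the mild formulation of \eqref{eq:system2}: define $\Phi=(\Phi_X,\Phi_Y)$ by
\begin{equs}
\Phi_X(X,Y)(t)&\eqdef P_t\Pi_Nx_0+\cL^{-1}\big(\pSym(X)+\pSym(Y)-\pXdotX(Y)\big)(t)\,,\\
\Phi_Y(X,Y)(t)&\eqdef P_t\Pi_Ny_0-\cL^{-1}\big(\vvv+\pXvv+\pvXv+\pvvX+\pXXv+\pvXX+\cS\big)(t)\,,
\end{equs}
where $v=X+Y$. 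I would show that for $T>0$ chosen sufficiently small (depending on the initial data and on $\Vert\bbX_4^{\LD}\Vert_T$) the map $\Phi$ stabilises a closed ball of $\mathcal{X}_T$ of radius $\sim \Vert x_0\Vert_{\cC^{\frac{3}{2}-2\epsilon}}+\Vert y_0\Vert_{\cC^{\frac{3}{2}-\epsilon}}$ and is a contraction there. Uniqueness, maximality, the blow-up criterion, and continuous dependence on $(x_0,y_0)$ and on $\bbX_4^{\LD}$ will then follow exactly as in the end of the proof of Proposition~\ref{prop:3dfixedpoint}.

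The $Y$-component estimate closely mirrors the three-dimensional fixed point: with $v\in C_T\cC^{\frac{3}{2}-2\epsilon}$, the deterministic cube $\vvv$ is bounded by $\Vert v\Vert_{C_TL^\infty}^3$, while the mixed nonlinearities $\pXvv,\pvXv,\pvvX,\pXXv,\pvXX$ are controlled in $C_T\cC^{-\frac{1+\epsilon}{2}}$ via Lemma~\ref{lem:mixednonlin}. The $2\epsilon$ slack built into that lemma was included for precisely this purpose, so the bounds remain valid with only $v\in C_T\cC^{\frac{3}{2}-2\epsilon}$ rather than $C_T\cC^{\frac{3}{2}-\epsilon}$ at our disposal. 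Combining with $\cS\in C_T\cC^{-\frac{1}{2}-\epsilon}$ from Lemma~\ref{lem:randfi4} and the Schauder estimate \eqref{eq:schauder2} puts $\Phi_Y(X,Y)$ in $C_T\cC^{\frac{3}{2}-\epsilon}$ with an overall prefactor of the form $T^{\delta}P(\Vert(X,Y)\Vert_{sol})\Vert\bbX_4^{\LD}\Vert_T^c$ for some polynomial $P$ and some $\delta>0$.

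The genuinely new content is the $X$-component, where Lemma~\ref{lem:randop_4} is essential: since $X\in C_TH^{\frac{3}{2}-2\epsilon}$ with $\frac{3}{2}-2\epsilon>\frac{1}{2}$, the bound \eqref{eq:boundpSym} gives $\pSym(X)\in C_T\cC^{-\frac{1}{2}-\epsilon}$, and similarly $\pSym(Y)\in C_T\cC^{-\frac{1}{2}-\epsilon}$; meanwhile $\pXdotX(Y)$ with $Y\in C_TH^{\frac{3}{2}-\epsilon}$ lies in $C_T\cC^{-\frac{1}{2}-2\epsilon}$ by \eqref{eq:boundpXdotX}. Applying $\cL^{-1}$ recovers regularity $\frac{3}{2}-2\epsilon$ with a short-time smoothing factor $T^{\delta}$, which is exactly the space chosen for $X$. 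This asymmetric allocation of regularities between $X$ (the component that digests $\pSym$) and $Y$ (the component that digests the milder mixed terms) is the crucial point that makes the reinjection scheme of Section~\ref{subsec:4dfixedpoint} close; any attempt to run the fixed point in a single space $C_T\cC^{\frac{3}{2}-\epsilon}$ for both components would fail on $\pXdotX(Y)$.

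For the contraction step I would linearise the above bounds in the differences $(X_1-X_2,Y_1-Y_2)$: the cubic $\vvv$ contributes the usual factor $1+\Vert v_1\Vert_{C_TL^\infty}^2+\Vert v_2\Vert_{C_TL^\infty}^2$ as in \eqref{eq:boundvvv}, the mixed nonlinearities and random operators are multilinear in their deterministic arguments so their differences are bounded by the same operator norms times $\Vert v_1-v_2\Vert$, and the purely stochastic term $\cS$ drops out. Choosing $T$ small enough (depending only on the data and on $\Vert\bbX_4^{\LD}\Vert_T$) then produces the contraction factor, and the standard iteration up to a maximal time yields $\Bar{T}$ with the stated blow-up criterion. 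The main subtlety I expect is keeping the tight regularity budget consistent: the argument has no slack, and relies essentially on the fact that $\pSym$, built as $\pXdotX\circ\cL^{-1}\pXdotX$, gains a full derivative over $\pXdotX$ alone, which is precisely what the reinjection \eqref{eq:X2} was designed to exploit.
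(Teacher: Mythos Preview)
Your proposal is correct and follows essentially the same route as the paper: set up the mild formulation $\Phi=(\Phi_X,\Phi_Y)$, bound $\Phi_Y$ in $C_T\cC^{\frac32-\epsilon}$ via Lemma~\ref{lem:mixednonlin} (exploiting the $2\epsilon$ slack there) together with the Schauder estimate, bound $\Phi_X$ in $C_T\cC^{\frac32-2\epsilon}$ via the random operator estimates \eqref{eq:boundpXdotX} and \eqref{eq:boundpSym}, and extract a factor $T^{\epsilon/4}$ from Schauder to obtain a contraction. The only cosmetic difference is that you keep the initial datum $P_t\Pi_N x_0$ in $\Phi_X$ (matching the proposition statement), whereas the paper's written proof specializes to $X(0)=0$.
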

\begin{proof}
    Let $\Phi(X,Y)=\big(\Phi_X(X,Y),\Phi_Y(X,Y)\big)$ be the fixed point map of \eqref{eq:system2}. We thus have
 \begin{subequations}    
\begin{empheq}[left=\empheqlbrace]{alignat=2}\nonumber
\Phi_X(X,Y)&= \int_0^tP_{t-s}\Big( \pSym(X) +  \pSym(Y)- \pXdotX(Y)\Big)(s)\rmd s\,,\\\nonumber
\Phi_Y(X,Y)&=P_tx-\int_0^tP_{t-s}\Big(\vvv+\pXvv+\pvXv+\pvvX+\pXXv+\pvXX+\cS\Big)(s)\rmd s\,,\label{eq:Y2}\end{empheq}
\end{subequations}
where $v=X+Y$. We show that $\Phi$ is indeed a contraction on $C_T\cC^{\frac{3}{2}-2\epsilon}\times C_T\cC^{\frac{3}{2}-\epsilon}$, while the proof that if maps a ball of $C_T\cC^{\frac{3}{2}-2\epsilon}\times C_T\cC^{\frac{3}{2}-\epsilon}$ into itself is left to the reader. We thus evaluate $\Phi$ at $(X_1-X_2,Y_1-Y_2)$. For the sake of notation, throughout this proof, we often write $X=X_1-X_2$, $Y=Y_1-Y_2$ and $v=X+Y$.

We first estimate $\Phi_X(X_1-X_2,Y_1-Y_2)$ in $C_T\cC^{\frac{3}{2}-2\epsilon}$.
The Schauder estimate \eqref{eq:schauder2} gives
\begin{equs}
    \Vert \Phi_X(X,Y)\Vert_{C_T\cC^{\frac{3}{2}-2\epsilon}}\lesssim T^{\frac{\epsilon}{4}}\Vert
    \pSym(X)+\pSym(Y)-\pXdotX(Y)\Vert_{C_T\cC^{-\frac{1+3\epsilon}{2}}}\,.
\end{equs}
Using \eqref{eq:boundpXdotX}, we obtain
\begin{equs}
   \Vert \pXdotX(Y)\Vert_{C_T\cC^{-\frac{1+3\epsilon}{2}}}   \lesssim \Vert \pXdotX\Vert_{\cL({C_T\cC^{\frac{3}{2}-\epsilon}},{C_T\cC^{-\frac{1+3\epsilon}{2}}})} \Vert Y\Vert_{C_T\cC^{\frac{3}{2}-\epsilon}}\,,
\end{equs}
and \eqref{eq:boundpSym} yields
\begin{equs}
    \Vert
    \pSym(X)\Vert_{C_T\cC^{-\frac{1+3\epsilon}{2}}}\lesssim
    \Vert \pSym\Vert_{\cL({C_T\cC^{\frac{3}{2}-2\epsilon}},{C_T\cC^{-\frac{1+3\epsilon}{2}}})} \Vert X\Vert_{C_T\cC^{\frac{3}{2}-2\epsilon}}\,,
\end{equs}
and 
\begin{equs}
    \Vert
    \pSym(Y)\Vert_{C_T\cC^{-\frac{1+3\epsilon}{2}}}\lesssim
    \Vert \pSym\Vert_{\cL({C_T\cC^{\frac{3}{2}-\epsilon}},{C_T\cC^{-\frac{1+3\epsilon}{2}}})} \Vert Y\Vert_{C_T\cC^{\frac{3}{2}-\epsilon}}\,.
\end{equs}
We thus have
\begin{equs}
     \Vert \Phi_X(X_1-X_2,Y_1-Y_2)\Vert_{C_T\cC^{\frac{3}{2}-2\epsilon}}\lesssim_{\bbX^{\LD}_4}T^{\frac{\epsilon}{4}}\Vert X_1-X_2,Y_1-Y_2\Vert_{sol}\,.
\end{equs}
Turning to the $C_T\cC^{\frac{3}{2}-\epsilon}$ norm of $\Phi_Y(X_1-X_2,Y_1-Y_2)$, by the Schauder estimate \eqref{eq:schauder2}, we have
\begin{equs}
    \Vert \Phi_Y(X,Y)\Vert_{C_T\cC^{\frac{3}{2}-\epsilon}}\lesssim T^{\frac{\epsilon}{4}}\Vert
    \vvv+\pXvv+\pvXv+\pvvX+\pXXv+\pvXX\Vert_{C_T\cC^{-\frac{1+2\epsilon}{2}}}\,.
\end{equs}
The estimates \eqref{eq:boundpX}, \eqref{eq:boundpXtwom} and \eqref{eq:boundpXtwonm} show that the stochastic objects made with the rough shift $\bX$ obey the same estimates than the objects made with $\X$. In particular, Lemma~\ref{lem:mixednonlin} stills holds in our context, and the terms $\pXvv$, $\pvXv$, $\pvvX$, $\pXXv$ and $\pvXX$ can be dealt with in the same way we did in three dimensions, in the proof of Proposition~\ref{prop:3dfixedpoint}.
Recalling \eqref{eq:boundvvv},
\begin{equs}
    \Vert \Phi_Y(X,Y)\Vert_{C_T\cC^{\frac{3}{2}-\epsilon}}&\lesssim T^{\frac{\epsilon}{4}}
     \big(1+\Vert v_1\Vert_{C_T\cC^{\frac32-2\epsilon}}^2+\Vert v_2\Vert_{C_T\cC^{\frac32-2\epsilon}}^2\big)\Vert v_1-v_2\Vert_{C_T\cC^{\frac32-2\epsilon}}\\&
     \lesssim T^{\frac{\epsilon}{4}}
     \big(1+\Vert X_1,Y_1\Vert_{sol}^2+\Vert X_2,Y_2\Vert_{sol}^2\big)\Vert X_1-X_2,Y_1-Y_2\Vert_{sol}\,,
\end{equs}
which gives
\begin{equs}
    &\Vert \Phi(X_1-X_2,Y_1-Y_2)\Vert_{sol}\lesssim T^{\frac{\epsilon}{4}}
     \big(1+\Vert X_1,Y_1\Vert_{sol}^2+\Vert X_2,Y_2\Vert_{sol}^2\big)\Vert X_1-X_2,Y_1-Y_2\Vert_{sol}\,.
\end{equs}
Consequently, by choosing $T$ small enough, depending only on $\Vert\bbX^{\LD}_4\Vert_T$ and $\Vert v_0\Vert_{\cC^{\frac32-\epsilon}}$, $\Phi$ is indeed a contraction. 
Iterating this argument yields the desired existence of maximal solutions and showing continuity of this solution in the stochastic data is a standard argument.
\end{proof}
In the next subsection, we aim to prove an a priori $L^2$ estimate on the solution $v=X+Y$ to \eqref{eq:DPD3} with initial condition $v_0\in\cC^{\frac32-\epsilon}$. To do so, we need the continuity in time of the solution to \eqref{eq:system2}:
\begin{lemma}\label{lem:continuity}
    Pick $\kappa<\frac{3}{4}$. Then for $T<\Bar{T}$, $t\mapsto v(t,x)$ belongs to $C_T^\kappa L^\infty(\T^4)$.
\end{lemma}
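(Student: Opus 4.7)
The plan is to read off the time Hölder regularity of $v = X + Y$ directly from the mild formulation of the system~\eqref{eq:system2}, by combining the spatial regularity of the right-hand side already established during the proof of Proposition~\ref{prop:4dfixedpoint}, the initial-data regularity $v_{0} \in \cC^{3/2-\epsilon}$, and the parabolic smoothing properties of the heat semigroup $P_{t}$.

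First, I would group the forcings into $F_{X} \eqdef \pSym(X) + \pSym(Y) - \pXdotX(Y)$ and $F_{Y} \eqdef -\vvv - \pXvv - \pvXv - \pvvX - \pXXv - \pvXX - \cS$, and check that $F \eqdef F_{X} + F_{Y}$ belongs to $C_{T}\cC^{-1/2-2\epsilon}(\T^{4})$ uniformly in $N$. The bound on $F_{X}$ is immediate from the random-operator estimates~\eqref{eq:boundpXdotX} and \eqref{eq:boundpSym} applied to $(X,Y) \in C_{T}\cC^{3/2-2\epsilon} \times C_{T}\cC^{3/2-\epsilon}$. For $F_{Y}$, the mixed terms are controlled exactly as in Lemma~\ref{lem:mixednonlin} after replacing $\X$ and its associated fields and operators by their hatted counterparts from Lemmas~\ref{lem:randop2} and \ref{lem:randfi2}; the cubic $\vvv$ is bounded in $L^{\infty}$ since $v \in C_{T}L^{\infty}$; and $\cS \in C_{T}\cC^{-1/2-\epsilon}$ by Lemma~\ref{lem:randfi4}.

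Second, for any $0 \leqslant s < t \leqslant T$ I would decompose
\[
v(t)-v(s)=(P_{t}-P_{s})\Pi_{N}v_{0}+\int_{s}^{t} P_{t-r}F(r)\,\rmd r+(P_{t-s}-I)\int_{0}^{s} P_{s-r}F(r)\,\rmd r,
\]
and apply the standard heat-semigroup bounds $\Vert (P_{\tau}-I)u\Vert_{L^{\infty}} \lesssim \tau^{\gamma/2}\Vert u\Vert_{\cC^{\gamma}}$ for $\gamma \in (0,2)$ and $\Vert P_{\tau} u\Vert_{L^{\infty}} \lesssim \tau^{-\alpha/2}\Vert u\Vert_{\cC^{-\alpha}}$ for $\alpha \in (0,2)$. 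The free term contributes $|t-s|^{(3/2-\epsilon)/2}$ using $v_{0} \in \cC^{3/2-\epsilon}$. The short Duhamel piece is bounded by $\int_{s}^{t}(t-r)^{-1/4-\epsilon}\,\rmd r \lesssim |t-s|^{3/4-\epsilon}$. For the third piece, note that $u_{s} \eqdef \int_{0}^{s}P_{s-r}F(r)\,\rmd r \in C_{T}\cC^{3/2-3\epsilon}$ by Schauder, so $\Vert (P_{t-s}-I)u_{s}\Vert_{L^{\infty}} \lesssim |t-s|^{(3/2-3\epsilon)/2}$.

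Combining the three bounds yields $\Vert v(t)-v(s)\Vert_{L^{\infty}} \lesssim |t-s|^{3/4-\epsilon'}$ uniformly in $N$ on $[0,T]$ for some small $\epsilon'>0$, whence $v \in C^{\kappa}_{T}L^{\infty}(\T^{4})$ for every $\kappa < 3/4$. I expect no real obstacle, only careful bookkeeping of the Schauder exponents from Lemmas~\ref{lem:randop2}, \ref{lem:randop_4} and \ref{lem:randfi4} with the losses absorbed into $\epsilon$; the argument is entirely parallel to the fixed-point estimates in Proposition~\ref{prop:4dfixedpoint}.
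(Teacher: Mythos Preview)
Your proposal is correct and follows essentially the same approach as the paper's proof: both observe that the right-hand side of the remainder equation lies in $C_{T}\cC^{-1/2-}$, so the Duhamel integral gains $3/4-$ in time in $L^{\infty}$, while the free evolution of the $\cC^{3/2-\epsilon}$ initial data also contributes $|t-s|^{3/4-}$. The paper works directly with the single equation~\eqref{eq:DPD3} and only spells out the case $s=0$, whereas you handle general $s<t$ at once via the three-term decomposition; this is a cosmetic difference, not a substantive one.
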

\begin{proof}
We prove the continuity in $t=0$, the result for $t>0$ can be proven similarly. 
Denote by $R(t,x)$ the RHS of \eqref{eq:DPD3}. 
By Proposition~\ref{prop:4dfixedpoint}, $v\in C_T\cC^{\frac{3}{2}-2\epsilon}$ so that $R\in C_T\cC^{-\frac12-3\epsilon}$. One has $v(t)-v(0)=P_t(x)-x+\int_0^tP_{t-s}(R(s))\rmd s$ so that 
   \begin{equs}
       \Vert v(t)-v(0)\Vert_{L^\infty}\lesssim \Vert (1-P_t)(x)\Vert_{L^\infty}+\Vert \int_0^tP_{t-s}(R(s))\rmd s\Vert_{L^\infty}\,.
   \end{equs} 
We deal with the first term using the heat kernel estimate $\Vert (1-P_t)(u)\Vert_{\cC^\beta}\lesssim t^{-\frac{\beta-\alpha}{2}} \Vert u\Vert_{\cC^\alpha}$ that holds for $\beta<\alpha$. Therefore, $\Vert (1-P_t)(x)\Vert_{L^\infty}\lesssim t^{\frac{3}{4}-\frac{\epsilon}{2}}\Vert x\Vert_{\cC^{\frac32-\epsilon}}$. On the other hand, using \eqref{eq:schauder1}, we have
\begin{equs}
    \Vert \int_0^tP_{t-s}(R(s))\rmd s\Vert_{L^\infty}&\lesssim
      \int_0^t \Vert P_{t-s}(R(s)) \Vert_{L^\infty} \rmd s \lesssim \int_0^t (t-s)^{-\frac14-\frac{3\epsilon}{2}} \rmd s \Vert R\Vert_{C_T\cC^{-\frac12-3\epsilon}}\,,
\end{equs}
and we can conclude that $\Vert v(t)-v(0)\Vert_{L^\infty}\lesssim t^{\frac34-\frac{3\epsilon}{2}}$.
\end{proof}

\subsection{$L^2$ a priori estimates and coming down from infinity}\label{subsec:L2apriori}
In this section, we show two different kinds of estimates on the $L^2$ norm of the solution to \eqref{eq:DPD3}. 
In Proposition~\ref{prop:comingdown} we use the damping effect of the non-linearity to establish a bound on the $L^{2}$ norm of $v$ that is uniform in the initial condition \dash this is sometimes called a ``coming down from infinity" estimate. 
In Proposition~\ref{prop:aprioriL2andH1} we use the nonlinearity to prove an energy estimate which is better suited for proving  tightness of the invariant measures. 

In order to use the nonlinearity as a damping effect, one has to show that all the mixed terms of the unknown with the stochastic objects are indeed bounded by the coercive terms. 

For the sake of clarity, the proofs of the bounds over the mixed terms are postponed to Appendix~\ref{sec:mixedterms}. 
\begin{proposition}[$L^2$ coming down from infinity]\label{prop:comingdown}
Recall from Section~\ref{subsec:4dfixedpoint} the definition of the enhanced noise $\bbX^{\LD}_4$ and its norm $\Vert \cdot\Vert_t$ (see equation~\eqref{eq:normbbX}). Pick $T<\Bar{T}$ and $v_0\in\cC^{\frac{3}{2}-2\epsilon}(\T^4)$, and let $v_x=X+Y$ with $(X,Y)$ the solution to \eqref{eq:system2} at fixed $N$ with initial condition $(0,\Pi_Nv_0)$. There exists a constant $\gamma>0$ such that for all $t\in[0,T]$, $v_x$ obeys the bound:
    \begin{equs}
    \Vert v(t,v_0)\Vert_{L^2(\T^4)}\lesssim \Vert \bbX^{\LD}_4\Vert^\gamma_t\max(1,t^{-\frac{3}{2}})\,.
    \end{equs}
Observe that the upper bound on $\Vert v(t,v_0)\Vert_{L^2(\T^4)}$ is a positive random variable, uniform in $N$, that only depends on the randomness on $[0,t]$ (not in the future of $t$), is in $L^p(\Omega)$ for all $1\leqslant p<\infty$, and that it is independent of the initial condition $v_0$.
\end{proposition}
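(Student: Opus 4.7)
Fix $N < \infty$ and $T < \Bar T$. By Proposition~\ref{prop:4dfixedpoint} the solution $(X,Y)$ is smooth in space on $[0,T]$ and by Lemma~\ref{lem:continuity} the field $v=X+Y$ is H\"older continuous in $L^\infty$, so $v$ satisfies \eqref{eq:DPD3} classically and $y(t):=\|v(t)\|_{L^2}^2$ is differentiable. The plan is to test \eqref{eq:DPD3} against $v$, use the dissipation $\|v\|_{M^4}^4=(v,\cN(v,v,v))$ built into the tensor nonlinearity, and then pass to an ODE comparison that is insensitive to the size of $v_0$.

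\textbf{Energy identity and mixed terms.} Pairing \eqref{eq:DPD3} with $v$ in $L^2$ gives the energy balance
\begin{equ}
\tfrac{1}{2} y'(t) + \|\nabla v\|_{L^2}^2 + \|v\|_{L^2}^2 + \|v\|_{M^4}^4 = -\sum_\tau (v,\tau),
\end{equ}
where $\tau$ ranges over $\pXvv,\pvXv,\pvvX,\pXXv,\pXvX,\pvXX$ and $\cS$. Each pairing $(v,\tau)$ is estimated uniformly in $N$ in Appendix~\ref{sec:mixedterms} by combining the random operator/field bounds of Lemmas~\ref{lem:randop2},~\ref{lem:randop_4},~\ref{lem:randfi2},~\ref{lem:randfi4} with Young's inequality, absorbing a fraction of each coercive term on the LHS. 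The delicate term is $(v,\pXvX)=(v,\pXdotX(v))$, for which the operator bound \eqref{eq:boundpXdotX} alone is too weak. Following the reinjection viewpoint of Remark~\ref{rem:12}, the identity $X=-\cL^{-1}\pXdotX(v)$ gives $\pSym(v)=-\pXdotX(X)$, hence
\begin{equ}
(v,\pXdotX(v)) = -(v,\pSym(v)) + (v,\pXdotX(Y)),
\end{equ}
trading $\pXdotX$ for $\pSym$ which gains a full derivative (Lemma~\ref{lem:randop_4}); the remainder involving $Y$ is harmless since it is paired with the better-regularity component. The outcome is
\begin{equ}
y'(t) + c\big(\|\nabla v\|_{L^2}^2+\|v\|_{L^2}^2+\|v\|_{M^4}^4\big) \leq C\|\bbX^{\LD}_4\|_t^{\gamma_0}.
\end{equ}

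\textbf{Interpolation and ODE comparison.} To convert the damping into a power of $y$ alone, I exploit the Hilbert--Schmidt representation $\|v\|_{M^4_c}^4=\|K_c\|_{L^2(\T^2)}^2=\|T_v^{(c)}\|_{S_4}^4$, where $K_c(x_c,y_c)=\int_{\T^3} v(x_{\hc},x_c)v(x_{\hc},y_c)\,dx_{\hc}$ is the Gram kernel of the $x_c$-fibers and $T_v^{(c)}\colon L^2(\T_c)\to L^2(\T^3_{\hc})$ is the associated integral operator. Cauchy--Schwarz on singular values yields $\|T_v^{(c)}\|_{S_4}^4\geq \|T_v^{(c)}\|_{S_2}^4/\mathrm{rank}(T_v^{(c)})$ with $\|T_v^{(c)}\|_{S_2}^2=\|v\|_{L^2}^2$. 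Truncating $v$ at Fourier scale $N\sim \|v\|_{H^1}/\|v\|_{L^2}$ preserves a definite fraction of the $L^2$ mass while $T_{v_{\leq N}}^{(c)}$ factors through $\mathbb{C}^{2N+1}$, giving effective rank $\lesssim N$. Combined with Young, this produces an interpolation of the form $\|v\|_{L^2}^{8/3}\lesssim \|v\|_{M^4}^4+\|v\|_{H^1}^2+1$. Substituting into the energy estimate, $y'(t)\leq -c\,y(t)^{4/3}+C\|\bbX^{\LD}_4\|_t^{\gamma_0}$, and comparison with the explicit solution of $z'=-cz^{4/3}$ yields $y(t)\leq C\max\big(\|\bbX^{\LD}_4\|_t^{3\gamma_0/4}, t^{-3}\big)$. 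Taking square roots gives $\|v(t)\|_{L^2}\lesssim \|\bbX^{\LD}_4\|_t^{\gamma}\max(1,t^{-3/2})$ with $\gamma=3\gamma_0/8$, as claimed.

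\textbf{Main obstacle.} The most subtle ingredient is the interpolation step: unlike the $\Phi^4$ setting, the tensor $M^4$ norm is strictly weaker than the local $L^4$ norm (Schatten-4 of $T_v^{(c)}$ rather than pointwise fourth power), so the H\"older chain $\|v\|_{L^2}^4\lesssim \|v\|_{L^4}^4$ is unavailable and one must genuinely exploit Sobolev smoothing via an effective-rank argument. This weaker damping is precisely what produces the slower decay rate $t^{-3/2}$ in place of the $t^{-1/2}$ familiar from local $\Phi^4$ coming-down-from-infinity. The $(v,\pXvX)$ term is the other main technical point and is the reason the reinjection from Section~\ref{subsec:4dfixedpoint} enters even in the $L^2$ estimate.
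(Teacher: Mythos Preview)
Your treatment of the term $(v,\pXvX)$ contains a genuine gap. The reinjection identity $\pXdotX(v)=-\pSym(v)+\pXdotX(Y)$ introduces two quantities that cannot be absorbed into a pointwise-in-time differential inequality for $y(t)=\|v(t)\|_{L^2}^2$. First, $\pSym$ is genuinely nonlocal in time (it contains $\cL^{-1}$), and the bound of Lemma~\ref{lem:randop_4} lives in $\cL(C_TH^\alpha,C_T\cC^{\beta-\epsilon})$; hence $(v(t),\pSym(v)(t))$ is controlled only through $\sup_{s\leqslant t}\|v(s)\|_{H^\alpha}$, not through $\cK(v_t)+\cI(v_t)$ at the single time $t$. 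Second, and fatally for coming down from infinity, the remainder $(v,\pXdotX(Y))$ forces a bound involving $\|Y\|_{C_t\cC^{3/2-\epsilon}}$, which by the fixed-point construction of Proposition~\ref{prop:4dfixedpoint} depends on $\|v_0\|_{\cC^{3/2-\epsilon}}$. This destroys precisely the uniformity in the initial condition that the proposition asserts.

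The paper avoids this entirely, and the Remark following Proposition~\ref{prop:comingdown} says so explicitly: the reinjection ansatz is \emph{not} needed for the $L^2$ estimate. After pairing with $v$ the problematic term becomes $\XvXv=(v,\cN(\bX,v,\bX))$, and the elementary Cauchy--Schwarz inequality~\eqref{eq:CS1} gives $|\XvXv|\leqslant\XXvv$, where the right-hand side involves the non-melonic pairing $\pXtwonm$, which requires no renormalization and is local in time. This reduces $\XvXv$ to the already-handled case~\eqref{eq:XXvv} of Lemma~\ref{lem:CSinaction}. Your claim that ``the reinjection from Section~\ref{subsec:4dfixedpoint} enters even in the $L^2$ estimate'' is therefore the opposite of what actually happens.

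The rest of your sketch (interpolation and ODE comparison) is in the right spirit, though the paper proceeds slightly differently. The needed interpolation is obtained directly from~\eqref{eq:interpol2} with $\theta=1/2$, giving $\|v\|_{L^2}^2\lesssim\|v\|_{M^4}\|v\|_{H^1}$, with no rank argument; the paper then sets $F(t)=\|v\|_{H^1}^{3/2}+\|v\|_{M^4}^3$ and applies the Mourrat--Weber comparison principle (Lemma~\ref{lem:compa}) to the integral inequality $\int_s^t F^{4/3}\,\mathrm{d}r\lesssim\|\bbX^{\LD}_4\|_t^\kappa+F(s)$ rather than a pointwise ODE for $y$. Once the energy inequality is correctly established, your direct route $y'\leqslant-cy^{4/3}+C$ would be essentially equivalent.
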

\begin{remark}
    A generalization of the proof of previous estimate to some $L^p$ seems to be out of the reach. Indeed, when pairing the nonlinearity with some higher powers of $v$, we obtain expressions such as $(v^{p-1},\cN(v,v,v))$, which are neither norms, nor even easily provable to be positive. Moreover, establishing some $M^p$ estimates seems hopeless, since we would only obtain them for $p\geqslant2$ only, and they would therefore be weaker than the $L^2$ one. Finally, the non-locality of the interaction prevents us from easily using a maximum principle.
\end{remark}
\begin{remark}
We saw in the previous section that in order to prove local in time existence of \eqref{eq:DPD3}, we had to be careful dealing with the worst term $\pXvX$, leading us to the ansatz \eqref{eq:system2}. 
However, this ansatz is not necessary when proving global existence \dash when pairing equation paired with its solution to derive an $L^2$ energe estimate, $\pXvX$ becomes $\XvXv$, which, by Cauchy Schwarz inequality, is bounded by $\XXvv$. 
The crucial fact is that $\pXtwonm$ can be defined without any new renormalization \dash after the pairing the term $\pXvX$ becomes similar to the better-behaved term $\pXXv$ so the ansatz is not necessary.
This occurs in $d=4$ but isn't generic in the full subcritical regime. 
Closer to the critical dimension the terms $\pXXv$ and $\pXXv$ are also problematic and require a more complicated ansatz and re-injections, similar to what that we did for $\pXvX$. 
This observation is also used for treating $\rmT^4_4$ in Section~\ref{sec:BG}. 
\end{remark}
In order to prove Proposition~\ref{prop:comingdown}, we need this a priori estimate on the solution to \eqref{eq:DPD3}:
\begin{proposition}[A priori estimate]\label{prop:apriori}
Let $T<\Bar{T}\wedge1$ and $v=X+Y$ with $(X,Y)$ the solution to \eqref{eq:system2} on $[0,T]\times\T^4$ with initial condition $(0,\Pi_Nv_0)$ for $v_0\in \cC^{\frac{3}{2}-2\epsilon}$, and pick $s,t\in[0,T]$, $s<t$. There exists a constant $\kappa>0$ such that $v$ obeys the following a priori estimate:
\begin{equs}
     \Vert v(t)\Vert^2_{L^2}+\int^t_s\Big(\Vert v(r) \Vert_{H^1}^2+\Vert v(r)\Vert^4_{M^4}\Big)\rmd r\lesssim \Vert \bbX^{\LD}_4\Vert_t^\kappa+ \Vert v(s)\Vert_{H^1}^{\frac{3}{2}}+\Vert v(s)\Vert_{M^4}^{3}\,.
\end{equs}
\end{proposition}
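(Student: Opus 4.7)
The plan is to derive an energy identity by pairing equation~\eqref{eq:DPD3} in $L^2$ against $v$. Since $\cL = \partial_t + (1 - \Delta)$, one has $(v, \cL v) = \tfrac12 \partial_t \Vert v\Vert_{L^2}^2 + \Vert v\Vert_{H^1}^2$, and the sign-positivity of the tensor interaction (see Appendix~\ref{sec:nonlin_facts}) gives $(v, \cN(v,v,v)) = \Vert v\Vert_{M^4}^4$. This yields
\begin{equs}
\tfrac12 \partial_t \Vert v\Vert_{L^2}^2 + \Vert v\Vert_{H^1}^2 + \Vert v\Vert_{M^4}^4 \;=\; -\big(v,\; \pXvv + \pvXv + \pvvX + \pXXv + \pvXX + \pXvX + \cS\big).
\end{equs}

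The core of the proof then consists of bounding each of the seven pairings on the right-hand side by an expression of the form $C_\delta \Vert \bbX_4^{\LD}\Vert_t^{\kappa} + \delta \big(\Vert v\Vert_{H^1}^2 + \Vert v\Vert_{M^4}^4\big)$, for a small $\delta > 0$ to be absorbed into the coercive left-hand side. Five of these terms, namely $\pXvv$, $\pvXv$, $\pvvX$, $\pXXv$ and $\pvXX$, are handled along the same lines as in Lemma~\ref{lem:mixednonlin}, using the random-operator estimates of Lemma~\ref{lem:randop2} (which deliver $\cC^{-\frac12-}$-valued outputs) combined with the bi-local Besov embeddings of Appendix~\ref{app:bilocbesov} to pair against $v \in H^{1}$. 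The purely stochastic remainder $\cS$ is controlled by Lemma~\ref{lem:randfi4}: $\cS \in C_T \cC^{-\frac12-\epsilon}$ so $|(v, \cS)| \lesssim \Vert v\Vert_{H^{\frac12+\epsilon}} \Vert \bbX_4^{\LD}\Vert_t^{\kappa}$, and Young's inequality provides the absorption.

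The delicate term is $(v, \pXvX)$; as emphasized in the remark preceding the proposition, the reinjection strategy of Section~\ref{subsec:4dfixedpoint} is \emph{not} needed here, because the $L^2$ pairing itself supplies sufficient symmetry. Writing $(v, \cN^c(\bX, v, \bX))$ as a double integral and using the symmetry $\chi^c(y,x) = \bar\chi^c(x,y)$ of the non-local vertex together with the elementary bound $|v(x)v(y)| \leqslant \tfrac12(v(x)^2 + v(y)^2)$, Cauchy--Schwarz dominates the pairing by an expression of the shape $(v^2, \pXtwonm(1))$, which is controlled by the non-melonic random operator $\pXtwonm$ of Lemma~\ref{lem:randop2} and requires no new renormalization beyond the $\mfC^1_N, \mfC^2_N$ already absorbed into the stochastic objects. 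Thus $(v, \pXvX)$ obeys a bound of the same type as $(v, \pXXv)$.

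Summing, absorbing the $\delta$-factors, and integrating from $s$ to $t \in [s, T]$ with $T \leqslant 1$ gives
\begin{equs}
\Vert v(t)\Vert_{L^2}^2 + \int_s^t \Big(\Vert v(r)\Vert_{H^1}^2 + \Vert v(r)\Vert_{M^4}^4\Big) \rmd r \;\lesssim\; \Vert \bbX_4^{\LD}\Vert_t^{\kappa} + \Vert v(s)\Vert_{L^2}^2.
\end{equs}
The final step is to replace $\Vert v(s)\Vert_{L^2}^2$ by $\Vert v(s)\Vert_{H^1}^{3/2} + \Vert v(s)\Vert_{M^4}^3$, which follows from a Young-type splitting: either $\Vert v(s)\Vert_{L^2} \leqslant 1$ (and the square is absorbed into the $\Vert \bbX\Vert^{\kappa}$ term), or $\Vert v(s)\Vert_{L^2} > 1$ and the sub-critical exponents $3/2 < 2$ and $3 < 4$ allow the bound $\Vert v(s)\Vert_{L^2}^2 \lesssim \Vert v(s)\Vert_{H^1}^{3/2} + \Vert v(s)\Vert_{M^4}^{3}$ by interpolation. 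The main obstacle is the bookkeeping of the mixed-term bounds of paragraph two, most significantly the symmetrization argument for $(v, \pXvX)$: this is where the melonic structure of $\rmT^4_4$ is decisive, ensuring that after Cauchy--Schwarz the worst term is paired against a non-melonic object that inherits the already-proved random-operator bounds without introducing new divergences.
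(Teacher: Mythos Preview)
Your energy identity is correct and your treatment of $(\cS,v)$ and of $\XvXv$ via the Cauchy--Schwarz inequality~\eqref{eq:CS1} matches the paper. The gap is in your handling of the remaining mixed terms.

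You propose to control $(v,\pXvv)$, $(v,\pvXv)$, $(v,\pvvX)$, $(v,\pXXv)$, $(v,\pvXX)$ ``along the same lines as Lemma~\ref{lem:mixednonlin}''. But the estimates of Lemma~\ref{lem:mixednonlin} bound $\Vert\pXvv\Vert_{\cC^{-\frac12-\epsilon}}$ etc.\ in terms of $\Vert v\Vert_{L^\infty}\Vert v\Vert_{\cC^{\frac32-2\epsilon}}$, and these norms are \emph{not} controlled by $\cK(v)=\Vert v\Vert_{H^{1-\epsilon}}^2$ or $\cI(v)=\Vert v\Vert_{M^4}^4$, which are the only coercive quantities available in the energy framework. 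So the absorption step cannot close. What is actually needed (and what the paper does in Appendix~\ref{sec:mixedterms}) is different: one first uses the Klein-four symmetry of the vertex to combine the three one-$\bX$ terms into $3\,\Xvvv$, and then invokes the Cauchy--Schwarz inequality~\eqref{eq:CS2} to obtain $|\Xvvv|\leqslant \cI(v)^{1/2}|\XXvv|^{1/2}$. The two-$\bX$ pairings $\XXvv$ and $\vXXv$ are then bounded not by bilocal Besov alone but by the $M^4_c$--Sobolev interpolation~\eqref{eq:interpol}, which after moving fractional derivatives onto the random operator yields $|\XXvv|\lesssim_{\bbX}\cI(v)^{a}\cK(v)^{b}$ with $a+b<1$; see the proof of Lemma~\ref{lem:CSinaction}. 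This interpolation, not the fixed-point estimates, is what makes the absorption work.

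Your final step also has a gap: the case split ``$\Vert v(s)\Vert_{L^2}>1$ and subcritical exponents'' does not produce $\Vert v(s)\Vert_{L^2}^2\lesssim\Vert v(s)\Vert_{H^1}^{3/2}+\Vert v(s)\Vert_{M^4}^3$, since neither $\Vert v\Vert_{L^2}\leqslant\Vert v\Vert_{H^1}$ nor $\Vert v\Vert_{M^4}\lesssim\Vert v\Vert_{L^2}$ goes the right way. The paper instead uses the interpolation~\eqref{eq:interpol2}, $\Vert v\Vert_{L^2}^2\lesssim\Vert v\Vert_{M^4}\Vert v\Vert_{H^1}$, followed by Young with exponents $(3,\tfrac32)$.
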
 
\begin{proof}
    Recall that by the solution theory established in Section~\ref{subsec:4dfixedpoint}, $v$ solves \eqref{eq:DPD3}. Therefore, let us pair \eqref{eq:DPD3} the solution $v$. For $r\in[s,t]$, we obtain
\begin{equs}\label{eq:paired}
    \frac{1}{2}\partial_r\Vert v(r)\Vert_{L^2}^2&+\Vert v(r) \Vert_{H^1}^2+\Vert \nonumber v(r)\Vert^4_{M^4}=-\Big(3\Xvvv+\XXvv+\XvXv+\vXXv+(\cS,v)\Big)(r)\,,
\end{equs}
where $\Xvvv\eqdef(\pvXv,v)$, $\XXvv\eqdef(\pXXv,v)$, $\XvXv\eqdef(\pXvX,v)$ and $\vXXv\eqdef(\pvXX,v)$. Note that by Lemma~\ref{lem:continuity}, the previous equation truly makes sens only integrated between two times, so that \eqref{eq:paired} should really be understood as $\int_s^t LHS = \int_s^t RHS$. However, for the sake of notation, time integration is postponed to the end of the proof. 

We now use the bounds on the mixed terms that are proven in Appendix~\ref{sec:mixedterms}. 
It is at this step that we take advantage of the fact that we have pushed the Da Prato - Debussche expansion far enough so that all the remaining purely stochastic terms on the RHS are of regularity $-1/2-$, this allows us to control the pairing $(\cS,v)$ using the $H^1$ norm of $v$. 
By \eqref{eq:Sv}, \eqref{eq:vXXv}, \eqref{eq:XXvv}, \eqref{eq:XvXv} and \eqref{eq:Xvvv}, we infer the existence of an exponent $\kappa>0$ such that
\begin{equs}
       \frac{1}{2}\partial_r&\Vert v(r)\Vert_{L^2}^2+\Vert v(r) \Vert_{H^1}^2+\Vert v(r)\Vert^4_{M^4}\leqslant 8(\delta^{-1}\Vert \bbX^{\LD}_4\Vert_r)^\kappa+8\delta\big(\Vert v(r) \Vert_{H^1}^2+\Vert v(r)\Vert^4_{M^4}\big)\,.
\end{equs}
Setting $\delta=\frac{1}{16}$ yields
\begin{equs}\label{eq:intermediatestep}
   \partial_r\Vert v(r)\Vert_{L^2}^2+\Vert v(r) \Vert_{H^1}^2+\Vert v(r)\Vert^4_{M^4}\lesssim \Vert \bbX^{\LD}_4\Vert_r^\kappa\,, 
\end{equs}
which we finally integrate between $s $ and $t$. We thus have
\begin{equs}
    \Vert v(t)\Vert^2_{L^2}+\int^t_s\Big(\Vert v(r) \Vert_{H^1}^2+\Vert v(r)\Vert^4_{M^4}\Big)\rmd r\lesssim \Vert \bbX^{\LD}_4\Vert_t^\kappa+ \Vert v(s)\Vert^2_{L^2}\,.
\end{equs}
We deduce from the interpolation inequality \eqref{eq:interpol2} that
\begin{equs}
    \Vert v(s)\Vert^2_{L^2}\lesssim \Vert v(s)\Vert_{M^4}\Vert v(s)\Vert_{H^1}\lesssim \Vert v(s)\Vert_{H^1}^{\frac{3}{2}}+\Vert v(s)\Vert_{M^4}^{3}\,,
\end{equs}  
\end{proof}
We need the following rewriting of the previous proposition:
\begin{corollary}
    Define $F(t)\eqdef  \Vert v(s)\Vert_{H^1}^{\frac{3}{2}}+\Vert v(s)\Vert_{M^4}^{3}$, uniform in  $T<\Bar{T}\wedge1$, $s,t\in [0,T]$, $s<t$, 
    \begin{equs}
{}& \Vert v(t)\Vert^2_{L^2}\lesssim \Vert \bbX^{\LD}_4\Vert_t^\kappa+F(s)\,, \label{eq:vL2&F} \\
{}& \int_s^t F(r)^{\frac{4}{3}}\rmd r\lesssim   \Vert \bbX^{\LD}_4\Vert_t^\kappa+F(s)\,. \label{eq:comparF}
    \end{equs}
\end{corollary}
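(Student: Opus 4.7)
The plan is to extract both bounds directly from Proposition~\ref{prop:apriori}, which gives
\begin{equs}
   \Vert v(t)\Vert^2_{L^2}+\int^t_s\Big(\Vert v(r) \Vert_{H^1}^2+\Vert v(r)\Vert^4_{M^4}\Big)\rmd r\lesssim \Vert \bbX^{\LD}_4\Vert_t^\kappa+ \Vert v(s)\Vert_{H^1}^{\frac{3}{2}}+\Vert v(s)\Vert_{M^4}^{3}=\Vert \bbX^{\LD}_4\Vert_t^\kappa+F(s)\,.
\end{equs}
Since the left-hand side is a sum of two non-negative terms, dropping the integral yields \eqref{eq:vL2&F} immediately, while dropping the $L^2$ term leaves us with $\int_s^t (\Vert v(r)\Vert_{H^1}^2+\Vert v(r)\Vert_{M^4}^4)\rmd r$ bounded by the same right-hand side.

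For \eqref{eq:comparF} the only nontrivial point is a pointwise comparison between the integrand $F(r)^{4/3}$ and the integrand $\Vert v(r)\Vert_{H^1}^2+\Vert v(r)\Vert_{M^4}^4$. Using the elementary inequality $(a+b)^{4/3}\lesssim a^{4/3}+b^{4/3}$ valid for all $a,b\geqslant 0$ together with the choice of exponents $\tfrac{3}{2}\cdot\tfrac{4}{3}=2$ and $3\cdot\tfrac{4}{3}=4$, one obtains
\begin{equs}
 F(r)^{\frac{4}{3}}=\big(\Vert v(r)\Vert_{H^1}^{\frac{3}{2}}+\Vert v(r)\Vert_{M^4}^{3}\big)^{\frac{4}{3}}\lesssim \Vert v(r)\Vert_{H^1}^{2}+\Vert v(r)\Vert_{M^4}^{4}\,.
\end{equs}
Integrating this comparison over $[s,t]$ and combining with Proposition~\ref{prop:apriori} yields the desired estimate \eqref{eq:comparF}.

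Since both steps are routine consequences of the a priori bound and a scalar inequality, there is no real obstacle to overcome; the verification amounts to bookkeeping of exponents. The only point worth flagging is that the exponents $3/2$ and $3$ in the definition of $F$ have been chosen precisely so that raising $F$ to the power $4/3$ produces the two coercive norms $\Vert v\Vert_{H^1}^2$ and $\Vert v\Vert_{M^4}^4$ appearing on the left-hand side of the a priori estimate, which is what makes the comparison work cleanly.
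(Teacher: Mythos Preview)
Your proof is correct and is exactly the intended argument: the paper presents this corollary as a direct ``rewriting'' of Proposition~\ref{prop:apriori} without further proof, and your derivation spells out precisely the two obvious steps (dropping a non-negative term, and the pointwise inequality $F(r)^{4/3}\lesssim \Vert v(r)\Vert_{H^1}^{2}+\Vert v(r)\Vert_{M^4}^{4}$).
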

Finally, we also need this comparison principle introduced by Mourrat \& Weber: 
\begin{lemma}[Comparison: \cite{MW17Phi43}, Lemma 7.3]\label{lem:compa}
 Let $G:[0,T)\rightarrow \mathbb{R}_+$ continuous and such that for all $s,t\in[0,T), s<t$, $ \int_s^t G(r)^{\frac{4}{3}}\rmd r\lesssim \mathfrak c G(s)$. Then there exists a sequence of times $0<t_1<\dots<t_N=T$ such that for all $n\in\{0,\dots,N-1\}$, we have
 \begin{equs}
     G(t_n)\lesssim \mathfrak{c}^3t_{n+1}^{-3}\,.
 \end{equs}
\end{lemma}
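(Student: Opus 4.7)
The plan is to construct the times $(t_n)$ by iteratively halving the value of $G$, to bound each waiting time $t_{n+1}-t_n$ using the integral hypothesis, and then to sum a geometric series.

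Set $t_0 = 0$ and inductively let $t_{n+1}$ be the first time $r > t_n$ at which $G(r) = G(t_n)/2$, terminating by setting $t_N = T$ either when this halving time would exceed $T$ or when no such halving time exists in $(t_n,T)$. By continuity and the definition of $t_{n+1}$, we have $G(r) \geq G(t_n)/2$ throughout $[t_n, t_{n+1}]$, so applying the hypothesis on this interval yields
\begin{equs}
  (t_{n+1}-t_n)\bigl(G(t_n)/2\bigr)^{4/3} \leq \int_{t_n}^{t_{n+1}} G(r)^{4/3}\,\rmd r \lesssim \mathfrak{c}\,G(t_n),
\end{equs}
hence $t_{n+1} - t_n \lesssim \mathfrak{c}\,G(t_n)^{-1/3}$.

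The crux is geometric: as long as the halving rule is in effect, $G(t_k) = G(t_0)\,2^{-k}$, so $G(t_k)^{-1/3} = G(t_0)^{-1/3}\,2^{k/3}$ grows geometrically with ratio $2^{1/3} > 1$. A finite geometric sum is comparable to its last term, giving
\begin{equs}
  t_{n+1} = \sum_{k=0}^{n}(t_{k+1} - t_k) \lesssim \mathfrak{c} \sum_{k=0}^n G(t_k)^{-1/3} \lesssim \mathfrak{c}\,G(t_n)^{-1/3}.
\end{equs}
Cubing yields $G(t_n) \lesssim \mathfrak{c}^3\,t_{n+1}^{-3}$, the desired bound. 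In the exceptional branch where $G > G(t_{N-1})/2$ throughout $[t_{N-1},T)$, the same integral computation applied on $[t_{N-1},T]$ gives $T - t_{N-1} \lesssim \mathfrak{c}\,G(t_{N-1})^{-1/3}$, which combined with $t_{N-1} \lesssim \mathfrak{c}\,G(t_{N-1})^{-1/3}$ still delivers $G(t_{N-1}) \lesssim \mathfrak{c}^3\,T^{-3}$.

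The conceptually clean part is the halving-plus-geometric-sum trick; the hypothesis is invoked once per subinterval, converting a pointwise lower bound on the integrand into a length estimate. The only mildly delicate point I anticipate is ensuring the sequence genuinely terminates at $t_N = T$ in finitely many steps: this follows since one can always truncate at the first index $n$ for which the halving prescription would place $t_{n+1}$ beyond $T$, replacing it by $T$ and verifying the bound on $[t_{N-1}, T]$ as above. Everything else is just algebra once the halving construction is in place.
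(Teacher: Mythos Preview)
The paper does not actually prove this lemma; it simply cites \cite{MW17Phi43}, Lemma~7.3. Your argument is correct and is exactly the standard halving construction from that reference: define $t_{n+1}$ as the first time $G$ drops to $G(t_n)/2$, use the integral hypothesis to bound $t_{n+1}-t_n\lesssim\mathfrak{c}\,G(t_n)^{-1/3}$, and sum the resulting geometric series. There is nothing to compare here beyond noting that you have reproduced the cited proof.

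One small remark on termination: your two stopping criteria do not formally exclude the possibility that the halving times $t_n$ are all defined but accumulate at some $t^*<T$ (which forces $G(t^*)=0$). In the paper's application (Proposition~\ref{prop:comingdown}) this cannot occur because one works under the standing assumption $F(s)\geqslant\Vert\bbX^{\LD}_4\Vert_t^\kappa>0$, so $G$ is bounded away from zero and the halving must exit $[0,T)$ in finitely many steps. If you want the lemma to stand on its own, either add that $G$ is bounded below by a positive constant, or note that at an accumulation point one has $G(t^*)=0$ and the construction can be restarted from $t^*$ trivially.
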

With these lemmas at hand, we can now prove Proposition~\ref{prop:comingdown}:
\begin{proof}[of Proposition~\ref{prop:comingdown}]
    Pick $t\in[0,T]$ for $T<\Bar{T}\wedge1$. If there exists $s<t$ such that $F(s)\leqslant \Vert \bbX^{\LD}_4\Vert^\kappa_t$, then using \eqref{eq:vL2&F}, we immediately conclude that $\Vert v(t)\Vert_{L^2}\lesssim\Vert \bbX^{\LD}_4\Vert_t^{\frac{\kappa}{2}}$. Otherwise, suppose that for all $s\in[0,t]$, we have $F(s)\geqslant  \Vert \bbX^{\LD}_4\Vert^\kappa_t$. Then, by Lemma~\ref{lem:compa}, we thus have a sequence $0<t_1<\dots<t_N=T$ such that $F(t_n)\lesssim t^{-3}_{n+1}$. On the other hand, there exists a $n$ such that $t\in[t_n,t_{n+1})$, so that taking $s=t_n$ in \eqref{eq:vL2&F} yields $\Vert v(t)\Vert_{L^2}\lesssim \max\big(1,t^{-\frac{3}{2}}_{n+1}\big)\lesssim \max\big(1,t^{-\frac{3}{2}}\big) $. Letting $T=1$, we then iterate the argument on $[1,2]$, starting the solution from $v(1)$, and then keep iterating in the same way. The proof then follows from the fact that the estimate we have obtained is uniform in the initial condition. 
\end{proof}

We also have the weaker bound on the $L^2$ norm of $v$, that depends on the initial condition:
\begin{proposition}\label{prop:aprioriL2andH1}
    Pick $p\geqslant2$ and $0\leqslant s<t<\Bar{T}$. There exists a constant $\gamma>0$ such that it holds
\begin{equs}\label{eq:aprioriL2}
    \int_s^t\Vert v(r)\Vert^p_{L^2(\T^d)}\rmd r&\lesssim(t-s)\Vert\bbX^{\LD}_4\Vert_t^\gamma+\Vert v(s)\Vert^p_{L^2(\T^d)}\,.
\end{equs}
\end{proposition}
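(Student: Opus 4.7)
The plan is to combine two ingredients already in hand: the coercive differential inequality extracted in the proof of Proposition~\ref{prop:apriori} and the coming down from infinity estimate of Proposition~\ref{prop:comingdown}. Starting from equation~\eqref{eq:intermediatestep}, I discard the non-negative $\Vert v(r)\Vert_{H^1}^2$ and $\Vert v(r)\Vert_{M^4}^4$ terms on the left and integrate from $s$ to $r\in[s,t]$, using that $\Vert\bbX^{\LD}_4\Vert_{\bigcdot}$ is non-decreasing in its time argument. This yields the pointwise bound
\begin{equs}
\Vert v(r)\Vert_{L^2}^2 \lesssim \Vert v(s)\Vert_{L^2}^2 + (r-s)\Vert \bbX^{\LD}_4\Vert_t^\kappa\,.
\end{equs}
Raising to the power $p/2\geqslant 1$ and using the elementary inequality $(a+b)^{p/2}\lesssim a^{p/2}+b^{p/2}$ (with $p$-dependent constant) gives
\begin{equs}
\Vert v(r)\Vert_{L^2}^p \lesssim \Vert v(s)\Vert_{L^2}^p + (r-s)^{p/2}\Vert \bbX^{\LD}_4\Vert_t^{\kappa p/2}\,.
\end{equs}

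I then split the time integral according to whether $t-s\leqslant 1$ or $t-s>1$. In the short regime, integrating the previous pointwise bound on $[s,t]$ produces a bound $\lesssim (t-s)\Vert v(s)\Vert_{L^2}^p + (t-s)^{p/2+1}\Vert\bbX^{\LD}_4\Vert_t^{\kappa p/2}$, and since $t-s\leqslant 1$ one may drop the factor $(t-s)$ in front of $\Vert v(s)\Vert_{L^2}^p$ and replace $(t-s)^{p/2+1}$ by $(t-s)$, yielding the claim. In the long regime I write $\int_s^t=\int_s^{s+1}+\int_{s+1}^t$ and control the first integral exactly as in the short regime. For the second integral I invoke Proposition~\ref{prop:comingdown}: because its bound is uniform in the initial datum, restarting the equation at time $s$ from $v(s)$ and applying the estimate on $[s,r]$ yields $\Vert v(r)\Vert_{L^2}\lesssim \Vert \bbX^{\LD}_4\Vert_t^{\gamma'}$ for every $r\geqslant s+1$, so that integrating on $[s+1,t]$ contributes $(t-s)\Vert\bbX^{\LD}_4\Vert_t^{\gamma' p}$. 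Collecting both pieces and choosing $\gamma$ larger than $\max(\kappa p/2,\gamma' p)$ (and using that, up to multiplying by a constant, one may assume $\Vert\bbX^{\LD}_4\Vert_t\geqslant 1$) produces the claimed bound.

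I do not anticipate a serious obstacle: the coercive $L^2$ identity and the coming down from infinity bound do all the work, and the proof reduces to a bookkeeping split of the time interval. The only technical point worth checking is that Proposition~\ref{prop:comingdown} can indeed be restarted at an arbitrary time $s\in[0,\bar T)$ with the enhanced noise norm controlled by $\Vert\bbX^{\LD}_4\Vert_t$; this follows because the proof of Proposition~\ref{prop:comingdown} is local in time (it only uses the differential inequality~\eqref{eq:intermediatestep} and the Mourrat--Weber comparison lemma~\ref{lem:compa}) and because the map $t\mapsto\Vert\bbX^{\LD}_4\Vert_t$ is monotone.
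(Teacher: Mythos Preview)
Your argument is correct, but it takes a more circuitous route than the paper's proof. The paper does not throw away the $\|v(r)\|_{H^1}^2$ term in \eqref{eq:intermediatestep}: instead it drops only the $M^4$ term, uses $H^1\hookrightarrow L^2$ to write $\partial_r\|v(r)\|_{L^2}^2+\|v(r)\|_{L^2}^2\lesssim\|\bbX^{\LD}_4\|_r^\kappa$, multiplies by $\|v(r)\|_{L^2}^{p-2}$ to obtain $\partial_r\|v(r)\|_{L^2}^p+\|v(r)\|_{L^2}^p\lesssim\|\bbX^{\LD}_4\|_r^\kappa\|v(r)\|_{L^2}^{p-2}$, absorbs the right-hand side by Young, and integrates. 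This one-line Gronwall-type argument immediately gives the stated bound with no time-splitting and, crucially, without invoking Proposition~\ref{prop:comingdown}. By discarding the $H^1$ damping you lose exactly the mechanism that makes the paper's proof self-contained, and you are then forced to import the (logically heavier) coming down from infinity estimate to handle long times. Your approach works, and in fact you do not even need the restart: since $s\geqslant 0$ one has $s+1\geqslant 1$, so Proposition~\ref{prop:comingdown} applied directly to the original solution already gives $\|v(r)\|_{L^2}\lesssim\|\bbX^{\LD}_4\|_r^\gamma$ for all $r\in[s+1,t]$. But the paper's route is shorter and keeps the proposition independent of the comparison lemma.
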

\begin{proof}
We start from \eqref{eq:intermediatestep} and use the positivity of the $M^4$ norm to obtain
\begin{equs}
    \partial_r\Vert v(r)\Vert_{L^2}^2+\Vert v(r) \Vert_{H^1}^2\lesssim \Vert \bbX^{\LD}_4\Vert_r^\kappa\,.
\end{equs}    
Multiply this inequality by  $\Vert v(r)\Vert_{L^2}^{p-2}$ yields for all $\delta\in(0,1)$
\begin{equs}
    \partial_t\Vert v(r)\Vert_{L^2}^p+\Vert v(r) \Vert_{L^2}^p&\lesssim \Vert \bbX^{\LD}_4\Vert_r^\kappa \Vert v(r)\Vert_{L^2}^{p-2}\leqslant C_\delta \Vert \bbX^{\LD}_4\Vert_r^{\frac{p\kappa}{2}}+\delta \Vert v(r)\Vert_{L^2}^{p}\,,
\end{equs}  
where we used the embedding $H^1\hookrightarrow L^2$, and then Young's inequality. The $L^2$ norm can then be reabsorbed on the LHS, and integrating between $s$ and $t$ finally yields the desired result.
\end{proof}

\subsection{Tightness of the invariant measure}
We recall from the introduction the definition of $\nu_N$, \eqref{eq:local_phi_4_renorm} and the fact that \eqref{eq:T^4_4} is chosen in a way such that at fixed $N\in\N$, it leaves $\nu_N$ invariant. 
Combining this with our $L^2$ estimates for the Langevin dynamic allows us to show tightness of the regularized and renormalized $\rmT^4_4$ measures. 
\begin{proposition}\label{prop:tightness}
  The family $(\nu_N)_{N\in\N}$ is tight on $H^{-1-\epsilon}(\T^4)$. Moreover, for all $p\in[1,\infty)$, any subsequential limit $\nu$ satisfies the bounds: 
   \begin{equs}\label{eq:nu1}
       \E_\nu[\Vert\phi\Vert_{H^{-1-\epsilon}(\T^d)}^p]&<\infty\,,\\\label{eq:nu2}
       \E_\nu[\Vert \phi-\bX\Vert_{L^2(\T^d)}^p]&<\infty\,.
   \end{equs} 
\end{proposition}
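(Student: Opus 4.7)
The plan is to exploit the invariance of $\nu_N$ under \eqref{eq:T^4_4} together with the coming-down-from-infinity bound of Proposition~\ref{prop:comingdown}, which is crucially insensitive to the initial condition. At fixed $N\in\N$, equation \eqref{eq:T^4_4} acts on the finitely many Fourier modes $|k|_\infty\leqslant N$ with smooth drift and non-degenerate noise, hence is a finite-dimensional SDE, globally well-posed, and $\nu_N$ is one of its invariant laws by construction.

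The first step is to initialize the regularized dynamic at $\phi(0)\sim\nu_N$, drawn independently of the driving noise $\xi$. Invariance gives $\phi(t)\sim\nu_N$ for every $t\geqslant 0$. Using the ansatz of Theorem~\ref{thm:4d}, write $\phi(t)=\bX_N(t)+v_N(t)$, where the random initial datum $v_N(0)=\phi(0)-\bX_N(0)$ is (at fixed $N$) smooth but of possibly $N$-dependent norm. The key point is that Proposition~\ref{prop:comingdown} at $t=1$ provides
\begin{equ}
\Vert v_N(1)\Vert_{L^2(\T^4)}\lesssim \Vert\bbX^{\LD}_4\Vert_1^{\gamma},
\end{equ}
with an implicit constant independent of both $N$ and $v_N(0)$. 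Since $\Vert\bbX^{\LD}_4\Vert_1\in L^q(\Omega)$ for every $q<\infty$, taking $p$-th moments yields $\E[\Vert\phi(1)-\bX_N(1)\Vert_{L^2}^p]\leqslant C_p$, uniformly in $N$.

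Next, combine this with the uniform control $\E[\Vert\bX_N(1)\Vert_{\cC^{-1-\epsilon/2}}^p]\leqslant C'_p$ for the rough shift $\bX_N=\X_N-\Pictwo_N+\Picthree_N$, which follows from Lemmas~\ref{lem:randfi2}--\ref{lem:randfi4} and Schauder estimates. Via the embeddings $L^2\hookrightarrow H^{-1-\epsilon/2}$ and $\cC^{-1-\epsilon/2}\hookrightarrow H^{-1-\epsilon/2}$, the triangle inequality gives
\begin{equ}
\E_{\nu_N}\bigl[\Vert\phi\Vert_{H^{-1-\epsilon/2}}^p\bigr]=\E\bigl[\Vert\phi(1)\Vert_{H^{-1-\epsilon/2}}^p\bigr]\leqslant C''_p,
\end{equ}
uniformly in $N$. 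Tightness of $(\nu_N)$ on $H^{-1-\epsilon}$ follows from Markov's inequality and the compact embedding $H^{-1-\epsilon/2}\hookrightarrow H^{-1-\epsilon}$, and lower semicontinuity of $\Vert\cdot\Vert_{H^{-1-\epsilon}}$ under weak convergence transfers the bound to any subsequential limit $\nu$, giving \eqref{eq:nu1}.

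For \eqref{eq:nu2} I would work with the joint law $\mfL_N$ of $(\phi_N(1),\bX_N(1))$ on $H^{-1-\epsilon/2}\times\cC^{-1-\epsilon/2}$. The marginal moment bounds above, combined with the $N\uparrow\infty$ convergence of the stochastic objects entering $\bbX^{\LD}_4$, imply that $(\mfL_N)$ is jointly tight; along a further subsequence $\mfL_{N_k}$ converges to a joint law of $(\phi,\bX)$ with $\phi\sim\nu$ and $\bX$ the limiting stationary shift. Fatou applied to the $L^2$ bound then yields $\E[\Vert\phi-\bX\Vert_{L^2}^p]<\infty$. The main delicate point I anticipate is this joint convergence and the identification of the limiting $\bX$ with the canonical stationary shift of the limiting theory, rather than just as an abstract subsequential limit marginal; a Skorokhod representation together with the already-established convergence of $\bX_N$ in $C_T\cC^{-1-\epsilon/2}$ should suffice to make this rigorous.
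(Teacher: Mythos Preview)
Your overall route is sound but differs from the paper's. For the uniform $L^2$ bound on $v_N$, you invoke the coming-down-from-infinity estimate (Proposition~\ref{prop:comingdown}) at $t=1$, which is independent of the initial datum, and then use invariance of $\nu_N$ to transport this to a bound under $\nu_N$. The paper instead runs a stationarity bootstrap: it uses the weaker a priori estimate (Proposition~\ref{prop:aprioriL2andH1}), which does depend on the initial datum, averages over $[0,T]$, exploits that $v_N+\Picthree_N$ is stationary in time so that the time average equals the time-zero expectation, and then absorbs the initial-data term by taking $T$ large (this is Lemma~\ref{dynamic_theorem2}). Your argument is more direct and avoids the stationarity trick; the paper's argument is slightly more elementary in that it only needs the easier Proposition~\ref{prop:aprioriL2andH1} rather than the full coming-down-from-infinity machinery. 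Both conclude identically for tightness and \eqref{eq:nu1}.

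There is one genuine gap. You write that the finite-dimensional SDE \eqref{eq:T^4_4} at fixed $N$ is globally well-posed because it has ``smooth drift and non-degenerate noise''. That reasoning is false: smoothness and non-degeneracy do not prevent finite-time blow-up (consider $dX = X^3\,dt + dB$). What saves you is the dissipative sign of the nonlinearity, $(\cN(\phi,\phi,\phi),\phi)=\cI(\phi)\geqslant 0$. The paper proves global existence precisely via Khasminskii's explosion criterion (Lemma~\ref{thm:Khasminskii}, Corollary~\ref{coro:globaltime}) with Lyapunov function $V(\phi)=1+\Vert\Pi_N\phi\Vert_{L^2}^2$. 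You need this (or an equivalent argument) to guarantee $\bar T=\infty$ at fixed $N$, without which Proposition~\ref{prop:comingdown} at $t=1$ is not available.

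On \eqref{eq:nu2} you are actually more careful than the paper, which simply asserts it follows from the uniform prelimit bound. Your plan to pass to a joint subsequential limit of $(\phi_N(1),\bX_N(1))$ via Skorokhod, using the already-established $N\uparrow\infty$ convergence of $\bX_N$, is the right way to make this precise.
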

In order to prove Proposition~\ref{prop:tightness}, we first rewrite \ref{eq:T^4_4} in Fourier space and for finite $N\in\N$ as a finite dimensional system of $(2N+1)^4$ stochastic ordinary differential equations:
\begin{equs}\label{eq:ODS}
    \rmd \hat{\phi}_m &=\Big( \mfC_N\hat{\phi}_m-\angp{m}{2}\hat{\phi}_m-\sum_{c=1}^4\sum_{n\in\Z^4_N} \hat{\phi}_{m_{\hc},n_c}\hat{\phi}_{-n}\hat{\phi}_{n_{\hc},m_c} \Big)\mrd t+\rmd B_t^m\\
    &=:b^m(\phi)\rmd t+\rmd B_t^m
    \,,\nonumber
\end{equs}
where $B^m_t\eqdef \int_{\T^4\times\R_{\geqslant 0}} \1_{[0,t]}(s) e^{\imath x\cdot m }\xi(\rmd x,\rmd s)$ is a complex Brownian motion and $\mfC_N$ is a short-hand notation for $\mfC^1_N-\mfC^2_N$, and we recall that we have $|\mfC_N|\lesssim N$. 
This  allows us to obtain global in time existence for the solution to \eqref{eq:T^4_4} at fixed $N$ using the following explosion criterion of Khasminskii.
\begin{lemma}[\cite{khasminskii}, Theorem 3.5]\label{thm:Khasminskii}
Consider a random vector $(X^i(t))_{i\in I}$ with $I$ a finite set solution to the system of stochastic differential equations
\begin{equs}
    \rmd X^i(t)=b^i(X(t))\rmd t+\sigma^i(X(t))\rmd B_t\,, \,\, i\in I\,.
\end{equs}
  Then if $b$ and $\sigma$ are locally Lipschitz, and if there exists a Lyapounov function $V(X)$ and a positive constant $C$ such that $\inf_{x:|x|>R}V(x)\rightarrow +\infty$ when $R\rightarrow +\infty$ and $LV(x)\leqslant CV(x)$ with $L\eqdef \sum_{i\in I}b^i(x)\partial_i+\frac{1}{2}\sum_{i,j\in I^2}\sigma^i(x)\sigma^j(x)\partial_i\partial_j$, the solution starting from $X_0$ at $t=0$ is a regular almost surely continuous Markov process. 
\end{lemma}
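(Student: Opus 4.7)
The plan is to prove this classical non-explosion result by combining the standard local existence theory for SDEs with a Lyapunov argument that turns the coercivity of $V$ into a uniform bound on exit times. I would begin by invoking the local Lipschitz hypothesis on $b$ and $\sigma$: by a Picard iteration argument on any bounded domain, there exists a unique strong solution $X(t)$ up to an explosion time $\tau_\infty = \lim_{R \uparrow \infty} \tau_R$, where $\tau_R \eqdef \inf\{t \geqslant 0 : |X(t)| > R\}$ is the first exit time of the ball of radius $R$. The Markov property of the non-exploded solution is a standard consequence of strong uniqueness of the SDE, so the crux of the statement is showing that $\tau_\infty = +\infty$ almost surely.

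The key step is to apply It\^o's formula to the process $M_t \eqdef e^{-Ct} V(X(t))$ for $t \leqslant \tau_R$, where $V$ is smooth enough on compacts (which is typically part of the definition of a Lyapunov function). Since $\partial_t(e^{-Ct}) = -C e^{-Ct}$ and $LV \leqslant CV$ by hypothesis, the drift of $M_t$ is
\begin{equs}
e^{-Ct}\bigl(LV(X(t)) - C V(X(t))\bigr) \leqslant 0,
\end{equs}
so $(M_{t \wedge \tau_R})_{t \geqslant 0}$ is a bounded local supermartingale, hence a true supermartingale. Optional stopping therefore gives
\begin{equs}
\E\bigl[e^{-C(t \wedge \tau_R)} V(X(t \wedge \tau_R))\bigr] \leqslant V(X_0)
\end{equs}
for every $R, t > 0$.

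On the event $\{\tau_R \leqslant t\}$ we have $|X(\tau_R)| = R$, hence $V(X(\tau_R)) \geqslant \psi(R) \eqdef \inf_{|x| \geqslant R} V(x)$, and by hypothesis $\psi(R) \to +\infty$ as $R \to +\infty$. Restricting the expectation above to this event gives
\begin{equs}
e^{-Ct}\,\psi(R)\, \P(\tau_R \leqslant t) \leqslant V(X_0),
\end{equs}
so $\P(\tau_R \leqslant t) \leqslant e^{Ct} V(X_0)/\psi(R) \xrightarrow[R \to \infty]{} 0$. Since $\{\tau_\infty \leqslant t\} = \bigcap_{R} \{\tau_R \leqslant t\}$ (up to a null set), one concludes $\P(\tau_\infty \leqslant t) = 0$ for every $t$, and letting $t \uparrow \infty$ yields $\tau_\infty = +\infty$ a.s. Continuity of sample paths is inherited from the local solutions on each $[0, \tau_R]$.

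The main obstacle in this route is the soft but necessary work of justifying that the local solutions glue together into a global continuous semimartingale before explosion, and that the supermartingale argument is applied to a genuinely bounded process, which is why the stopping at $\tau_R$ (rather than at $\tau_\infty$ directly) is essential. In our application $b^m$ is a polynomial and $\sigma$ is constant, so the local Lipschitz hypothesis is automatic and all the regularity needed to run It\^o's formula on $V$ will come for free from the explicit choice $V(\phi) = 1 + \Vert \phi \Vert_{L^2}^2$ or a Wick-renormalized variant.
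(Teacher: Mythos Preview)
The paper does not prove this lemma at all: it is quoted as a black box from Khasminskii's book \cite{khasminskii} and immediately applied in Corollary~\ref{coro:globaltime}. Your argument is the standard supermartingale/optional-stopping proof of that result and is correct; there is nothing to compare against in the paper itself.
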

\begin{corollary}\label{coro:globaltime}
   For fixed $N\in\N$, \eqref{eq:ODS} admits global in time solutions for any initial data and the same holds for   \eqref{eq:T^4_4} for data of the form $\Pi_{N} v_{0}$.  
\end{corollary}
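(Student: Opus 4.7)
\textbf{Proof proposal for Corollary~\ref{coro:globaltime}.} The plan is to apply Khasminskii's criterion (Lemma~\ref{thm:Khasminskii}) directly to the finite-dimensional SDE \eqref{eq:ODS}. At fixed $N\in\N$ the unknown $(\hat\phi_m)_{m\in\Z_N^4}$ lives in a finite-dimensional real vector space (via the reality constraint $\hat\phi_{-m}=\overline{\hat\phi_m}$), the drift $b^m(\phi)$ is a polynomial of degree at most three in these coefficients, and the diffusion is constant, so $b$ and $\sigma$ are globally smooth and in particular locally Lipschitz. It remains to produce a Lyapunov function.

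For this I would take
\[
V(\phi) \eqdef 1 + \|\phi\|_{L^2(\T^4)}^2 = 1 + \sum_{m\in\Z_N^4} |\hat\phi_m|^2\,,
\]
which is manifestly coercive in the Euclidean norm on the Fourier side. Working in real space, the action of the generator on $V$ amounts to pairing \eqref{eq:T^4_4} with $\phi$ in $L^2(\T^4)$ and adding the deterministic It\^o correction. Using $(\phi,(\Delta-1)\phi)_{L^2} = -\|\phi\|_{H^1(\T^4)}^2$ together with the identity $(\cN_N(\phi,\phi,\phi),\phi)_{L^2} = \|\phi\|_{M^4(\T^4)}^4\geqslant 0$, which is immediate from definition \eqref{eq:1colornonlin} and $\Pi_N\phi=\phi$, one obtains
\[
L V(\phi) = -2\|\phi\|_{H^1(\T^4)}^2 - 2\|\phi\|_{M^4(\T^4)}^4 + 2\mfC_N\|\phi\|_{L^2(\T^4)}^2 + c_N\,,
\]
where $L$ is the generator of \eqref{eq:ODS} and $c_N \lesssim N^4$ is a finite It\^o correction coming from the finite-rank noise $\sqrt{2}\Pi_N\xi$.

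Discarding the two nonpositive terms yields
\[
LV(\phi) \leqslant 2\mfC_N \|\phi\|_{L^2(\T^4)}^2 + c_N \leqslant C_N\,V(\phi)
\]
for a constant $C_N$ depending on $N$ through $\mfC_N$ and the rank of $\Pi_N$ but finite at each fixed $N$. Khasminskii's theorem then delivers a global-in-time, almost surely continuous Markov process solution to \eqref{eq:ODS} from any initial condition. For \eqref{eq:T^4_4} with initial condition $\Pi_N v_0$, both $\sqrt{2}\Pi_N\xi$ and the drift $\Pi_N\cN_N(\cdot,\cdot,\cdot)$ preserve the subspace of fields whose Fourier support lies in $\Z_N^4$, so the SPDE coincides with the SDE \eqref{eq:ODS} and inherits its global existence.

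I expect no genuine obstacle in this argument: the nonlinearity is actually coercive after pairing with $\phi$, the only positive contribution $2\mfC_N\|\phi\|_{L^2}^2$ is trivially dominated by $V$ with an $N$-dependent constant, and the It\^o correction is a harmless finite constant at fixed $N$. The only feature one must accept is that $C_N$ deteriorates as $N\uparrow\infty$, which is unavoidable with this soft strategy but is irrelevant for the present statement.
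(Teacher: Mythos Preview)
Your proposal is correct and follows essentially the same route as the paper: apply Khasminskii's criterion to the finite-dimensional system \eqref{eq:ODS} with the Lyapunov function $V=1+\|\phi\|_{L^2}^2$, discard the nonpositive contributions from the $H^1$ kinetic term and the $M^4$ interaction, and absorb the mass counterterm $\mfC_N\|\phi\|_{L^2}^2$ together with the finite It\^o correction into $C_N V$. The paper carries out the generator computation explicitly on the complex Fourier side (noting a small twist in the form of $L$ for complex Brownian motions) whereas you phrase it via the $L^2$ pairing in real space, but the two computations are the same up to bookkeeping of constants.
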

\begin{remark}
In contrast to Proposition~\ref{prop:4dfixedpoint}, we only prove Corollary~\ref{coro:globaltime}   for fixed cut-off $N$ make not  claim about the global in time existence of the limiting $N\uparrow \infty$ local in time solution. 
\end{remark}
\begin{proof}
We apply Lemma~\ref{thm:Khasminskii} to the system~\eqref{eq:ODS}. 
We use for a Lyapounov function the $L^2$ norm:
 \begin{equs}    V(\hat{\phi})&\eqdef1+\sum_{m\in\Z^4_N}|\hat{\phi}_m|^2=1+\Vert \Pi_N\phi\Vert^2_{L^2(\T^4)}\,.
\end{equs} 
Whenever $V(\hat{\phi})$ is a function of $(\hat{\phi}_m)_{m\in\Z^4_N}$ we write $\partial_m V\eqdef \partial_{\hat{\phi}_m}V$ and $\Bar{\partial}_m V\eqdef \partial_{\hat{\phi}_{-m}}V$. 
A small twist is given by the fact that in our case, the diffusion is complex, so that in the definition of $L$, $b^i(x)\partial_i$ is replaced with $\frac{1}{2}\big(b^i(x)\partial_i+\Bar{b}^i(x)\Bar{\partial}_i\big)$. With this notation, one has 
\begin{equs}    
LV(\hat{\phi})&=\frac12\sum_{m\in\Z_N^4}\Big(b^m(\hat{\phi})\partial_m V(\hat{\phi})+\Bar{b}^m(\hat{\phi})\Bar{\partial}_m V(\hat{\phi})+\Bar{\partial}_m\partial_m V(\hat{\phi})\Big)\\
    &=\sum_{m\in\Z_N^4}\Big(\big(\mfC_N-\angp{m}{2}\big)|\hat{\phi}_m|^2-\sum_{c=1}^4\sum_{n\in\Z^4_N}\hat{\phi}_{-m}\hat{\phi}_{m_{\hc},n_c}\hat{\phi}_{-n}\hat{\phi}_{n_{\hc},m_c}+\frac{1}{2}\Big)\\
&=\mfC_N\Vert \Pi_N\phi\Vert^2_{L^2(\T^4)}-\Vert \Pi_N\phi\Vert^2_{H^1(\T^4)}-\cI(\Pi_N\phi)+\frac{(2N+1)^4}{2}\\
&\leqslant |\mfC_N|\Vert \Pi_N\phi\Vert^2_{L^2(\T^4)}+\frac{(2N+1)^4}{2}\lesssim N^4V(\hat{\phi})\,,
\end{equs}
where we used the positivity of $\Vert \Pi_N\phi\Vert^2_{H^1(\T^4)}$ and $\cI(\Pi_N\phi)$. 
Lemma~\ref{thm:Khasminskii} shows the explosion time of $\hat{u}_m$ is almost surely infinite. Observe that the constant $C$ from Lemma~\ref{thm:Khasminskii} is of order $N^4$ and blows up with $N$, here we do not work uniformly in $N$.
\end{proof}
We turn to the stationary solution  $\phi_N$ to \eqref{eq:T^4_4}, that is to say with initial solution drawn according to $\nu_N$ as defined in \eqref{eq:tensor_phi_4_renorm} \dash in particular $(\hat{\phi}_{N,m})_{m\in\Z^4_N}$ solves \eqref{eq:ODS}. We also write $v_N=\phi_N-\bX_N$. 
While $\bX$ is not stationary in time, it note that $\bX-\Picthree=\X-\Pictwo$ and  $v_N+\Picthree_N$ are stationary in time.
\begin{lemma}\label{dynamic_theorem2}
For $p\in[1,\infty)$, it holds
\begin{equs}
    \sup_{N\in\N}\E[\Vert v_N(0)\Vert^{p}_{L^2(\T^4)}]&<\infty\,.\label{eq:tighTL^2}
\end{equs}
\end{lemma}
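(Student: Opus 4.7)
The plan is to combine the stationarity of $\nu_N$ under the regularized dynamic with the $L^2$ coming down from infinity estimate of Proposition~\ref{prop:comingdown}, the crucial feature being that the latter bound is independent of the initial condition and uniform in $N$. At fixed $N$ the dynamic \eqref{eq:T^4_4} is globally well-posed by Corollary~\ref{coro:globaltime}, so the stationary solution $\phi_N$ on $[0,\infty)$ with $\phi_N(0)\sim \nu_N$ is well-defined and smooth in space.

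First I would use the observation made just above the statement: while $\bX_N$ is not stationary (because $\Picthree_N$ starts from $0$ at $t=0$), the sum $v_N+\Picthree_N=\phi_N-\X_N+\Pictwo_N$ is stationary in time, since $\phi_N$ is $\nu_N$-stationary by invariance and $\X_N,\Pictwo_N$ are stationary by construction. Using $\Picthree_N(0)=0$ and sampling this stationary process at $t=0$ and at $t=1$ gives
\begin{equs}
\E\big[\Vert v_N(0)\Vert^p_{L^2}\big] = \E\big[\Vert v_N(1)+\Picthree_N(1)\Vert^p_{L^2}\big] \lesssim \E\big[\Vert v_N(1)\Vert^p_{L^2}\big] + \E\big[\Vert\Picthree_N(1)\Vert^p_{L^2}\big]\,.
\end{equs}

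The two terms on the right are then easy to handle. The second, $\E[\Vert\Picthree_N(1)\Vert^p_{L^2}]$, is controlled uniformly in $N$ by Lemma~\ref{lem:randfi2} together with the Schauder estimate \eqref{eq:schauder2}, since $\Picthree_N=\cL^{-1}\XtwoPictwo_N$ and $\Vert\bbX^{\LD}_4\Vert_1$ has finite moments of every order. For the first, I would apply Proposition~\ref{prop:comingdown} at $t=1$ to the stationary solution (taking as ansatz $X(0)=0$, $Y(0)=v_N(0)$, which is smooth at fixed $N$): this yields $\Vert v_N(1)\Vert_{L^2}\lesssim \Vert\bbX^{\LD}_4\Vert^\gamma_1$ independently of $\phi_N(0)$, and taking $p$-th moments gives a bound uniform in $N$. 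I do not foresee any substantive obstacle: the genuine analytic work already sits in Proposition~\ref{prop:comingdown}, whose decisive feature is precisely that its bound is initial-data-free; stationarity merely transports this bound from $t=1$ back to $t=0$, where the $\Picthree_N$ correction vanishes.
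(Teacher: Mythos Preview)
Your proof is correct, but it takes a different route from the paper's. The paper does not invoke Proposition~\ref{prop:comingdown} here; instead it uses the weaker a~priori estimate of Proposition~\ref{prop:aprioriL2andH1}, which still carries the initial condition on the right-hand side. Stationarity of $v_N+\Picthree_N$ is used to replace $\E[\Vert v_N(0)\Vert^p_{L^2}]$ by the time average $\frac{1}{T}\int_0^T\E[\Vert v_N(r)\Vert^p_{L^2}]\,\rmd r$ (plus a $\Picthree_N$ term), and then \eqref{eq:aprioriL2} bounds this average by $\E[\Vert\bbX^{\LD}_4\Vert_T^\gamma]+\frac{1}{T}\E[\Vert v_N(0)\Vert^p_{L^2}]$; taking $T$ large absorbs the last term into the left-hand side. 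Your argument bypasses this absorption step entirely by exploiting that the bound in Proposition~\ref{prop:comingdown} is already initial-data-free, so evaluating at the single time $t=1$ suffices. This is cleaner and makes Proposition~\ref{prop:aprioriL2andH1} unnecessary for this particular lemma; the paper's route, on the other hand, shows that the tightness argument does not require the full strength of coming down from infinity, only the softer energy inequality.
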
 
\begin{proof}
Without loss of generality we  assume $p \geqslant 2$. 
By stationarity, 
\begin{equs}
   \E[\Vert v_N(0)\Vert^p_{L^2(\T^4)}]&=\E[\Vert v_N(0)+\Picthree_N(0)\Vert^p_{L^2(\T^4)}]=\frac{1}{T}\int_0^T
\E[\Vert v_N(r)+\Picthree_N(r)\Vert^p_{L^2(\T^4)}]\rmd r \\
&\lesssim\frac{1}{T}\int_0^T\E[\Vert v_N(r)\Vert^p_{L^2(\T^4)}]\rmd r+\frac{1}{T}\int_0^T\E[\Vert \Picthree_N(r)\Vert^p_{L^2(\T^4)}]\rmd r\\
&\lesssim\frac{1}{T}\int_0^T\E[\Vert v_N(r)\Vert^p_{L^2(\T^4)}]\rmd r+\E[\Vert \Picthree_N\Vert^p_{C_T\cC^{1-\epsilon}(\T^4)}]
\end{equs}
Combining this with \eqref{eq:aprioriL2} gives
\begin{equs}
    \E[\Vert v_N(0)\Vert^p_{L^2(\T^4)}] \lesssim \E[\Vert\bbX^{\LD}_4\Vert_T^\gamma]+\frac{1}{T}\E[\Vert v_N(0)\Vert^p_{L^2(\T^4)}]\,.
\end{equs}
The result follows from taking $T$ sufficiently large which is allowed due to Corollary~\ref{coro:globaltime}.
\end{proof}

\begin{proof}[of Proposition~\ref{prop:tightness}]
\eqref{eq:nu2} directly follows from \eqref{eq:tighTL^2}. Regarding \eqref{eq:nu1}, \eqref{eq:tighTL^2} implies
   \begin{equs}       \sup_{N\in\N}&\E[\Vert\phi_N(0)\Vert_{H^{-1-\epsilon}(\T^4)}^p]\\&\leqslant
\sup_{N\in\N}\E[\Vert\phi_N(0)-\bX_N(0)\Vert_{H^{-1-\epsilon}(\T^4)}^p]+\sup_{N\in\N}\E[\Vert\bX_N(0)\Vert_{H^{-1-\epsilon}(\T^4)}^p]       \\&\leqslant
\sup_{N\in\N}\E[\Vert\phi_N(0)-\bX_N(0)\Vert_{L^2(\T^4)}^p]+\sup_{N\in\N}\E[\Vert\bX_N(0)\Vert_{\cC^{-1-\frac\epsilon2}(\T^4)}^p]<\infty\,,    
   \end{equs} 
using the regularity estimate on $\bX$ in Lemma~\ref{lem:stoob}. The same estimate holds in $H^{-1-2\epsilon}(\T^4)$, and tightness stems for the fact that the embedding $H^{-1-\epsilon}\hookrightarrow H^{-1-2\epsilon}$ is compact. 
\end{proof}

\section{The variational approach for tensor field theories}\label{sec:BG}
In this section, we follow the approach introduced in \cite{BG20} to establish tightness of approximations of the $\rmT^4_4$ measure. 
We follow notations of \cite{BG20} which we briefly recall here. 
We consider $(\Omega, \cB,\P)$ a probability space endowed with a collection of complex Brownian motions $(B^m_t)_{m\in\Z^d}$, such that $B^m$ and $B^n$ are independent if $m\neq\pm n$ and $B^{-m}=\overline{B^m}$, and denote by $(\cF_t)_{t\geqslant0}$ the filtration of $\cB$ induced by these Brownian motions. 
$\E$ will denote the expectation with respect to $\P$.  

Let also $\varrho$ be a smooth decreasing function $\R_{\geqslant 0}\rightarrow\R_{\geqslant 0}$ such that $\varrho\vert_{[0,\frac12]}=1$, $\varrho\vert_{[1,+\infty)}=0$ and $|\varrho'|\leqslant1$ on $[\frac{1}{2},1]$. For $t > 0$ we write  $\varrho_t(\cdot)\eqdef\varrho(\ang{\cdot}/t)$ and also set for $x\in\R^4$ $\sigma_t(x)\eqdef \sqrt{\partial_t \varrho^2_t(x)}$. Finally, we define the Fourier multiplier $J_t\eqdef\sigma_t(\nabla)\angp{\nabla}{-1}$, which verifies the following estimate.
\begin{lemma}\label{lem:J}
 For $\alpha\in\R$, $J_t$ is continuous from $H^{\alpha}$ to $H^{\alpha+1}$ uniformly in $t\geqslant0$.
\end{lemma}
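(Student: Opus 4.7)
The plan is to work entirely on the Fourier side. Since $J_t$ acts as the multiplier $\sigma_t(k)\langle k\rangle^{-1}$, Parseval reduces the desired continuity bound $\|J_tf\|_{H^{\alpha+1}}\lesssim\|f\|_{H^\alpha}$, uniform in $t\geqslant 0$, to the pointwise estimate
\begin{equs}
\sup_{t\geqslant 0,\, k\in\Z^d}\sigma_t(k)^2 \lesssim 1\,.
\end{equs}
Indeed, the factor $\langle k\rangle^{-1}$ in the definition of $J_t$ cancels exactly one of the two extra powers of $\langle k\rangle$ that distinguish $\|\bigcdot\|_{H^{\alpha+1}}^2$ from $\|\bigcdot\|_{H^\alpha}^2$, leaving $\sigma_t(k)^2$ as the only quantity one must control.

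Next I would compute $\sigma_t(k)^2$ explicitly by differentiating $\varrho_t^2(k)=\varrho(\langle k\rangle/t)^2$ in $t$, obtaining
\begin{equs}
\sigma_t(k)^2 \;=\; -2\,\varrho(\langle k\rangle/t)\,\varrho'(\langle k\rangle/t)\,\frac{\langle k\rangle}{t^2}\,.
\end{equs}
The assumptions on $\varrho$ (constant $=1$ on $[0,1/2]$, constant $=0$ on $[1,\infty)$) force $\varrho'(\langle k\rangle/t)=0$ unless $\langle k\rangle/t\in[1/2,1]$. On this support one has $\langle k\rangle\leqslant t$, and since $\langle k\rangle\geqslant 1$ for all $k\in\Z^d$ this also forces $t\geqslant 1$. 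Combining these with the pointwise bounds $|\varrho|\leqslant 1$ and $|\varrho'|\leqslant 1$ from the statement of the lemma yields
\begin{equs}
\sigma_t(k)^2 \;\leqslant\; 2\cdot\frac{\langle k\rangle}{t^2}\;\leqslant\; \frac{2}{t}\;\leqslant\; 2\,,
\end{equs}
uniformly in $k\in\Z^d$ and $t\geqslant 1$; for $t<1$, no Fourier mode lies in the annulus $\{\langle k\rangle\in[t/2,t]\}$, so $\sigma_t\equiv 0$ and the estimate is trivial.

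Feeding this uniform bound back into the Fourier representation gives $\|J_tf\|_{H^{\alpha+1}}^2\leqslant 2\|f\|_{H^\alpha}^2$, which is the desired continuity uniformly in $t$. This is essentially a symbol-level verification that $\sigma_t(\nabla)$ behaves like a Littlewood--Paley-style annular cut-off of width comparable to $t$, and in particular is bounded on every $H^\alpha$. There is no real obstacle here; the only mild care point is separating the case $t<1$, where the multiplier vanishes for trivial support reasons rather than by virtue of the symbol estimate.
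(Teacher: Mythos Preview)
Your proof is correct and follows essentially the same approach as the paper: both compute $\sigma_t$ explicitly, use the support condition $\langle k\rangle/t\in[1/2,1]$ coming from $\varrho'$, and split into the cases $t<1$ (where the support is empty since $\langle k\rangle\geqslant 1$) and $t\geqslant 1$ (where $\sigma_t^2\leqslant 2/t\leqslant 2$). The paper's write-up is slightly more compressed but the logic is identical.
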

\begin{proof}
Using that $\ang{x}/t<1$ and $|\varrho'|(\ang{x}/t)\leqslant 1$ on supp$\varrho_t$, we have 
\begin{equs}
    \sigma_t(x)&= \sqrt{\partial_t \varrho^2_t(x)}=\frac{1}{\sqrt{t}}\sqrt{2(\varrho|\varrho'|\mathrm{Id})(\ang{x}/t)}\lesssim t^{-1/2}\1_{\ang{x}<t}\\
    &\lesssim   t^{-1/2}\1_{\ang{x}<t}\1_{t<1}+ t^{-1/2}\1_{\ang{x}<t}\1_{t\geqslant1}  \,.
\end{equs}
If $t\geqslant1$, then $t^{-1/2}\leqslant1$ while if $t<1$, $\1_{\ang{x}<t}\leqslant\1_{\ang{x}<1}=0$ (by definition, $\ang{x}\geqslant1$). This means $\sigma_t(x)\lesssim1$ uniformly in time so, for $\phi\in H^\alpha$, $J_t(m)\hat{\phi}_m\lesssim\angp{m}{-1}\hat{\phi}_m$ which gives $J_t\phi\in H^{\alpha+1}$.
\end{proof}
We also define, for $T \geqslant 0$ and $\phi: [0,T] \times\T^4\rightarrow\R$, $J(\phi):[0,T] \times\T^4\rightarrow\R,\,(t,x)\mapsto J_t\phi(t,x)$ and $I(\phi):[0,T] \times\T^4\rightarrow\R,\,(t,x)\mapsto \int_0^tJ_s\phi(s,x)\rmd s$, and for $t\in[0,T]$ we sometimes write $I_t(\phi)(x)$ instead of $I(\phi)(t,x)$. $I$ satisfies the following estimate.  
\begin{lemma}[\cite{BG20}, Lemma 2]Let $\alpha\in\R$. 
Uniformly in $T \geqslant 0$ we have
\begin{equs}\label{eq:J}
\Vert I(\phi) \Vert_{L^\infty_TH^{\alpha+1}(\T^4)} \lesssim \Vert \phi\Vert_{L^2_TH^\alpha(\T^4)}\,.
\end{equs}
In particular, Lemma~\ref{lem:J} implies $\Vert I(J(\phi)) \Vert_{L^\infty_TH^{\alpha+2}(\T^4)} \lesssim \Vert J(\phi) \Vert_{L^2_TH^{\alpha+1}(\T^4)} \lesssim  \Vert \phi \Vert_{L^2_TH^{\alpha}(\T^4)}$ so
\begin{equs}\label{eq:SchauderBG}
  \Vert I(J(\phi)) \Vert_{L^\infty_TH^{\alpha+2-\epsilon}(\T^4)} \lesssim \Vert \phi\Vert_{L^\infty_T\mcC^{\alpha}(\T^4)}\,.  
\end{equs}
\end{lemma}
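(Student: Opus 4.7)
The first estimate~\eqref{eq:J} is a one-line Fourier computation, and the remaining two bounds follow formally by combining~\eqref{eq:J} with Lemma~\ref{lem:J} and a Besov embedding. I expect no real obstacle; the only subtle point is a telescoping identity for $\sigma_t^2$ that makes the time integral of the multiplier uniformly bounded.

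\emph{Step 1: the Fourier-side estimate.} Writing everything in Fourier modes,
\[
\widehat{I(\phi)}_m(t)=\int_0^t \sigma_s(m)\,\langle m\rangle^{-1}\,\hat{\phi}_m(s)\,\mathrm{d}s,
\]
and Cauchy--Schwarz in the $s$ variable gives
\[
|\widehat{I(\phi)}_m(t)|^2 \leqslant \langle m\rangle^{-2}\Big(\int_0^t\sigma_s(m)^2\,\mathrm{d}s\Big)\Big(\int_0^t|\hat{\phi}_m(s)|^2\,\mathrm{d}s\Big).
\]
The key observation is that, by the very definition of $\sigma_t$,
\[
\int_0^t\sigma_s(m)^2\,\mathrm{d}s = \int_0^t\partial_s\varrho_s(m)^2\,\mathrm{d}s = \varrho_t(m)^2-\lim_{s\to 0^+}\varrho_s(m)^2\leqslant 1,
\]
since $\varrho\leqslant 1$ and the limit vanishes: for $m\in\Z^4$ we have $\langle m\rangle\geqslant 1$, so the support condition $\mathrm{supp}\,\varrho\subset[0,1]$ forces $\varrho_s(m)=\varrho(\langle m\rangle/s)=0$ for $s$ small. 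Multiplying by $\langle m\rangle^{2(\alpha+1)}$, summing over $m\in\Z^4$ and taking the supremum over $t\in[0,T]$ yields
\[
\|I(\phi)\|_{L^\infty_T H^{\alpha+1}}^2 \leqslant \sum_{m\in\Z^4}\langle m\rangle^{2\alpha}\int_0^T|\hat{\phi}_m(s)|^2\,\mathrm{d}s = \|\phi\|_{L^2_T H^\alpha}^2,
\]
which is \eqref{eq:J} with implicit constant $1$, in particular uniform in $T$.

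\emph{Step 2: the intermediate bound.} Applying~\eqref{eq:J} with $\alpha+1$ in place of $\alpha$ and with $J(\phi)$ in place of $\phi$ gives $\|I(J(\phi))\|_{L^\infty_T H^{\alpha+2}}\lesssim \|J(\phi)\|_{L^2_T H^{\alpha+1}}$. Lemma~\ref{lem:J}, asserting the uniform-in-$s$ continuity $J_s\colon H^\alpha\to H^{\alpha+1}$, then gives after squaring and integrating in $s$ that
\[
\|J(\phi)\|_{L^2_T H^{\alpha+1}}^2 = \int_0^T\|J_s\phi(s)\|_{H^{\alpha+1}}^2\,\mathrm{d}s \lesssim \int_0^T\|\phi(s)\|_{H^\alpha}^2\,\mathrm{d}s = \|\phi\|_{L^2_T H^\alpha}^2,
\]
which chains with the previous inequality.

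\emph{Step 3: the Schauder-type bound \eqref{eq:SchauderBG}.} This follows by losing $\epsilon$ of spatial regularity through the standard Besov embedding on the torus
\[
\mcC^\alpha = B^\alpha_{\infty,\infty}(\T^4)\hookrightarrow B^\alpha_{2,\infty}(\T^4)\hookrightarrow B^{\alpha-\epsilon/2}_{2,2}(\T^4) = H^{\alpha-\epsilon/2}(\T^4),
\]
together with the trivial inclusion $L^\infty_T\hookrightarrow L^2_T$ on bounded time intervals, i.e.\ $\|\phi\|_{L^2_T H^{\alpha-\epsilon/2}}\lesssim T^{1/2}\|\phi\|_{L^\infty_T H^{\alpha-\epsilon/2}}\lesssim \|\phi\|_{L^\infty_T \mcC^\alpha}$, with the $T^{1/2}$ factor absorbed into the implicit constant for $T$ bounded. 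Plugging this into the intermediate estimate, now at regularity level $\alpha-\epsilon/2$, produces \eqref{eq:SchauderBG}. The whole argument is essentially a Schauder estimate for the Fourier multiplier $J_s$ on the $H^\alpha$ scale; the ``hard part'', if any, is simply recognising that the design of $\sigma_t$ via $\sigma_t^2=\partial_t\varrho_t^2$ is precisely what makes the time integral of $|J_s|^2$ a bounded multiplier.
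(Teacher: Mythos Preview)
Your proof is correct and is exactly the standard argument; the paper itself gives no proof but simply cites \cite{BG20}, Lemma~2, and your Step~1 is the argument found there (Cauchy--Schwarz in $s$ together with the telescoping identity $\int_0^t\sigma_s(m)^2\,\mathrm{d}s=\varrho_t(m)^2\leqslant 1$, which is the whole point of the definition of $\sigma_t$). Steps~2 and~3 are the obvious chaining with Lemma~\ref{lem:J} and a Besov embedding, which is precisely what the paper indicates by the words ``In particular''.

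One minor remark: you correctly observe that your Step~3 picks up a factor $T^{1/2}$ from $L^\infty_T\hookrightarrow L^2_T$, so \eqref{eq:SchauderBG} as you derive it is not uniform in $T$. This is fine: the phrase ``uniformly in $T\geqslant 0$'' in the statement should be read as qualifying only \eqref{eq:J}, and indeed for generic $\phi$ the bound \eqref{eq:SchauderBG} cannot be uniform in $T$. In the paper's applications the relevant $\phi$ are stochastic objects for which fixed-time moment bounds are available, so this $T$-dependence causes no trouble.
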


As mentioned before, in the stochastic processes appearing in Barashkov-Gubinelli approach the time $t$ will play the role of a scale parameter. 
The most basic stochastic object is then given by $\X_t\eqdef \int_0^t J_s\rmd B_s$ where $ B_s(x)$ denotes $(s,x)\mapsto\sum_{m\in\Z^4}e^{-\imath x\cdot m }\rmd B^m_s$. $\X_t$ should be seen as a cut-off Gaussian free field for $t < \infty$. 
 Indeed, for fixed $t$, $\X_t$  is Gaussian with covariance $\varrho_t^2(\nabla)\na^{-2}$ in space so that $\mathrm{Law}_{\P}
\X_t=\varrho_t(\nabla)_{\#}\mcb{g}$ \dash in particular $\mathrm{Law}_{\P}\X_\infty=\mcb{g}$. 
We denote by $\bH_a$ the space of all progressively measurable processes with respect to $(\cF_t)_{t\geqslant0}$ and $\P$-almost surely in $L^2(\R_{\geqslant 0}\times\T^d)$. We can now state the Bou\'e-Dupuis formula.
 \begin{theorem}[Bou\'e-Dupuis formula] Let $F: C_t\cC^{-\frac{d-2}{2}-\epsilon}\rightarrow\R$ be Borel measurable and such that there exist $1<p,q<\infty$ with $1/p + 1/q = 1$ for which $\E[|F(\X)|^p]$ and $\E[e^{-qF(\X)}]$ are both finite. Then one has the identity 
 \begin{equs}\label{eq:Boue-Dupuis}
-\log \E[e^{-F(\X)}]= 
\inf_{u\in \mathbb H_a}\E[F(\X+I(u))+\frac12\Vert u\Vert^2_{L^2([0,t]\times\T^d)}]\,.
\end{equs}
\end{theorem}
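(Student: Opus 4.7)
The plan is to follow the classical Bou\'e--Dupuis argument, adapted to the infinite-dimensional Gaussian free field setting as in \cite{BG20}. I would split the proof into the two matching inequalities and work on the fixed finite interval $[0,t]$ throughout.

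For the upper bound, fix $u \in \mathbb{H}_a$ satisfying Novikov's condition and introduce the exponential martingale
\begin{equation*}
\mathcal{E}_s(u) \eqdef \exp\bigl(\textstyle\int_0^s u_r\,\rmd B_r - \tfrac{1}{2}\int_0^s \|u_r\|_{L^2}^2\,\rmd r\bigr),
\end{equation*}
together with the Girsanov-shifted measure $\rmd \mathbb{Q} = \mathcal{E}_t(u)\,\rmd\P$. Under $\mathbb{Q}$ the process $\tilde B_s \eqdef B_s - \int_0^s u_r\,\rmd r$ is a Brownian motion, so $\tilde \X_s \eqdef \int_0^s J_r\,\rmd \tilde B_r$ has the $\P$-law of $\X_s$, while the $\P$-process satisfies $\X_s = \tilde \X_s + I_s(u)$. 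Reversing this change of measure on the left-hand side of \eqref{eq:Boue-Dupuis} and applying Jensen's inequality to $-\log$ (the stochastic integral vanishing in expectation) yields
\begin{equation*}
-\log \E\bigl[e^{-F(\X)}\bigr] \leqslant \E\bigl[F(\X + I(u)) + \tfrac12 \|u\|_{L^2([0,t]\times\T^d)}^2\bigr],
\end{equation*}
and taking the infimum over $u$ gives one direction of \eqref{eq:Boue-Dupuis}.

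For the matching lower bound one constructs the minimiser explicitly via the martingale representation theorem. Setting $M_s \eqdef \E[e^{-F(\X)} \mid \cF_s]$, a strictly positive martingale on $[0,t]$, the representation theorem gives $M_s = M_0 + \int_0^s H_r\,\rmd B_r$ for some adapted $H$, and It\^o's formula applied to $V_s \eqdef -\log M_s$ yields
\begin{equation*}
V_t - V_0 = \int_0^t u^*_r\,\rmd B_r + \tfrac12 \int_0^t \|u^*_r\|_{L^2}^2\,\rmd r, \qquad u^*_r \eqdef -H_r/M_r.
\end{equation*}
Since $V_t = F(\X)$ and $V_0 = -\log \E[e^{-F(\X)}]$, a Girsanov change of measure associated to $u^*$ (under which $\X$ becomes $\X + I(u^*)$ in law) converts this identity into
\begin{equation*}
-\log \E\bigl[e^{-F(\X)}\bigr] = \E\bigl[F(\X + I(u^*)) + \tfrac12 \|u^*\|_{L^2([0,t]\times\T^d)}^2\bigr],
\end{equation*}
so that $u^*$ realises the infimum.

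The main technical obstacle is verifying that $u^*$, or suitable approximations thereof, actually belongs to $\mathbb{H}_a$ and that the stochastic integrals above are true martingales rather than merely local ones. These are precisely the points where the integrability hypotheses $\E[|F(\X)|^p] < \infty$ and $\E[e^{-qF(\X)}] < \infty$ enter. The standard route, followed both by Bou\'e--Dupuis and in \cite{BG20}, is to first establish the identity for bounded Lipschitz $F$ (for which $M_s$ is uniformly bounded away from $0$ and $\infty$ and $u^*$ is automatically tame) and then to remove boundedness by approximation, using $L^p$--$L^q$ H\"older duality together with the integrability hypotheses to pass to the limit via uniform integrability.
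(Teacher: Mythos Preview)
The paper does not actually prove this theorem: it is stated as a known result, taken over from the original Bou\'e--Dupuis work and its adaptation to the free-field setting in \cite{BG20}, and then applied without further justification. Your sketch is a faithful outline of the standard proof (Girsanov plus Jensen for the upper bound, martingale representation plus It\^o for the lower bound, with the $L^p$--$L^q$ hypotheses entering through the approximation argument), so there is nothing to compare against here beyond noting that the paper treats the formula as a black box.
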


The approach of Barashkov and Gubinelli in \cite{BG20} starts with writing the measure $\nu_t$ defined in \eqref{eq:tensor_phi_4_renorm} with $\varrho_t$ as above as
\begin{equs}
    \E_{\nu_t}[e^{-f(\phi)}]=\frac{1}{\cZ_t}\E_{\P}[e^{-f(\X_t)-\cI(\X_t)+a_t\Vert\X_t\Vert^2_{L^2}+b_t}]\,,
\end{equs} 
where $\cZ_t\eqdef\E_{\P}[e^{-\cI(\X_t)+a_t\Vert\X_t\Vert^2_{L^2}+b_t}]$, and to apply the Bou\'e-Dupuis formula to $F(\X)=V_t^f(\X_t)\eqdef f(\X_t)+\cI(\X_t)-a_t\Vert\X_t\Vert_{L^2(\T^4)}^2-b_t$. 
The constant $b_{t}$ is a ``vacuum renormalization'' which is explicit and guarantees $\cZ_t$ satisfies uniform bounds as $t \rightarrow \infty$. We thus have the following variational representation of $\nu_t$:
\begin{equs}\label{eq:varnut}
  -\log  \E_{\nu_t}[e^{-f(\phi)}]=\log\cZ_t+\inf_{u\in\bH_a}
    \E[V^f_t(\X_t+I_t(u))+\frac12\Vert u\Vert^2_{L^2([0,t]\times\T^4)}]\,.
\end{equs}

Renormalization beyond Wick renormalization (as is needed for $\Phi^4_3$) can be carried out by introducing a shift of the drift $u$ in \eqref{eq:Boue-Dupuis} \dash this shift closely resembles the ansatz used in the analysis of the Langevin dynamic.
For $\rmT^4_4$ this shift does not belong to the Cameron-Martin space of $\mcb{g}$ (the Sobolev space $H^1$) which suggests the $\rmT^4_4$ measure is singular with respect to $\mcb{g}$.

\subsection{Introducing the BG stochastic objects}\label{sec:BG-stochasticobjects}
To describe this shift, we introduce stochastic objects analogous to those introduced to study the Langevin dynamic \dash we distinguish these from the former by calling them BG stochastic objects. 
We first give the BG analogs of the renormalization constants of Lemmas~\ref{lem:CT1} and~\ref{def:CT2}. 
\begin{definition}[BG renormalization constants]
\begin{equs}  
\mfC^{1,c}_t&\eqdef \sum_{m_{\hc}\in\Z^{d-1}}\frac{\varrho_t^2( m_{\hc})}{\langle m_{\hc}\rangle^2}\,,\\
\mfC^{2,c}_t&\eqdef 2\sum_{c'\neq c}\sum_{m_{\hc}n_{\hat{c}'}\in\Z^{d-1}}\int_0^t
\frac{\varrho^2_{t\wedge s}( m_{\hc})\partial_s\varrho_s^2( m_{\hc})}{\langle m_{\hc}\rangle^4}\bigg(\frac{\varrho_s^2(n_{\hc'},m_{c'})}{\langle(n_{\hc'},m_{c'})\rangle^2}- \frac{\varrho_s^2( n_{\hc'})}{\langle n_{\hc'}\rangle^2}\bigg)\rmd s\,,
\end{equs}
and define $\mfC^1_t\eqdef \sum_{c=1}^4 \mfC^{1,c}_t$ as well as $\mfC^2_t\eqdef \sum_{c=1}^4 \mfC^{2,c}_t$.
\end{definition}
We now describe the BG enhancement $\bbX^{\BG}$ of $\X$ consisting of various random fields and operators. 
 \begin{notation}
In our pictorial representation of the objects in $\bbX^{\BG}$, it is our convention that for a trees $\tau$ different from the noise $\noise$ , we always have $\rXtau_t=J_t\tau_t$ and $\Xtau_t=\int_0^tJ_s^2\tau_s \rmd s$.
\end{notation}
\begin{definition}[BG random fields]\label{def:BGrandomfields}
The BG random fields are given by
\begin{equs}
\X_t&\eqdef\int_0^tJ_s\rmd B_s
\,,\quad  \Xthreeloc_t\eqdef 4\big(\cN(\X_t,\X_t,\X_t)-\mfC^1_t\X_t\big)\,,\quad
     \rPictwo_t\eqdef J_t\Xthreeloc_t\,,\quad
     \Pictwo_t\eqdef\int^t_0 J_s\rPictwo_sds\,,\\
       \XtwoPictwo_t&\eqdef 4\big(\cN(\Pictwo_t,\X_t,\X_t)-\mfC^1_t\Pictwo_t\big)
+4\big(\cN(\X_t,\Pictwo_t,\X_t)+\cN(\X_t,\X_t,\Pictwo_t)-\mfC^2_t\X_t\big)\,,  \quad \rPicthree_t\eqdef J_t\XtwoPictwo_t\,,\\
 \Picthree_t&\eqdef\int^t_0 J_s\rPicthree_s\rmd s\,,\quad\bX_t\eqdef \X_t-\Pictwo_t+\Picthree_t\,,\quad \pXthreeloc_t\eqdef4\big(\cN(\bX_t,\bX_t,\bX_t) -(\mfC^1_t-\mfC^2_t) \bX_t\big)\,,\\
   \cS_t&\eqdef \pXthreeloc_t-\Xthreeloc_t+\XtwoPictwo_t \,,\quad \XXXX_t\eqdef \cI(\bX_t)-2(\mfC^{1}_t-\mfC^{2}_t)\Vert\bX_t\Vert^2_{L^2(\T^4)}\,.
\end{equs}\label{def:stoobBG}
\end{definition}
\begin{definition}[BG random operators]\label{def:BGrandomop}
The BG random operators are defined as
\begin{align*}
   \X^{(k)}_t(f)&\eqdef4 \int_{\T^{4-k}} \X_t(\cdot,y) f(y)\rmd y\,,&&\text{ for }k\in[d-1]\;,\; f:\T^{4-k}\rightarrow\R\,,\\
    \Xtwom^c_t(f)&\eqdef 2\big(\cN^c(f,\X_t,\X_t)-\mfC^{1,c}_tf\big) \,,&&\text{ for }c\in[4]\,,\;f:\T_c\rightarrow\R\,,
       \\ \Xtwonm^c_t(f)&\eqdef 2\cN^c(\X_t,\X_t,f)\,,&&\text{ for }c\in[4]\,,\;f:\T_{\hc}^{3}\rightarrow\R\,,
       \\ \XdotX_t(f)&\eqdef 2\cN(\X_t,f,\X_t)\,,&&\text{ for }f:\T^{4}\rightarrow\R\,\;.
\end{align*}
We also define $\bX_t$, $\pXtwom^c_t$, $\pXtwonm^c_t$ and $\pXdotX_t$ analogously, substituting every occurrence of $\X_t$ with $\bX_t$, and replacing $\mfC^{1,c}_t$ with  $\mfC^{1,c}_t- \mfC^{2,c}_t$ in the definition of $\pXtwom^c$. Again, we drop the color subscript $c$ on the random operators when there is a sum over $c$.
\end{definition}

Recalling the notation $\bbX=\bbX_f\cup\bbX_o$ from Subsection~\ref{subsec:4dfixedpoint}, we now define the tuple  $\bbX^{\BG}_f$ of all the random fields defined in Definition~\ref{def:stoobBG} except $\rPictwo$, $\rPicthree$ and $\XXXX$, and the tuple $\bbX^{\BG}_o$ of all the random operators defined in Definition~\ref{def:BGrandomop}. 

Note that all the stochastic objects we have introduced in this section are processes in time taking values in distributions over space or operators on either space distributions or space-time distributions. 
For $t\in\R_{\geqslant 0}$, we endow $\bbX^{\BG}(t)\eqdef\{\tau(t)|\tau\in\bbX^{\BG}\}$ with the norm
\begin{align}\label{eq:bbXnormBG}
    \Vert \bbX^{\BG}(t)\Vert=\max\Big(  \max_{\tau\in\bbX^{\BG}_f}\Vert \tau(t)\Vert_{\cC^{\beta_\tau-\epsilon}},\max_{\tau\in\bbX^{\BG}_o}\Vert \tau(t)\Vert_{\cL(C_tH^{\alpha_\tau},\cC^{\beta_\tau-\epsilon})} \Big)\,,
\end{align}
where $\alpha_\tau$ and $\beta_\tau$ are the inner and outer regularities of the given stochastic objects which are chosen the same as in Lemmas~\ref{lem:stoob} and \ref{lem:ranop} for the corresponding LD objects.
The next lemma states that we can control the purely stochastic objects appearing in the variational formula. 
\begin{lemma}\label{lem:BGreg}
\begin{equs}
   \sup_{t\in\R_{\geqslant 0}} \E[\Vert \bbX^{\BG}(t)\Vert^p]<\infty\,.
\end{equs}
\end{lemma}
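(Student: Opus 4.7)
The plan is to follow the Gaussian stochastic-estimates framework used for the Langevin-dynamic objects, with the spectral cut-off $\varrho_t$ playing the role of the projector $\Pi_N$ and the scale parameter $t\in\R_{\geqslant 0}$ playing the role of the UV cut-off $N$. Each element of $\bbX^{\BG}(t)$ lies in a fixed finite Wiener chaos over the complex white noise $\rmd B$ (of degree at most three for the fields and at most two for the operators), so Nelson's hypercontractivity reduces the $L^p$ bound to a second-moment estimate, on which I argue from now on.

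For a random field $\tau\in\bbX^{\BG}_f$, a Kolmogorov/Littlewood-Paley argument reduces the bound on $\E[\Vert\tau(t)\Vert_{\cC^{\beta_\tau-\epsilon}}^p]$ to a uniform in $x$ and $t$ bound on $2^{-2j\beta_\tau}\E[|\Delta^j\tau(t)(x)|^2]$ for $j\geqslant -1$. For a random operator $\tau\in\bbX^{\BG}_o$, I bound the operator norm by the Hilbert-Schmidt norm of its random kernel $K_\tau(t;\cdot,\cdot)$ between the relevant Sobolev spaces, which reduces to an analogous second-moment estimate for the Fourier coefficients of $K_\tau$. By Wick's rule each such second moment is an explicit Fourier sum in which every propagator $1/\ang{m}^2$ is dressed by a factor $\varrho_s^2(m)$ coming from the $J_s$'s, with the $s$'s integrated against time; modulo the dictionary $\Pi_N\leftrightarrow\varrho_t$ these are precisely the sums appearing in Lemmas~\ref{prop:stochastic_1}, \ref{lem:randop1}, \ref{lem:randfi2}, \ref{lem:randop2} and \ref{lem:randfi4}, which I plan to reuse essentially verbatim.

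The main obstacle is to secure uniformity in $t\in[0,\infty)$ once the BG counterterms have been subtracted. For $\Xthreeloc_t$ and $\pXthreeloc_t$ the first-chaos melonic piece is cancelled up to the BG remainder
\begin{equ}
\mfR^{1,c}_t(m)=\frac{\varrho_t^2(m)}{\ang{m}^2}-\frac{\varrho_t^2(m_{\hc})}{\ang{m_{\hc}}^2},
\end{equ}
which is summable in $d=4$ uniformly in $t$ thanks to the pointwise bound $|\varrho_t|\leqslant 1$; the key point is that the Fourier tail is untouched by $\varrho_t$ after the subtraction, so the cancellation is identical to that of the Langevin-dynamic remainder~\eqref{eq:R1}. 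The analogous bound for $\XtwoPictwo_t$ follows from a second BG remainder $\mfR^{2,c,c'}_t$; the explicit form of $\mfC^{2,c}_t$ was engineered precisely so that the first-chaos melonic piece of $\XtwoPictwo_t$ reduces to a convergent sum involving $\mfR^{2,c,c'}_t$, uniformly in $t$. For $\cS_t$ no further counterterm is needed because each of its terms contains at least one factor of the more regular $\Picthree_t$; applying the Schauder-type estimate~\eqref{eq:SchauderBG} to the uniform in $t$ bounds on $\XtwoPictwo_s$ for $s\leqslant t$ gives $\Picthree_t\in\cC^{1-\epsilon}$ uniformly in $t$, which combined with the previous covariance estimates yields the bound on $\cS_t$ as well as on the $\bX$-dressed random operators.
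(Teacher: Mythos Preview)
Your overall strategy matches the paper's: reduce to second moments by hypercontractivity, apply the fixed-time Kolmogorov estimates \eqref{eq:FixedTimeKolmoRF} and \eqref{eq:FixedTimeKolmoRO}, and control the resulting Fourier sums via the diagrammatic bounds of Section~\ref{sec:diagram}. The paper makes the LD/BG dictionary precise in Lemma~\ref{lem:amplitude_bound}, where the abstract assumptions \eqref{eq:assumption1}--\eqref{eq:assumption4} on the kernels ${\tt I}$ and ${\tt C}$ are verified for both settings, yielding the uniform-in-$t$ amplitude bound $\tA(G)$ in one stroke; your sketch arrives at the same place more informally.

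There is however a genuine gap in your treatment of $\cS_t$. First, the claim that ``each of its terms contains at least one factor of $\Picthree_t$'' is false: for instance $\cN(\Pictwo_t,\X_t,\Pictwo_t)-\tfrac12\mfC^2_t\Pictwo_t$ is one of the $23$ terms and contains no $\Picthree_t$. Second, and more importantly, the one term that \emph{does} contain $\Picthree_t$ in the melonic slot, namely $\cS^{\det}_t=\cN(\Picthree_t,\X_t,\X_t)-\mfC^1_t\Picthree_t$, is precisely the one that \emph{cannot} be handled by a direct second-moment estimate on the whole object: as explained in Section~\ref{subsubsec:S}, a pure stochastic bound picks up a $(2,2,0)$ subgraph and yields only regularity $-(d-3)=-1$ in $d=4$, which is strictly worse than the required $-\tfrac12-$. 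The paper resolves this by writing $\cS^{\det}(x)=\sum_c\Xtwom^c\big(\Picthree(\cdot,x_{\hc})\big)(x_c)$ and combining the random-operator bound \eqref{eq:regXtwom} with the $\cC^{1-\epsilon}$ regularity of $\Picthree_t$ \emph{deterministically}; an analogous stochastic/deterministic split is also needed for the $\bX$-dressed operators (Section~\ref{subsubsec:regRoughAnsatz}). Your ``combine with the previous covariance estimates'' does not capture this mechanism, and without it the argument for $\cS_t$ does not close.
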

\begin{proof}
The statement is proved combining the fixed time Kolmogorov estimates for the random fields \eqref{eq:FixedTimeKolmoRF} and for the random operators \eqref{eq:FixedTimeKolmoRO} with our bounds on the second moments of the stochastic objects derived in Sections~\ref{sec:6_3} and \ref{sec:6_5}. In particular, the second moments are bounded uniformly in the scale parameter in the proof of Lemma~\ref{lem:amplitude_bound}. We conclude by studying individually the power countings of the amplitudes of each objects in Sections~\ref{sec:proofs} and \ref{sec:proofs2}.
\end{proof}
\subsection{Renormalizing the Bou\'e-Dupuis formula}
We are ready to introduce the shift of the drift that renormalizes the Bou\'e-Dupuis formula which we will be similar to the one that was introduced to deal with the $\rmT^4_4$ equation.
The main estimate for the variational approach to $\rmT^4_4$ is given in the following proposition. 
\begin{proposition}\label{prop:BG_energy}  
Let $f:\cC^{-1-\epsilon}\rightarrow\R$ of at most linear growth and $V_t^f(\phi)=f(\phi)+\cI(\phi)-a_t\Vert\phi\Vert_{L^2(\T^4)}^2-b_t$ where 
\begin{equs}
    a_t&\eqdef 2(\mfC^1_t-\mfC_t^2)\,,\;b_t\eqdef\E[\XXXX_t+\Vert\rPictwo-\rPicthree\Vert^2_{L^2([0,t]\times\T^4)}]\,.
\end{equs}
Then the change of variable
\begin{equs}
    l^t_s(u)&\eqdef u_s+\1_{s\leqslant t}\big(\rPictwo_s-\rPicthree_s \big)\,,  
\end{equs}
renormalizes the Bou\'e-Dupuis formula, and it holds
    \begin{equs}\label{eq:BGdecom}
       \nonumber &\E[V^f_t(\X_t+I_t(u))+\frac12\Vert u\Vert^2_{L^2([0,t]\times\T^4)}]\\&\qquad=\E[\mathfrak M^f_t(\bbX^{\BG},K_t(u))+\cI(K_t(u))
        +\frac12\Vert l^t(u)\Vert^2_{L^2([0,t]\times\T^4)}]\,,
    \end{equs}
where $    K_s(u)\eqdef I_s(l^t(u))$ and
\begin{equs}
  \fM^f_t(\bbX^{\BG},v) \eqdef f(\bX_t+v_t)+\Big((\cS,v)+\vXXv+\XvXv+\XXvv+\Xvvv\Big)(t)\,,
\end{equs}
is 
a term collecting all the mixed terms, and that obeys the bound
\begin{equs}\label{eq:boundmixed}
    |\fM^f_t(\bbX^{\BG},K_t(u))|\leqslant C\big(1+\Vert\bbX^{\BG}(t)\Vert^\gamma\big)+\frac12\Big(\cI(K_t(u))+\frac12\Vert l^t(u)\Vert^2_{L^2([0,t]\times\T^d)}\Big)\,,
\end{equs}
where $C,\gamma$ are two positive constants. 
\end{proposition}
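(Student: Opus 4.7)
The statement has two parts: the algebraic identity \eqref{eq:BGdecom} and the analytic bound \eqref{eq:boundmixed}.

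For the identity, I would first verify that $\X_t + I_t(u) = \bX_t + K_t(u)$, which follows from $I_t(l^t(u)) - I_t(u) = \int_0^t J_s(\rPictwo_s - \rPicthree_s)\,\rmd s = \Pictwo_t - \Picthree_t$ together with $\bX_t = \X_t - \Pictwo_t + \Picthree_t$. Setting $v = K_t(u)$, the next step is to expand $\cI(\bX_t + v)$ by multilinearity into its $2^4$ pieces. These regroup, using the $x \leftrightarrow y$ symmetry of $\cN$ and the factors $2$ (quadratic) and $4$ (cubic) built into Definition~\ref{def:BGrandomop}, as $\cI(\bX_t) + (4\cN(\bX,\bX,\bX), v) + (\vXXv + \XvXv + \XXvv)(t) + \Xvvv(t) + \cI(v)$. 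Writing $4\cN(\bX,\bX,\bX) = \pXthreeloc + 2a_t\bX = \cS + \Xthreeloc - \XtwoPictwo + 2a_t\bX$ isolates the linear-in-$v$ terms: the $2a_t(\bX, v)$ piece is exactly cancelled by the corresponding term of the mass expansion $-a_t\|\bX+v\|^2 = -a_t\|\bX\|^2 - 2a_t(\bX, v) - a_t\|v\|^2$; the $-a_t\|v\|^2$ piece is precisely the counterterm absorbed into $\vXXv$; and $\cI(\bX_t) - a_t\|\bX_t\|^2 = \XXXX_t$ by definition.

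The key cancellation then comes from expanding $\|u\|^2_{L^2([0,t]\times\T^4)}$ via $u = l^t(u) - \1_{s\leq t}(\rPictwo_s - \rPicthree_s)$, which produces a cross term $-2(l^t(u), \rPictwo - \rPicthree)_{L^2([0,t]\times\T^4)}$ and a vacuum square $\|\rPictwo - \rPicthree\|^2_{L^2([0,t]\times\T^4)}$. Under $\E$, the cross term is matched by the leftover linear-in-$v$ piece $2(\Xthreeloc_t - \XtwoPictwo_t, K_t(u))$. I would prove this matching by using $\rPictwo_s - \rPicthree_s = J_s(\Xthreeloc_s - \XtwoPictwo_s)$, the representation $K_t(u) = \int_0^t J_s l^t_s(u)\,\rmd s$, self-adjointness of $J_s$, and adaptedness of $l^t_s(u)$ to $\cF_s$: the discrepancy reduces to the vanishing expectation of a stochastic integral against an adapted integrand, by construction of the scale-martingale parts of $\Xthreeloc$ and $\XtwoPictwo$ from the renormalizations $\mfC^1_t$ and $\mfC^2_t$. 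The residual vacuum $\E[\XXXX_t + \|\rPictwo - \rPicthree\|^2_{L^2([0,t]\times\T^4)}]$ is then absorbed by the very definition of $b_t$, and all remaining terms reassemble into \eqref{eq:BGdecom}.

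For the bound \eqref{eq:boundmixed}, each term appearing in $\fM^f_t$ is estimated via the matching random field or operator bound of Lemma~\ref{lem:BGreg}, combined with the Schauder-type inequality \eqref{eq:SchauderBG} which trades $\|l^t(u)\|_{L^2([0,t]\times\T^4)}$ for Sobolev norms of $K_t(u)$. Concretely, $(\cS, K_t(u))$ is controlled by $\|\cS\|_{\cC^{-1/2-\epsilon}} \|K_t(u)\|_{H^{1/2+\epsilon}}$; the three quadratic pairings by the operator norms of $\pvXX, \pXdotX, \pXXv$ times $\|K_t(u)\|_{H^1}^2$; the cubic pairing $\Xvvv$ by $\|\bX^{(k)}\|$ times $\|K_t(u)\|^3$ in an appropriate Sobolev norm; and $f(\bX_t + K_t(u))$ by its linear growth together with $\|\bX_t\|_{\cC^{-1-\epsilon}} + \|K_t(u)\|_{H^1}$. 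Young's inequality with sufficiently small prefactor absorbs each factor of $K_t(u)$ into $\tfrac12(\cI(K_t(u)) + \tfrac12\|l^t(u)\|^2_{L^2([0,t]\times\T^4)})$, leaving the residual $C(1 + \|\bbX^{\BG}(t)\|^\gamma)$, with $\gamma$ the maximum power that appears. This is essentially the same scheme as Lemma~\ref{lem:mixednonlin} and Appendix~\ref{sec:mixedterms}.

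The hardest part will be the scale-martingale cancellation described above: verifying rigorously, via It\^o's formula in the scale parameter, that the renormalizations $\mfC^1_t$ and $\mfC^2_t$ are calibrated so that the required cross-term identity closes cleanly and that no leftover drift divergence escapes the absorption into $b_t$. The shift $l^t_s(u) = u_s + \1_{s\leq t}(\rPictwo_s - \rPicthree_s)$ is uniquely forced by the requirement that the offending linear-in-$v$ piece $(\Xthreeloc - \XtwoPictwo, v)$ be cancelled by the drift's quadratic form, mirroring the Cameron--Martin-type mechanism used in \cite{BG20} for $\Phi^4_3$.
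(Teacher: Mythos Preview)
Your proposal is correct and follows essentially the same route as the paper's proof: the same change of variable $\X_t+I_t(u)=\bX_t+K_t(u)$, the same multilinear expansion of $\cI(\bX_t+v)-a_t\|\bX_t+v\|^2$, and the same martingale mechanism by which $(\Xthreeloc_t-\XtwoPictwo_t,v_t)=\int_0^t(\Xthreeloc_s-\XtwoPictwo_s,\dot K_s(u))\,\rmd s+M_t$ cancels the cross term coming from $\tfrac12\|u\|^2=\tfrac12\|l^t(u)-(\rPictwo-\rPicthree)\|^2$. Your emphasis on the martingale property of the renormalized trees is exactly the point the paper invokes in one line.

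One caution on the bound \eqref{eq:boundmixed}: your sketch controls the quadratic pairings $\vXXv$, $\XvXv$, $\XXvv$ by ``operator norm times $\|K_t(u)\|_{H^1}^2$'' and $\Xvvv$ by ``$\|\bX^{(k)}\|$ times $\|K_t(u)\|^3$''. Taken literally this does not close, since a factor $\|\bbX\|\cdot\cK(v)$ cannot be absorbed by $\tfrac12\cK(v)$ when $\|\bbX\|$ is large, and there is no $\|v\|_{H^1}^3$ available on the right-hand side. The actual estimates in Appendix~\ref{sec:mixedterms}, which you correctly cite, rely on the $M^4$--Sobolev interpolation \eqref{eq:interpol} to produce bounds of the form $\cI(v)^a\cK(v)^b$ with $a+b<1$; in particular $\Xvvv$ goes through the Cauchy--Schwarz inequality \eqref{eq:CS2}, not through $\bX^{(k)}$. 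As long as you defer to those lemmas (as the paper does), your plan goes through unchanged.
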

\begin{remark}
Our choice of shift of the drift is made to ensure that, with the above notations, $\mfM^0_t(\bbX^{\BG},v)=(\mathfrak N_t(\bbX^{\BG},v),v)$ with $\mathfrak N_t(\bbX^{\BG},v)\in H^{-1}$ if $v\in H^1$.
\begin{itemize}
\item The first shift by $\1_{s\leqslant t}\rPictwo_s$ takes care of the term $\Xthreeloc_t$ in $\mathfrak N_t$ that would be of regularity $-2-$, while the second shift by $-\1_{s\leqslant t}\rPicthree_s$ takes care of all the three terms of the form $\Pictwo_t\times\X_t\times\X_t$ that would be of regularity $-1-$.
\item The analysis of the Langevin dynamic and the observation that $\cN(\X_t,v,\X_t)$ is of regularity $-1-$ if $v\in H^1$ strongly suggest that we should cancel it by introducing the non constant term $J_s\cN(\X_s,I_s(u),\X_s)$ into the shift of the drift. However, like for the a priori $L^2$ estimates in Section~\ref{subsec:L2apriori}, we will take advantage of the fact that the non-melonic products do not require any renormalization and we can bound the duality pairing $(v,\cN(\X,v,\X))$ by the better behaved $(v,\cN(\X,\X,v))$ thanks to the Cauchy-Schwarz inequality \eqref{eq:CS1} (see also the proof of Lemma~\ref{lem:CSinaction}), so that $\cN(\X_t,v,\X_t)$ is harmless even if it is not in $H^1$. 
Note that this simplification appears to only hold for $d \in \{3,4\}$, in particular, covering the full subcritical regime likely requires a shift that depends on the drift $u$.
\item Note that our final variational formula differs from that of \cite{BG20} in that the interaction term is evaluated at $K(u)$ instead of $I(u)$. 
This is due to the fact that our shift of $\X_{t}$ doesn't depend on $u$, which allows us to simplify our proofs compared to those of \cite{BG20}. 
\end{itemize} 
\end{remark}
\begin{proof}
We sketch the proof since it is very similar to the $L^2$ estimates of Section~\ref{subsec:L2apriori} \dash we use the same notation as well. 
We again use estimates on the mixed terms given in Appendix~\ref{sec:mixedterms}. 
We also write $v\eqdef K(u)$. 
We start by observing that we can carry out renormalization cancellations by expanding
\begin{align}
&\cI(\X_t+I_t(u))-a_t\Vert\X_t+I_t(u)\Vert_{L^2(\T^4)}^2=  \cI(\bX_t+K_t(u))-a_t\Vert\bX_t+K_t(u)\Vert_{L^2(\T^4)}^2\nonumber\\
&\quad= \cI(\bX_t+v_t)-a_t\Vert\bX_t+v_t\Vert_{L^2(\T^4)}^2
\nonumber\\
&\quad=\Big(\XXXX+\XXXv+\vXXv+\XvXv+\XXvv+\Xvvv\Big)(t)+\cI(v_t)\nonumber\\&\quad=\Big(\XXXX+(\cS,v)+(\Xthreeloc-\XtwoPictwo,v)+\vXXv+\XvXv+\XXvv+\Xvvv\Big)(t)+\cI(v_t)\,.\label{eq:expansionI}
\end{align}
Moreover, using the martingale properties of the stochastic objects, the third term in \eqref{eq:expansionI} rewrites as
\begin{equs}
    (\Xthreeloc_t-\XtwoPictwo_t,v_t)=
    \int_0^t(\Xthreeloc_s-\XtwoPictwo_s,\dot{v}_s)\rmd s+M_t=
    \int_0^t(\Xthreeloc_s-\XtwoPictwo_s,\dot{K}_s(u))\rmd s+M_t\,,
\end{equs}
where $M_t$ stands for a martingale of null expectation. On the other hand, we have 
\begin{equs}
    \frac12\Vert u\Vert^2_{L^2([0,t]\times\T^4)}&=\frac12\Vert l^t(u)-\1_{[0,t]}(\rPictwo-\rPicthree)\Vert^2_{L^2([0,t]\times\T^4)}\\
    &=\frac{1}{2}\Vert l^t(u)\Vert^2_{L^2([0,t]\times\T^4)}+\frac12\Vert \rPictwo-\rPicthree\Vert^2_{L^2([0,t]\times\T^4)}-\int_0^t(\rPictwo_s-\rPicthree_s,l^t_s(u))\rmd s\\
    &=\frac{1}{2}\Vert l^t(u)\Vert^2_{L^2([0,t]\times\T^4)}+\frac12\Vert \rPictwo-\rPicthree\Vert^2_{L^2([0,t]\times\T^4)}-\int_0^t(\Xthreeloc_s-\XtwoPictwo_s,\dot{K}_s(u))\rmd s\,, 
\end{equs}
which makes the cancellation with the third term in \eqref{eq:expansionI} manifest, so that \eqref{eq:BGdecom} follows. 

We now have to prove \eqref{eq:boundmixed}. In this setting, we still can use \eqref{eq:Sv}, \eqref{eq:vXXv}, \eqref{eq:XXvv}, \eqref{eq:XvXv} and \eqref{eq:Xvvv}, and the proof is therefore exactly similar to that of Proposition~\ref{prop:apriori}. Regarding, the term $f(\bX_t+v_t)$, using the fact that $f$ is at most of linear growth we have, by Sobolev embedding,
\begin{equs}
    |f(\bX_t+v_t)|&\lesssim \Vert \bX(t)+v(t)\Vert_{\cC^{-1-\epsilon}}\lesssim \Vert \bX(t)\Vert_{\cC^{-1-\epsilon}}+\Vert v\Vert_{C_t\cC^{-1-\epsilon}}
    \lesssim \Vert \bX(t)\Vert_{\cC^{-1-\epsilon}}+\Vert v\Vert_{C_t H^{1-\epsilon}}\\&\leqslant C_\delta\big(1+\Vert \bbX^{\BG}(t)\Vert\big)+\delta \Vert v\Vert_{C_tH^{1-\epsilon}}^2\,,
\end{equs}
where we recall that $\Vert v \Vert^2_{C_tH^{1-\epsilon}}=\Vert K(u)\Vert^2_{C_tH^{1-\epsilon}}\lesssim \Vert l^t(u)\Vert^2_{L^2([0,t]\times\T^4)}$ by \eqref{eq:J}. 
\end{proof}
We can now prove the main theorem of this section. 
\begin{proof}[Theorem~\ref{BG_theorem1}]
We first prove that $\log \cZ_t$ is bounded uniformly in $t$. By the Bou\'e-Dupuis formula~\eqref{eq:Boue-Dupuis}, and then \eqref{eq:BGdecom}, $\log \cZ_t$ verifies
\begin{equs}
    -\log  \cZ_t &=\inf_{u\in\bH_a}
    \E[V^0_t(\X_t+I_t(u)))+\frac12\Vert u\Vert^2_{L^2([0,t]\times\T^4)}]\\&=\inf_{u\in\bH_a}\E[\mathfrak M^0_t(\bbX^{\BG},K_t(u))+\cI(K_t(u))
        +\frac12\Vert l^t(u)\Vert^2_{L^2[0,t]\times\T^4)}]\,.
\end{equs}
For any adapted process $u\in\bH_a$, using \eqref{eq:boundmixed}, we thus have
\begin{equs}
&\E[\mathfrak M^0_t(\bbX^{\BG},K_t(u))+\cI(K_t(u))
        +\frac12\Vert l^t(u)\Vert^2_{L^2([0,t]\times\T^4)}]\\
&\quad\quad\quad\geqslant\E[-C\big(1+\Vert\bbX^{\BG}(t)\Vert^\gamma\big)+\frac{1}{2}\Big(\cI(K_t(u))
        +\frac12\Vert l^t(u)\Vert^2_{L^2([0,t]\times\T^4)}\Big)]
        \\
&\quad\quad\quad\geqslant-C\big(1+\E[\Vert\bbX^{\BG}(t)\Vert^\gamma]\big)\geqslant-C\big(1+\sup_{t\in\R_{\geqslant 0}}\E[\Vert\bbX^{\BG}(t)\Vert^\gamma]\big)\,, 
\end{equs}
which implies that $-\log\cZ_t\geqslant-C\big(1+\sup_{t\in\R_{\geqslant 0}}\E[\Vert\bbX^{\BG}(t)\Vert^\gamma]\big)$. 
On the other hand, for any $u\in\bH_a$
\begin{equs}
 -\log\cZ_t&\leqslant \E[V^0_t(\X_t+I_t(u)))+\frac12\Vert u\Vert^2_{L^2([0,t]\times\T^4)}]\\
 &\leqslant \E[C\big(1+\Vert\bbX^{\BG}(t)\Vert^\gamma\big)+\frac{1}{2}\Big(\cI(K_t(u))
        +\frac12\Vert l^t(u)\Vert^2_{L^2([0,t]\times\T^4)}\Big)]\,.
\end{equs}
Taking $u_s=-\1_{s\leqslant t}\big(\rPictwo_s-\rPicthree_s\big)$ (thus requiring that $l^t(u)=0$) yields an upper bound on $\log \cZ_t$ that only depends on the stochastic objects and is uniformly bounded in time. Finally, we have thus obtained that it holds uniformly in time 
\begin{equs}
    |\log \cZ_t|\lesssim 1+ \sup_{t\in\R_{\geqslant 0}}\E[\Vert \bbX^{\BG}(t)\Vert^\gamma]<\infty\,.
\end{equs}
The representation \eqref{eq:varnut} implies that repeating the previous argument with $V^f$ instead of $V^0$ immediately shows that $\nu_t$ verifies the same bound uniform in time, giving us the desired estimate \eqref{eq:laplace_bound}.  The fact that tightness of $(\nu_{t})_t$ follows from uniform bounds on the Laplace transforms of $(\nu_{t})_{t}$ is standard.
\end{proof}

\section{Kolmogorov arguments for random fields and operators}\label{sec:stochasticobjects}
In our analysis we will need to construct and obtain path-wise regularity estimates on various explicit stochastic objects. 

For most of these objects (which include both random fields and operators), we will proceed by formulating Kolmogorov criteria Lemmas~\ref{lem:stoobsobolev}, \ref{lem:randomopsob}, and~\ref{lem:randomopsob2}. 
When these criteria are combined with moment estimates, they will 
give the results stated in Lemmas~\ref{prop:stochastic_1}, \ref{lem:randop1}, \ref{lem:randfi2}, \ref{lem:randfi4}, \ref{lem:randop2}, \ref{lem:randop_4} and~\ref{lem:BGreg}. 

There are differences in both the definition and type of control we obtain on the objects needed for the Langevin dynamic (LD) versus those needed for the Barashkov \& Gubinelli approach (BG). 
On immediate difference is the role played by time, for the case of the free field recall that $\X^{\text{LD}}=\sqrt{2}\underline{\cL}^{-1}\xi$ while $\X^{\text{BG}}= \int_0^tJ_s\mrd B_s$.

The control of the LD objects is harder \dash we need to control space \textit{ and time} regularity of the LD object \dash for instance note that uniform space-time control is inside the expectation in the statements for the Lemmas~\ref{prop:stochastic_1}, \ref{lem:randop1}, \ref{lem:randfi2}, \ref{lem:randfi4}, \ref{lem:randop2}, and \ref{lem:randop_4} while for BG objects one needs space-regularity estimates along with control over expectations uniform in time - note that the supremum in time is outside of the expectation. in Lemma~\ref{lem:BGreg}. 

For this reason, our discussion will primarily focus on the LD objects, and we point where and how extra considerations should be kept in mind for the BG objects. 
 
\subsection{Random fields}
We first define the LD random fields and investigate their regularity for $d\in\{2,3,4\}$.
\begin{definition}[LD random fields]\label{def:randomfields}
Recall that the LD random fields are defined as
\begin{equs}
    \X_{N}&\eqdef \underline{\cL}^{-1}\xi_{N}\,,\quad
   \Xthreeloc_{N}\eqdef \cN_N(\X_{N},\X_{N},\X_{N})-\mfC^1_{N}(d)\X_{N}\,,\quad
     \Pictwo_{N}\eqdef \underline{\cL}^{-1}\Xthreeloc_{N}\,,\\
       \XtwoPictwo_{N}&\eqdef      \big(\cN_N(\Pictwo_{N},\X_{N},\X_{N})-\mfC_{N}^1(d)\Pictwo_{N}\big)       +         \big(\cN_N(\X_N,\Pictwo_{N},\X_{N})+\cN_N(\X_{N},\X_N,\Pictwo_{N})-\mfC^2_{N}(d)\X_{N}\big)
       \,,\\\Picthree_{N}&\eqdef {\cL}^{-1}\XtwoPictwo_{N}\,,\quad  \bX_{N}\eqdef \X_{N}-\Pictwo_{N}+\Picthree_{N}\,,
      \quad     \pXthreeloc_{N}\eqdef
    \cN_N(\bX_{N},\bX_{N},\bX_{N})-\big(\mfC^1_{N}(d)-\mfC^2_{N}(d)\big)\bX_{N}\,,\\
          \cS_{N}&\eqdef \pXthreeloc_{N}-\Xthreeloc_{N}+\XtwoPictwo_{N}\,.
\end{equs}
\end{definition}
\begin{notation}\label{not:notation}
In what follows we associate to each $\tau$ a power-counting $|\tau| \in \mathbb{R}$ as a useful notation. 
It is not always the case that $|\tau|$ is the regularity or order/homogenity of the random field associated to $\tau$ \dash this is because we do not positively renormalize as in regularity structures or use resonant products as in paracontrolled calculus.
\begin{figure}[H] 
\centering
\begin{tabular}{c|ccccccc}  $\tau$&$\X$&$\Xtwom/\dotXXloc$&$\Xtwonm/\XXdotloc$&$\Xthreeloc$&$\XdotX/\XdotXloc$&$\Sym$\\\hline $|\tau|$&$-\frac{d-2}{2}$&$-\frac{2d-5}{2}$&$-\frac{2d-5}{2}$&$-\frac{3d-8}{2}$&$-(d-2)$&$-(2d-7)$ 
\end{tabular}
\caption{Power counting of the main stochastic objects}
\end{figure}
If $\tau$ is a random operator of argument $f$, we partition its vertices in four categories: the vertices of the form $\cN(\tau_1,\tau_2,\tau_3)=\cXthreeloc$ for $\tau_1=\rnoise$, $\tau_2=\gnoise$ and $\tau_3=\bnoise $ some stochastic objects, of the form $\cN(\tau_1,\tau_2,f)=\cXXdotloc$, of the form $\cN(f,\tau_1,\tau_2)=\cdotXXloc$ and of the form $\cN(\tau_1,f,\tau_2)=\cXdotXloc$. All the vertices of a random field are of the form $\cXthreeloc$. With this convention, the general formula is given by 
\begin{equs}
|\tau|\eqdef &-\frac{d+2}{2}\#\text{noises}\;\noise\;(\tau)+\#\text{vertices}\;\cXthreeloc\;(\tau)+\frac{1}{2}\#\text{vertices}\cdotXXloc\;(\tau)\\&+\frac12\#\text{vertices}\;\cXXdotloc(\tau)+2\#\text{edges}\;|\;(\tau)\,.
\end{equs}
\end{notation}

All the stochastic objects we are considering are some variations of the seven main stochastic objects, either because we rather see them as random operators instead of random fields, or because we study them with the rough shift $\bX$ instead of $\X$.

Let us now recall the properties of the random fields:
\begin{lemma}\label{lem:stoob}In any dimension $d\geqslant2$, 
\begin{equs}\label{eq:regX}
\sup_{N\in\N}\E[\Vert \X_{N}\Vert_{C_T\cC^{|\X|-\epsilon}(\T^{d})}^p]&<\infty\,,
 \end{equs}
 Moreover, for $d\in\{2,3,4\}$, 
 \begin{equs}
\sup_{N\in\N} \E[\Vert \Xthreeloc_{N}\Vert_{C_T\cC^{|\Xthreeloc|-\epsilon}(\T^{d})}^p]&<\infty\,,\\ 
\label{eq:regXtwoPictwo}
\sup_{N\in\N}\E[\Vert \XtwoPictwo_{N}\Vert^p_{C_T\cC^{\min(-\frac{(d-3)}2,|\XtwoPictwo|)-\epsilon}(\T^{d})}]&<\infty\,,
   \\ 
   \sup_{N\in\N} \E[\Vert \cS_{N}\Vert_{C_T\cC^{-\frac{d-3}{2}-\epsilon}(\T^{d})}^p]&<\infty\,.
 \end{equs}
Furthermore, for every $\tau$ above of regularity $\beta_\tau$, the corresponding BG random field belongs to $\cC^{\beta_\tau-\epsilon}$ for all time $t\in\R_{\geqslant 0}$, and obeys a bound uniform in time:
\begin{equs}
    \sup_{t\in\R_{\geqslant 0}}\E[\Vert \tau(t)\Vert^p_{\cC^{\beta_\tau-\epsilon}}]<\infty\,.
\end{equs}
\end{lemma}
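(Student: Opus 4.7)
The plan is to obtain all the estimates from a Besov-Kolmogorov criterion (Lemma~\ref{lem:stoobsobolev}, stated later in this section), which reduces the problem to uniform in $N$ second-moment bounds on Littlewood-Paley blocks of each random field $\tau_N$, at fixed time and for time increments. Since each $\tau_N$ belongs to a finite sum of Wiener chaoses of order bounded independently of $N$, hypercontractivity \slash Nelson's equivalence upgrades these $L^2$ bounds to $L^p$ bounds for arbitrary $p$; combining this with the Besov embedding then yields the claimed $C_T \cC^{\beta_{\tau}-\epsilon}$ control. Uniformity in $N$ comes from the cutoff $\Pi_{N}$ satisfying $0\leqslant\Pi_N\leqslant1$ and from the fact that after renormalization all the relevant Feynman amplitudes are bounded uniformly in $N$ at the right order of power counting.

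As a warm-up, one checks directly from \eqref{eq:covariance} that $\E[|\Delta^j \X_{N}(x,t)|^2]\lesssim 2^{-j(d-2)}$ uniformly in $N,x,t$, and the time-increment bound $\E[|\Delta^j (\X_{N}(t)-\X_{N}(s))|^2] \lesssim |t-s|^{\alpha} 2^{-j(d-2-2\alpha)}$ for small $\alpha>0$, giving \eqref{eq:regX}. For $\Xthreeloc_N$ one expands the three copies of $\X_N$ into their Wick chaos decomposition; the renormalization constant $\mfC^1_N$ is precisely designed to cancel the single most divergent melonic self-contraction, as computed in Lemma~\ref{lem:CT1}. The remaining contributions split into a third-chaos component (superficially convergent in $d\leqslant 4$) and a first-chaos remainder, whose covariance is controlled by $\sum_{n_{\hc}}\mfR^{1,c}_N(\chi^{c}(n,m))^2$ and hence bounded uniformly in $N$ for $d\leqslant4$. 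The time regularity follows by inserting an extra propagator increment in the same Feynman representation.

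For $\XtwoPictwo_N$ the strategy is identical but now both $\mfC^1_N$ and $\mfC^2_N$ are needed: the computations already carried out in the proof of Lemma~\ref{def:CT2} show that, after subtracting these two counterterms, each Feynman diagram reduces to a finite sum involving $\mfR^{1,c}_N$ and $\mfR^{2,c,c'}_N$, which are shown to yield absolutely convergent sums for $d\leqslant 4$. Combined with the gain in regularity coming from the additional propagator in $\Pictwo_N = \underline{\cL}^{-1}\Xthreeloc_N$ (via Schauder), this yields the bound~\eqref{eq:regXtwoPictwo}. The main technical obstacle is $\cS_N$, which contains terms such as $\cN(\Picthree_N,\X_N,\X_N)-\mfC^1_N \Picthree_N$ where $\Picthree_N$ has positive regularity. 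As observed in the remark following Lemma~\ref{lem:randfi4}, one does not gain from the extra regularity of $\Picthree$ inside the non-local product at the level of pure power counting, and the strategy is instead to combine the random-operator estimate on $\pXtwom_N$ (or the corresponding variant) acting on the deterministic-looking factor $\Picthree_N$ with the uniform regularity bound on $\Picthree_N$ already obtained. The other summands in $\cS_N$ contain at most one occurrence of $\Picthree_N$ paired with either $\X_N$'s or $\Pictwo_N$'s, so the same hybrid stochastic-deterministic argument gives the uniform bound in $\cC^{-\frac{d-3}{2}-\epsilon}$.

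Finally, the statement for the BG versions of these objects follows from the exact same scheme. The only change is that $\Pi_N$ is replaced by the smooth cutoff $\varrho_t$, that the underlying free field is the martingale $\X_t = \int_0^t J_s \rmd B_s$ of covariance $\varrho_t^2(\nabla)\angp{\nabla}{-2}$, and that one uses the fixed-time Kolmogorov criterion (Lemma~\ref{lem:stoobsobolev} applied at a fixed $t$, or rather its BG analogue). Since $0\leqslant\varrho_t\leqslant 1$ uniformly in $t$, the second-moment computations from the LD setting transfer verbatim and provide bounds on $\E[\Vert\tau(t)\Vert_{\cC^{\beta_\tau-\epsilon}}^p]$ uniform in $t\geqslant 0$, as claimed.
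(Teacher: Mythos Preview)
Your overall strategy---Besov--Kolmogorov criterion plus hypercontractivity reducing to second-moment diagram estimates---is the paper's. However, your treatment of $\cS_N$ has a concrete gap.

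You correctly identify that $\cN(\Picthree_N,\X_N,\X_N)-\mfC^1_N\Picthree_N$ must be handled by writing it as a random operator acting on $\Picthree_N$ (it should be $\Xtwom^c$, not $\pXtwom$, since only $\X$'s appear). But you then claim that ``the same hybrid stochastic-deterministic argument gives the uniform bound'' for the remaining terms. This fails already for $\cN(\X_N,\Picthree_N,\X_N)$: the relevant random operator here is $\XdotX$, which in $d=4$ maps $H^{\alpha}$ to $\cC^{\min(-\frac12,\alpha-2)-\epsilon}$. Feeding in $\Picthree\in\cC^{1-\epsilon}$ gives only regularity $-1-\epsilon$, strictly worse than the required $-\tfrac{d-3}{2}-\epsilon=-\tfrac12-\epsilon$. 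No variant of $\Xtwom$ or $\Xtwonm$ applies to the exterior pairing, so the hybrid route is blocked for this term.

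The paper's resolution (Section~\ref{subsubsec:S}) is to split $\cS=\cS^{\det}+\cS^{\sto}$ with $\cS^{\det}$ consisting of \emph{exactly} the single term $\cN(\Picthree,\X,\X)-\mfC^1\Picthree$, and to treat the other $22$ terms in $\cS^{\sto}$ by a \emph{pure stochastic} graph estimate. The point is combinatorial: for every term in $\cS^{\sto}$ the position of $l_\beta$ in the skeleton graph prevents any $(2,2,0)$ subgraph from containing $l_\beta$; the worst subgraph containing $l_\beta$ is then a $(2,4,0)$ one, which yields $\beta<-\tfrac{d-3}{2}$. By contrast, $\cN(\Picthree,\X,\X)-\mfC^1\Picthree$ \emph{does} admit a $(2,2,0)$ subgraph through $l_\beta$, which forces the hybrid argument precisely there. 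Your proposal conflates these two regimes; to repair it you need the multiscale/subgraph analysis of Section~\ref{sec:diagram} for $\cS^{\sto}$.

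A smaller point in the same vein: your sketch for $\XtwoPictwo$ does not explain why the regularity is $\min(-\tfrac{d-3}{2},|\XtwoPictwo|)$ rather than $|\XtwoPictwo|$. This too comes from subgraph constraints (Section~\ref{subsubsec:regQuintic}), not from top-level power counting plus Schauder on $\Pictwo$.
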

Note that we do not include $\Pictwo $, $\Picthree$ and $\bX$ in the previous list, since we can control them using the previous bounds, the Schauder estimate \eqref{eq:schauder2} and \eqref{eq:SchauderBG}.
\subsection{Random operators}
We state definitions and regularity estimates for the LD random operators.
\begin{definition}[LD random operators]\label{def:randomop}
Recall that the LD random operators are defined as
\begin{align*}  
  \X^{(k)}_{N}(f)&\eqdef \int_{\T^{d-k}} \X_{N}(\cdot,y) f(\cdot,y)\rmd y\,,  &&\text{ for }k\in[d-1]\;,\;f:\R_{\geqslant 0}\times\T^{d-k}\rightarrow\R\,,\\
    \Xtwom^c_{N}(f)&\eqdef \cN^c_N(f,\X_N,\X_N)-\mfC^{1,c}_N(d)f\,,   &&\text{ for }c\in[d]\,,\;f:\R_{\geqslant 0}\times\T_c\rightarrow\R 
    \,,\\ \Xtwonm^c_{N}(f)&\eqdef \cN^c_N(\X_N,\X_N,f)  \,, &&\text{ for }c\in[d]\,,\;f:\R_{\geqslant 0}\times\T_{\hc}^{d-1}\rightarrow\R 
  \,,\\ \XdotX_{N}(f)&\eqdef \cN_N(\X_{N},f,\X_{N}) \,,&&\text{ for }f:\R_{\geqslant 0}\times\T^d\rightarrow\R\,,\\
     \Sym_{N}(f)&\eqdef
    \XdotX_{N}(\cL^{-1}\ 
    \XdotX_{N}(f))\,, &&\text{ for }f:\R_{\geqslant 0}\times\T^d\rightarrow\R\,,
\end{align*}
and $\bX_{N}$, $\pXtwom^c_{N}$, $\pXtwonm^c_{N}$, $\pXdotX_{N}$ and $\pSym_{N}$ are defined accordingly, substituting every occurrence of $\X_{N}$ with $\bX_{N}$, and replacing $\mfC^{1,c}_N(d)$ with  $\mfC^{1,c}_N(d)- \mfC^{2,c}_N(d)$ in the definition of $\pXtwom^c$.
\end{definition}

\begin{lemma}\label{lem:ranop}
In any dimension $d\geqslant2$, for all $\alpha>\frac{d-k-2}{2}$, 
\begin{equ}\label{eq:X_rand_op}  
\sup_{N\in\N}\E[\Vert\X^{(k)}_{N}\Vert^p_{\cL(C_TH^{\alpha}(\T^{d-k}),C_T\cC^{\min(-\frac{k-2}{2},\alpha+|\X|)-\epsilon}(\T^k))}]<\infty\,.
\end{equ}
For $d\in\{2,3,4\}$, $c\in[d]$, and all $\alpha>-\frac{1}{2}$, 
\begin{equs}  \label{eq:regXtwom}  
 \sup_{N\in\N}  \E[\Vert\Xtwom^c_{N}\Vert^p_{\cL(C_TH^{\alpha}(\T),C_T\cC^{\min(\frac{1}{2},\alpha+|\Xtwom|)-\epsilon}(\T))}]<\infty\,.
\end{equs}
In any dimension $d\geqslant2$, for all $c\in[d]$ and $\alpha>\frac{d-3}{2}$, 
\begin{equs}\label{eq:regXtwonm}
   \sup_{N\in\N}  \E[\Vert\Xtwonm^c_{N}\Vert^p_{\cL(C_TH^{\alpha}(\T^{d-1}),C_T\cC^{\min(-\frac{d-3}{2},\alpha+|\Xtwonm|)-\epsilon}(\T^{d-1}))}]<\infty\,.
\end{equs}
In any dimension $d\geqslant2$, for all $\alpha>\frac{d-3}{2}$, 
\begin{equs}    \label{eq:regXdotX}
\sup_{N\in\N}\E[\Vert\XdotX_{N}\Vert^p_{\cL(C_TH^{\alpha}(\T^{d}),C_T\cC^{\min(-\frac{d-3}{2},\alpha+|\XdotX|)-\epsilon}(\T^{d}))}]<\infty\,.
\end{equs}
In any dimension $d\geqslant2$, for all $\alpha>\frac{d-3}{2}$, 
\begin{equs}    
    \sup_{N\in\N} \E[\Vert\Sym_{N}\Vert^p_{\cL(C_TH^{\alpha}(\T^{d}),C_T\cC^{\min(-\frac{d-3}{2},\alpha+|\Sym|)-\epsilon}(\T^{d}))}]<\infty\,.
\end{equs}
Moreover, the purple objects constructed with the rough shift $\bX$ obey the same bounds as their black counterparts constructed with $\X$, but with the restriction that the dimension must be $\leqslant4$. 

Furthermore, for $\tau$ as above of inner regularity $\alpha_\tau$ and outer regularity $\beta_\tau$, the corresponding BG random operator belongs to $\cL\big(C_tH^{\alpha_\tau},\cC^{\beta_\tau-\epsilon}\big)$ for any $t\in\R_{\geqslant 0}$, and obeys the bound 
\begin{equs}
    \sup_{t\in\R_{\geqslant 0}}\E[\Vert \tau(t)\Vert^p_{\cL(C_tH^{\alpha_\tau},\cC^{\beta_\tau-\epsilon})}]<\infty\,.
\end{equs}
\end{lemma}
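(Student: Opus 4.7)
The plan is to combine a Kolmogorov-type criterion for operator norms (which I expect to be established in Lemmas~\ref{lem:randomopsob} and~\ref{lem:randomopsob2}) with fixed-time second-moment bounds on the Fourier-Wiener expansion of each operator. The criterion reduces the $\cL(C_T H^\alpha, C_T \cC^{\beta-\epsilon})$ norm, which is an operator norm valued in a H\"older space, to uniform-in-$m$ estimates on the moments of $\langle \tau(t)(e_m), e_{-n}\rangle$ (and its time increments) against Fourier test functions, weighted appropriately by $\langle m\rangle^{-\alpha}$ and $\langle n\rangle^{\beta}$. Hypercontractivity of Gaussian chaoses will then reduce the $p$-th moment bounds we need to variance computations, which can be made explicit using Wick's formula in Fourier.

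For each individual operator I would proceed as follows. For $\X^{(k)}_N$ only the first Wiener chaos appears, so its amplitude is the Fourier representation of the covariance \eqref{eq:covariance} and the bound \eqref{eq:X_rand_op} follows from summing geometric Fourier factors; the two regimes in $\min(-\tfrac{k-2}{2}, \alpha+|\X|)$ correspond to whether the $(d-k)$-fold integration against $f$ saturates before or after the gain from $\alpha$. For the quadratic operators $\Xtwom^c, \Xtwonm^c, \XdotX$, I would decompose into second and zero-th chaos; the zero-th chaos piece is the expectation already computed in the proof of Lemma~\ref{lem:CT1}, which is exactly (and only, in the melonic case) cancelled by the $\mfC^{1,c}_N$ counterterm, and the second chaos amplitude is a straightforward Fourier sum whose power counting matches Notation~\ref{not:notation}. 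The extra $+\tfrac12$ regularity in \eqref{eq:regXtwom} comes from the fact that in the melonic pairing a free-field strand integrates over a one-dimensional marginal rather than a $(d-1)$-dimensional one. For $\Sym_N$ I would either iterate the bound on $\XdotX$ using the Schauder estimate \eqref{eq:schauder2} between the two factors, or --- and this is the cleaner route --- expand $\Sym_N$ directly in chaoses and verify that the additional internal integration against $\cL^{-1}$ is what upgrades $|\XdotX| = -(d-2)$ to $|\Sym| = -(2d-7)$.

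For the objects built from the rough shift $\bX = \X - \Pictwo + \Picthree$, I would expand multilinearly: the pure $\X$ contributions are the LD objects just treated; contributions containing $\Picthree$ are subleading by the positive regularity of $\Picthree$ from Lemma~\ref{lem:stoob}; contributions containing $\Pictwo$ require more care because the melonic pairings $\cN(f,\X,\Pictwo) + \cN(f,\Pictwo,\X)$ are divergent and are precisely what is cancelled by the $\mfC^{2,c}_N$ counterterm in the definition of $\pXtwom^c$, consistent with Lemma~\ref{def:CT2}. For the BG analogs, the time $t$ is a scale parameter, so instead of H\"older-in-time one needs only second moments uniform in $t$; these follow from the boundedness $\sigma_t(x) \lesssim 1$ established in the proof of Lemma~\ref{lem:J} and the monotonicity of $\varrho_t^2$, which allows one to dominate each amplitude by its $t = \infty$ analog. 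The main obstacle will be keeping careful bookkeeping of which Wick pairings are melonic and verifying, for $\XdotX$ and especially $\Sym$, that the resulting Fourier amplitude sums really meet the geometric threshold $-\tfrac{d-3}{2}$ uniformly in $N$; this is where the non-local structure genuinely helps compared to the local vertex, since the partial integrations built into $\cN^c$ provide precisely the regularization needed to close the power counting.
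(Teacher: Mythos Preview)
Your overall strategy is correct and matches the paper's: Kolmogorov criteria (Lemmas~\ref{lem:randomopsob} and~\ref{lem:randomopsob2}) plus hypercontractivity reduce everything to second-moment Fourier sums, and the rough-shift objects are handled by multilinear expansion into a stochastic part and a deterministic remainder built from smaller operators. For $\X^{(k)}$ and the quadratic operators your direct chaos computation is essentially what the paper does.

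There are two genuine gaps concerning $\Sym$. First, the iteration route via Schauder does \emph{not} work, and it is important that you recognise this rather than present it as merely less clean: composing $\XdotX \circ \cL^{-1} \circ \XdotX$ deterministically gives $H^\alpha \to \cC^{\min(-\frac{d-3}{2},\,\alpha - (d-2))-\epsilon}$, not the claimed $\alpha + |\Sym| = \alpha - (2d-7)$. In $d=4$ this is $\alpha - 2 - \epsilon$ versus $\alpha - 1$, so iteration loses exactly the regularity whose absence forced the introduction of $\Sym$ as a standalone stochastic object in Section~\ref{subsec:4dfixedpoint}. The gain comes only from treating all four noises jointly in a single stochastic estimate. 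Second, $\Sym$ is not local in time: it involves $\cL^{-1}$ between the two factors of $\XdotX$, so it cannot be written as a time-varying operator $\tau_t$ acting on $f(t,\cdot)$. The paper handles this with a separate two-parameter Kolmogorov criterion (Lemma~\ref{lem:randomopsob2}) for kernels $\tau_{u,t}$, with blow-up as $u\uparrow t$ controlled by the heat kernel; your proposal does not account for this.

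A methodological difference worth noting: where you propose to evaluate the Fourier amplitude sums by hand, the paper instead sets up a systematic diagrammatic framework (stranded/tensor graphs, the degree $\delta(G)$, the renormalized divergence degree $\omega(G)$) and a multiscale decomposition culminating in Corollary~\ref{coro:multiscale}, which reduces convergence of each amplitude to checking $\omega(\cG^i_k)<0$ for every high subgraph. For $\Xtwom$ and $\Xtwonm$ this machinery is overkill and the paper says so, but for $\XdotX$ and especially $\Sym$ (four-vertex vacuum graphs with many Wick contractions) the subgraph-by-subgraph analysis is what actually identifies the worst contribution and pins down the thresholds $\beta<-\frac{d-3}{2}$, $\alpha>\frac{d-3}{2}$, and $\beta<\alpha+|\Sym|$. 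Your direct approach would in principle arrive at the same sums, but without the graph bookkeeping it is easy to miss a subdivergence.
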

\begin{remark}
    The proofs of the regularity properties of the random fields and the random operators are essentially similar. In particular repeating the argument used in order to establish \eqref{eq:regXtwom} and \eqref{eq:regXtwonm} yields the regularities of the random fields $\Xtwom$ and $\Xtwonm $ given in Lemma~\ref{prop:stochastic_1}.
\end{remark}

\subsection{Kolmogorov criteria}

In Lemmas~\ref{lem:stoobsobolev} and~\ref{lem:randomopsob} we give two estimates, the first being an estimate at a fixed time and the second an estimate on an homogenous H\"{o}lder norm in time, together these give control over the corresponding inhomogenous H\"{o}lder norm in time for the LD objects.
For the BG object, only the first estimate of  Lemmas~\ref{lem:stoobsobolev} and~\ref{lem:randomopsob} is relevant. 

Lemma~\ref{lem:randomopsob2} gives an estimate for an homogenous norm for particular class of random operators of depending on two times, and is solely used for the LD object $\Sym$. 

We also note, since in Section~\ref{sec:diagram} we obtain covariance estimates in terms of Fourier variables, it is convenient to do the same for Kolmogorov estimates and so we simplify our estimates using appropriate stationarity of the stochastic objects.

\subsubsection{Kolmogorov criteria for random fields}
We start with the Kolmogorov argument for the random fields. Below, $T$ is a fixed positive real number, and we write $\delta_{s,t}\tau\eqdef\tau(t)-\tau(s)$.

\begin{lemma}\label{lem:stoobsobolev}
For any fixed $k\geqslant1$ and $p \in 2\N$, uniform over smooth stationary in space random fields $\tau$ over $\R_{\geqslant0}\times\T^{d}$ belonging to the $k$-th inhomogeneous Gaussian chaos, 
one has 
\begin{equ}  \label{eq:FixedTimeKolmoRF} 
\E [\Vert\tau(t) \Vert_{\cC^{\beta-\epsilon}}^p ]\lesssim 
\bigg(
  \sum_{m\in\Z^d}\angp{m}{2\beta}\E [\hat{\tau}_m(t) \hat{\tau}_{-m}(t) 
  ]\bigg)^{\frac{p}{2}}
    \,.
\end{equ} 
In particular, for any fixed $\theta > \kappa > 0$ and uniform over smooth stationary in space random fields $\tau$ over $\R_{\geqslant 0} \times \T^{d}$ belonging to the $k$-th inhomogenous Gaussian chaos, one has 
\begin{equ}
\E [ 
\Vert \tau \Vert_{\dot{C}^{\theta - \kappa }_T\cC^{\beta-\epsilon}}^{p} ]
\lesssim \sup_{\substack{0\leqslant s,t\leqslant T\\|t-s|\leqslant 1}}{|t-s|^{-\frac{\theta p}{2}}}  
{\bigg(
    \sum_{m\in\Z^d}\angp{m}{2\beta}\E [\delta_{s,t}\hat{\tau}_m\delta_{s,t}\hat{\tau}_{-m}]\bigg)^{\frac{p}{2}}}
    \,.
\end{equ}
\end{lemma}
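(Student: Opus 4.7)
The plan is to reduce both inequalities to second-moment computations in Fourier space, combining the Besov characterization of $\cC^{\beta-\epsilon}$ with hypercontractivity on the $k$-th chaos, and then invoking a standard Kolmogorov continuity step for the time-regularity statement. The Besov embedding $B^{\beta-\epsilon/2}_{q,q}(\T^d) \hookrightarrow \cC^{\beta-\epsilon}(\T^d)$ holds once $q \geqslant 2d/\epsilon$, and since hypercontractivity lets me upgrade $L^p(\Omega)$ to $L^q(\Omega)$ for any $q \geqslant p$ at the cost of a constant depending on $k,q$, I can always take $q$ as large as needed.

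For the fixed-time bound \eqref{eq:FixedTimeKolmoRF}, I would first use Jensen to replace $p$ by $q \geqslant 2d/\epsilon$, then expand the Besov norm dyadically and use Fubini and spatial stationarity to write
\begin{equs}
\E\big[\Vert\tau(t)\Vert^q_{B^{\beta-\epsilon/2}_{q,q}}\big] \lesssim \sum_{i \geqslant -1} 2^{iq(\beta-\epsilon/2)} \,\E\big[|\Delta^i\tau(t)(0)|^q\big]\,.
\end{equs}
Nelson's inequality gives $\E[|\Delta^i\tau(t)(0)|^q] \lesssim_{k,q} \E[|\Delta^i\tau(t)(0)|^2]^{q/2}$, and a direct Fourier computation, using that spatial stationarity forces $\E[\hat\tau_m(t)\hat\tau_n(t)]$ to vanish unless $m+n=0$ and that $(\hat\Delta^i_m)^2 = \hat\Delta^i_m$, yields $\E[|\Delta^i\tau(t)(0)|^2] = \sum_{m \sim 2^i} \E[\hat\tau_m(t)\hat\tau_{-m}(t)]$. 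Writing $C_m \eqdef \E[\hat\tau_m(t)\hat\tau_{-m}(t)] \geqslant 0$ and estimating $C_m \leqslant \angp{m}{-2\beta}\angp{m}{2\beta} C_m$ on each dyadic shell, extending the sum to all of $\Z^d$, produces
\begin{equs}
\E\big[\Vert\tau(t)\Vert^q_{\cC^{\beta-\epsilon}}\big] \lesssim \Big(\sum_{i\geqslant -1} 2^{-iq\epsilon/2}\Big)\Big(\sum_{m\in\Z^d}\angp{m}{2\beta}C_m\Big)^{q/2}\,,
\end{equs}
and the geometric prefactor is finite since $\epsilon>0$.

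For the time-regularity statement, the same machinery applied to the spatially stationary chaos element $\delta_{s,t}\tau$ gives, after a further use of hypercontractivity to enlarge $p$ to some $q \geqslant p$,
\begin{equs}
\E\big[\Vert\delta_{s,t}\tau\Vert^q_{\cC^{\beta-\epsilon}}\big] \lesssim M \cdot |t-s|^{\theta q/2}\,,\quad M \eqdef \sup_{\substack{0\leqslant s,t\leqslant T\\|t-s|\leqslant 1}} |t-s|^{-\theta p/2}\Big(\sum_m \angp{m}{2\beta}\E[\delta_{s,t}\hat\tau_m\delta_{s,t}\hat\tau_{-m}]\Big)^{p/2}\,.
\end{equs}
A standard Kolmogorov / Garsia--Rodemich--Rumsey argument (equivalently, the parabolic embedding $B^{\theta/2}_{q,q}([0,T];\cC^{\beta-\epsilon}) \hookrightarrow \dot C^{\theta-2/q}_T\cC^{\beta-\epsilon}$ in parabolic H\"older scaling) then converts this increment bound into the bound $\E[\Vert\tau\Vert^p_{\dot C^{\theta-\kappa}_T\cC^{\beta-\epsilon}}] \lesssim M$ as soon as $q \geqslant 2/\kappa$. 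The only subtle point is the freedom to enlarge $q$ at both steps, and hypercontractivity on the fixed chaos is exactly what makes the loss in $\kappa$ (and in $\epsilon$) arbitrarily small at no real cost beyond $p$-dependent constants.
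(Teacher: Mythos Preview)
Your proposal is correct and follows essentially the same approach as the paper: Besov embedding $B^{\bullet}_{q,q}\hookrightarrow\cC^{\beta-\epsilon}$ for $q$ large, Jensen to raise the moment, hypercontractivity to drop back to second moments, spatial stationarity to diagonalise the Fourier covariance, and a standard Kolmogorov step for the time regularity. The only cosmetic slip is that in your increment bound the prefactor should be $M^{q/p}$ rather than $M$, but this is harmless since after Kolmogorov and Jensen you recover $\E[\Vert\tau\Vert^p_{\dot C^{\theta-\kappa}_T\cC^{\beta-\epsilon}}]\lesssim M$ as claimed.
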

\begin{proof}
We focus on the second statement, the first one will be clear from our argument.  

Recall that for $k > \frac{d}{\epsilon}$ we have the compact embedding 
\begin{equ}\label{eq:embedding}
B^{\beta}_{ k, k}\hookrightarrow\cC^{\beta-\epsilon}\;.
\end{equ} 
We denote by $\Delta^h$ taking a Littlewood-Paley block in the $x$-variable (see Appendix~\ref{app:besov}). Therefore, we have
\begin{equs} 
\Vert \delta_{s,t}\tau \Vert_{\cC^{\beta-\epsilon}}^{p}
\lesssim 
\Vert \delta_{s,t}\tau \Vert_{B_{k,k}^{\beta}}^{p}    
   &=
  \bigg(\sum_{h\geqslant-1}2^{\beta hk}\int_{\T^d} |\Delta^h\delta_{s,t}\tau(x)|^k \rmd x\bigg)^{\frac{p}{k}} \\
  {}&= 
   \bigg(\sum_{h\geqslant-1}2^{\beta h k}\int_{\T^d} \Big|\sum_{m\sim 2^h}e^{-\imath x\cdot m}\delta_{s,t}\hat\tau_m\Big|^k \rmd x\bigg)^{\frac{p}{k}}
  \,.
\end{equs}
We now further enforce $k>p$ with $k$ even so that by Jensen's inequality we have
\begin{equs} 
\E [
\Vert \delta_{s,t}\tau\Vert_{\mcC^{\beta-\epsilon}}^p
]&
\lesssim\bigg(\sum_{h\geqslant-1}2^{\beta hk}\int_{\T^d}
\E \Big[\Big(\sum_{m\sim 2^h}e^{-\imath x\cdot m}\delta_{s,t}\hat\tau_m\Big)^k\Big]\rmd x\bigg)^{\frac{p}{k}}\,.
\end{equs}
Since $\tau$ sits in a finite inhomogenous Gaussian chaos, one has the hypercontractive moment bound
\begin{equs}
 {}&   \bigg(\sum_{h\geqslant-1}2^{\beta hk}\int_{\T^d}
\E \Big[\Big(\sum_{m\sim 2^h}e^{-\imath x\cdot m}\delta_{s,t}\hat\tau_m\Big)^k\Big]\rmd x\bigg)^{\frac{p}{k}}\\
{}&\lesssim
\bigg(\sum_{h\geqslant-1}2^{\beta hk}\int_{\T^d}
\E \Big[\Big(\sum_{m\sim 2^h}e^{-\imath x\cdot m}\delta_{s,t}\hat\tau_m\Big)^2\Big]^{\frac{k}{2}}\rmd x\bigg)^{\frac{p}{k}}\\
{}&=
\bigg(\sum_{h\geqslant-1}2^{\beta hk}\int_{\T^d}
 \Big(\sum_{m_1,m_2\sim 2^h}e^{-\imath x\cdot (m_1+m_2)}\E[\delta_{s,t}\hat\tau_{m_1}\delta_{s,t}\hat\tau_{m_2}]\Big)^{\frac{k}{2}}\rmd x\bigg)^{\frac{p}{k}}\;.
\end{equs}
Since the stationarity hypothesis implies that $\E[\delta_{s,t}\hat{\tau}_{m_1}\delta_{s,t}\hat{\tau}_{m_2} ]=0$ unless $m_1 +m_2=0$, we have
\begin{equs} 
\E [
\Vert \delta_{s,t}\tau\Vert_{\mcC^{\beta-\epsilon}}^p
]&
\lesssim
\bigg(\sum_{h\geqslant-1}2^{\beta hk}\int_{\T^d}
 \Big(\sum_{m\sim 2^h}\E[\delta_{s,t}\hat\tau_{m}\delta_{s,t}\hat\tau_{-m}]\Big)^{\frac{k}{2}}\rmd x\bigg)^{\frac{p}{k}}\\
{}&\lesssim
\bigg(
\sum_{h\geqslant-1}\sum_{m\sim 2^h}2^{2\beta h}
\E [
\delta_{s,t}\hat\tau_{m}\delta_{s,t}\hat\tau_{-m}
]\bigg)^{\frac{p}{2}}\\
&=\bigg(
\sum_{m\in\Z^d}\angp{m}{2\beta}
\E [
\delta_{s,t}\hat\tau_{m}\delta_{s,t}\hat\tau_{-m}
]\bigg)^{\frac{p}{2}}
\,.
\end{equs}
where we used $k\geqslant 1$. Finally, a more standard Kolmogorov argument in time along with hypercontractivity gives
\begin{equs}
  \E [ 
\Vert \tau \Vert_{\dot{C}^{\theta - \kappa }_T\cC^{\beta-\epsilon}}^{p} ]
\lesssim \sup_{\substack{0\leqslant s,t\leqslant T\\|t-s|\leqslant 1}}
\frac{
   \E [\|\delta_{s,t}{\tau}\|_{\cC^{\beta-\epsilon}}^2 ]^{p/2}}{|t-s|^{\frac{\theta p}{2}}}  
    \,.
\end{equs}
Combining the above with the first statement gives the second statement. 
\end{proof}

\subsubsection{Kolmogorov criteria for time-local random operators}

All the random operators in Definition~\ref{def:randomop} are presented as operators acting on a functions on space and time.
In this section we give Kolmogorov criteria for $\Xtwom$, $\Xtwonm$, $\pXtwom^{\sto}$ and $\XdotX$. 

We separate these cases since $\Sym$ is the only operator listed that does not act locally in time and therefore needs a slightly different argument -- see the next section. 

We note that, instead of being viewed as operators on space-time functions, the remaining operators $\tau$ in Definition~\ref{def:randomop} can be viewed as operators $\tilde{\tau}_{t}$ on space functions that themselves vary in time, by setting 
\begin{equs}\label{local-in-time}
    \tau(f)(t,x)=\tilde\tau_t\big(f(t,\cdot)\big)(x)
\end{equs}
for $f:\R_{\geqslant0} \times\T^{d_1}$ and a time-varying random operator $\tilde\tau$. Overloading the notation, we identify $\tilde\tau$ with $\tau$, also denoting it $\tau$ (or $\tau_{\bigcdot}$ when it is necessary to distinguish). Moreover, observe that we have
\begin{equs}   \|\tau(t)\|_{\mcL(C_TH^\alpha(\T^{d_1}),\mcC^{\beta-\epsilon}(\T^{d_2}))}\lesssim
\|\tau_t\|_{\mcL(H^\alpha(\T^{d_1}),\mcC^{\beta-\epsilon}(\T^{d_2}))}\,,
\end{equs} 
and, for $\theta > 0$, 
\begin{equs}\label{local-in-time-bound}    \|\tau\|_{\mcL(C_TH^\alpha(\T^{d_1}),C^\theta_T\mcC^{\beta-\epsilon}(\T^{d_2}))}\lesssim
\|\tau_{\bigcdot}\|_{C_T^\theta\mcL(H^\alpha(\T^{d_1}),\mcC^{\beta-\epsilon}(\T^{d_2}))}\,.
\end{equs} 
Our Kolmogorov criteria for local-in-time random operators will estimate the norm on the r.h.s. in both inequalities above. 

Our approach to estimating random operators in space is inspired by \cite[Sec.~10.2]{GP17}.

\begin{definition}\label{def:timevar}
For any $T>0$, a \textit{time-varying random smooth operator} over time $T$ will be an element $\tau_{\bigcdot} \in C_{T}\cL \big(H^\alpha(\T^{d_1}), \cC^{\beta-\epsilon} (\T^{d_2}) \big)$ which, for $x \in \T^{d_2}$ and $t\in[0,T]$, is of the form 
\begin{equs}
    \tau_t(f)(x)=\sum_{n\in\T^{d_1}}\mcF_i\tau_n(x,t)\hat f_{-n}
\end{equs}
where $\tau_t(f)$ denotes the action of $\tau$ evaluated a time $t$ on $f\in H^\alpha(\T^{d_1})$, and $\mcF_i\tau_n(x,t)$ stands for the Fourier transform of $\tau$ in the inner space variable evaluated at the mode $n$, that is to say the Fourier transform in the space variable acting on $f$. This implies that for $m\in\Z^{d_2}$,
\[
\cF_x\big(\tau_t(f)\big)(m) = 
\sum_{n \in \Z^{d_1}}
\hat{\tau}_{m,n}(t)
\hat{f}_{-n}
 \,,
\]
for $\hat{\tau}_{m,n}(t)$ rapidly decaying in $m$ and $n$. We say a time-varying random smooth operator $\tau$ is in the inhomogenous Gaussian chaos of order $k\in\N$ if all the $\hat{\tau}_{m,n}(t)$ are. 

For $h \in \T^{d_2}$, we write $T_{h}$ for the associated translated operator, that is $(T_{h}g)(\cdot) = g(\cdot - h)$ for functions $g$ on $\T^{d_2}$. We then say $\tau_{\bigcdot}$ as above is \textit{stationary in space}
if for every $h \in \T^{d_2}$,  $T_{h} \circ \tau_t(f)$ is equal in distribution to $\tau_t$ as a random element of $C_{T} \cL \big(H^\alpha(\T^{d_1}), \cC^{\beta-\epsilon} (\T^{d_2}) \big)$. Note that stationary in space implies that for $s,t \in [0,T]$ and $n\in\Z^{d_1}$, 
\begin{equ}\label{local-in-time-stationary}
\E[ \hat{\tau}_{m_1,n}(s) \hat{\tau}_{m_2,-n}(t) ] = 0 \text{ unless }
m_1 + m_2 = 0\,.
\end{equ}
\end{definition}

\begin{remark}
Note that it is easy to verify that with either the cut-off $N$ for the LD objects or a fixed time cut-off on the BG objects, the operators $\Xtwom$, $\Xtwonm$, $\pXtwom^{\sto}$ and $\XdotX$ satisfy the definition above -- in particular they have the stated stationarity property given above.
\end{remark}

We are now going to state some Kolmogorov estimates for $\tau_t$ which combined with \eqref{local-in-time-bound} will imply the desired bound for $\tau$.

Below, $T > 0$ is fixed, and we extend the notation $\delta_{s,t}\tau=\tau_t-\tau_s$ to a time-varying random operator $\tau_{\bullet}\;$.
\begin{lemma}\label{lem:randomopsob}
For any fixed $k\geqslant1$ and $p \in 2\N$, and uniform over smooth time-varying stationary in space random operators $\tau$ over time $T$ belonging to the $k$-th inhomogeneous Gaussian chaos, 
one has, for any $t \in [0,T]$, 
\begin{equs}\label{eq:FixedTimeKolmoRO}
\E [ \Vert \tau_t \Vert^p_{ \cL ( H^\alpha,\cC^{\beta-\epsilon} )} ]
\lesssim 
\bigg(
 \sum_{m \in\Z^{d_2},n\in\Z^{d_1}}\angp{m}{2\beta}\angp{n}{-2\alpha} \E[\hat\tau_{m,n}(t) \hat\tau_{-m,-n} (t)]\bigg)^{\frac{p}{2}}\;.
\end{equs}
In particular, for any fixed $\theta > \kappa > 0$ and uniform over smooth time-varying stationary in space random operators $\tau$ over time $T$ belonging to the $k$-th homogenous Gaussian chaos, one has 
\begin{equs}    
       \E&[\Vert\tau_{\bigcdot}\Vert_{\dot{C}_T^{\theta-\kappa}\cL(H^\alpha,\cC^{\beta-\epsilon})}^p]\lesssim \sup_{\substack{0\leqslant s,t\leqslant T\\|t-s|\leqslant 1}}{|t-s|^{-\frac{\theta p}{2}}}
       \bigg(    \sum_{m\in\Z^{d_2},n\in\Z^{d_1}}\angp{m}{2\beta}\angp{n}{-2\alpha}\E[\delta_{s,t}\hat\tau_{m,n}\delta_{s,t}\hat\tau_{-m,-n}]\bigg)^{\frac{p}{2}}\,.
 \end{equs}
\end{lemma}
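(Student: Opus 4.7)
The plan is to adapt the Littlewood--Paley/Besov-embedding argument of Lemma~\ref{lem:stoobsobolev}, but applied not to the operator itself but to its Schwartz kernel $K_t(x,y)$, defined so that $\tau_t(f)(x)=\langle K_t(x,\cdot),f\rangle_{H^{-\alpha},H^{\alpha}}$ and whose Fourier coefficients are exactly $\hat K_{m,n}(t)=\hat\tau_{m,n}(t)$. First I would pass from the operator norm to a \emph{bilocal} Besov norm of the kernel: testing on Littlewood--Paley blocks in $x$ and using duality against $H^{\alpha}$ in $y$ gives
\begin{equs}
\|\tau_t\|_{\cL(H^{\alpha}(\T^{d_1}),\cC^{\beta-\epsilon}(\T^{d_2}))}
\lesssim
\|K_t\|_{B^{\beta-\epsilon}_{\infty,\infty}(\T^{d_2},\,H^{-\alpha}(\T^{d_1}))}.
\end{equs}
As in the scalar case I would then pick an even integer $k\geqslant p$ so large that the Besov embedding $B^{\beta}_{k,k}(\T^{d_2},H^{-\alpha})\hookrightarrow B^{\beta-\epsilon}_{\infty,\infty}(\T^{d_2},H^{-\alpha})$ holds, trading $\epsilon$ of spatial regularity for $L^{k}$ integrability in $x$.

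Next I would unfold the $B^{\beta}_{k,k}$ norm and bound, for each dyadic scale $h$ and each $x\in\T^{d_2}$, the quantity $\E\|\Delta^h_x K_t(x,\cdot)\|_{H^{-\alpha}}^{k}$. Since $\Delta^h_x K_t(x,\cdot)$ still belongs to the $k$-th inhomogeneous Gaussian chaos for each fixed $(x,y)$, I would pull the $\ell^2_n$ sum out of the expectation via Minkowski in $L^{k/2}(\Omega)$ and then invoke Nelson's hypercontractivity \emph{mode by mode}:
\begin{equs}
\E\|\Delta^h_x K_t(x,\cdot)\|_{H^{-\alpha}}^{k}
\leqslant
\Big(\sum_{n\in\Z^{d_1}}\angp{n}{-2\alpha}\,\E|\mcF_y(\Delta^h_x K_t)(x,n)|^{2}\Big)^{k/2}.
\end{equs}
The space-stationarity hypothesis~\eqref{local-in-time-stationary} then forces the $\E|\mcF_y(\Delta^h_x K_t)(x,n)|^{2}$ to be independent of $x$ and annihilates all off-diagonal Fourier contributions, producing $\sum_{m\sim 2^h}\E[\hat\tau_{m,n}(t)\hat\tau_{-m,-n}(t)]$.

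Finally, summing over the dyadic blocks against the weight $2^{\beta h k}$ and using $\sum_h a_h^{k/2}\leqslant(\sum_h a_h)^{k/2}$ for $k\geqslant 2$, the resulting sum reassembles into $\sum_{m,n}\angp{m}{2\beta}\angp{n}{-2\alpha}\E[\hat\tau_{m,n}(t)\hat\tau_{-m,-n}(t)]$ raised to the power $k/2$, and Jensen's inequality (with $k\geqslant p$) converts this $L^k$ estimate into the claimed $L^p$ estimate. The second statement follows by applying the first to the increment $\delta_{s,t}\tau$, which by linearity still lives in the same chaos of order $k$, and then running the standard Kolmogorov continuity argument in time with exponent $\theta-\kappa$. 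The main subtlety I expect is the hypercontractivity step: the $H^{-\alpha}$ norm is not a polynomial in the underlying noise, so one cannot apply hypercontractivity to it directly; the whole point of pulling $\ell^2_n$ out first via Minkowski is to reduce to bounding $\E|\hat K_{m,n}(t)|^{k}$, which \emph{is} genuinely a polynomial expression of fixed chaos order and hence tractable.
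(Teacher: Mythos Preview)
Your proposal is correct and follows essentially the same route as the paper's own proof. The only difference is packaging: you bound the operator norm by the bilocal Besov norm $\|K_t\|_{B^{\beta}_{k,k}(\T^{d_2},H^{-\alpha}(\T^{d_1}))}$ of the Schwartz kernel, whereas the paper writes this step out explicitly as a Cauchy--Schwarz inequality in the $n$ variable applied to $\tau_t(f)$ before taking the supremum over $f\in H^\alpha$; these are the same inequality. After that, both proofs run Jensen (to bring the expectation inside the $p/k$-th power), Minkowski in $L^{k/2}(\Omega)$ plus hypercontractivity to handle the $H^{-\alpha}$ norm, stationarity to kill the off-diagonal Fourier terms, and a standard Kolmogorov step for the time-H\"older bound.
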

\begin{proof}
We focus on the second statement, the first one will be clear from our argument. 
As before, we take $ k > d_2/\eps$ and use \eqref{eq:embedding}, giving 
\begin{equs}
   \Vert \delta_{s,t}\tau(f)\Vert^p_{\mcC^{\beta-\epsilon}}\lesssim
     \Vert \delta_{s,t}\tau(f)\Vert^p_{B_{k,k}^{\beta}}=  \bigg(\sum_{h\geqslant-1}2^{\beta hk}\int_{\T^d} \Big|\Delta^h\delta_{s,t}\tau(f)(x)\Big|^k \rmd x\bigg)^{\frac{p}{k}}
\end{equs}
Then, by Cauchy–Schwarz inequality,
\begin{equs}
    \bigg(\sum_{h\geqslant-1} & 2^{\beta hk}\int_{\T^{d_2}} \Big|\Delta^h\delta_{s,t}\tau(f)(x)\Big|^k \rmd x\Big)^{\frac{p}{k}}\\
    {}&\lesssim
      \bigg(\sum_{h\geqslant-1}2^{\beta hk}\int_{\T^{d_2}} \Big(\sum_{n\in\Z^{d_1}}|\Delta^h\delta_{s,t}\mcF_i\tau_{n}(x)\hat f_{-n}|\Big)^k \rmd x\bigg)^{\frac{p}{k}}\\&\lesssim
      \bigg(\sum_{h\geqslant-1}2^{\beta hk}\int_{\T^{d_2}} \Big(\sum_{n\in\Z^{d_1}}\angp{n}{-2\alpha}\big|\Delta^h\delta_{s,t}\mcF_i\tau_{n}(x)\big|^2\Big)^{\frac{k}{2}} \rmd x\bigg)^{\frac{p}{k}}      
      \Big(\sum_{n\in\Z^{d_1}}\angp{n}{2\alpha}|\hat{f}_n|^2\Big)^{\frac{p}{2}}\,.
      \end{equs}
We may now divide by $$  \Big(\sum_{n\in\Z^{d_1}}\angp{n}{2\alpha}|\hat{f}_n|^2\Big)^{\frac{p}{2}}=\|f\|_{H^\alpha}^p$$ and take the supremum over $f\in H^\alpha$ on the l.h.s. to obtain
\begin{equs}    \Vert\delta_{s,t}\tau\Vert_{\cL(H^\alpha,\cC^{\beta-\epsilon})}^p\lesssim
\bigg(\sum_{h\geqslant-1}2^{\beta hk}\int_{\T^{d_2}} \Big(\sum_{n\in\Z^{d_1}}\ang{n}^{-2\alpha}|\Delta^h\delta_{s,t}\mcF_i\tau_{n}(x)|^2\Big)^{\frac{k}{2}} \rmd x\bigg)^{\frac{p}{k}}\,.
\end{equs}
We are now ready to take the expectation. Again, taking $k > p$ and using Jensen’s inequality and
the hypercontractive moment bound, we have
\begin{equs}
  \E\bigg[  \bigg(\sum_{h\geqslant-1}2^{\beta hk}\int_{\T^{d_2}} \Big(&\sum_{n\in\Z^{d_1}}\angp{n}{-2\alpha}\big|\Delta^h\delta_{s,t}\mcF_i\tau_{n}(x)\big|^2\Big)^{\frac{k}{2}} \rmd x\bigg)^{\frac{p}{k}}\bigg] \\&\lesssim
  \bigg(\sum_{h\geqslant-1}2^{\beta hk}\int_{\T^{d_2}}  \E[  \Big(\sum_{n\in\Z^{d_1}}\angp{n}{-2\alpha}\big|\Delta^h\delta_{s,t}\mcF_i\tau_{n}(x)\big|^2\Big)^{\frac{k}{2}}] \rmd x\bigg)^{\frac{p}{k}}\\
  &\lesssim
   \bigg(\sum_{h\geqslant-1}2^{\beta hk}\int_{\T^{d_2}}  \Big(\sum_{n\in\Z^{d_1}}\angp{n}{-2\alpha}\E[  \Delta^h\delta_{s,t}\mcF_i\tau_{n}(x)
   \Delta^h\delta_{s,t}\mcF_i\tau_{-n}(x)]\Big)^{\frac{k}{2}} \rmd x\bigg)^{\frac{p}{k}}\\
   &\lesssim
     \bigg(\sum_{h\geqslant-1}2^{\beta hk}\int_{\T^{d_2}}  \Big(\sum_{n\in\Z^{d_1}}\angp{n}{-2\alpha}\sum_{m_1,m_2\sim 2^h}e^{-\imath x\cdot(m_1+m_2)}\E[  \delta_{s,t}\hat\tau_{m_1,n}\delta_{s,t}\hat\tau_{m_2,-n}]\Big)^{\frac{k}{2}} \rmd x\bigg)^{\frac{p}{k}}
     \\
   &\lesssim
     \bigg(\sum_{h\geqslant-1}2^{\beta hk}  \Big(\sum_{n\in\Z^{d_1}}\angp{n}{-2\alpha}\sum_{m\sim 2^h}\E[ \delta_{s,t}\hat\tau_{m,n}\delta_{s,t}\hat\tau_{-m,-n}]\Big)^{\frac{k}{2}} \bigg)^{\frac{p}{k}}\\
     &\lesssim
    \Big(\sum_{n\in\Z^{d_1}}\angp{n}{-2\alpha}\sum_{m\in\Z^{d_2}}\angp{m}{2\beta}\E[ \delta_{s,t}\hat\tau_{m,n}\delta_{s,t}\hat\tau_{-m,-n}]\Big)^{\frac{p}{2}} \,.
\end{equs}    
We can now conclude the proof by means of a standard Kolmogorov argument in time combined with hypercontractive moment estimates, which yields, for any $\kappa>0$, 
\begin{equs}    \E[\Vert\tau\Vert_{\cL(H^\alpha,\dot C^{\theta-\kappa}_T\cC^{\beta-\epsilon})}^p]&\lesssim\sup_{\substack{0\leqslant s,t\leqslant T\\|t-s|\leqslant 1}}{|t-s|^{-\frac{\theta p}{2}}}\E[\Vert\delta_{s,t}\tau\Vert_{\cL(H^\alpha,\cC^{\beta-\epsilon})}^2]^{\frac{p}{2}}\\
&\lesssim\sup_{\substack{0\leqslant s,t\leqslant T\\|t-s|\leqslant 1}}{|t-s|^{-\frac{\theta p}{2}}} \bigg( \sum_{m\in\Z^{d_2},n\in\Z^{d_1}}\angp{m}{2\beta }\ang{n}^{-2\alpha}\E[\delta_{s,t}\hat\tau_{m,n}\delta_{s,t}\hat\tau_{-m,-n}]
 \bigg)^{\frac{p}{2}}\,.
\end{equs}
\end{proof}

\subsubsection{Kolmogorov criteria for the time non-local random operator $\Sym$}\label{sec:non-local-in-time}
We now provide the Kolmogorov criterion we will use to control the LD random operator $\tau = \Sym$.
As described before, the feature of $\Sym$ that makes it challenging to estimate is that it is non-local in time -- this is because of the heat kernel connecting the root of $\Sym$ to the internal node pictured right above it. 

However, for this operator we can express this non-locality in time through an integral formula, exchanging \eqref{local-in-time} by writing, for $f:[0,T]\times\T^d\rightarrow\R$, 
\begin{equs}\label{nonlocal-in-time}
    \tau(f)(t,\cdot)=\int_{0}^t\tau_{u,t}\big(f(u,\cdot)\big)\rmd u\,.
\end{equs}
More explicitly, for $\tau = \Sym$, writing $\X_s(\cdot)=\X(s,\cdot)$ for $s\in\R$, we have 
\begin{equ}\label{non-local-def}
\tau_{u,t}\equiv\Sym_{u,t}(f)\eqdef\mcN\big(\X_t,P_{t-u}\mcN(\X_u,f,\X_u),\X_t\big) 
\text{ for }f:\T^d\rightarrow\R\;.
\end{equ}
The following lemma gives us an estimate analogous to \eqref{local-in-time-bound}. 

\begin{lemma}\label{lem:non-local-bound}
Let $\tau$ be as as in \eqref{nonlocal-in-time} and fix $\kappa>0$ and $\theta\in(\kappa,2+\kappa)$. 
Then, we have 
\begin{equs}\label{eq:FixedTimeBy2TimeObject}
        \|\tau(t)\|_{\mcL(C_TH^\alpha,\mcC^{\beta-\epsilon})}\lesssim \sup_{\substack{0\leqslant u\leqslant t}}  |t-u|^{1-\frac{\theta-\kappa}2}  \|\tau_{u,t}\|_{\mcL(H^\alpha,\mcC^{\beta-\epsilon})}\,,
\end{equs}
and
\begin{equs}\label{eq:non-local-bound}
      \|\tau\|_{\mcL(C_TH^\alpha,\dot C_T^{\theta-\kappa}\mcC^{\beta-\epsilon})} & \lesssim \sup_{\substack{0\leqslant u\leqslant t\leqslant T}}|t-u|^{1-\frac{\theta-\kappa}{2}}\|\tau_{u,t}\big(f(u,\cdot)\big)\|_{\mcL(H^\alpha,\mcC^{\beta-\epsilon})} \\
{}& \qquad +
    \sup_{\substack{0\leqslant u\leqslant s\leqslant t\leqslant T}}{|t-s|}^{-\frac{\theta-\kappa}2}|s-u|^{1-\frac{\theta-\kappa}{2}}\|\tau_{u,t}-\tau_{u,s}\|_{\mcL(H^\alpha,\mcC^{\beta-\epsilon})}\,.
\end{equs}
\end{lemma}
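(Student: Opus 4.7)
The plan is to reduce everything to the integral representation \eqref{nonlocal-in-time} and apply the triangle inequality in $\mcC^{\beta-\epsilon}$, followed by elementary integration of explicit powers of $|t-u|$. The hypothesis on the first supremum controls $\|\tau_{u,t}\|_{\mcL(H^\alpha,\mcC^{\beta-\epsilon})}$ by $M\,|t-u|^{\frac{\theta-\kappa}{2}-1}$, where $M$ denotes the supremum on the right-hand side of \eqref{eq:FixedTimeBy2TimeObject}, and the two constraints $\theta>\kappa$ and $\theta<2+\kappa$ correspond exactly to the parabolic H\"older exponent $\frac{\theta-\kappa}{2}$ lying in $(0,1)$: the lower bound ensures integrability of $|t-u|^{\frac{\theta-\kappa}{2}-1}$ near $u=t$, while the upper bound is the usual condition for the seminorm $\dot C^{\theta-\kappa}_T$ to be meaningful.

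For the fixed-time bound \eqref{eq:FixedTimeBy2TimeObject}, I would simply write, for $f\in C_TH^\alpha$,
\[
\|\tau(f)(t,\cdot)\|_{\mcC^{\beta-\epsilon}}
\leqslant \int_0^t \|\tau_{u,t}\|_{\mcL(H^\alpha,\mcC^{\beta-\epsilon})}\,\|f(u,\cdot)\|_{H^\alpha}\,\rmd u
\leqslant M\,\|f\|_{C_TH^\alpha}\int_0^t |t-u|^{\frac{\theta-\kappa}{2}-1}\,\rmd u,
\]
and the last integral equals $\tfrac{2}{\theta-\kappa}\,t^{\frac{\theta-\kappa}{2}}$, uniformly bounded on $[0,T]$ given the convention $T\in(0,1]$ in force throughout the paper. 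Dividing by $\|f\|_{C_TH^\alpha}$ and taking the supremum over $f$ produces the first inequality.

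For the H\"older-in-time bound \eqref{eq:non-local-bound}, I would fix $0\leqslant s<t\leqslant T$ and split
\[
\tau(f)(t,\cdot)-\tau(f)(s,\cdot)
= \int_s^t \tau_{u,t}\big(f(u,\cdot)\big)\,\rmd u
+ \int_0^s \big(\tau_{u,t}-\tau_{u,s}\big)\big(f(u,\cdot)\big)\,\rmd u.
\]
The first integral is estimated exactly as in the previous paragraph, but integrating only over $u\in[s,t]$; this produces the factor $|t-s|^{\frac{\theta-\kappa}{2}}$ directly. For the second integral, I would call $M'$ the second supremum on the right of \eqref{eq:non-local-bound}, giving $\|\tau_{u,t}-\tau_{u,s}\|_{\mcL(H^\alpha,\mcC^{\beta-\epsilon})}\leqslant M'\,|t-s|^{\frac{\theta-\kappa}{2}}\,|s-u|^{\frac{\theta-\kappa}{2}-1}$, then integrate over $u\in[0,s]$ to gain a further $s^{\frac{\theta-\kappa}{2}}\lesssim 1$ and pull the factor $|t-s|^{\frac{\theta-\kappa}{2}}$ out of the integral. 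Dividing by $|t-s|^{\frac{\theta-\kappa}{2}}$ and taking suprema yields the second inequality.

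The argument is purely functional-analytic; there is no real obstacle at this stage. The substantive work, which lies outside the scope of this lemma, is the subsequent stochastic estimate showing that the two suprema on the right are finite when $\tau_{u,t}=\Sym_{u,t}$ as in \eqref{non-local-def}, and in particular that the difference $\tau_{u,t}-\tau_{u,s}$ enjoys the gain $|t-s|^{\frac{\theta-\kappa}{2}}$ coming from the H\"older-in-$t$ regularity of the heat kernel $P_{t-u}$ wedged between the two free-field factors of $\bX$. The only point to watch in the deterministic step is to keep $\theta\in(\kappa,2+\kappa)$ so that the integrals converge and the H\"older seminorm notation is consistent.
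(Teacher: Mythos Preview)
Your proposal is correct and follows essentially the same route as the paper: both use the integral representation, bound $\|\tau(f)(t)\|$ by integrating $|t-u|^{\frac{\theta-\kappa}{2}-1}$ over $[0,t]$, and for the increment both split $\tau(f)(t)-\tau(f)(s)=\int_s^t \tau_{u,t}(f(u,\cdot))\,\rmd u+\int_0^s(\tau_{u,t}-\tau_{u,s})(f(u,\cdot))\,\rmd u$ and estimate each piece exactly as you describe. The paper's argument is identical up to notation.
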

\begin{proof}
We start by performing the fixed-time estimate
\begin{equs}
      \|\tau(f)(t)\|_{\mcC^{\beta-\epsilon}}&\lesssim\int_0^t\|\tau_{u,t}\big(f(u,\cdot)\big)\|_{\mcC^{\beta-\epsilon}}\rmd u\lesssim      \int_0^t|t-u|^{-1+\frac{\theta-\kappa}2}
      \rmd u  \sup_{\substack{0\leqslant u\leqslant t\leqslant T}}  |t-u|^{1-\frac{\theta-\kappa}2}  \|\tau_{u,t}\big(f(u,\cdot)\big)\|_{\mcC^{\beta-\epsilon}}\\
      &\lesssim     t^{\frac{\theta-\kappa}2}
      \Big(  \sup_{\substack{0\leqslant u\leqslant t\leqslant T}}  |t-u|^{1-\frac{\theta-\kappa}2}  \|\tau_{u,t}\|_{\mcL(H^\alpha,\mcC^{\beta-\epsilon})}\Big)\|f\|_{C_TH^\alpha}\,.
\end{equs}
Dividing by $ \|f\|_{C_TH^\alpha}$ and taking the supremum over $f\in C_TH^\alpha$ in the l.h.s. gives us
\begin{equs}
    \|\tau(t)\|_{\mcL(C_TH^\alpha,\mcC^{\beta-\epsilon})}\lesssim \sup_{\substack{0\leqslant u\leqslant t\leqslant T}}  |t-u|^{1-\frac{\theta-\kappa}2}  \|\tau_{u,t}\|_{\mcL(H^\alpha,\mcC^{\beta-\epsilon})}\,.
\end{equs}

Similarly, using the fact that we have 
\begin{equs}
    \tau(f)(t,x)-\tau(f)(s,x)=\int_s^t\tau_{u,t}\big(f(u,\cdot)\big)\rmd u+\int_0^s (\tau_{u,t}-\tau_{u,s})\big(f(u,\cdot)\big)\rmd u\,,
\end{equs}
we obtain, denoting $\delta^o_{s,t}\tau_{u,\cdot}\eqdef\tau_{u,t}-\tau_{u,s}$
\begin{equs}
    \|&\delta_{s,t}\tau(f)\|_{\mcC^{\beta-\epsilon}}\\&\lesssim\int_s^t\|\tau_{u,t}\big(f(u,\cdot)\big)\|_{\mcC^{\beta-\epsilon}}\rmd u+\int_0^s\|\delta^o_{s,t}\tau_{u,\cdot}\big(f(u,\cdot)\big)\|_{\mcC^{\beta-\epsilon}}\rmd u\\
    &\lesssim|t-s|^{\frac{\theta-\kappa}{2}}\Big(\sup_{\substack{0\leqslant u\leqslant t\leqslant T}}|t-u|^{1-\frac{\theta-\kappa}{2}}\|\tau_{u,t}\big(f(u,\cdot)\big)\|_{\mcC^{\beta-\epsilon}}\\
    {}&  \qquad \qquad  \qquad +
    \sup_{\substack{0\leqslant u\leqslant s\leqslant t\leqslant T}}{|t-s|}^{-\frac{\theta-\kappa}2}|s-u|^{1-\frac{\theta-\kappa}{2}}\|\delta^o_{s,t}\tau_{u,\cdot}\big(f(u,\cdot)\big)\|_{\mcC^{\beta-\epsilon}}
    \Big)\\
    &\lesssim
    |t-s|^{\frac{\theta}{2}}\Big(\sup_{\substack{0\leqslant u\leqslant t\leqslant T}}|t-u|^{1-\frac{\theta-\kappa}{2}}\|\tau_{u,t}\big(f(u,\cdot)\big)\|_{\mcL(H^\alpha,\mcC^{\beta-\epsilon})}\\
    {}& \qquad \qquad  \qquad +
    \sup_{\substack{0\leqslant u\leqslant s\leqslant t\leqslant T}}{|t-s|}^{-\frac{\theta-\kappa}2}|s-u|^{1-\frac{\theta-\kappa}{2}}\|\delta^o_{s,t}\tau_{u,\cdot}\|_{\mcL(H^\alpha,\mcC^{\beta-\epsilon})}
    \Big)\|f\|_{C_TH^\alpha}\;,
\end{equs}
from which the desired estimate follows. 
\end{proof}

\begin{remark}
We allow ourselves the blow-up factor in $|t-u|$ in the estimates of Lemma~\ref{lem:non-local-bound} in anticipation of the estimates we expect for the LD object \eqref{lem:non-local-bound}, our estimates on $P_{t-u}$ blow-up as $(t-u) \downarrow 0$, see the estimate~\eqref{eq:assumption5}.  
\end{remark}

Our Kolmogorov criterion will be formulated for the quantities on the r.h.s. of \eqref{eq:non-local-bound}. 
Before stating this result we state the replacement for Definition~\ref{def:timevar}. 

\begin{definition}
For any $ T > 0$, we say  $\tau_{\bigcdot,\bigcdot}$ is a \textit{2-parameter time-varying random smooth operator} if $\tau_{\bigcdot,\bigcdot}$ is a continuous map
\[
\mathrm{Simp}_{T} = 
\{ (u,t) \in [0,T]^2\ni(u,t):\enskip 
u < t \}
\mapsto 
\tilde \tau_{u,t}\in\mcL(H^\alpha,\mcC^{\beta-\epsilon})
\]
which, for $x\in\T^d$, we often write in the form
\[
\tau_{u,t}(f)(x) = 
\sum_{n \in \Z^{d_1}}\cF_i
{\tau}_{n}(x,u,t)
\hat{f}_{-n}
 \,,
\]
and for $m \in \Z^{d}$
\[
\cF_x\big(\tau_{u,t}(f)\big)(m) = 
\sum_{n \in \Z^{d_1}}
\hat{\tau}_{m,n}(u,t)
\hat{f}_{-n}
 \,,
\]
and $\hat{\tau}_{m,n}(u,t)$ decaying rapidly in $m$ and $n$. 

As before, we say that $\tau_{\bigcdot,\bigcdot}$ belongs to the inhomogenous Gaussian chaos of order $k$ if $\hat{\tau}_{m,n}(u,t)$ always does.
We again say $\tau_{\bigcdot,\bigcdot}$ is stationary in space if $T_{h} \circ \tau_{u,t}(f)$ is equal in distribution to $\tau_{u,t}$ for any $h \in \T^d$. 
In analogy with \eqref{local-in-time-stationary}, this means we have, for any $(u,t),\; (u',t') \in \mathrm{Simp}_{T}$ and $n\in\Z^d$, 
\begin{equ}
\E[ \hat{\tau}_{m_1,n}(u,t) \hat{\tau}_{m_2,-n}(u',t') ] = 0 \text{ unless }
m_1 + m_2 = 0 \,.
\end{equ}

\begin{remark}
Note that $\Sym_{N}$ indeed satisfies the definition above, including the needed stationarity property. 
\end{remark}

Finally, to state a Kolmogorov estimate taking the blow up when $u\uparrow t$ into account, for $\theta,\eta\in(0,1)$, we endow the space of smooth functions $f: \mathrm{Simp}_{T}\rightarrow\R$ with the Hölder norms with blow up $C_{\mathrm{Simp}_{T}}^{\theta,\eta}$ defined as
\begin{equs}    \|f\|_{C_{\mathrm{Simp}_{T}}^{\theta,\eta}}\eqdef\sup_{(u,t)\in \mathrm{Simp}_{T}}|t-u|^{\eta}|f(u,t)|+\|f\|_{\dot C_{\mathrm{Simp}_{T}}^{\theta,\eta}}\,,
\end{equs}
where the homogeneous part of the norm is given by
\begin{equs}
    \|f\|_{\dot C_{\mathrm{Simp}_{T}}^{\theta,\eta}}\eqdef\sup_{((u,t),(r,s))\in \mathrm{Simp}^2_{T}}\big(|t-u|\wedge|s-r|\big)^{\eta}\big(|t-s|+|r-u|\big)^{-\frac{\theta}{2}}|f(u,t)-f(r,s)|\,.
\end{equs}
Note that the r.h.s. of \eqref{eq:FixedTimeBy2TimeObject} and \eqref{eq:non-local-bound} are bounded by 
\begin{equs}    \|\tau_{\bigcdot,\bigcdot}\|_{C^{\theta-\kappa,\eta}_{\mathrm{Simp}_{T}}\mcL(H^\alpha,\mcC^{\beta-\epsilon})}
\end{equs}
for $\eta=1-\frac{\theta-\kappa}{2}\in(0,1)$.

\end{definition}
The above discussion motivates the following lemma. Recall the notation $\delta^o_{s,t}\tau_{u,\cdot}=\tau_{u,t}-\tau_{u,s}$ the variation in the outer time. We also write $\delta^i_{u,r}\tau_{\cdot,t}\eqdef\tau_{r,t}-\tau_{u,t}$ the variation in the inner time.

\begin{lemma}\label{lem:randomopsob2}
    For any fixed $k\geqslant1$, $p\in2\N$, $\kappa>0$ and $\theta \in (\kappa,2+\kappa)$, writing $\eta=1-\frac{\theta-\kappa}{2}\in(0,1)$, and uniform over smooth 2-parameter time-varying stationary in space
random operators $\tau$ over time $T \in (0,1]$ belonging to the $k$-th inhomogenous Gaussian chaos, one has
\begin{equs}\label{eq:nonlocalhomogenous}
    \E&[\Big(  \|\tau_{\bigcdot,\bigcdot}\|_{ \dot{C}^{\theta-\kappa,\eta}_{\mathrm{Simp}_{T}}\mcL(H^\alpha,\mcC^{\beta-\epsilon})}\Big)^p]
    \\&\lesssim \sup_{\substack{0\leqslant u\leqslant r\leqslant t\leqslant T}}   |t-r|^{\eta p}|r-u|^{-\frac{\theta p}2}  \bigg(    \sum_{m\in\Z^{d},n\in\Z^{d}}\angp{m}{2\beta}\angp{n}{-2\alpha}\E[\delta^i_{u,r}\hat\tau_{m,n}(\cdot,t)\delta^i_{u,r}\hat\tau_{-m,-n}(\cdot,t)]\bigg)^{\frac{p}{2}}\\
    &\quad+\sup_{\substack{0\leqslant r\leqslant s\leqslant t\leqslant T}}|s-r|^{\eta p}{|t-s|}^{-\frac{\theta p}{2}} \bigg(    \sum_{m\in\Z^{d},n\in\Z^{d}}\angp{m}{2\beta}\angp{n}{-2\alpha}\E[\delta^o_{s,t}\hat\tau_{m,n}(r,\cdot)\delta^o_{s,t}\hat\tau_{-m,-n}(r,\cdot)]\bigg)^{\frac{p}{2}}\,.
\end{equs}
\end{lemma}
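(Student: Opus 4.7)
The plan is to adapt the proof of Lemma~\ref{lem:randomopsob}, accommodating the additional difficulty introduced by the two time parameters and the blow-up at the diagonal $u=t$. First, for fixed $((u,t),(r,s))\in \mathrm{Simp}_T^2$, one decomposes the two-parameter increment via an intermediate point: assuming (by symmetry) $u\leqslant r$ and (say) $s\leqslant t$, one writes
\begin{equs}
\tau_{u,t}-\tau_{r,s} = \delta^i_{u,r}\tau_{\cdot,t} + \delta^o_{s,t}\tau_{r,\cdot}\,,
\end{equs}
and analogously for the remaining orderings. This reduces bounding the two-parameter increment to separately bounding a purely inner-time increment (at the fixed outer time $t$) and a purely outer-time increment (at the fixed inner time $r$).

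For each of these two pieces I would then run the Besov--duality argument used to establish \eqref{eq:FixedTimeKolmoRO} in Lemma~\ref{lem:randomopsob}. Concretely, pick an even integer $k>\max(d_2/\epsilon,p)$, use the compact embedding $B^\beta_{k,k}\hookrightarrow\mcC^{\beta-\epsilon}$ to rewrite the operator-norm increment via Littlewood--Paley blocks, insert the weight $\langle n\rangle^{\pm 2\alpha}$ through a Cauchy--Schwarz in the inner Fourier variable $n$ so as to extract $\|f\|_{H^\alpha}$ and pass to the operator norm, take expectation, invoke hypercontractivity (since the operator lies in the finite chaos of order $k$) to reduce to second moments, and finally use spatial stationarity (in the form analogous to \eqref{local-in-time-stationary}) to enforce $m_1+m_2=0$. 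This yields the pointwise bounds
\begin{equs}
\E\big[\|\delta^i_{u,r}\tau_{\cdot,t}\|^p_{\mcL(H^\alpha,\mcC^{\beta-\epsilon})}\big]&\lesssim\Big(\sum_{m,n}\langle m\rangle^{2\beta}\langle n\rangle^{-2\alpha}\E\big[\delta^i_{u,r}\hat\tau_{m,n}(\cdot,t)\delta^i_{u,r}\hat\tau_{-m,-n}(\cdot,t)\big]\Big)^{p/2}\,,\\
\E\big[\|\delta^o_{s,t}\tau_{r,\cdot}\|^p_{\mcL(H^\alpha,\mcC^{\beta-\epsilon})}\big]&\lesssim\Big(\sum_{m,n}\langle m\rangle^{2\beta}\langle n\rangle^{-2\alpha}\E\big[\delta^o_{s,t}\hat\tau_{m,n}(r,\cdot)\delta^o_{s,t}\hat\tau_{-m,-n}(r,\cdot)\big]\Big)^{p/2}\,.
\end{equs}

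The final step is a standard two-parameter Kolmogorov continuity argument on $\mathrm{Simp}_T$, performed dyadically so as to properly account for the blow-up. Partition $\mathrm{Simp}_T^2$ according to the dyadic scales of both $|t-s|+|r-u|$ (the Hölder denominator) and $|t-u|\wedge|s-r|$ (the distance to the singular diagonal), so that for each piece the fixed-time bound above can be multiplied by $(|t-u|\wedge|s-r|)^{\eta}(|t-s|+|r-u|)^{-\theta/2}$ and then summed. For the $\delta^i_{u,r}\tau_{\cdot,t}$ contribution, $|t-u|\wedge|s-r|$ is controlled by $|t-r|$ and $|t-s|+|r-u|$ is controlled by $|r-u|$; this explains the weights $|t-r|^{\eta p}|r-u|^{-\theta p/2}$ appearing in the first supremum in \eqref{eq:nonlocalhomogenous}. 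The $\delta^o_{s,t}\tau_{r,\cdot}$ piece gives the second supremum symmetrically. Losing an arbitrarily small $\kappa>0$ in the chaining, and using the constraint $\eta=1-(\theta-\kappa)/2$ to make the resulting geometric series converge, delivers the claimed bound. The main obstacle is precisely making the interplay between the blow-up $\eta$ and the Hölder exponent $\theta-\kappa$ compatible with the dyadic summation; the relation $\eta=1-(\theta-\kappa)/2<1$ is what keeps the sums finite.
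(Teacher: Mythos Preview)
Your approach is essentially the same as the paper's: both decompose the two-parameter increment as $\delta^i_{u,r}\tau_{\cdot,t}+\delta^o_{s,t}\tau_{r,\cdot}$, apply the Besov--duality plus hypercontractivity argument of Lemma~\ref{lem:randomopsob} to each piece, and combine with a Kolmogorov-type estimate (the paper applies the weighted Kolmogorov first and only then splits the increment and simplifies the weights, but this reordering is cosmetic). Two minor remarks: you cannot assume both $u\leqslant r$ and $s\leqslant t$ from a single application of the symmetry $(u,t)\leftrightarrow(r,s)$ --- the paper assumes only $t\geqslant s$, uses $|t-u|\wedge|s-r|\leqslant|t-(u\vee r)|$ (valid since $r\leqslant s\leqslant t$ gives $|s-r|\leqslant|t-r|$), and then exploits the $u\leftrightarrow r$ symmetry of the resulting expression; and the specific relation $\eta=1-(\theta-\kappa)/2$ plays no role in the proof --- convergence of the chaining rests solely on the loss of $\kappa>0$, and the lemma holds for any fixed $\eta\in(0,1)$ (the specific value comes from the application in Lemma~\ref{lem:non-local-bound}).
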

\begin{proof}
To lighten notation we just write $|\bigcdot|$ for $\| \bigcdot \|_{\mcL(H^\alpha,\mcC^{\beta-\epsilon})}$. 
Then for any random function $f:\mathrm{Simp}_T\rightarrow \mcL(H^\alpha,\mcC^{\beta-\epsilon})$ in the $k$-th inhomogenous Gaussian chaos, we again have by a classical Kolmogorov estimate and hypercontractive estimates,
\begin{equs}\label{eq:two-time-estimate}
    \E[\| f\|_{\dot{C}_{\mathrm{Simp}_{T}}^{\theta-\kappa,\eta}}^p]&\lesssim
\sup_{((u,t),(r,s))\in \mathrm{Simp}^2_{T}}\big(|t-u|\wedge|s-r|\big)^{\eta p}\big(|t-s|+|r-u|\big)^{-\frac{\theta p}{2}}\E[|f(u,t)-f(r,s)|^2]^{p/2}\,.\textcolor{white}{blablabalblabl}
    \end{equs}
By symmetry, we can take $t\geqslant s$ in the supremum. 
Using $f(u,t)-f(r,s)=\delta^i_{r,u}f(\cdot,t)+\delta^o_{s,t}f(r,\cdot)$ we can bound the r.h.s. of \eqref{eq:two-time-estimate} (up to an inessential constant) by
\begin{equs}
    \sup_{\substack{((u,t),(r,s))\in \mathrm{Simp}^2_{T}\\t\geqslant s}}
    & \big(|t-u|\wedge|s-r|\big)^{\eta p}|r-u|^{-\frac{\theta p}{2}}\E[|\delta^i_{r,u}f(\cdot,t)|^2]^{p/2} \\
    {}&+  \sup_{\substack{((u,t),(r,s))\in \mathrm{Simp}^2_{T}\\t\geqslant s}}\big(|t-u|\wedge|s-r|\big)^{\eta p}|t-s|^{-\frac{\theta p}{2}}\E[|\delta^o_{s,t}f(r,\cdot)|^2]^{p/2}\,.
\end{equs}
For the first term on the r.h.s. above, observe that $t\geqslant s$ with $s\geqslant r$ yields $|t-u|\wedge|s-r|\leqslant|t-u|\wedge|t-r|=|t-u\vee r|$.  
For the second term we observe that we have $|t-u|\wedge|s-r|\leqslant|s-r|$. 
Together, this allows to estimate the r.h.s of \eqref{eq:two-time-estimate} by 
\begin{equs}
  \lesssim  & \sup_{\substack{((u,t),(r,s))\in \mathrm{Simp}^2_{T}\\t\geqslant s}}|t-u\vee r|^{\eta p}|r-u|^{-\frac{\theta p}{2}}\E[|\delta^i_{r,u}f(\cdot,t)|^2]^{p/2} \\
 {}&\quad +  \sup_{\substack{((u,t),(r,s))\in \mathrm{Simp}^2_{T}\\t\geqslant s}}|s-r|^{\eta p}|t-s|^{-\frac{\theta p}{2}}\E[|\delta^o_{s,t}f(r,\cdot)|^2]^{p/2}\,.
\end{equs}
The desired estimate then follows by controlling expectations of $\| \bigcdot \|^2_{\mcL(H^\alpha,\mcC^{\beta-\epsilon})}$ by using \eqref{eq:FixedTimeKolmoRO} with $p=2$ to replace them by sums over Fourier modes. 
\end{proof}
\begin{remark}
While the above Lemma gives control on the homogenous bound, for any fixed $({u},{t}) \in \mathrm{Simp}_{T}$ we have
\[
\| f\|_{C_{\mathrm{Simp}_{T}}^{\theta-\kappa,\eta}} 
\lesssim  |t-u|^{\eta} |f({u},{t})| + 
\| f\|_{\dot{C}_{\mathrm{Simp}_{T}}^{\theta-\kappa,\eta}}\;,
\]
where putting in the factor $|t-u|^{\eta} |f({u},{t})|$  makes the implicit constant above independent of the choice of $({u},{t})$. Therefore, we have
\begin{equs}
      \E[\| \tau_{\bigcdot,\bigcdot}\|_{{C}_{\mathrm{Simp}_{T}}^{\theta-\kappa,\eta}}^p]
    & \lesssim |t-u|^{\eta p }  \bigg(    \sum_{m\in\Z^{d},n\in\Z^{d}}\angp{m}{2\beta}\angp{n}{-2\alpha}\E[\hat\tau_{m,n}(u,t)\hat\tau_{-m,-n}(u,t)]\bigg)^{\frac{p}{2}}+\text{r.h.s.}\;\text{of}\;\eqref{eq:nonlocalhomogenous}\,.
\end{equs}
Controlling the first term of the r.h.s. of the above equation uniformly in $(u,t)\in\mathrm{Simp}_T$ turns out to be similar to controlling the whole r.h.s. of \eqref{eq:nonlocalhomogenous}. We will focus on describing how to estimate this quantity, and point out how the second term above can be obtained by extracting small factors from increments of heat kernels -- see Remark~\ref{rem:continuityintime}.
\end{remark}

\begin{remark}
Those who are less familiar with path-wise approaches to stochastic analysis may be surprised we need to control H\"{o}lder regularity in $u$, as in the end we only need uniform estimates in $u$. 
However, the only way we have to control an expectation of a supremum over $u$ is by proving regularity in $u$. 
\end{remark}

\subsection{Regularity estimates for random field \TitleEquation{\X}{X} and operators \TitleEquation{\X^{(k)}}{X^{(k)}}}
As an instructive warm-up, we look at regularity estimates for $\X$ both as a random field and a random operator.
Using Lemma~\ref{lem:stoobsobolev}, we see that the key ingredient for \eqref{eq:regX} is estimating, for $\beta = |\X|$,
\begin{equ}\label{eq:Xrandomfieldest}
\sum_{m\in\Z^d}  \angp{m}{2\beta} \E[\delta_{s,t}\hat{\X}_m\delta_{s,t}\hat{\X}_{-m}] \lesssim\sum_{m\in\Z^d}\angp{m}{2\beta-2+2\theta}|t-s|^{\theta}\,,
\end{equ}
where we used \eqref{eq:covariance} to obtain
\begin{equ}\label{eq:covariance2}
\E[\delta_{s,t}\hat{\X}_m\delta_{s,t}\hat{\X}_{-m}]=\frac{2}{\angp{m}{2}}\big(1-e^{-|t-s|\angp{m}{2}}\big)\lesssim \angp{m}{-2+2\theta}|t-s|^\theta \,,
\end{equ}
for $\theta\in(0,1)$. 
The RHS of \eqref{eq:Xrandomfieldest} is convergent if and only if $\beta<-\frac{d-2}{2}-\theta$, and so we obtain the desired statement by taking $\theta $ small enough. 
 
For obtaining \eqref{eq:X_rand_op} for the random operator $\X^{(k)}$, Lemma~\ref{lem:randomopsob} tells us the key estimate is 
\begin{equs}\label{eq:lollipopproof}
    \sum_{n\in\Z^{d-k},m\in\Z^k}&\angp{n}{-2\alpha}\angp{m}{2\beta}\E[\delta_{s,t}\hat{\X}_{m,n}\delta_{s,t}\hat{\X}_{-m,-n}]\\&\lesssim\sum_{n\in\Z^{d-k},m\in\Z^k}\angp{n}{-2\alpha}\angp{m}{2\beta}\angp{(m,n)}{-2+2\theta}|t-s|^{\theta}\nonumber\;.   
\end{equs}
The situation here is more delicate, with the RHS of \eqref{eq:lollipopproof} being convergent if and only if the sum $\sum_{p\in\Z^d}\angp{p}{-2\alpha+2\beta-2+2\theta}$ is convergent along with both of the sums $\sum_{n\in\Z^{d-k}}\angp{n}{-2\alpha-2+2\theta}$ and $\sum_{m\in\Z^{k}}\angp{m}{2\beta-2+2\theta}$ are convergent. 
This gives us three conditions: 
\begin{equs}\label{eq:lollipopproofBis}
\beta<\alpha-\frac{d-2}{2}\,, \enskip \beta<-\frac{k-2}{2}\; \enskip \text{and} \enskip \alpha>\frac{d-k-2}{2}\;.    
\end{equs}
More generally, the regularity we can associate to a random field will be determined by how large we can take $\beta$ while keeping \eqref{eq:amplitude}  convergent. 
Similarly, for a general random operator $\tau$, we will aim to take $\alpha$ small and $\beta$ large while keeping \eqref{eq:amplitude} convergent. The exponents $\alpha$ and $\beta$ are, respectively, called the \textit{inner} and \textit{outer regularities} of $\tau$.
For a random field $\tau$, we also call the supremum over the values of $\beta$ that we can take the outer regularity of $\tau$.

Finally, since all the $\tau$ are polynomials in the Gaussian noise $\xi$, we can use Wick's rule to compute $C^{u,t}_{\tau}(m,n)$ as sum of various terms indexed by all the possible contractions of instances of $\xi$. 
To each of these contractions will correspond a (possibly renormalized) amplitude. 
In the next step, we show that these amplitudes can be indexed by a certain class of graphs, that we define hereafter. 

\begin{remark}\label{rem:continuityintime}    
Regarding H\"{o}lder estimates in time, recall that the required estimate for the covariance of $\delta_{s,t}\X$ is straightforward modification of the estimate of $\X(t)$ for fixed $t$ in \eqref{eq:covariance2}. 
For computing \eqref{eq:amplitude2} for more complicated stochastic estimates, we can always reorganize the terms by means of some telescoping sums so that we introduce a single time difference of free fields that gives us a good factor of $|t-s|$, see \cite{mourrat2017construction}. 

Moreover, the same technique can be applied to \eqref{eq:amplitude3} to prove Hölder-continuity in the inner time $u$ of the non-local in time random operator: each contribution can be rewritten as a sum over many contribution depending on $u$ and $r$ only through a single term $$\frac{1}{\angp{m}{2}}\big(1-e^{-|r-u|\angp{m}{2}}\big)\lesssim\angp{m}{-2+2\theta}|r-u|^\theta$$ giving us the needed small factor $|r-u|^\theta$.
Therefore, in the sequel, we will focus on bounding \eqref{eq:amplitude} with the understanding that the required estimate on \eqref{eq:amplitude2} and \eqref{eq:amplitude3} can be dealt with similarly. 
Note that this remark is only relevant for the LD objects. 
\end{remark}

\section{Diagrammatic methods and estimates on larger objects }\label{sec:diagram}
In Section~\ref{subsec:graphical_notation} we began to introduce some diagrammatic notation, we now introduce a more careful diagrammatic framework for obtaining the probabilistic estimates, that is covariance bounds, for the random fields and random operators that enter into our analysis.

\subsection{Covariances of the stochastic objects}\label{subsec:covar}
We introduce notation for covariances that will be used to estimate the r.h.s. of the Kolmogorov estimates, and which in this section we will estimate using sums over graphs.  

From now on, we fix $\theta>0$ sufficiently small and $\eta<1$ sufficiently close to $1$. 
\begin{definition}  
We introduce a short-hand notation for the covariances of the stochastic objects. For any random field $\tau$, we let
\begin{equs}
    C^{u,t}_{\tau}(m,n)&\eqdef\E[\hat{\tau}_{m}(t)\hat{\tau}_{-m}(t)]\delta_{n,0}\,,\\
    \tilde C^{r,s,t}_{\tau}(m,n)&\eqdef \E[\delta_{s,t}\hat{\tau}_{m}\delta_{s,t}\hat{\tau}_{-m}]\delta_{n,0}\,,
\end{equs}
for all random operators $\tau\neq\Sym$, we let
\begin{equs}
    C^{u,t}_{\tau}(m,n)&\eqdef \E[\hat{\tau}_{m,n}(t)\hat{\tau}_{-m,-n}(t)]\,,\\
   \tilde C^{r,s,t}_{\tau}(m,n)&\eqdef \E[\delta_{s,t}\hat{\tau}_{m,n}\delta_{s,t}\hat{\tau}_{-m,-n}]\,,
\end{equs}
and for $\tau=\Sym$ we let
\begin{equs}
  C^{u,t}_{\tau}(m,n)&\eqdef \E[\hat{\tau}_{m,n}(u,t)\hat{\tau}_{-m,-n}(u,t)]\,,\\
  \tilde C_\tau^{r,s,t}(m,n)&\eqdef\E[\delta^o_{s,t}\hat{\tau}_{m,n}(r,\cdot)\delta^o_{s,t}\hat{\tau}_{-m,-n}(r,\cdot)]\,,\\
 \hat C^{u,r,t}_{\tau}(m,n)&\eqdef \E[\delta^i_{u,r}\hat\tau_{m,n}(\cdot,t)\delta^i_{u,r}\hat\tau_{-m,-n}(\cdot,t)]\,.  
\end{equs}
\end{definition}
Using Lemmas~\ref{lem:stoobsobolev},\ref{lem:randomopsob} and \ref{lem:randomopsob2}, the needed space-time regularity estimates on LD stochastic objects reduces to estimating 
\begin{equs}\label{eq:amplitude2}
|s-r|^{\1\{\tau=\Sym\}2\eta}{|t-s|}^{-\theta}\sum_{m\in\Z^{d_2},n\in\Z^{d_1}}\angp{m}{2\beta}\angp{n}{-2\alpha}\tilde C^{r,s,t}_{\tau}(m,n)
\end{equs}
uniformly in the cut-off $N$ and in $(r,s,t)$, while uniform in time (for the LD objects) and in the cut-off (for both LD and BG objects) control of the space regularity reduces to estimating
\begin{equs}\label{eq:amplitude}
{|t-u|}^{\1\{\tau=\Sym\}2\eta}\sum_{m\in\Z^{d_2},n\in\Z^{d_1}}\angp{m}{2\beta}\angp{n}{-2\alpha} C^{u,t}_{\tau}(m,n)
\end{equs}
uniformly in the cut-off $N$ for the LD objects and in $(u,t)$. Finally, to handle $\tau=\Sym$, it is necessary to control the Hölder-continuity in the inner time $u$, which corresponds to estimating
\begin{equs}\label{eq:amplitude3}
   |t-r|^{2{\eta }}   |r-u|^{-\theta }   \sum_{m\in\Z^{d},n\in\Z^{d}}\angp{m}{2\beta}\angp{n}{-2\alpha} \hat C^{u,r,t}_{\tau}(m,n)
\end{equs}
uniformly in the cut-off $N$ and in $(u,r,t)$.

Note that with the above notation, when $\tau$ is a random field, the term $\delta_{n,0}$ makes that the sum over $n\in \Z^{d_1}$ vanishes, so that one can think of \eqref{eq:amplitude2} and \eqref{eq:amplitude} with $d_1=0$. 

\subsection{Stranded graphs}\label{subsubsec:strandedgraphs}
The first class of diagrams we will work with are called stranded graphs.  
\begin{definition}\label{def:strandedgraphs}
A \textit{stranded graph} $G$ is given by data $G=(V,L,\mfc,\mfp)$ of the following form. 

The data $(V,L)$ specifies a quartic graph, namely where each link $l \in L $ is incident to one or two vertices and each vertex $v \in N$ is incident to four links, in both cases counting multiplicity.  
We allow a link $l$ to be incident to a vertex $v$ with multiplicity two (which we still call being incident to two vertices), and we call such a link a tadpole. 
In particular, if we say $l$ is a tadpole at $v$, then $v$ is also incident to $l$ with multiplicity two. 
If any link $l$ is only incident to one vertex, then we call it an \textit{external link}.
Links incident to two vertices (including tadpoles) are called \textit{internal links}. 
We let $L^{int}(G)$ and $L^{ext}(G)$ be the internal link and external link set of $G$. 
A vertex which is only incident to internal links is called an \textit{internal vertex} and we denote by $V^{int}(G)$ the internal vertex set of $G$. 
If $|L^{ext}(G)|=0$ then $G$ is called a \textit{closed stranded graph} while if $|L^{ext}(G)| > 0$ we call $G$ an \textit{open stranded graph}.

The data $\mfc$ is a ``coloring map'' $\mfc: N \rightarrow [d]$ that indicates which term in $\sum_{c=1}^d\cI^c(\phi)$ is being associated to the vertex $v$. As an example, if we write $[d] = \{\textcolor{darkgreen}{1},\textcolor{brown}{2},\textcolor{red}{3}, \textcolor{blue}{4}\}$, then if $\mfc(v) = \textcolor{darkgreen}{1}$, $v$ would be associated to the following picture.   
\begin{figure}[H]
\centering
   \tikzsetnextfilename{fig6}
   \begin{tikzpicture}
\draw[darkgreen] (0,-1)--(0,0);
\draw[darkgreen] (1,-1)--(1,0);
\draw[red] (0,-1)--(1,-1);
\draw[red] (0,0)--(1,0);
\draw[brown] (0,-1)..controls(.3,-.8)and(.7,-.8)..(1,-1);
\draw[brown] (0,0)..controls(.3,-.2)and(.7,-.2)..(1,0);
\draw[blue] (0,0)..controls(.3,.2)and(.7,.2)..(1,0);
\draw[blue] (0,-1)..controls(.3,-1.2)and(.7,-1.2)..(1,-1);
\draw[black,thick,densely dotted] (1,0)--(1.3,0.3);
\draw[black,thick,densely dotted] (0,0)--(-0.3,0.3);
\draw[black,thick,densely dotted] (0,-1)--(-.3,-1.3);
\draw[black,thick,densely dotted] (1,-1)--(1.3,-1.3);
\node at(-.5,.5){$l_{1}$};
\node at(1.5,0.5){$l_{2}$};
\node at(1.5,-1.5){$l_{3}$};
\node at(-.5,-1.5){$l_{4}$};

\end{tikzpicture}
\end{figure}

The colored edges above are not links in $L$, only the dashed lines $l_{1},l_{2},l_{3},l_{4}$ are. 
Note that $l_{1},\dots,l_{4}$ need not all be distinct if we have a tadpole at $v$.  
Note also that the four nodes appearing above are not four elements of the set $V$, but the entire collection of them plus the colored edges correspond to one vertex in $V$. 

The data $\mfp$ consists of associating, to each $v \in V$, an ordered pair of partitions of the multi-set of links incident to $v$, which we write $\mfp(v) = (\hat{\mfp}(v), \bar{\mfp}(v))$. 
The data $\mfp(v)$ tells us how to glue the links incident to $v$ to the non-local vertex, $\hat{\mfp}(v)$ is the pairing induced by pairing links that are incident to nodes of the non-local vertex connected by the edges colored $\mfc(v)$, while  $\bar{\mfp}(v)$ is the pairing induced by pairing links that are connected by the other $d-1$ edges. The data $\mfp(v)$ furthermore comes with the following compatibility condition:
we enforce $\hat{\mfp}(v) \not = \bar{\mfp}(v)$, unless $\hat{\mfp}(v)  = \bar{\mfp}(v) = \big\{ \{l_t,l_1\} , \{ l_t,l_2\} \big\}$ with $l_t,l_1,l_2\in L$, and possibly $l_1=l_2$.

For example, in the picture above one has
\begin{equs}\label{eq:defl_i}
 \hat{\mfp}(v) = \Big\{ \{l_1,l_4\} , \{ l_2,l_3\} \Big\} \text{ and } \bar{\mfp}(v)= \Big\{ \{l_1,l_2\}, \{l_3,l_4\} \Big\}\;.
\end{equs} 
In what follows, we denote by $\{ l_1(v),\dots,l_4(v) \}$ the links incident to $v$ such that \eqref{eq:defl_i} holds.
 
Finally a \textit{rooted} stranded graph is a stranded graph with either exactly one marked vertex or exactly one marked link.
 
 \end{definition}

Note that stranded graphs for the local theory are obtained simply by ignoring the data $\mfc$ and $\mfp$. 
We close the subsection by describing the notion of stranded subgraph.

\begin{definition}
A \textit{stranded subgraph} $G'$ of a closed stranded graph $G$ is defined by a subset $L^{int}(G')\subset L(G)$. Its vertex set $V(G')$ is given by the set of all vertices incident to the links in $L^{int}(G')$ while its external link set $L^{ext}(G')$ is given by the set of all the links in $L^{ext}(G)$ that are attached to the vertices in $V(G')$ but are not in $L^{int}(G')$. 
$G'$ inherits the data $\mfc$ and $\mfp$ of $G$, so that it is indeed a stranded graph.
\end{definition}

\subsubsection{Drawing stranded graphs as ribbon graphs}
Why we call the objects ``stranded graphs'' becomes more clear once we draw them by expanding links as $d$ parallel strands. 

In Feynman diagrams each link $l$ carries a Fourier variable $m_{l}  = (m_{l,i})_{i=1}^{d} \in \N^{d}$ and contributes a factor $\angp{m_l}{-2}$, the total contribution of a given diagram comes from summing over the Fourier variable for every link subject to the constraints imposed by the vertices on Fourier modes of the links incident to them. 

At the level of links, drawing a a stranded graph using strands involves expanding each link into $d$-different edges corresponding to the $d$-different components of  it Fourier mode (which we view as carrying different colors $c \in [d]$). 
\begin{figure}[H]
    \centering
    \tikzsetnextfilename{fig7}
    \begin{tikzpicture}
\draw[black,thick,densely dotted] (-3,0)--(-2,0);
\node at(-2.5,.5){$l$};
\node at (-1,.15) {$\longleftrightarrow$};
\draw[blue] (0,0)--(1,0);
\draw[red] (0,.1)--(1,.1);
\draw[brown] (0,.2)--(1,.2);
\draw[darkgreen] (0,.3)--(1,.3);
\node at(.5,.5){$l$};
\node at (5,.15) {$\longleftrightarrow$\quad$\sum_{m_l=(\textcolor{darkgreen}{m_{l,1}},\textcolor{brown}{m_{l,2}},\textcolor{red}{m_{l,3}},\textcolor{blue}{m_{l,4}})\in\Z^4}\angp{m_l}{-2}$};
\end{tikzpicture}
    \caption{A link of a stranded graph representing a Fourier mode $m$}
\end{figure}

The Fourier relations \eqref{eq:fourier_local_vertex} and \eqref{eq:fourier_nonlocal_vertex} determine how links interact at vertices. 
We now describe how we write vertices using strands.
For the local theory, the relation  \eqref{eq:fourier_local_vertex} just produces a $d$-fold reproduction the original vertex.
\begin{figure}[H]
    \centering
    \tikzsetnextfilename{fig8}
    \begin{tikzpicture}
\draw[densely dotted, thick](0,0)--(1,0);
\draw[densely dotted, thick](0.5,-.5)--(.5,.5);
\node at(-0.2,0){$l_1$};
\node at(.50,-0.7){$l_2$};
\node at(1.2,0){$l_3$};
\node at(0.5,0.7){$l_4$};
\node at(3,0){$\longleftrightarrow \delta( \sum_{i=1}^4m_{e_i})$};
\end{tikzpicture}
\qquad 
\tikzsetnextfilename{fig9}
\begin{tikzpicture}
\draw[darkgreen](-0.05,-1.45)--(.95,-1.45);
\draw[darkgreen](0.45,-1.95)--(.45,-.95);
\draw[brown](0,-1.5)--(1,-1.5);
\draw[brown](0.5,-2)--(.5,-1);
\draw[red](0.05,-1.55)--(1.05,-1.55);
\draw[red](0.55,-2.05)--(.55,-1.05);
\draw[blue](0.1,-1.6)--(1.1,-1.6);
\draw[blue](0.6,-2.1)--(.6,-1.1);
\node at(4,-1.5){$\longleftrightarrow \prod_{c \in\{\textcolor{darkgreen}{1},\textcolor{brown}{2},\textcolor{red}{3}, \textcolor{blue}{4} \}} \delta( \sum_{i=1}^4m_{l_i,c} )$};
\node at(0,-2.5){};
\end{tikzpicture}
    \caption{The local vertex, and its trivial stranded representation}     \label{fig:localvertex}
\end{figure}
For larger Feynman diagrams in the local theory, expanding both links and vertices using strands in the local theory again just duplicates them, see the example below.
\[
\tikzsetnextfilename{fig10}
\begin{tikzpicture}
    \draw[black](-1,1)..controls(-.7,-.2)and(.7,-.2)..(1,1);
    \draw[black](-1,0)..controls(-.7,1.2)and(.7,1.2)..(1,0);
    \node at(2,.5){$\longrightarrow$};
    \draw[blue](3,1)..controls(3.3,-.2)and(4.7,-.2)..(5,1);
    \draw[red](3.05,1.05)..controls(3.3,-0.15)and(4.7,-0.15)..(4.95,1.05);
    \draw[brown](3.1,1.1)..controls(3.3,-0.1)and(4.7,-0.1)..(4.9,1.1);
    \draw[darkgreen](3.15,1.15)..controls(3.3,-0.05)and(4.7,-0.05)..(4.85,1.15);
    \draw[blue](3,0)..controls(3.3,1.2)and(4.7,1.2)..(5,0);
    \draw[red](3.05,-.05)..controls(3.3,1.15)and(4.7,1.15)..(4.95,-.05);
    \draw[brown](3.1,-.1)..controls(3.3,1.1)and(4.7,1.1)..(4.9,-.1);
    \draw[darkgreen](3.15,-.15)..controls(3.3,1.05)and(4.7,1.05)..(4.85,-.15);     
\end{tikzpicture}
\]
For the tensor field theory, we have a different vertex for each color $c$. 
The following picture shows how the relation \eqref{eq:fourier_nonlocal_vertex} can be pictured in terms of strands for $c = \textcolor{darkgreen}{1}$. 

\begin{figure}[H]\label{fig:nodestrandedgraph}
    \centering
\tikzsetnextfilename{fig11}
\begin{tikzpicture}
\draw[darkgreen] (0,1)..controls(0.1,0.8)and(0.9,.8)..(1,1);
\draw[brown] (-0.05,0.05)..controls(0.1,.1)and(0.1,.9)..(-.05,.95);
\draw[red] (-0.1,0.1)..controls(0.05,.1)and(0.05,.9)..(-.1,.9);
\draw[blue] (-0.15,0.15)..controls(0.0,.1)and(0.0,.9)..(-.15,.85);
\draw[darkgreen] (0,0)..controls(0.1,.2)and(0.9,.2)..(1,0);
\draw[brown] (1.05,0.05)..controls(0.9,.1)and(0.9,.9)..(1.05,.95);
\draw[red] (1.1,0.1)..controls(.95,.1)and(0.95,.9)..(1.1,.9);
\draw[blue] (1.15,0.15)..controls(1.0,.1)and(1.0,.9)..(1.15,.85);
\node at(-.3,1.2){$l_1$};
\node at(1.3,1.2){$l_2$};
\node at(-.3,-.2){$l_4$};
\node at(1.3,-.2){$l_3$};
\node  at(5.0,.5){$\longleftrightarrow$\quad$\delta^{\textcolor{darkgreen}{1}}_v(m_{l_1},m_{l_2},m_{l_3},m_{l_4})$};
\end{tikzpicture}
    \caption{The \textcolor{darkgreen}{green} vertex representing the \textcolor{darkgreen}{green} interaction $\cI^{\textcolor{darkgreen}{1}}$}
\end{figure}
where for $c \in [d]$ and Fourier modes $m,n,p,q \in \Z^{d}$, 
\begin{equs}    \delta^c_v(m,n,p,q)\eqdef\delta_{m_{c},-n_{c}}
\delta_{p_{c},-q_{c}}
\delta_{m_{\hc},-q_{\hc}}
\delta_{n_{\hc},-p_{\hc}}\,.
\end{equs}
An example of writing both links and vertices with strands in a larger graph is
\[
\tikzsetnextfilename{fig12}
\begin{tikzpicture}
\draw[black](-4,1)..controls(-3.7,-.2)and(-2.3,-.2)..(-2,1);
\draw[black](-4,0)..controls(-3.7,1.2)and(-2.3,1.2)..(-2,0);
\node at(-1,.5){$\longrightarrow$};
\draw[brown] (-0.05,0.05)..controls(0.1,.1)and(0.1,.9)..(-.05,.95);
\draw[red] (-0.1,0.1)..controls(0.05,.1)and(0.05,.9)..(-.1,.9);
\draw[blue] (-0.15,0.15)..controls(0.0,.1)and(0.0,.9)..(-.15,.85);
\draw[brown] (1.05,0.05)..controls(0.9,.1)and(0.9,.9)..(1.05,.95);
\draw[red] (1.1,0.1)..controls(.95,.1)and(0.95,.9)..(1.1,.9);
\draw[blue] (1.15,0.15)..controls(1.0,.1)and(1.0,.9)..(1.15,.85);
\draw[brown] (1.95,0.05)..controls(2.1,.1)and(2.1,.9)..(1.95,.95);
\draw[red] (1.9,0.1)..controls(2.05,.1)and(2.05,.9)..(1.9,.9);
\draw[blue] (1.85,0.15)..controls(2.0,.1)and(2.0,.9)..(1.85,.85);
\draw[brown] (3.05,0.05)..controls(2.9,.1)and(2.9,.9)..(3.05,.95);
\draw[red] (3.1,0.1)..controls(2.95,.1)and(2.95,.9)..(3.1,.9);
\draw[blue] (3.15,0.15)..controls(3.0,.1)and(3.0,.9)..(3.15,.85);
\draw[blue](1.14,0.855)..controls(1.4,0.9)and(1.6,0.9)..(1.86,0.855);
\draw[blue](1.14,0.145)..controls(1.4,0.1)and(1.6,0.1)..(1.86,0.145);
\draw[red](1.1,0.9)..controls(1.4,.95)and(1.6,.95)..(1.9,0.9);
\draw[red](1.1,0.1)..controls(1.4,.05)and(1.6,.05)..(1.9,0.1);
\draw[brown](1.05,0.95)..controls(1.4,1.)and(1.6,1.)..(1.95,0.95);
\draw[brown](1.05,0.05)..controls(1.4,0)and(1.6,0.)..(1.95,0.05);
\draw[darkgreen](1,.98)..controls(1.1,1.06)and(1.9,1.06)..(2,.98);
\draw[darkgreen](0,.98)..controls(.1,.9)and(.9,.9)..(1,.98);
\draw[darkgreen](2,.98)..controls(2.1,.9)and(2.9,.9)..(3,.98);
\draw[darkgreen](1,.02)..controls(1.1,-.06)and(1.9,-.06)..(2,.02);
\draw[darkgreen](0,.02)..controls(.1,.1)and(.9,.1)..(1,.02);
\draw[darkgreen](2,.02)..controls(2.1,.1)and(2.9,.1)..(3,.02);
\node at (-4.25,.75){$l_{e,1}$};
\node at (-1.75,.75){$l_{e,2}$};
\node at (-1.75,.25){$l_{e,4}$};
\node at (-4.25,.25){$l_{e,3}$};
\node at (-3,1.2){$l_{i,1}$};
\node at (-3,-.2){$l_{i,2}$};
\node at (-3.4,.5){$v_1$};
\node at (-2.6,.5){$v_2$};
\node at (.5,.5){$v_1$};
\node at (2.5,.5){$v_2$};
\node at (1.5,1.3){$l_{i,1}$};
\node at (1.5,-.3){$l_{i,2}$};
\node at (-.15,1.15){$l_{e,1}$};
\node at (3.15,1.15){$l_{e,2}$};
\node at (3.15,-.15){$l_{e,4}$};
\node at (-.15,-.15){$l_{e,3}$};
\end{tikzpicture}
\]
Going from the left to the right representation uses the data $(\mfc,\mfp)$ given by $\mfc(v_a)=\textcolor{darkgreen}{1}$, $\hat{\mfp}(v_a)=\{\{l_{i,1},l_{e,a}\},\{l_{i,2},l_{e,a+2}\}\}$ and $\bar{\mfp}(v_a)=\{\{l_{e,a},l_{e,a+2}\},\{l_{i,1},l_{i,2}\}\}$ for every $a\in\{1,2\}$. The graph above has three internal strands and eight external strands.

When we draw stranded graphs in their stranded representation, we see that a stranded graph can be associated to to a collection of graphs each of which is a single color. 
Individual colored strands pass through at through at least one link and one vertex, possibly more. 

If a strand runs through a least one external link, it is called an \textit{external strand}. Otherwise, a strand going through only internal links is called an \textit{internal strand}. We let $S(G)$ and $S^{int}(G)$ be the strand set and internal strand set of $G$.


 
By virtue of the delta functions imposed at vertices, each strand $s\in S(G)$, in addition to carrying a single color, is also associated to a one dimensional Fourier mode which we denote $m_s\in\Z$. 


\subsection{Bounding renormalized amplitudes with stranded graphs}\label{sec:6_3}
Let us go back to the evaluation of \eqref{eq:amplitude}. In the sequel, we have to partition the stochastic objects we are dealing with into two categories. We call stochastic objects of \textit{type I} all the random fields listed in Definition~\ref{def:randomfields} along with $\XdotX$ and $\Sym$, and stochastic objects of \textit{type II} $\Xtwom$, $\Xtwonm$ and $\pXtwom^{\sto}$ (which are defined in \eqref{eq:stoObUse}). 
Note that dealing with these objects is sufficient to prove Lemmas~\ref{lem:stoob} and \ref{lem:ranop}.

To a stochastic object $\tau$ of type I that contains $k$ instances of the noise we associate a collection of closed rooted stranded graphs $\G_\tau$ and also define a map $A^{u,t}_{T,N} : \G_{\tau} \rightarrow \R$ 
such that 
\begin{equs}\label{eq:covariance_representation}
{|t-u|}^{\1\{\tau=\Sym\}2\eta}\sum_{m\in\Z^{d_2},n\in\Z^{d_1}}\angp{m}{2\beta}\angp{n}{-2\alpha}C^{u,t}_{\tau}(m,n) = \sum_{G\in\G_\tau}A^{u,t}_{T,N}(G)\,.
\end{equs}
Each $G \in \G_{\tau}$ is determined by choosing one out of all the pairings of the $2k$ noises coming from Wick's rule and one of the colorings of all the vertices in the two copies of $\tau$. 
Graphs in $\G_\tau$ have links, vertices. Moreover, one or two of their links also carry a special labeling reflecting the terms $\angp{n}{-2\alpha}$ and $\angp{m}{2\beta}$ that probe the inner and outer regularities. We denote them as $l_\alpha$ and $l_\beta$, and $l_\beta$ is chosen to be the root of $G$. Note that $l_\alpha\in L(G)$ if and only if $\tau\in\{\XdotX,\Sym\}$.

If $\tau$ is of type II, then there is a slight subtlety related with the definition of $l_\alpha$ and $l_\beta$. 
Indeed, gluing two copies of $\tau$ and performing all the possible pairings of the $2k$ noises and choosing all the possible colorings of the vertices again produces terms that are indexed by a collection of closed rooted stranded graphs $\G_\tau$. 
However, for $G_s\in\G_\tau$, $l_\alpha$ and $l_\beta$ do not belong to $L(G_s)$ but are defined as follows. 
Since $G_s$ is representing the gluing of two copies of $\tau$, and $\tau$ contains one half-vertex, the gluing produces a new vertex $r$ of color $c\in[d]$, which is chosen to be the root of $G_s$. If $\tau\in\{\Xtwom,\pXtwom^{\sto}\}$, then $r$ is crossed by two strands $s_\alpha$ and $s_\beta$ of color $c$, and we have $\angp{m}{2\beta}=\angp{m_{s_\beta}}{2\beta}$ and $\angp{n}{-2\alpha}=\angp{m_{s_\alpha}}{-2\alpha}$, so that we define $l_\alpha\eqdef s_\alpha\cap r$, $l_\beta\eqdef s_\beta\cap r$, and we have
$m_{l_\alpha}=m_{s_\alpha}\in\Z$ and $m_{l_\beta}=m_{s_\beta}\in\Z$. The situation is similar if $\tau=\Xtwonm$, but now $r$ is crossed by two beams of $d-1$ strands of all the colors but $c$, that we denote $f_\alpha\eqdef(s_{\alpha,i})_{i\in[d]\setminus\{c\}}$ and $f_\beta\eqdef(s_{\beta,i})_{i\in[d]\setminus\{c\}}$. 
We also denote $m_{f_\alpha}\eqdef(m_{s_{\alpha,i}})_{i\in[d]\setminus\{c\}}$ and $m_{f_\beta}\eqdef(m_{s_{\beta,i}})_{i\in[d]\setminus\{c\}}$, that are such that $\angp{m}{2\beta}=\angp{m_{f_\beta}}{2\beta}$ and $\angp{n}{-2\alpha}=\angp{m_{f_\alpha}}{-2\alpha}$, and again we define $l_\alpha\eqdef f_\alpha\cap r$, $l_\beta\eqdef f_\beta\cap r$, so that $m_{l_\alpha}=m_{f_\alpha}\in\Z^{d-1}$ and $m_{l_\beta}=m_{f_\beta}\in\Z^{d-1}$. 
This definition implies that we do not really see the root $r$ as a vertex, but rather as two parallel links gluing the two copies of $\tau$. From now on, we thus identify $G_s$ with the graph $G$ which is obtained from $G_s$ removing the two times $1$ or $d-1$ edges that compose $l_\alpha$ and $l_\beta$ and replacing them with $l_\alpha$ and $l_\beta$. 
By definition, $L(G)\eqdef L(G_s)\sqcup\{l_\alpha\}\sqcup\{l_\beta\}$. See Figure~\ref{fig:gluing} for an illustration of the definitions of $l_\alpha$ and $l_\beta$ in the tensor graph representation. Going from $G_s$ to 
$G$ then simply involves seeing $l_\alpha$ and $l_\beta$ as two dotted lines instead of $1$ or $d-1$ parallel edges. 
\begin{figure}[H]
    \centering  
\tikzsetnextfilename{fig13} 
\begin{tikzpicture}
\draw[gray] (0,0)--(0,1);
\draw[gray] (1,0)--(1,1);
\draw[gray] (0,1)..controls(.3,1.2)and(.7,1.2)..(1,1);
\draw[gray] (0,1)..controls(.3,.8)and(.7,.8)..(1,1);
\draw[gray] (0,0)..controls(.3,.2)and(.7,.2)..(1,0);
\draw[gray] (0,0)..controls(.3,-.2)and(.7,-.2)..(1,0);
\draw[black,thick,densely dotted] (0,0)--(-.3,-.3) node[darkgreen,circle, fill=black, draw, solid, inner sep=0pt, minimum size=2pt] {};
\draw[black,thick,densely dotted] (0,1)--(-.3,1.3) node[darkgreen,circle, fill, draw, solid, inner sep=0pt, minimum size=2pt] {};
\draw[black,thick,densely dotted] (1,1)--(1.3,1.3) node[blue,circle, fill, draw, solid, inner sep=0pt, minimum size=2pt] {};
\draw[black,thick,densely dotted] (1,0)--(1.3,-.3) node[blue,circle, fill, draw, solid, inner sep=0pt, minimum size=2pt] {};
\node[black] at (-.2,.5){$l_\alpha$};
\node[black] at (1.2,.5){$l_\beta$};
\draw[gray] (3,0)--(3,1);
\draw[gray] (4,0)--(4,1);
\draw[gray] (3,1)..controls(3.3,1.2)and(3.7,1.2)..(4,1);
\draw[gray] (3,1)..controls(3.3,.8)and(3.7,.8)..(4,1);
\draw[gray] (3,0)..controls(3.3,.2)and(3.7,.2)..(4,0);
\draw[gray] (3,0)..controls(3.3,-.2)and(3.7,-.2)..(4,0);
\draw[black,thick,densely dotted] (3,0)--(2.7,-.3) node[red,circle, fill=black, draw, solid, inner sep=0pt, minimum size=2pt] {};
\draw[black,thick,densely dotted] (3,1)--(2.7,1.3) node[darkgreen,circle, fill, draw, solid, inner sep=0pt, minimum size=2pt] {};
\draw[black,thick,densely dotted] (4,1)--(4.3,1.3) node[darkgreen,circle, fill, draw, solid, inner sep=0pt, minimum size=2pt] {};
\draw[black,thick,densely dotted] (4,0)--(4.3,-.3) node[red,circle, fill, draw, solid, inner sep=0pt, minimum size=2pt] {};
\node[black] at (3.5,1.35){$l_\alpha$};
\node[black] at (3.5,-.35){$l_\beta$};
\end{tikzpicture}
\caption{The gluing of two copies of $\cdotXXloc=\Xtwom$ and of $\cXXdotloc=\Xtwonm$, with the positions of $l_\alpha$ and $l_\beta$}
\label{fig:gluing}
\end{figure}

\subsubsection{Renormalizing amplitudes}
Moreover, the formula for $A^{u,t}_{T,N}(G)$ must also take into consideration our renormalization coming from divergent subgraphs.
For every $G\in\G_\tau$, we thus define the (possibly empty) sets $\mfM^{1,c}(G)$ (respectively $\mfM^{2,c,c'}(G)$) that contain all the subgraphs of $G$ of the form $\mfM^{1,c}$ for $c\in[d]$ (respectively $\mfM^{2,c,c'}$ for $c\neq c'$). 
We write $\mfM^1(G)\eqdef\bigcup_{c=1}^d\mfM^{1,c}(G)$, $\mfM^2(G)\eqdef\bigcup_{c\neq c'}\mfM^{2,c,c'}(G)$ and $\mfM(G)\eqdef\mfM^1(G)\cup\mfM^2(G)$. 

We distinguish the links and vertices of $G$ that do not belong to some $M\in\mfM(G)$, defining $\tV(G)\eqdef V(G)\setminus\bigcup_{M\in\mfM^2}V^{int}(M)$ (this definition takes into account the fact that graphs in $\mfM^1$ do not have internal vertices) and $\tL(G)\eqdef L(G)\setminus\bigcup_{M\in\mfM(G)}L^{int}(M)$. 

Graphs $M\in\mfM^1$ only have one internal link $l_i=l_i(M)$, and we define $m_{M,i}\eqdef m_{l_i}$ the internal Fourier mode of $M$.
They have two external links $l_{e,1}=l_{e,1}(M)$ and $l_{e,2}=l_{e,2}(M)$ that carry the same Fourier mode, and we define $m_{M,e}\eqdef m_{l_{e,1}}=m_{l_{e,2}}$ the external Fourier mode of $M$. 
With this notation, if $M$ is color $c$ it carries the renormalized amplitude $\mfR^{1,c}(m_{M})$ defined in \eqref{eq:R1}. 

Graphs $M\in \mfM^{2}(G)$ have three internal links, among which two are attached to an external node of $M$ (they are thus themselves the two external links of some graph $M_1(M)\in\mfM^1(G)$). 
These two links are called \textit{superficial}, and we denote $l_{s,1}=l_{s,1}(M)$ and $l_{s,2}=l_{s,2}(M)$.
We call the remaining internal link of $M$ the \textit{deep link}, and it is denoted $l_d=l_d(M)$.

The structure of $M$ imposes that the two superficial links of $M$ carry the same Fourier mode which we denote $m_{M,s}\eqdef m_{l_{s,1}}=m_{l_{s,2}}$ while we denote $m_{M,d}\eqdef m_{l_d}$ the Fourier mode of the deep link of $M_2$ (which is also the internal link of $M_1(M_2)$). They also have two external links $l_{e,1}=l_{e,1}(M)$ and $l_{e,2}=l_{e,2}(M)$ that carry the same Fourier mode, and we define $m_{M,e}\eqdef m_{l_{e,1}}=m_{l_{e,2}}$ the external Fourier mode of $M$. With this notation, $M$ of colors $c,c'$ bears the amplitude $\mfR^{2,c,c'}(m_{M,s},m_{M,d})$. 

Finally, we let $\Tilde{\mfM}^{1,c}(G)$ be the set of all the graphs in $\mfM^{1,c}(G)$ that are not of the form $M_1(M_2)$ for some $M_2\in\mfM^{2,c,c'}(G)$ for $c'\neq c$. We also let $\Tilde{\mfM}^1(G)\eqdef\bigcup_{c=1}^d \Tilde{\mfM}^{1,c}(G)$ and $\Tilde{\mfM}(G)\eqdef\Tilde{\mfM}^{1}(G)\cup\mfM^2(G)$. A link $l$ can link two different graphs $M,M'\in\Tilde{\mfM}(G)$. If this is the case, we use the freedom we have in the labeling of $l_{e,1}$, $l_{e,2}$ to enforce the convention that if $l=l_{e,1}(M)$, then $l=l_{e,2}(M')$.
See Figure~\ref{fig:divergentgraphs} for an illustration of the notations introduced in this paragraph.
\begin{figure}[H]
    \centering
   \tikzsetnextfilename{fig14}
   \begin{tikzpicture}
\draw[darkgreen] (0,-1)--(0,0);
\draw[darkgreen] (1,-1)--(1,0);
\draw[red] (0,-1)--(1,-1);
\draw[red] (0,0)--(1,0);
\draw[brown] (0,-1)..controls(.3,-.8)and(.7,-.8)..(1,-1);
\draw[brown] (0,0)..controls(.3,-.2)and(.7,-.2)..(1,0);
\draw[blue] (0,0)..controls(.3,.2)and(.7,.2)..(1,0);
\draw[blue] (0,-1)..controls(.3,-1.2)and(.7,-1.2)..(1,-1);
\draw[black,thick,densely dotted] (1,0)arc(-45:225:.71);
\draw[black,thick,densely dotted] (0,-1)--(-.3,-1.3);
\draw[black,thick,densely dotted] (1,-1)--(1.3,-1.3);
\draw[blue] (3,0)--(3,1);
\draw[blue] (4,0)--(4,1);
\draw[brown] (3,0)--(4,0);
\draw[brown] (3,1)--(4,1);
\draw[darkgreen] (3,0)..controls(3.3,.2)and(3.7,.2)..(4,0);
\draw[darkgreen] (3,1)..controls(3.3,.8)and(3.7,.8)..(4,1);
\draw[red] (3,1)..controls(3.3,1.2)and(3.7,1.2)..(4,1);
\draw[red] (3,0)..controls(3.3,-.2)and(3.7,-.2)..(4,0);
\draw[red] (3,-2)--(3,-1);
\draw[red] (4,-2)--(4,-1);
\draw[darkgreen] (3,-2)--(4,-2);
\draw[darkgreen] (3,-1)--(4,-1);
\draw[blue] (3,-2)..controls(3.3,-1.8)and(3.7,-1.8)..(4,-2);
\draw[blue] (3,-1)..controls(3.3,-1.2)and(3.7,-1.2)..(4,-1);
\draw[brown] (3,-1)..controls(3.3,-.8)and(3.7,-.8)..(4,-1);
\draw[brown] (3,-2)..controls(3.3,-2.2)and(3.7,-2.2)..(4,-2);
\draw[black,thick,densely dotted] (4,1)arc(-45:225:.71);
\draw[black,thick,densely dotted] (3,-2)--(2.7,-2.3);
\draw[black,thick,densely dotted] (4,-2)--(4.3,-2.3);
\draw[black,thick,densely dotted] (3,-1)..controls(2.7,-.7)and(2.7,-.3)..(3,0) ;
\draw[black,thick,densely dotted] (4,-1)..controls(4.3,-.7)and(4.3,-.3)..(4,0) ;
\node[black] at(.5,-2){$M_1$};
\node[black] at(3.5,-3){$M_2$};
\node at(.5,1.4){$m_{M_1,i}$};
\node at(-.45,.65){$l_i$};
\node at(2.45,.-.5){$l_{s,1}$};
\node at(4.55,.-.5){$l_{s,2}$};
\node at(2.55,1.6){$l_d$};
\node at(3.5,2.4){$m_{M_2,d}$};
\node at(3.5,-.5){$m_{M_2,s}$};
\node at(5.1,1.2){$\Bigg\}M_1(M_2)$};
\node at(1.45,-.95){$l_{e,2}$};
\node at(-.5,-.95){$l_{e,1}$};
\node at(.5,-1.4){$m_{M_1,e}$};
\node at(4.45,-1.95){$l_{e,2}$};
\node at(2.5,-1.95){$l_{e,1}$};
\node at(3.5,-2.4){$m_{M_2,e}$};
\end{tikzpicture}
    \caption{A divergent graph $M_1$ of type $\mathfrak M^{1,\textcolor{darkgreen}{c}}$ and a divergent graph $M_2$ of type $\mathfrak M^{2,\textcolor{red}{c},\textcolor{blue}{c'}}$ pictured in the tensor graph representation introduced below}
    \label{fig:divergentgraphs}
\end{figure}

We also need the following partition of $\tL(G)$ which will enable us to reexpress \eqref{eq:amplitude}. 
First of all, note that $l_\alpha$ and $l_\beta$ belong to $\Tilde{L}(G)$ since they are attached to two different vertices in a non-melonic way. 
Links in $\Tilde{L}(G)$ are partitioned as $\tL(G)=\tL^i(G)\sqcup \Tilde{L}^c(G) \sqcup \{l_\alpha\}\sqcup \{l_\beta\}$ (if $\tau$ is a random fields, then $l_\alpha$ is absent in this decomposition) where links in $\tL^i(G)$ stand for \textit{time lines} while links in $\tL^c(G)$ represent the contraction of two noises. By time lines, we mean the links associated to the inverse heat operator in LD objects (i.e. the time integral of $P_{t-s}$), and to the time integral of $J^2$ in BG objects. Note that once all the integrations over the variables of the noises yielding some objects $\X_t$ are performed, then the times lines are the only links bearing a time integration. It turns out that $\tL^i(G)$ does not correspond to the set of all time lines, because each subgraph $M\in\mfM^{2}(G)$ only contains one time line (one of its two superficial links). 

We display below an example of graph $G\in\G_\tau$ for $\tau=\Sym$. Here, $l_\alpha$ and $l_\beta$ are indicated by their names, while $l_1$ and $l_2$ are the two elements of $\tL^i(G)$, and all the four remaining links belong to $\tL^c(G)$. Note that the same graph with $l_\alpha$ replaced by a link $l\in\tL^c(G)$ belongs to $\G_\tau$ for $\tau=\XtwoPictwo$ (more precisely it would be a contribution coming from the gluing of $\cN(\X,\Pictwo,\X)-\frac{1}{2}\mfC^2\X$ with itself).
\begin{figure}[H]
    \centering    
\tikzsetnextfilename{fig28}
\begin{tikzpicture}
\draw[gray] (0,0)--(0,1);
\draw[gray] (1,0)--(1,1);
\draw[gray] (0,1)..controls(.3,1.2)and(.7,1.2)..(1,1);
\draw[gray] (0,1)..controls(.3,.8)and(.7,.8)..(1,1);
\draw[gray] (0,0)..controls(.3,.2)and(.7,.2)..(1,0);
\draw[gray] (0,0)..controls(.3,-.2)and(.7,-.2)..(1,0);
\draw[gray] (2,0)--(2,1);
\draw[gray] (3,0)--(3,1);
\draw[gray] (2,1)..controls(2.3,1.2)and(2.7,1.2)..(3,1);
\draw[gray] (2,1)..controls(2.3,.8)and(2.7,.8)..(3,1);
\draw[gray] (2,0)..controls(2.3,.2)and(2.7,.2)..(3,0);
\draw[gray] (2,0)..controls(2.3,-.2)and(2.7,-.2)..(3,0);
\draw[black,thick,densely dotted] (1,0)..controls(1.3,-.3)and(1.7,-.3)..(2,0) ;
\node[black] at (1.5,.1){$l_\beta$};
\node[black] at (1.5,2.9){$l_\alpha$};
\node[black] at (3.5,1.5){$l_{2}$};
\node[black] at (-.5,1.5){$l_{1}$};
\draw[gray] (2,2)--(2,3);
\draw[gray] (3,2)--(3,3);
\draw[gray] (2,3)..controls(2.3,3.2)and(2.7,3.2)..(3,3);
\draw[gray] (2,3)..controls(2.3,2.8)and(2.7,2.8)..(3,3);
\draw[gray] (2,2)..controls(2.3,2.2)and(2.7,2.2)..(3,2);
\draw[gray] (2,2)..controls(2.3,1.8)and(2.7,1.8)..(3,2);
\draw[gray] (0,2)--(0,3);
\draw[gray] (1,2)--(1,3);
\draw[gray] (0,3)..controls(.3,3.2)and(.7,3.2)..(1,3);
\draw[gray] (0,3)..controls(.3,2.8)and(.7,2.8)..(1,3);
\draw[gray] (0,2)..controls(.3,2.2)and(.7,2.2)..(1,2);
\draw[gray] (0,2)..controls(.3,1.8)and(.7,1.8)..(1,2);
\draw[black,thick,densely dotted] (0,1)..controls(-.3,1.3)and(-.3,1.7)..(0,2) ;
\draw[black,thick,densely dotted] (3,1)..controls(3.3,1.3)and(3.3,1.7)..(3,2) ;
\draw[black,thick,densely dotted] (1,1)..controls(1.3,1.3)and(1.3,1.7)..(1,2) ;
\draw[black,thick,densely dotted] (2,1)..controls(1.7,1.3)and(1.7,1.7)..(2,2) ;
\draw[black,thick,densely dotted] (2,1)..controls(1.7,1.3)and(1.7,1.7)..(2,2) ;
\draw[black,thick,densely dotted] (1,3)..controls(1.3,3.3)and(1.7,3.3)..(2,3) ;
\draw[black,thick,densely dotted] (0,3)..controls(.5,4)and(2.5,4)..(3,3);
\draw[black,thick,densely dotted] (0,0)..controls(.5,-1)and(2.5,-1)..(3,0);
\end{tikzpicture}   
\caption{An example of graph $G\in\G_\tau$ for $\tau=\Sym$ with its links in $\{\tL^i(G)\}\sqcup\{l_\alpha\}\sqcup\{l_\beta\}$ indicated}
\end{figure}

With this notation in hand, we are ready to define, for any $\tau$ and $G\in\G_\tau$, a labelling of the links of $G$, $\ell:L(G)\rightarrow\R$:
\begin{equs}\label{eq:defell}
    \ell(l)\equiv\ell_\tau(l)\eqdef1-(1-\alpha)\1\{l=l_\alpha\}-(1+\beta)\1\{l=l_\beta\}-(1-\eta)\1\{\tau=\Sym\;\text{and}\;l\in\tL^i(G)\}\,.\textcolor{white}{bmabma}
\end{equs}

The tree structure of $\tau$ implies that for every $v\in \tV(G)$, there exists a unique path $\mfP_v$ made with only links in $\tL^i(G)$ between $v$ and the root (if $\tau$ is of type II) or one of the two vertices attached to the root (if $\tau$ is of type I). This induces a partial order on $\tV(G)$, and we say that $v_2\prec v_1$ if $v_1$ is on the path $\mfP_{v_1}$. 
For $l\in \tL^i(G)$, we denote by $v_1(l)$ and $v_2(l)$ the two extremities of $l$ such that $v_2\prec v_1$. 
For $l\in \tL^c(G)$, we make an arbitrary choice in labeling the two extremities of $l$ as $v_1(l)$ and $v_2(l)$.  
 
\subsubsection{Explicit formulae and bounds for renormalized amplitudes} 
We can now give an expression for $A^{u,t}_{T,N}(G)$. Recall that links of $G$ are partitioned as 
\begin{equs}
    L(G)=\bigsqcup_{M\in\Tilde{\mfM}(G)}L^{int}(M)\sqcup\tL^i(G)\sqcup \Tilde{L}^c(G) \sqcup \{l_\alpha\}\sqcup \{l_\beta\}\,.
\end{equs}
While $l_\alpha$ and $l_\beta$ bear the kernels $\angp{m_{l_\alpha}}{-2\alpha}$ and $\angp{m_{l_\beta}}{2\beta}$, links $l$ in $\tL^i(G)$ bear the integral kernel 
\[
\begin{cases}
\begin{array}{ll}
\int_{a_l}^{t_{v_1(l)}}{\tt I}(t_{v_1(l)},t_{v_2(l)},m_l) \rmd t_{v_2(l)}& \text{ if }\;\tau\neq\Sym\;, \\
{\tt I}(t_{v_1(l)},t_{v_2(l)},m_l)& \text{ if }\;\tau=\Sym\;.
\end{array}\end{cases}
\]
where $a_l\in\{-\infty,0\}$ and
\[
{\tt I}(t_1,t_2,m) =\begin{cases}
\begin{array}{ll}
\1\{|m|_\infty\leqslant N\}e^{-(t_1-t_2)\angp{m}{2}}& \text{ for a LD object }\;, \\
\frac{\partial_{t_2}\varrho_{t_2}^2(m)}{\angp{m}{2}}& \text{ for a BG object }\;.
\end{array}\end{cases}
\]
and links $l$ in $\tL^i(G)$ carry the contraction kernel ${\tt C}(t_{v_1(l)},t_{v_2(l)},m_l)$ given by
\begin{equs}
    {\tt C}(t_1,t_2,m)=\begin{cases}\begin{array}{ll}\1\{|m|_\infty\leqslant N\}\frac{e^{-|t_1-t_2|\angp{m}{2}}}{\angp{m}{2}}& \text{ for a LD object}\;,
    \\\frac{\varrho^2_{t_1\wedge t_2}(m)}{\angp{m}{2}}&\text{ for a BG object}\;.\end{array}\end{cases}.
\end{equs}
The remaining links belong to some $M\in\Tilde{\mfM}(G)$. The subamplitude $M(t_M,m_{M,e})$ of $M\in\Tilde{\mfM}^{1,c}(G)$ at $t_M=t_{v_2(l_{e,1})}$ is given by
\begin{equs}    M(t_M,m_{M,e})&=\sum_{m_{M,i}\in\Z^d}\delta_{m_{M,i,c},m_{M,e,c}}\big({\tt C}(t_M,t_M,m_{M,i})-{\tt C}(t_M,t_M,m_{M,i,\hc})\big)\,,
\end{equs}
and the subamplitude $M(t_M,m_{M,e})$ of $M\in{\mfM}^{2,c,c'}(G)$ at $t_M=t_{v_2(l_{e,1})}$ is given by
\begin{equs}  
M(t_M,&m_{M,e})=\sum_{m_{M,s},m_{M,d}\in\Z^d}
\delta_{m_{M,s,c},m_{M,e,c}}
\delta_{m_{M,d,c'},m_{M,s,c'}}\int_{a_{l_s}}^{t_M}\rmd s
\\&\big({\tt I}(t_M,s,m_{M,s}){\tt C}(t_M,s,m_{M,s})
-{\tt I}(t_M,s,m_{M,s,\hc}){\tt C}(t_M,s,m_{M,s,\hc})
\big)\\
&\big({\tt C}(s,s,m_{M,d})-{\tt C}(s,s,m_{M,d,\hc'})\big)\,.
\end{equs}

With this notation we have for $\tau\neq\Sym$ and $G\in\G_\tau$ 
\begin{equs}    A^{u,t}_{T,N}(G)=&\sum_{m_{l_1},\dots,m_{l_{|\tL(G)|}}\in\Z^{d(l)}}
   \angp{m_{l_\alpha}}{-2\alpha} \angp{m_{l_\beta}}{2\beta}
    \prod_{l\in \tL^i(G)}\int_{a_l}^{t_{n_1(l)}} \rmd t_{n_2(l)}{\tt I}(t_{n_1(l)},t_{n_2(l)},m_l) \\&\qquad    \prod_{M\in\Tilde{\mfM}(G)}M(t_M,m_{M,e})
     \prod_{v\in V(G)}\delta_v(m_{l_1(v)},\dots,m_{l_4(v)})
    \prod_{l\in \tL^c(G)}{\tt C}(t_{n_1(l)},t_{n_2(l)},m_l)
  \,,
\end{equs}
where $d(l)=d$ except, if $\tau$ is of type II, for $l_\alpha$ and $l_\beta$. Here, it is understood that the time of the root (if $\tau$ is of type II) and the times of the two vertices attached to the root (if $\tau$ is of type I) is $t$. 

Moreover, a similar formula holds for $\tau=\Sym$ and $G\in\G_\tau$, except that $n_1(l)=t$ and $n_2(l)=u$ for both $l\in \tL^i(G)$ (so that there is no integration over the variables $(t_{n_2(l)})_{l\in \tilde L^i(G)}$), and that the additional factor $|t-u|^{2\eta}$ is present. We state it for completeness:
\begin{equs}    A^{u,t}_{T,N}(G)=&\sum_{m_{l_1},\dots,m_{l_{|\tL(G)|}}\in\Z^{d(l)}}
\angp{m_{l_\beta}}{2\beta} \angp{m_{l_\alpha}}{-2\alpha} 
    \prod_{l\in \tL^i(G)}|t-u|^{\eta}{\tt I}(t,u,m_l)      
    \\&\qquad  \prod_{v\in V(G)}\delta_v(m_{l_1(v)},\dots,m_{l_4(v)})\prod_{l\in \tL^c(G)}{\tt C}(t_{n_1(l)},t_{n_2(l)},m_l)\,  
  \,.
\end{equs}

\begin{lemma}\label{lem:amplitude_bound}  
With all the previous notations introduced in Definition~\ref{def:strandedgraphs} and in the last paragraph at hand, for every stochastic object $\tau$ we have \eqref{eq:covariance_representation}.

Moreover, for every $G\in\G_\tau$, we have $A^{u,t}_{T,N}(G)\lesssim \tA(G)$ where $\tA(G)$ is independent of $u$, $t$, $T$ and $N$, and is given by 
 \begin{equs}  \label{eq:A(G)}   \tA(G)&\eqdef\sum_{m_{s_1},\dots,m_{s_{|S(G)|}}\in\Z}
     \prod_{l\in \tL(G)}\angp{m_l}{-2\ell(l)}\prod_{c\in[d]}\prod_{M\in\Tilde{\mfM}^{1,c}(G)}\frac{\angp{m_{M,e}}{2}\wedge\angp{m_{M,i}}{2}}{\angp{m_{M,i}}{2}\angp{m_{M,i,\hc}}{2}}\nonumber\\ 
       &\quad\prod_{c\neq c'}
      \prod_{M\in\mfM^{2,c,c'}(G)}
    \frac{\angp{m_{M,e}}{2}\wedge\angp{m_{M,s}}{2}}{\angp{m_{M,s}}{2}\angp{m_{M,s,\hc}}{4}}      \frac{\angp{m_{M,s}}{2}\wedge\angp{m_{M,d}}{2}}{\angp{m_{M,d}}{2}\angp{m_{M,d,\hc'}}{2}}
    \,.
 \end{equs}
 
 In particular, we have uniformly in $u,t\in\mathrm{Simp}_T$, $T\geqslant 0$ and $N\in\N$,
\begin{equs}\label{eq:BoundUnif1}
{|t-u|}^{\1\{\tau=\Sym\}2\eta}\sum_{m\in\Z^{d_2},n\in\Z^{d_1}}&\angp{m}{2\beta}\angp{n}{-2\alpha}C^{u,t}_{\tau}(m,n)\lesssim\sum_{G\in\G_\tau}\tA(G)\,.
\end{equs}

Moreover, a similar formula holds uniformly in $0\leqslant r\leqslant s\leqslant t\leqslant T<\infty$ and $N\in\N$ to control the time continuity of the LD objects: 
\begin{equs}\label{eq:BoundUnif2}
|s-r|^{\1\{\tau=\Sym\}2\eta}{|t-s|}^{-\theta}\sum_{m\in\Z^{d_2},n\in\Z^{d_1}}\angp{m}{2\beta}\angp{n}{-2\alpha}\tilde C^{r,s,t}_{\tau}(m,n)
\lesssim\sum_{G\in\G_\tau}\sum_{l_0\in \tL^r(G)}\mathring A(G,l_0)\,,\textcolor{white}{blablablabla}
\end{equs}
where $ \mathring A(G,l_0)$ is defined similarly to $\tA(G)$ but with $\ell(l)$ replaced by
\begin{equs}
    \tilde \ell(l,l_0)\equiv\tilde\ell_\tau(l,l_0)\eqdef\ell(l)-\theta\1\{l=l_0\}
\end{equs}
in \eqref{eq:A(G)}, and $\tL^r(G)$ is the set of all links of $G$ in $\tL^i(G)\sqcup\tL^c(G)$ that are attached to the root if $\tau$ is of type II or to the two vertices attached to the root if $\tau$ is of type I.

Finally, we also have a similar formula holding uniformly in $0\leqslant u\leqslant r\leqslant t\leqslant T<\infty$ and $N\in\N$ to control the inner time continuity of $\Sym$:
\begin{equs}\label{eq:boundUnif3}
   |t-r|^{2{\eta }}   |r-u|^{-\theta }   \sum_{m\in\Z^{d},n\in\Z^{d}}\angp{m}{2\beta}\angp{n}{-2\alpha} \hat C^{u,r,t}_{\tau}(m,n)\lesssim \sum_{G\in\G_\tau}\sum_{l_0\in \tL^i(G)}\mathring A(G,l_0)\,.
\end{equs}

\end{lemma}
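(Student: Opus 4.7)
The plan is to establish \eqref{eq:covariance_representation} first as a combinatorial identity, then produce the uniform bound $A^{u,t}_{T,N}(G) \lesssim \tilde{A}(G)$, and finally deduce the three uniform bounds \eqref{eq:BoundUnif1}, \eqref{eq:BoundUnif2}, \eqref{eq:boundUnif3}. For the first step, I would start from the definition of each covariance as an expectation of a polynomial in $\X_{N}$ (or $\X_t$ in the BG case) and apply Wick's theorem. Each of the resulting pairings of the $2k$ noises inside $\tau \otimes \tau$ corresponds exactly to a choice of internal contraction-link set $\tL^c$, while the expansion $\cN = \sum_{c} \cN^c$ produces the coloring $\mfc$ at each vertex; the compatibility constraint on $\mfp(v)$ is exactly the structure of $\delta^c_v$. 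The heat propagators (resp.~$\partial_t \varrho^2_t / \langle m\rangle^2$) coming from $\underline{\cL}^{-1}$ (resp.~from $I \circ J$) produce the time-line kernels $\tt I$ on $\tL^i(G)$, so reading the Feynman rules off the trees gives the claimed explicit formulas for $A^{u,t}_{T,N}(G)$. The renormalization counterterms $\mfC^1, \mfC^2$ cancel exactly the melonic self-contractions identified in Lemmas~\ref{lem:CT1} and \ref{def:CT2}, which turns every melonic subgraph $M \in \tilde{\mfM}(G)$ into the differences $M(t_M, m_{M,e})$.

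For the uniform bound, the non-renormalized part is straightforward: on each link $l \in \tL^i(G)$, the inequality $\int_{a_l}^{t_{v_1}} e^{-(t_{v_1}-t_{v_2})\langle m_l\rangle^2}\, dt_{v_2} \lesssim \langle m_l\rangle^{-2}$ (respectively $\partial_t \varrho_t^2 \langle m\rangle^{-2} \lesssim \langle m\rangle^{-2}$) combined with $\tt C \lesssim \langle m\rangle^{-2}$ yields $\langle m_l\rangle^{-2\ell(l)}$ with $\ell(l)=1$, and for $\tau = \Sym$ the elementary bound $e^{-|t-u|\langle m\rangle^2} \lesssim (|t-u|\langle m\rangle^2)^{-\eta}$ extracts the factor $|t-u|^{-\eta}$ per time-line, matching \eqref{eq:defell}. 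The distinguished links $l_\alpha, l_\beta$ receive their factors $\langle m_{l_\alpha}\rangle^{-2\alpha}$, $\langle m_{l_\beta}\rangle^{2\beta}$ directly from the source of $\alpha, \beta$ in the Kolmogorov criteria.

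The main obstacle is the treatment of renormalized subgraphs, which is where the full benefit of $\mfR^1$ and $\mfR^2$ must be extracted. For $M \in \tilde{\mfM}^{1,c}(G)$, after using $\delta_{m_{M,i,c}, m_{M,e,c}}$ I would bound
\[
\bigl|{\tt C}(t_M,t_M,m_{M,i}) - {\tt C}(t_M,t_M,m_{M,i,\hc})\bigr| \lesssim \min\!\left(\tfrac{1}{\langle m_{M,i}\rangle^2} + \tfrac{1}{\langle m_{M,i,\hc}\rangle^2},\ \tfrac{\langle m_{M,e,c}\rangle^2}{\langle m_{M,i}\rangle^2 \langle m_{M,i,\hc}\rangle^2}\right),
\]
where the second estimate uses $|a^{-1} - b^{-1}| = |b-a|/(ab)$ together with $\langle m_{M,i}\rangle^2 - \langle m_{M,i,\hc}\rangle^2 = m_{M,e,c}^2$; taking the minimum of both estimates yields exactly the factor in \eqref{eq:A(G)}. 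For $M \in \mfM^{2,c,c'}(G)$ I would first integrate out the intermediate time of the superficial link with the usual $\int e^{-\cdots}$ bound, then apply the same difference trick twice, once on the $(m_{M,s}, m_{M,s,\hc})$ pair (producing the $\langle m_{M,s,\hc}\rangle^{-4}$ gain from two propagators on the superficial lines and the $\min$-factor in $m_{M,e,c}$) and once on the $(m_{M,d}, m_{M,d,\hc'})$ pair (producing the gain in $m_{M,s,c'}$). The convention excluding from $\tilde{\mfM}^1$ those $M_1 = M_1(M_2)$ nested inside $\mfM^2$ is precisely what prevents double-counting at this step.

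Finally, the three stated bounds follow by assembly. Since each vertex delta $\delta^c_v$ identifies strand momenta around each interaction, a relabelling passes from sums over link momenta to sums over strand momenta, yielding $\sum_{m_{s_1}, \dots, m_{s_{|S(G)|}}}$ and hence $\tilde{A}(G)$; this gives \eqref{eq:BoundUnif1}. For \eqref{eq:BoundUnif2}, I would telescope the difference $\delta_{s,t}\tau$ as a sum over distinguished root-links $l_0 \in \tL^r(G)$, on each of which the elementary bound $|1 - e^{-|t-s|\langle m\rangle^2}| \lesssim (|t-s|\langle m\rangle^2)^\theta$ extracts the factor $|t-s|^\theta$ at the cost of worsening the exponent of $\langle m_{l_0}\rangle$ by $2\theta$, which is exactly $\tilde{\ell}(l,l_0)$. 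For \eqref{eq:boundUnif3}, the identical trick is applied to the inner-time increment $\delta^i_{u,r}$ across the single time-line ${\tt I}(t,\cdot,m)$ that carries $u$ or $r$, and the Hölder prefactors $|t-r|^{2\eta}$ are absorbed as before. The bookkeeping that nothing in the above is lost uniformly in $N, T$ or in $(u,r,s,t)$ follows because every time integral and every Fourier sum has been bounded by a quantity depending only on the strand momenta and on the structure of $G$.
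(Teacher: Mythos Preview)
Your proposal is correct and follows essentially the same route as the paper: Wick expansion for \eqref{eq:covariance_representation}, link-by-link bounds on ${\tt I}$ and ${\tt C}$ for the non-renormalized part, a difference estimate on each $M\in\tilde{\mfM}(G)$, and telescoping at the root for time continuity.

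One point deserves a small warning. Your treatment of the renormalized tadpole uses the algebraic identity $|a^{-1}-b^{-1}|=|b-a|/(ab)$ applied to ${\tt C}(t_M,t_M,\cdot)$. This is clean for the LD case where ${\tt C}(t,t,m)=\langle m\rangle^{-2}$, but for the BG objects one has ${\tt C}(t,t,m)=\varrho_t^2(m)\langle m\rangle^{-2}$, and the difference
\[
\frac{\varrho_t^2(m_{\hc})}{\langle m_{\hc}\rangle^2}-\frac{\varrho_t^2(m)}{\langle m\rangle^2}
\]
does not reduce to $|a^{-1}-b^{-1}|$. The paper handles both cases at once by writing this difference as $-\int_0^{m_c^2}\frac{d}{du}\bigl(\tilde{\tt C}(t,m_{\hc}+\sqrt{u})/(1+|m_{\hc}|^2+u)\bigr)\,du$ and invoking the monotonicity and derivative bound $|\partial_{m_i^2}\tilde{\tt C}(t,m)|\lesssim\langle m\rangle^{-2}$ (verified from $|\varrho'|\leqslant 1$ and the support of $\varrho$). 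This yields exactly your $\min$-factor but uniformly across LD and BG; the same device with $a=4$ covers $M\in\mfM^{2,c,c'}$. If you only care about LD objects your argument is complete as written; for the BG half you would need to add this step.
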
 
\begin{proof}
We have \eqref{eq:covariance_representation} by the definition of the quantities on the r.h.s of \eqref{eq:covariance_representation} and Wick's rule. 

We first show \eqref{eq:BoundUnif1}. In order to deal with both LD and BG objects in our estimates, we denote by $\tt{I}$ and $\tt{C}$ some general integration and contraction kernels verifying the following assumptions (for $a\in\{0,-\infty\}$):
\begin{subequations}
    \begin{equation}\label{eq:assumption1}
        \int_a^t{\tI}(t,s,m)\rmd s\leqslant\angp{m}{-2} \text{ for all }t\geqslant0\,,
    \end{equation}
    \begin{equation}\label{eq:assumption5}
    |t-u|^{\eta}   {\tI}(t,u,m)\lesssim\angp{m}{-2\eta} \text{ for all }t\geqslant u\geqslant0\,,
    \end{equation}
    \begin{equation}\label{eq:assumption2}
        {\tt C}(t_1,t_2,m)\leqslant\angp{m}{-2} \text{ for all }t_1,t_2\geqslant0\,,
    \end{equation}
    \begin{equs}\label{eq:assumption3} 
      &{\tt C}(t,t,m)=\Tilde{{\tt C}}(t,m)\angp{m}{-2} \text{ with }\Tilde{{\tt C}}(t)\leqslant1 \text{ for all }t\geqslant0\\&\text{and }-\partial_{m^2_i}\Tilde{{\tt C}}(t,m) =  \big| \partial_{m^2_i}\Tilde{{\tt C}}(t,m) \big| \lesssim\angp{m}{-2}\text{ for all }t\geqslant0\nonumber\,.
     \end{equs}
     \begin{equation}         
     \label{eq:assumption4}
       \int_a^t{\tI}(t,s,m){\tt C}(t,s,m)\rmd s\lesssim\Tilde{{\tt C}}(t,m)\angp{m}{-4} \text{ with }\Tilde{{\tt C}} \text{ as above}\,.
    \end{equation}
\end{subequations}
The integral and contraction kernels of both the LD and BG objects verify these assumptions, rather trivially for the LD objects for which $\Tilde{{\tt C}}(t,m)=1$. For the BG objects, we have $\Tilde{{\tt C}}(t,m)=\varrho^2_t(m)$. Note that we have
\[
-\partial_{m^2_i}\Tilde{{\tt C}}(t,m)
=
2\varrho_t(m)
\partial_{m^2_i}\frac{\ang{m}}{t}
\Big|\varrho'\Big(\frac{\ang{m}}{t}
\Big)
\Big|
=
\frac{\varrho_t(m)}{t\ang{m}}
\Big|\varrho'\Big(\frac{\ang{m}}{t} \Big)\Big|
\;.
\]
Recalling that $\supp(\varrho) \subset [\frac{1}{2},1]$ and $|\varrho'| \leqslant 1$ , we have $-\partial_{m^2_i}\Tilde{{\tt C}}(t,m)\lesssim\frac{1}{t\ang{m}}\lesssim\frac{1}{\angp{m}{2}}$. 
The estimate for $\int_a^t{\tI}(t,s,m){\tt C}(t,s,m)\rmd s$ follows similarly. Regarding \eqref{eq:assumption5}, it is only necessary in order to handle the LD object $\Sym$, and is easily verified, since
\begin{equs}
    |t-u|^{\eta}{\tt I}(t,u,m)=|t-u|^{\eta}e^{-|t-u|\ang{m}^2}=\angp{m}{-2\eta}\big(|t-u|\ang{m}^2\big)^{\eta}e^{-|t-u|\ang{m}^2}\,,
\end{equs}
and $r\mapsto r^{\eta}e^{-r}$ is bounded on $\R_{\geqslant0}$.

We first address the case where $G$ does not involve any renormalization. 
In this case, the proof directly follows from the assumptions \eqref{eq:assumption1}, \eqref{eq:assumption5} and \eqref{eq:assumption2} on ${\tt I}$ and ${\tt C}$. Indeed, once for all $l\in \tL^c(G)$, ${\tt C}(t_{v_1(l)},t_{v_2(l)},m_l)$ has been bounded by $\angp{m_l}{-2}$, there is no dependence left in $t_{v_2(l)}$ for $l\in \tL^i(G)$ apart in ${\tt I}(t_{v_1(l)},t_{v_2(l)},m_l)$. If $\tau\neq\Sym$, we can therefore perform all the integrations over the parameters $t_{v_2(l)}$, and they are in turn also bounded by $\angp{m_l}{-2}$, while if $\tau=\Sym$, we can make use of \eqref{eq:assumption5} to bound all the terms ${\tt I}(t_{v_1(l)},t_{v_2(l)},m_l)$ by $\angp{m_l}{-2\eta}$. Note that if $\tau=\Sym$, then $|\tL^i(G)|=2$ which matches the fact that we have precisely two factors $|t-u|^{\eta}$ at our disposal.

We now turn to $G$ that involve renormalization.
Below we sometimes abuse notation and write $\sqrt{u}\in\R$ for $\chi^c\big(0,\sqrt{u}(1,\dots,1) \big) \in\R^d$.  
For $M\in\Tilde{\mfM}^{1,c}(G)$, by assumption \eqref{eq:assumption3}, we have
\begin{equs}
   | {\tt C}(t_M&,t_M,m_{M,i})-{\tt C}(t_M,t_M,m_{M,i,\hc})|=\frac{
   \Tilde{{\tt C}}(t_M,m_{M,i,\hc})
   }{\angp{m_{M,i,\hc}}{2}}-\frac{\Tilde{{\tt C}}(t_M,m_{M,i})}{\angp{m_{M,i}}{2}}\\&=-\int_0^{m_c^2}\frac{\rmd}{\rmd u}\Big(
   \frac{
   \Tilde{{\tt C}}(t_M,m_{M,i,\hc}+\sqrt{u})
   }{(1+|m_{M,i,\hc}|^{2}+u)^{\frac{a}{2}}}\Big)\rmd u\; \text{ with $a=2$}\\
   &= 
   \int_0^{m_c^2}
   \frac{\frac{\rmd}{\rmd u}\big(-
   \Tilde{{\tt C}}(t_M,m_{M,i,\hc}+\sqrt{u})\big)
   }{(1+|m_{M,i,\hc}|^{2}+u)^{\frac{a}{2}}}\rmd u
   +\int_0^{m_c^2}
   \frac{
   \Tilde{{\tt C}}(t_M,m_{M,i,\hc}+\sqrt{u})
   }{(1+|m_{M,i,\hc}|^{2}+u)^{\frac{a+2}{2}}}\rmd u
   \\&\lesssim\int_0^{m_c^2}
   \frac{\rmd u }{(1+|m_{M,i,\hc}|^{2}+u)^{\frac{a+2}{2}}}=\frac{1}{\angp{m_{M,i,\hc}}{2}}-\frac{1}{\angp{m_{M,i}}{2}}
   \\&\lesssim\frac{m^2_{M,i,c}}{\angp{m_{M,i}}{2}\angp{m_{M,i,\hc}}{2}}\lesssim \frac{\angp{m_{M,e}}{2}\wedge\angp{m_{M,i}}{2}}{\angp{m_{M,i}}{2}\angp{m_{M,i,\hc}}{2}}\,.
\end{equs}

Regarding $M\in\mfM^{2,c,c'}(G)$, we first bound its subamplitude in $\mfM^{1,c'}(G)$ as
\begin{equs}
  |  {\tt C}(s,s,m_{M,d})-{\tt C}(s,s,m_{M,d,\hc'})|\lesssim \frac{\angp{m_{M,s}}{2}\wedge\angp{m_{M,d}}{2}}{\angp{m_{M,d}}{2}\angp{m_{M,d,\hc'}}{2}}\,.
\end{equs}
Then, we can integrate on $s$ which yields, using assumptions \eqref{eq:assumption3} and \eqref{eq:assumption4} and repeating the argument used to deal with $M\in\Tilde{\mfM}^{1,c}(G)$ with $a=4$,
\begin{equs}  
&\int_{a_{l_s}}^{t_M}\rmd s
|{\tt I}(t_M,s,m_{M,s}){\tt C}(t_M,s,m_{M,s})
-{\tt I}(t_M,s,m_{M,s,\hc}){\tt C}(t_M,s,m_{M,s,\hc})|\\
&=\frac{
   \Tilde{{\tt C}}(t_M,m_{M,s,\hc})
   }{\angp{m_{M,s,\hc}}{4}}-\frac{\Tilde{{\tt C}}(t_M,m_{M,s})}{\angp{m_{M,s}}{4}}\lesssim \frac{\angp{m_{M,e}}{2}\wedge\angp{m_{M,s}}{2}}{\angp{m_{M,s}}{2}\angp{m_{M,s,\hc}}{4}}\,.
\end{equs}

The proof of \eqref{eq:BoundUnif2} is similar, but we first have to pre-process a bit the expression of $\delta_{s,t}\tau$. First, observe that there are either two (for $\Xtwom,\Xtwonm$ and $\XdotX$) or three inverse heat operators attached to the root of $\tau$. Using 
\begin{equs}
    \int_a^tP_{t-u_{1}}\rmd u_1&\int_a^tP_{t-u_{2}}\rmd u_2-\int_a^sP_{s-u_{1}}\rmd u_1\int_a^sP_{s-u_{1}}\rmd u_2\\&=\Big(\int_a^tP_{t-u_{1}}\rmd u_1-\int_a^sP_{s-u_{1}}\rmd u_1\Big)\int_a^tP_{t-u_{2}}\rmd u_2+\Big(\int_a^tP_{t-u_{2}}\rmd u_2-\int_a^sP_{s-u_{2}}\rmd u_2\Big)\int_a^tP_{s-u_{1}}\rmd u_1
\end{equs}
in the first case and a similar telescoping sum in the second case, one can always create a difference of two inverse heat operators. We can now conclude, since proceeding as follows for $a\in\{-\infty,0\}$, we have that a difference of two inverse heat operators always yields a good factor $|t-s|^\theta$ (loosing a bit of the decay in $m$):
\begin{equs}
\Big|  \int_a^tP_{t-u_{1}}(m)\rmd u_1-\int_a^sP_{s-u_{1}}(m)\rmd u_1\Big| &=\Big| \int_s^tP_{t-u_1}(m)\rmd u_1+\int_a^s\big(P_{t-u_1}-P_{s-u_1}\big)(m)\rmd u_1\Big| \\
&\lesssim \angp{m}{-2}\big(1-e^{-|t-s|\angp{m}{2}}\big)\lesssim\angp{m}{-2+2\theta}|t-s|^\theta\,,
\end{equs}
where for $v\geqslant0$ we use the short-hand notation $P_v(m)=e^{-v\angp{m}{2}}$.

Finally, \eqref{eq:boundUnif3} is proved in the same way, pre-processing the expression of $\delta^i_{u,r}\tau$ using the fact that
\begin{equs}
    P_{t-r}(m)-P_{t-u}(m)=P_{t-r}(m)\big(1-P_{r-u}(m)\big)\lesssim P_{t-r}(m)\angp{m}{2\theta}|r-u|^\theta\,.
\end{equs}
\end{proof}





\subsection{Tensor graphs}
In Sobolev norms of the objects in terms of a sum over a class a graph, such that the amplitudes of the graphs are purely expressed in terms of Fourier mode. If this rewriting is useful, it is not yet sufficient to obtain the regularity of the objects. To obtain a better bound easier to handle, we need to introduce a new class of graphs.  
\begin{definition} 
A \textit{$(d+1)-$colored graph} $G=(N,E,\mfc)$ is a $(d+1)$-regular properly edge-colored graph. 
Properly edge-colored means the graph $G=(N,E)$ comes with a map $\mathfrak c $: $E\rightarrow\{0,\dots,d\}$ such that for every node $\mcb{n}\in N$ and any color $c\in\{0,\dots,d\}$, there exists a unique edge $e\in E(\mcb{n})$ such that $\mathfrak c(e)=c$ \dash here $E(\mcb{n})\subset E$ denotes the set of all edges incident to $\mcb{n}$.   
\end{definition}
\begin{definition} Given a $(d+1)-$colored graph $G$ and and two colors $0 \leqslant c_{1} < c_{2} \leqslant d$, a \textit{face} of $G$ of color $(c_1,c_2)$ is a (simple) cycle of edges all of which are colored either $c_1$ or $c_2$.
\end{definition}

\begin{definition} 
A \textit{tensor graph} $G=(N,E,\mathfrak c)$ is is $(d+1)-$colored graph where we also allow for a new kind of node, called ``external nodes'',  each of which are incident to precisely one edge of color $0$ (which we call an ``external edge'').
We also enforce the following constraints on tensor graphs:
\begin{itemize}
\item For $c_{1} > 0$, then any $(c_1,c_2)$-face has length $2$ or $4$. 
\item No two nodes can be connected by $d$ edges of colors $1,\dots,d$. 
\item Any face of colors $(c_1,c_2)$ with $c_1>0$ which is of length $4$ must, for $i \in \{1,2\}$, correspond to a ``vertex of color $c_{i}$'' , in particular one can label four nodes though which this face runs through as $\mcb{n}_1,\dots,\mcb{n}_4$ in a way such that 
\begin{itemize}
\item $\mcb{n}_1$ and $\mcb{n}_2$ are connected by a single edge of color $c_{i}$ and the same is true of  $\mcb{n}_3$ and $\mcb{n}_4$. 
\item $\mcb{n}_1$ and $\mcb{n}_4$ are connected by precisely $d-1$ edges of all the colors in $[d]\setminus\{c_i\}$,  and the same is true of $\mcb{n}_2$ and $\mcb{n}_3$ are connected by $d-1$ edges of all the colors in $[d]\setminus\{c_i\}$.
\end{itemize}
\end{itemize}

An edge of color $0$ is called a \textit{link}, we denote by $L(G)$, $L^{int}(G)$ and $L^{int}(G)$ the link set, internal link set and external link set of $G$. 
 We write $V(G)$ for the vertex set of $G$, a vertex is a set of four nodes $\{\mcb{n}_{1},\dots,\mcb{n}_{4}\}$ forming a vertex of color $c$ as described above.
For a vertex $v\in V(G)$ with nodes $\mcb{n}_1,\dots,\mcb{n}_4$, we denote by $l_i(v)$ the link attached to $\mcb{n}_i$. \\
Faces of color $(0,c)$ that run through at least one external link of $G$ are called ``\textit{external strands} of color $c$'' \dash by parity they must run through exactly two external links.  
Faces of color $(0,c)$ that are not \text{external strands} are called ``\textit{internal strands} of color $c$''.
We write $S$, $S^{int}(G)$, and $S^{ext}(G)$ the strand, internal strand, and external strand set of $G$.

A \textit{closed tensor graph} is an tensor graph with $L^{ext}(G)=0$, and a tensor graph that is not closed it called \textit{open}.
\end{definition}

 \begin{definition}
A \textit{tensor subgraph} of a closed tensor graph $G$ is an open tensor graph $G'$ determined by a subset $L^{int}(G')$ of $L(G)$. 
Its vertex set $V(G')$ is the set of all the extremities of all the links in $L^{int}(G')$, and its external link set $L^{ext}(G')$ are all the other links attached to the vertices in $V(G')$ that don't belong to $L^{int}(G')$. It inherits the data $\mfc$ of $G$, so that it is indeed a tensor graph. A subgraph $G'$ of a graph $G$ may not be connected, but its connected connected components are also tensor graphs themselves. 
\end{definition}
\begin{definition}
An open tensor graph $G=(N,E,\mathfrak c)$ has a \textit{boundary graph} $\partial G$ defined as follows: we remove all the internal links of $G$ and reconnect together the edges of colors $1,\dots,d$ that belong to the face going through one removed link. Note that the boundary graph is not necessarily connected. We denote by $C(\partial G)$ the number of connected components of $\partial G$. Finally, note that the boundary graph of a closed tensor graph is empty and has $C(\partial G)=0$.
\end{definition}
\begin{figure}[H]
    \centering
    \tikzsetnextfilename{fig15}
    \begin{tikzpicture}
\draw[gray] (0,0)--(1,0);
\draw[gray] (0,1)--(1,1);
\draw[gray] (1,0)..controls(.8,.3)and(.8,.7)..(1,1);
\draw[gray] (1,0)..controls(1.2,.3)and(1.2,.7)..(1,1);
\draw[gray] (0,0)..controls(-.2,.3)and(-.2,.7)..(0,1);
\draw[gray] (0,0)..controls(.2,.3)and(.2,.7)..(0,1);
\draw[black,thick,densely dotted] (0,0)--(-.3,-.3) ;
\draw[black,thick,densely dotted] (0,1)--(-.3,1.3) ;
\draw[gray] (2,0)--(3,0);
\draw[gray] (2,1)--(3,1);
\draw[gray] (2,0)..controls(1.8,.3)and(1.8,.7)..(2,1);
\draw[gray] (2,0)..controls(2.2,.3)and(2.2,.7)..(2,1);
\draw[gray] (3,0)..controls(2.8,.3)and(2.8,.7)..(3,1);
\draw[gray] (3,0)..controls(3.2,.3)and(3.2,.7)..(3,1);
\draw[black,thick,densely dotted] (1,0)..controls(1.3,-.3)and(1.7,-.3)..(2,0) ;
\draw[black,thick,densely dotted] (1,1)..controls(1.3,1.3)and(1.7,1.3)..(2,1) ;
\draw[black,thick,densely dotted] (3,1)--(3.3,1.3);
\draw[black,thick,densely dotted] (3,0)--(3.3,-.3);
\draw[gray] (5,0)..controls(4.8,.3)and(4.8,.7)..(5,1);
\draw[gray] (5,0)..controls(5.2,.3)and(5.2,.7)..(5,1);
\draw[gray] (6,0)..controls(5.8,.3)and(5.8,.7)..(6,1);
\draw[gray] (6,0)..controls(6.2,.3)and(6.2,.7)..(6,1);
\draw[black,thick,densely dotted] (5,0)--(4.7,-.3) ;
\draw[black,thick,densely dotted] (5,1)--(4.7,1.3) ;
\draw[black,thick,densely dotted] (6,0)--(6.3,-.3) ;
\draw[black,thick,densely dotted] (6,1)--(6.3,1.3) ;
\node[gray] at (.5,.15){$c$};
\node[gray] at (.5,1.15){$c$};
\node[gray] at (2.5,.2){$c'$};
\node[gray] at (2.5,1.2){$c'$};
\end{tikzpicture}
    \caption{An open tensor graph and its boundary graph for $c\neq c'$. Note that if $c=c'$, the boundary graph would be the node of color $c$.}
\end{figure}
\begin{definition}
We can associate to a $(d+1)-$colored graph $G$ a \textit{degree} $\delta(G)$ encoding some combinatorial information about the structure of $G$. We refer to \cite{randomtensors} for a gentle introduction to this notion of degree as well as its property. In the present work, we will simply use the following two facts: for an open tensor graph $G$, the degree is given by
\begin{equs}\label{eq:degree}
    \delta(G)=d-C(\partial G)+(d-1)|V(G)|-|S^{int}(G)|-\frac{d-1}{2}|L^{ext}(G)|\,,
\end{equs}
and is an integer, which if it is not 0, is at least $d-2$. Null degree graphs are called \textit{melonic}, and are easily identifiable by their tree like structure. Indeed, tensor graphs are in one to one correspondence with a class of maps, under the transformation that shrinks faces of size $d-1$ to vertices and stretches the vertices to edges. Melonic graphs correspond to the trees under this transformation.
\begin{figure}[H]
    \centering    \tikzsetnextfilename{fig16}
    \begin{tikzpicture}
\draw[gray] (0,0)--(1,0);
\draw[gray] (0,1)--(1,1);
\draw[gray] (1,0)..controls(.8,.3)and(.8,.7)..(1,1);
\draw[gray] (1,0)..controls(1.2,.3)and(1.2,.7)..(1,1);
\draw[gray] (0,0)..controls(-.2,.3)and(-.2,.7)..(0,1);
\draw[gray] (0,0)..controls(.2,.3)and(.2,.7)..(0,1);
\draw[gray] (2,0)--(3,0);
\draw[gray] (2,1)--(3,1);
\draw[gray] (2,0)..controls(1.8,.3)and(1.8,.7)..(2,1);
\draw[gray] (2,0)..controls(2.2,.3)and(2.2,.7)..(2,1);
\draw[gray] (3,0)..controls(2.8,.3)and(2.8,.7)..(3,1);
\draw[gray] (3,0)..controls(3.2,.3)and(3.2,.7)..(3,1);
\draw[black,thick,densely dotted] (1,0)..controls(1.3,-.3)and(1.7,-.3)..(2,0) ;
\draw[black,thick,densely dotted] (1,1)..controls(1.3,1.3)and(1.7,1.3)..(2,1) ;
\draw[black, ultra thick] (7,.5)--(6,.5);
\draw[black, ultra thick] (7,.5)--(8,.5);
\draw[black,thick,densely dotted] (0,1)arc(45:315:.71);
\draw[black,thick,densely dotted] (3,0)arc(-135:135:.71);
\node[white,circle, fill=black, draw, solid, inner sep=0pt, minimum size=8pt] at(6,.5) {};
\node[white,circle, fill=black, draw, solid, inner sep=0pt, minimum size=8pt] at(7,.5) {};
\node[white,circle, fill=black, draw, solid, inner sep=0pt, minimum size=8pt] at(8,.5) {};

\end{tikzpicture}
    \caption{A melonic graph and its tree representation}
\end{figure}
\end{definition}
\begin{remark}
Tensor graphs are in one-to-one correspondence with the stranded graphs introduced in Section~\ref{subsubsec:strandedgraphs} and we often identify them. 
In particular, the notions of link, vertex and strand match exactly when comparing a stranded graph to the tensor graph is corresponds to.
    
Writing $G$ for a stranded graph and $\tilde{G}$ for the corresponding tensor graph, note that the boundary graph $\partial \Tilde{G}$ of a tensor graph $\Tilde{G}$ is the tensor graph corresponding to the stranded graph $\partial G$ obtained by removing all internal strands from $G$. 
\begin{figure}[H]
    \centering
    \tikzsetnextfilename{fig17}
    \begin{tikzpicture}
\draw[darkgreen] (0,1)..controls(0.1,0.8)and(0.9,.8)..(1,1);
\draw[brown] (-0.05,0.05)..controls(0.1,.1)and(0.1,.9)..(-.05,.95);
\draw[red] (-0.1,0.1)..controls(0.05,.1)and(0.05,.9)..(-.1,.9);
\draw[blue] (-0.15,0.15)..controls(0.0,.1)and(0.0,.9)..(-.15,.85);
\draw[darkgreen] (0,0)..controls(0.1,.2)and(0.9,.2)..(1,0);
\draw[brown] (1.05,0.05)..controls(0.9,.1)and(0.9,.9)..(1.05,.95);
\draw[red] (1.1,0.1)..controls(.95,.1)and(0.95,.9)..(1.1,.9);
\draw[blue] (1.15,0.15)..controls(1.0,.1)and(1.0,.9)..(1.15,.85);
\draw[darkgreen] (2,0)--(3,0);
\draw[darkgreen] (2,1)--(3,1);
\draw[red] (2,0)--(2,1);
\draw[red] (3,0)--(3,1);
\draw[brown] (3,0)..controls(2.8,.3)and(2.8,.7)..(3,1);
\draw[blue] (3,0)..controls(3.2,.3)and(3.2,.7)..(3,1);
\draw[blue] (2,0)..controls(1.8,.3)and(1.8,.7)..(2,1);
\draw[brown] (2,0)..controls(2.2,.3)and(2.2,.7)..(2,1);
\draw[black,thick,densely dotted] (2,0)--(1.7,-.3) ;
\draw[black,thick,densely dotted] (3,1)--(3.3,1.3) ;
\draw[black,thick,densely dotted] (2,1)--(1.7,1.3) ;
\draw[black,thick,densely dotted] (3,0)--(3.3,-.3) ;
\end{tikzpicture}
    \caption{A vertex as a stranded graph vertex and also as a tensor graph}
\end{figure}
\end{remark}
Thinking in terms of tensor graphs and stranded graphs clarifies the definition of $l_\alpha$ and $l_\beta$ when $\tau$ is of type II. 
Indeed, these two links are actually one or $d-1$ of the edges constituting the rooted vertex of the tensor graph, as was shown on Figure~\ref{fig:gluing}.
\begin{remark}
    In two dimensions, stranded graphs are ribbon graphs. It turns out that the degree $\delta$ of a tensor graph $G$ is equal to the genus of the ribbon graph corresponding to $G$, and is thus a topological quantity. In higher dimension, however, the degree is not a topological invariant, even though melonic graphs are a particular class of tensor graphs that embed in the $d$-sphere.
\end{remark}
We have thus seen that for any object $\tau$, \eqref{eq:amplitude} is bounded by a sum indexed by $\G_\tau$, a certain class of closed tensor graphs. The number of vertices and some links of the elements of $\G_\tau$ are directly fixed by the structure of $\tau$ (the links $l_\alpha$, $l_\beta$, and the links corresponding to the time lines), while the remaining links correspond to the possible contractions of the noises. In the sequel, we call skeleton graph of $\tau$ the graph corresponding to $\sum_{n\in\Z^{d_1},m\in\Z^{d_2}}\angp{n}{-2\alpha}\angp{m}{2\beta}\hat{\tau}_{m,n}\hat{\tau}_{-m,-n}$ before the expectation is taken. Below we present the skeleton graph of $\Xtwom$ as a random operator on the left and the skeleton graph of $\Xthreeloc$ on the right: 
\begin{figure}[H]
    \centering 
\tikzsetnextfilename{fig18}
\begin{tikzpicture}
\draw[gray] (0,0)--(0,1);
\draw[gray] (1,0)--(1,1);
\draw[gray] (0,1)..controls(.3,1.2)and(.7,1.2)..(1,1);
\draw[gray] (0,1)..controls(.3,.8)and(.7,.8)..(1,1);
\draw[gray] (0,0)..controls(.3,.2)and(.7,.2)..(1,0);
\draw[gray] (0,0)..controls(.3,-.2)and(.7,-.2)..(1,0);
\draw[black,thick,densely dotted] (0,0)--(-.3,-.3) node[black,circle, fill=black, draw, solid, inner sep=0pt, minimum size=2pt] {};
\draw[black,thick,densely dotted] (0,1)--(-.3,1.3) node[black,circle, fill, draw, solid, inner sep=0pt, minimum size=2pt] {};
\draw[black,thick,densely dotted] (1,1)--(1.3,1.3) node[black,circle, fill, draw, solid, inner sep=0pt, minimum size=2pt] {};
\draw[black,thick,densely dotted] (1,0)--(1.3,-.3) node[black,circle, fill, draw, solid, inner sep=0pt, minimum size=2pt] {};
\node[black] at (-.2,.5){$l_\alpha$};
\node[black] at (1.2,.5){$l_\beta$};
\node[] at (0,-.7){$ $};
\end{tikzpicture}
\qquad 
\qquad 
\qquad 
\tikzsetnextfilename{fig19}
\begin{tikzpicture}
\draw[gray] (0,0)--(0,1);
\draw[gray] (1,0)--(1,1);
\draw[gray] (0,1)..controls(.3,1.2)and(.7,1.2)..(1,1);
\draw[gray] (0,1)..controls(.3,.8)and(.7,.8)..(1,1);
\draw[gray] (0,0)..controls(.3,.2)and(.7,.2)..(1,0);
\draw[gray] (0,0)..controls(.3,-.2)and(.7,-.2)..(1,0);
\draw[black,thick,densely dotted] (0,0)--(-.3,-.3) node[black,circle, fill=black, draw, solid, inner sep=0pt, minimum size=2pt] {};
\draw[black,thick,densely dotted] (0,1)--(-.3,1.3) node[black,circle, fill, draw, solid, inner sep=0pt, minimum size=2pt] {};
\draw[black,thick,densely dotted] (1,1)--(1.3,1.3) node[black,circle, fill, draw, solid, inner sep=0pt, minimum size=2pt] {};
\draw[gray] (2,0)--(2,1);
\draw[gray] (3,0)--(3,1);
\draw[gray] (2,1)..controls(2.3,1.2)and(2.7,1.2)..(3,1);
\draw[gray] (2,1)..controls(2.3,.8)and(2.7,.8)..(3,1);
\draw[gray] (2,0)..controls(2.3,.2)and(2.7,.2)..(3,0);
\draw[gray] (2,0)..controls(2.3,-.2)and(2.7,-.2)..(3,0);
\draw[black,thick,densely dotted] (1,0)..controls(1.3,-.3)and(1.7,-.3)..(2,0) ;
\draw[black,thick,densely dotted] (2,1)--(1.7,1.3) node[black,circle, fill, draw, solid, inner sep=0pt, minimum size=2pt] {};
\draw[black,thick,densely dotted] (3,1)--(3.3,1.3) node[black,circle, fill, draw, solid, inner sep=0pt, minimum size=2pt] {};
\draw[black,thick,densely dotted] (3,0)--(3.3,-.3) node[black,circle, fill, draw, solid, inner sep=0pt, minimum size=2pt] {};
\node[black] at (1.5,-.5){$l_\beta$};
\end{tikzpicture}   
\end{figure}
We have now gathered all the notations we needed in order to obtain a good upper bound on $\Tilde{A}(G)$. This is done through a multiscale analysis, which is introduced in the next section.
\subsection{Multiscale analysis}\label{sec:6_5}
\begin{definition}
An open tensor graph $G\in\G_\tau$ furthermore comes with a \textit{renormalized superficial degree of divergence} $\omega(G)$ defined as
\begin{equs}
        \omega(G)(\alpha,\beta)=&-2\sum_{l\in L^{int}(G)}\ell(l)+|S^{int}(G)|-2\1\{G=\mfM^1 \text{ }
        \mathrm{ or }\text{ }\mfM^2\}\,.
\end{equs}
Note that is the case where $\tau=\Sym$, $\omega(G)$ also depends on $\eta$, but we choose to drop this dependence since $\eta$ will be taken arbitrarily close to one. 
This definition reflects the fact that each internal strand comes with a sum over $\Z$, while each internal link brings a factor $\angp{m_l}{-2\ell(l)}$, and each renormalized subgraph has a power counting improved by two. This degree of divergence is superficial in the sens that its negativity is a necessary but not sufficient condition for $\tA(G)$ to be convergent. Moreover, the degree of divergence rewrites as
\begin{equs} \nonumber   \omega(G)(\alpha,\beta)&\eqdef d-(5-d)|V(G)|-\frac{d-3}{2}|L^{ext}(G)|-\delta(G)-C(\partial G)\\&\quad+2(1-\alpha)\1\{l_\alpha\in G\}+2(1+\beta)\1\{l_\beta\in G\}\label{eq:divergencedegree}+2(1-\eta)|\tL^i(G)|\1\{\tau=\Sym\}\textcolor{white}{blablab}\\&\quad-2\1\{G=\mfM^1\text{ or }\mfM^2\}\,,\nonumber
\end{equs}
and we say that $G$ is \textit{convergent} (resp. \textit{divergent}) if $\omega(G)$ is strictly negative (resp. positive or null). Note that the second expression is obtained by injecting the definition of the degree \eqref{eq:degree} in \eqref{eq:divergencedegree} and using the combinatorial relation $4|V(G)|=2|L^{int}(G)|+|L^{ext}(G)|$ as well as the definition of $\ell$, \eqref{eq:defell}. 
These computations also justify \eqref{eq:perturbative_degree_tensor} in Remark~\ref{remark:DPD}.
\end{definition}

To obtain good upper bounds on amplitudes we slice ``propagators" across scales. 
\begin{lemma}[Multiscale decomposition]   Pick $\ell\in(0,4]$. There exists a numerical constant $C>0$ such that for every $m\in\Z^d$, 
    \begin{equs}\label{eq:slacing1}
        \angp{m}{-2\ell}\lesssim \sum_{k\geqslant-1}2^{-2k\ell}e^{-C2^{-k}\sum_{c\in[d]} |m_c|}\,.
    \end{equs}
Moreover, for $\ell\in[0,4]$ and $k_0\geqslant0$, we have
\begin{equs}\label{eq:slacing2}
    e^{-2^{-2k_0}\angp{m}{2}} \angp{m}{-2\ell}\lesssim \sum_{k\leqslant k_0-1}2^{-2k\ell}e^{-C2^{-k}\sum_{c\in[d]} |m_c|}\,.
\end{equs}
\end{lemma}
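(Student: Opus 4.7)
The plan is to use the standard gamma-function representation
\[
\angp{m}{-2\ell}=\frac{1}{\Gamma(\ell)}\int_{0}^{\infty}t^{\ell-1}e^{-t\angp{m}{2}}\,\mathrm{d}t,
\]
combined with a pointwise heat-kernel estimate that converts Gaussian decay in $|m|^{2}$ into exponential decay in $\sum_{c}|m_{c}|$. The crucial one-variable bound is $e^{-u^{2}}\leqslant e^{1/4}e^{-u}$ for all $u\geqslant 0$, obtained by checking that the quadratic $u\mapsto u^{2}-u+1/4$ is nonnegative. Applying this with $u=\sqrt{t}|m_{c}|$ and taking a product over $c\in[d]$ yields
\[
e^{-t|m|^{2}}\leqslant e^{d/4}e^{-\sqrt{t}\sum_{c}|m_{c}|}\qquad \text{for every }t\geqslant 0,
\]
which will be the engine of both estimates.

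For the first inequality I would dyadically slice the integral. Split $(0,\infty)=(0,1]\sqcup[1,\infty)$ and further write $(0,1]=\bigsqcup_{k\geqslant 0}[2^{-2(k+1)},2^{-2k}]$. On each dyadic slab $I_{k}$, estimate $t^{\ell-1}\lesssim 2^{-2k(\ell-1)}$ uniformly for $\ell\in(0,4]$ (controlling the two cases $\ell\geqslant 1$ and $\ell<1$ by evaluating at the appropriate endpoint), note $|I_{k}|\lesssim 2^{-2k}$, and apply the heat-kernel bound with $\sqrt{t}\geqslant 2^{-(k+1)}$. This gives
\[
\int_{I_{k}}t^{\ell-1}e^{-t\angp{m}{2}}\,\mathrm{d}t\lesssim 2^{-2k\ell}e^{-(1/2)\cdot 2^{-k}\sum_{c}|m_{c}|},
\]
which provides the $k\geqslant 0$ terms. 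The tail $\int_{1}^{\infty}t^{\ell-1}e^{-t\angp{m}{2}}\,\mathrm{d}t$ is handled by pulling out a factor $e^{-\angp{m}{2}/2}$ and bounding the remaining integral by an absolute constant (using $\ell\leqslant 4$); applying the heat-kernel bound at $t=1/2$ converts this into $e^{-c\sum_{c}|m_{c}|}$, which fits into the $k=-1$ term. Choosing $C\leqslant 1/4$ accommodates both contributions simultaneously.

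For the second inequality I would start from the first one and split the sum at $k_{0}$. The terms with $k\leqslant k_{0}-1$ yield the desired right-hand side directly, after discarding the prefactor $e^{-2^{-2k_{0}}\angp{m}{2}}\leqslant 1$. For the tail $k\geqslant k_{0}$, use $e^{-2^{-2k_{0}}\angp{m}{2}}\leqslant e^{d/4}e^{-2^{-k_{0}}\sum_{c}|m_{c}|}$ (again from the heat-kernel bound with $t=2^{-2k_{0}}$) and, when $\ell>0$, sum the geometric series $\sum_{k\geqslant k_{0}}2^{-2k\ell}\lesssim 2^{-2k_{0}\ell}$. The resulting term is bounded (up to constants depending on $\ell$) by the single $k=k_{0}-1$ contribution of the target sum, provided $C\leqslant 1/2$. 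The boundary case $\ell=0$, where the geometric series argument collapses, will be treated by hand: here the left-hand side is simply $e^{-2^{-2k_{0}}\angp{m}{2}}\leqslant e^{d/4}e^{-2^{-k_{0}}\sum_{c}|m_{c}|}$, which is dominated by the $k=k_{0}-1$ term of the right-hand side for $C\leqslant 1/2$.

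The main obstacle is purely bookkeeping: making sure the constant $C$ can be chosen uniformly for all $\ell$ in the stated range and that the different regimes ($k=-1$ tail contribution, dyadic scales $0\leqslant k\leqslant k_{0}-1$, and the absorbed tail $k\geqslant k_{0}$) are compatible with the same $C$. The $\ell=0$ endpoint of the second inequality requires a separate one-line argument since the geometric-series collapse degenerates there. Everything else is a direct consequence of the pointwise bound $e^{-t|m|^{2}}\lesssim e^{-\sqrt{t}\sum_{c}|m_{c}|}$ and dyadic slicing of the Mellin-type integral representation of $\angp{m}{-2\ell}$.
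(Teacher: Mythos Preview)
Your proposal is correct. For the first inequality, your argument is essentially identical to the paper's: both use the representation $\angp{m}{-2\ell}\propto\int_0^\infty a^{\ell-1}e^{-a\angp{m}{2}}\,\mathrm{d}a$, decompose the integration domain dyadically as $[1,\infty)\sqcup\bigsqcup_{k\geqslant 0}[2^{-2(k+1)},2^{-2k}]$, and convert the Gaussian factor into exponential decay in $\sum_{c}|m_c|$ (the paper appeals to ``equivalence of the $\ell^2$ and $\ell^1$ norms in finite dimension''; you make this explicit via the elementary bound $e^{-u^{2}}\leqslant e^{1/4}e^{-u}$).

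For the second inequality you take a genuinely different route. The paper goes back to the integral representation and argues that multiplying by the heat factor effectively truncates the integral to $\int_{2^{-2k_0}}^{\infty}a^{\ell-1}e^{-a\angp{m}{2}}\,\mathrm{d}a$, after which the same dyadic slicing applies. You instead multiply the already-proved first inequality by $e^{-2^{-2k_0}\angp{m}{2}}$, split the resulting sum at $k_0$, and absorb the tail $\sum_{k\geqslant k_0}$ into the $k=k_0-1$ term via a geometric series, with a separate one-line argument for $\ell=0$. Both approaches are valid. Yours has the advantage of handling the endpoint $\ell=0$ transparently, whereas the paper's intermediate inequality $e^{-2^{-2k_0}\angp{m}{2}}\angp{m}{-2\ell}\lesssim\int_{2^{-2k_0}}^{\infty}a^{\ell-1}e^{-a\angp{m}{2}}\,\mathrm{d}a$ is delicate for $\ell<1$ (since $\Gamma(\ell,y)/e^{-y}\to 0$ as $y\to\infty$) and requires a small adjustment to be literally correct. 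A minor cost of your route is that the geometric-series step produces a constant $\sim(1-2^{-2\ell})^{-1}$, so the implicit constant degenerates as $\ell\downarrow 0$; this is harmless in the applications, where only finitely many fixed values of $\ell$ are used.
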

\begin{proof}
We start by writing
\begin{equs}
    \angp{m}{-2\ell}&\lesssim\int_0^{+\infty}a^{\ell-1}e^{-a\angp{m}{2}}\rmd a\\&\lesssim\int_1^{+\infty}a^{\ell-1}e^{-a\angp{m}{2}}\rmd a+\sum_{k\geqslant0}\int_{2^{-2(k+1)}}^{2^{-2k}}a^{\ell-1}e^{-a\angp{m}{2}}\rmd a\,.
\end{equs}
In the second line, the first term corresponds to the slice $-1$ and is bounded by a constant while the $k$-th slice is bounded as
\begin{equs}    \int_{2^{-2(k+1)}}^{2^{-2k}}a^{\ell-1}e^{-a\angp{m}{2}}\rmd a&\lesssim 2^{-2k\ell}e^{-2^{-2(k+1)}\angp{m}{2}}\lesssim 2^{-2k\ell}e^{-C2^{-k}\sum_{c\in[d]}|m_c|}\,,
\end{equs}
where we used the equivalence of the $\ell^2$ and $\ell^1$ norms in finite dimension.

The proof of \eqref{eq:slacing2} goes the same way, starting from the observation that 
\begin{equs}
      e^{-2^{-2k_0}\angp{m}{2}} \angp{m}{-2\ell}\lesssim\int_{2^{-2k_0}}^{+\infty}a^{\ell-1}e^{-a\angp{m}{2}}\rmd a\,.
\end{equs}
Note that the integration doesn't include $a =0$ so it remains finite if $\ell=0$.
\end{proof}
\begin{lemma}[Multiscale analysis] Let $\tau$ a stochastic object and $G\in\G_\tau$. It holds
\begin{equs}
     \Tilde{A}(G)\lesssim \sum_{k_1,\dots,k_{|L(G)|}\geqslant-1}    \prod_{i\geqslant-1}\prod_{k=1}^{C(\cG^i)}2^{\omega(\cG^i_k)}\,,
    \end{equs}
    where $\cG^i$ is the subgraph of $G$ with internal link set $L^{int}(\cG^i)=\{l\in L(G):k_l\geqslant i\}$, and we denote by $(\cG^i_k)_k$ its $C(\cG^i)$ connected components. 
\end{lemma}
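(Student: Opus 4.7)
The plan is to start from the bound on $\tA(G)$ obtained in Lemma~\ref{lem:amplitude_bound} and then slice the propagator factors $\angp{m_l}{-2\ell(l)}$ across dyadic scales, following the standard multiscale strategy of constructive field theory. Recall that $\tA(G)$ is a sum over strand modes of a product of factors $\angp{m_l}{-2\ell(l)}$ associated to links in $\tL(G)$, together with the renormalized subamplitudes attached to the melonic subgraphs $M\in\tilde{\mfM}(G)$. The first step is to apply the multiscale decomposition~\eqref{eq:slacing1} to each factor $\angp{m_l}{-2\ell(l)}$, introducing a scale index $k_l\geqslant-1$ per internal link; this produces
\begin{equs}
\tA(G)\lesssim \sum_{k_1,\dots,k_{|L(G)|}\geqslant -1}\prod_{l\in \tL(G)}2^{-2k_l\ell(l)}\sum_{\{m_s\}_{s\in S(G)}}\prod_{l\in \tL(G)}e^{-C 2^{-k_l}\sum_{c\in[d]}|m_{l,c}|}\cdot (\text{melonic factors})\,.
\end{equs}
For the melonic contributions arising from $M\in\tilde{\mfM}^{1,c}(G)$ and $M\in\mfM^{2,c,c'}(G)$, I use the fact that the bounds from Lemma~\ref{lem:amplitude_bound} compare numerator differences as $\wedge$-style improvements; I apply \eqref{eq:slacing1} or \eqref{eq:slacing2} to each factor appearing in the renormalized amplitudes of $M$, and use the $\wedge$ to save a factor $2^{-2}$ at the scale of the external link of $M$ relative to the internal one, which is exactly the $-2\1\{G=\mfM^1 \text{ or }\mfM^2\}$ term in the definition of $\omega$.

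The second step is the strand summation. Because each strand $s\in S^{int}(G)$ is crossed by the same integer mode $m_s\in\Z$ along all the (internal) links it traverses, the exponential factors combine as $e^{-C 2^{-i_s}|m_s|}$ where $i_s\eqdef\min_{l\in s\cap \tL^{int}(G)}k_l$. Summing geometric series we obtain $\sum_{m_s\in\Z}e^{-C 2^{-i_s}|m_s|}\lesssim 2^{i_s}$, so that after carrying out all the strand sums the bound becomes
\begin{equs}
\tA(G)\lesssim \sum_{k_1,\dots,k_{|L(G)|}\geqslant -1}\prod_{l\in \tL(G)}2^{-2k_l\ell(l)}\prod_{s\in S^{int}(G)}2^{i_s}\cdot\prod_{M\in\mfM(G)}2^{-2}\;,
\end{equs}
where the last product records the renormalization improvement.

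The third and most delicate step is the combinatorial reorganization turning the link-scales and strand-scales into a product indexed by the sliced subgraphs $\cG^i_k$. Using the elementary identity $k_l=-1+\sum_{i\geqslant 0}\1\{i\leqslant k_l\}$ (and an analogous one for $i_s$), I rewrite
\begin{equs}
\prod_{l\in \tL(G)}2^{-2k_l\ell(l)}=\prod_l 2^{2\ell(l)}\prod_{i\geqslant 0}\prod_{l\in L^{int}(\cG^i)}2^{-2\ell(l)}\,,\quad \prod_{s\in S^{int}(G)}2^{i_s}\lesssim\prod_{i\geqslant 0}2^{|S^{int}(\cG^i)|}\,,
\end{equs}
where I used that $l\in L^{int}(\cG^i)$ iff $k_l\geqslant i$ and that $s\in S^{int}(\cG^i)$ iff $i_s\geqslant i$ (so that $s$ remains an internal strand after removing links of scale $<i$). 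Splitting $\cG^i$ into its connected components and recognising $-2\sum_{l\in L^{int}(\cG^i_k)}\ell(l)+|S^{int}(\cG^i_k)|-2\1\{\cG^i_k\in\{\mfM^1,\mfM^2\}\}$ as precisely $\omega(\cG^i_k)$, one arrives at the desired bound.

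The main obstacle here is the bookkeeping of the renormalization improvement: one must verify that the melonic subgraphs $M\in\tilde{\mfM}(G)$ indeed survive as connected components $\cG^i_k$ at the appropriate scale $i$, and that the $-2$ improvement coming from the numerator differences in $\mfR^{1,c}$ and $\mfR^{2,c,c'}$ is accounted for exactly once, namely at the scale at which $M$ is first isolated. This requires carefully choosing which factor in the product over scales receives the $\wedge$-improvement; the natural choice is to attach it to the coarsest scale at which $M$ becomes a connected component of $\cG^i$, so that it appears in exactly one $\omega(\cG^i_k)$ summand as desired.
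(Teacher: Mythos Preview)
Your overall strategy is right and matches the paper's: slice each propagator into dyadic scales, sum the strand modes to get $2^{i_s}$ per internal strand, and reorganise everything as a product over the connected components of the high-scale subgraphs $\cG^i$. The strand summation and the telescoping identities you write are exactly what the paper does.

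There is, however, a genuine gap in your treatment of the renormalized melonic subgraphs. You write the renormalization gain as a single constant factor $\prod_{M\in\mfM(G)}2^{-2}$ and then, in your last paragraph, propose to attach the $-2$ improvement ``exactly once, namely at the coarsest scale at which $M$ becomes a connected component''. That is not what the statement asserts, and it is not enough. In the bound $\prod_{i}\prod_k 2^{\omega(\cG^i_k)}$ the term $-2\,\1\{\cG^i_k\in\{\mfM^1,\mfM^2\}\}$ contributes at \emph{every} scale $i$ for which $M$ is an entire connected component of $\cG^i$, i.e.\ for all $i$ with $k_e<i\leqslant k_i$ (here $k_i$ is the scale of the internal link of $M$ and $k_e$ that of its external link). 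The total gain is therefore $2^{-2((k_i-k_e)\vee 0)}$, not $2^{-2}$. This is precisely what the $\wedge$-structure in the renormalized amplitude delivers: the paper splits
\[
\frac{\angp{m_{M,e}}{2}\wedge\angp{m_{M,i}}{2}}{\angp{m_{M,e}}{2\ell(l_{e,1})}\angp{m_{M,i}}{2}\angp{m_{M,i,\hc}}{2}}
\]
according to which of $\angp{m_{M,e}}{}$, $\angp{m_{M,i}}{}$ is smaller, slices one factor with \eqref{eq:slacing1} and uses the indicator on the other to invoke \eqref{eq:slacing2}, and thereby obtains the scale-dependent factor $2^{2(k_i\wedge k_e-k_i)}=\prod_{i=k_e+1}^{k_i}2^{-2}=\prod_{i:\exists k,\,M=\cG^i_k}2^{-2}$. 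With only a single $2^{-2}$, the sum over $k_i$ (equivalently, the sum over $m_{M,i,\hc}\in\Z^{d-1}$ with the tadpole propagator $\angp{m_{M,i,\hc}}{-2}$) would diverge for $d\geqslant 3$, so the bound you wrote after step two is actually infinite. The fix is exactly to replace ``exactly once'' by ``at every scale at which $M$ is isolated'', and to derive this from the $\wedge$ by the two-case slicing argument just described; the same mechanism, iterated, handles $\mfM^2$.
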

\begin{proof}
Recall the expression of $\tA(G)$ in \eqref{eq:A(G)}. We first deal with the contribution of the renormalized subgraphs, and begin with the study the possible subgraphs of the form $\Tilde{\mfM}^1$. Pick $M\in\Tilde{\mfM}^{1,c}(G)$. The contribution of $M\cup \{l_{e,1}(M)\}$ to $\tA(G)$ is given by
\begin{equs}
     &\frac{\angp{m_{M,e}}{2}\wedge\angp{m_{M,i}}{2}}{\angp{m_{M,e}}{2\ell(l_{e,1})    
     }\angp{m_{M,i}}{2}\angp{m_{M,i,\hc}}{2}}\\
      &\quad=\1\{\angp{m_{M,e}}{2}<\angp{m_{M,i}}{2}\}\angp{m_{M,e}}{2(1-\ell(l_{e,1}))}\angp{m_{M,i}}{-2}\angp{m_{M,i,\hc}}{-2}\\
      &\quad\textcolor{white}{=}+\1\{\angp{m_{M,i}}{2}\leqslant\angp{m_{M,e}}{2}\}\angp{m_{M,e}}{-2\ell(l_{e,1})}\angp{m_{M,i,\hc}}{-2}
      \\
      &\quad\lesssim\1\{\angp{m_{M,e}}{2}<\angp{m_{M,i}}{2}\}\angp{m_{M,e}}{2(1-\ell(l_{e,1}))}\angp{m_{M,i,\hc}}{-4}\\
      &\quad\textcolor{white}{=}+\1\{\angp{m_{M,i,\hc}}{2}\leqslant\angp{m_{M,e}}{2}\}\angp{m_{M,e}}{-2\ell(l_{e,1})}\angp{m_{M,i,\hc}}{-2}
      \\
      &\quad\lesssim\sum_{k_i\geqslant-1}2^{-4k_i}e^{-C2^{-k_i}\sum_{c'\neq c}|m_{M,i,c'}|}\1\{\angp{m_{M,e}}{2}<\angp{m_{M,i}}{2}\}\angp{m_{M,e}}{2(1-\ell(l_{e,1}))}\\
      &\quad\textcolor{white}{=}+\sum_{k_e\geqslant-1}2^{-2\ell(l_{e,1})k_e}e^{-C2^{-k_e}\sum_c |m_{M,e,c}|}\1\{\angp{m_{M,i,\hc}}{2}\leqslant\angp{m_{M,e}}{2}\}\angp{m_{M,i,\hc}}{-2}\,.
\end{equs}
In the first inequality, we use the fact that since $\angp{m_{M,i}}{2}\geqslant\angp{m_{M,i,\hc}}{2}$, we have 
\begin{equs}
    \1\{\angp{m_{M,i}}{2}\leqslant\angp{m_{M,e}}{2}\}\leqslant\1\{\angp{m_{M,i,\hc}}{2}\leqslant\angp{m_{M,e}}{2}\}\,,
\end{equs}
while in going from the first to the second inequality, we used \eqref{eq:slacing1}. To conclude, we bound the two indicator functions as $\1\{\angp{m_{M,e}}{2}$ $<\angp{m_{M,i}}{2}\}\lesssim
e^{-2^{-2k_i}\angp{m_{M,e}}{2}}
$ and $\1\{\angp{m_{M,i,\hc}}{2}\leqslant\angp{m_{M,e}}{2}\}\lesssim
e^{-2^{-2k_e}\angp{m_{M,i,\hc}}{2}}
$ and use \eqref{eq:slacing2}. This yields
\begin{equs}
     &\frac{\angp{m_{M,e}}{2}\wedge\angp{m_{M,i}}{2}}{\angp{m_{M,e}}{2\ell(l_{e,1})     
     }\angp{m_{M,i}}{2}\angp{m_{M,i,\hc}}{-2}}\\
      &\quad\lesssim\sum_{k_e<k_i}2^{-4k_i}e^{-C2^{-k_i}\sum_{c'\neq c}|m_{M,i,c'}|}
      2^{2(1-\ell(l_{e,1}))k_e}e^{-C2^{-k_e}\sum_{c\in[d]}|m_{M,e}|}
      \\
      &\quad\textcolor{white}{=}+\sum_{k_i\leqslant k_e}2^{-2\ell(l_{e,1})k_e}e^{-C2^{-k_e}\sum_c |m_{M,e,c}|}
      2^{-2k_i}e^{-C2^{-k_i}\sum_{c'\neq c} |m_{M,i,c'}|}\\
      &\quad\lesssim\sum_{k_i,k_e\geqslant-1}2^{-2k_i}
      2^{-2\ell(l_{e,1})
      k_e}
      2^{2(k_i\wedge k_e-k_i)}
      \\&\qquad\qquad
      e^{-C2^{-k_i}\sum_{c'\neq c}|m_{M,\hc,c'}|}
      e^{-C2^{-k_e}\sum_{c\in[d]}|m_{M,e}|}  \\
      &\quad\lesssim \sum_{k_i,k_e\geqslant-1}\prod_{l\in \{l_{i}\}\sqcup\{l_{e,1}\}}2^{-2\ell(l)k_l}\prod_{s\in S(G)}\prod_{l\in \{l_{i}\}\sqcup\{l_{e,1}\}|s\in l}e^{-C2^{-k_l}|m_s|}2^{-2((k_i-k_e)\vee 0)}\\
      &\quad\lesssim \sum_{k_i,k_e\geqslant-1}\prod_{l\in \{l_{i}\}\sqcup\{l_{e,1}\}}2^{-2\ell(l)k_l}\prod_{s\in S(G)}\prod_{l\in  \{l_{i}\}\sqcup\{l_{e,1}\}|s\in l}e^{-C2^{-k_l}|m_s|}\prod_{i=k_e+1}^{k_i}2^{-2}
      \,,
\end{equs}
where we introduced $k_{l_i(M)}=k_i$ for the internal link of $M$ and $k_{l_{e,1}(M)}=k_e$ for the external links of $M$ $l_{e,1}$. Finally, we note that $\prod_{i=k_e+1}^{k_i}2^{-2}=\prod_{i|\exists k, M=\cG^i_k}2^{-2}$. 

The proof for the graphs $M\in\mfM^{2}(G)$ is very similar so we only sketch it. 
Indeed, for $M$, one has to introduce three scales $k_e$, $k_s$ and $k_d$ that correspond to the slicing of the Fourier modes $m_{M_2,e}$, $m_{M_2,s}$ and $m_{M_2,d}$. One then has to consider separately all the six possible orderings of these scales, in the same way we did for $M\in\Tilde\mfM^1(G)$. This yields the following bound over the amplitude of $M$:
\begin{equs}
    \sum_{k_i,k_s,k_e\geqslant-1}
    \prod_{l\in L^{int}(M)\sqcup\{l_{e,1}\}}2^{-2\ell(l)k_l}
   \prod_{s\in S(G)}\prod_{l\in L^{int}(M)\sqcup\{l_{e,1}\}|s\in l}e^{-C2^{-k_l}|m_s|}\prod_{i=k_e+1}^{k_s\wedge k_d}2^{-2}\prod_{i=k_s+1}^{ k_d}2^{-2}
      \,,
\end{equs}
and we also note that $\prod_{i=k_e+1}^{k_s\wedge k_d}2^{-2}=\prod_{i|\exists k,M=\cG^i_k}2^{-2}$ and $\prod_{i=k_s+1}^{ k_d}2^{-2}=\prod_{i|\exists k,M_1(M)=\cG^i_k}2^{-2}$.

All the other links that are not in $\bigcup_{M\in\Tilde{\mfM}(G)}L^{int}(M)\cup\{l_{e,1}(M)\}$ are easily dealt with using \eqref{eq:slacing1}, and for $l$ such a link we have
\begin{equs}
\angp{m_l}{-2\ell(l)}&\lesssim\sum_{k_l\geqslant-1}2^{-2\ell(l)k_l}e^{-C2^{-k_l}\sum_{c\in[d]}|m_{l,c}|}\lesssim\sum_{k_l\geqslant-1}2^{-2\ell(l)k_l}\prod_{s\in S(G)|s\in l}e^{-C2^{-k_l}|m_s|}\,,
\end{equs}
where we say that $s\in l$ if $s$ runs through $l$. Therefore, collecting all the previous bounds together, we have obtained that 
\begin{equs}
    \tA(G)&\lesssim \sum_{m_{s_1},\dots,m_{s_{|S(G)|}}\in\Z}\sum_{k_1,\dots,k_{|L(G)|}\geqslant-1}\prod_{L\in L(G)}2^{-2\ell(l)k_l}\\    
    &\quad\prod_{s\in S(G)}\prod_{l\in L(G)|s\in l}e^{-C2^{-k_l}|m_s|}
    \prod_{M\in\mfM(G)}\prod_{i|\exists k,M=\cG^i_k}2^{-2}\,.
\end{equs}
We first rewrite 
\begin{equs}
    \prod_{M\in\mfM(G)}\prod_{i|\exists k,M=\cG^i_k}2^{-2}=\prod_{i\geqslant-1}\prod_{k=1}^{C(\cG^i)}2^{-2\1\{\cG^i_k=\mfM^1 \text{ or }\mfM^2\}}\,.
\end{equs}
Then, 
\begin{equs}
    \sum_{m_s\in\Z}\prod_{l\in L(G)|s\in l}&e^{-C2^{-k_l}|m_s|}\lesssim\sum_{p\in\N}e^{-C\sum_{l\in L(G)|s\in l}2^{-k_l}p}    \lesssim \frac{1}{\sum_{l\in L(G)|s\in l}2^{-k_l}}\lesssim 2^{k_s}\,,
\end{equs}
where $k_s\eqdef\min_{l\in L(G)|s\in l}k_l$. Moreover, 
\begin{equs}
    \prod_{s\in S(G)}2^{k_s}\lesssim\prod_{i\geqslant-1}\prod_{s\in S(G)|k_s\geqslant i}2^{1}\,,
\end{equs}
and since the condition $k_s\geqslant i$ implies that $s$ is an internal strand of some $\cG^i_k$, we finally have
\begin{equs}
    \prod_{s\in S(G)}2^{k_s}\lesssim\prod_{i\geqslant-1}\prod_{k=1}^{C(\cG^i)}2^{|S^{int}(\cG^i_k)|}\,.
\end{equs}
Let us finally rewrite, observing that the condition $k_l\geqslant i$ is equivalent to $l\in L^{int}(\cG^i_k)$ for some $k$,
\begin{equs}
    \prod_{l\in L(G)}2^{-2\ell(l)k_l}&=\prod_{l\in L(G)}\prod_{i=1}^{k_l}2^{-2\ell(l)}\lesssim\prod_{i\geqslant-1}\prod_{l\in L(G)|k_l\geqslant i}2^{-2\ell(l)}\lesssim\prod_{i\geqslant-1}\prod_{k=1}^{C(\cG^i)}2^{-2\sum_{l\in L^{int}(\cG^i_k)}\ell(l)}
\end{equs}
which proves the statement. 
\end{proof}

We immediately have the following corollary.
\begin{corollary}\label{coro:multiscale}
    If $G\in\G_\tau$, then $\tA(G)$ is finite if for all $i\geqslant-1$ and $k\in[C(\cG^i)]$, we have
    \begin{equs}   \max_{\cG^i_k|l_\alpha\in\cG^i_k\cap l_\beta\notin\cG^i_k}     \omega(\cG^i_k)(\alpha)<0\,,
   \\   \max_{\cG^i_k|l_\alpha\notin\cG^i_k\cap l_\beta\in\cG^i_k}     \omega(\cG^i_k)(\beta)<0\,,
   \\   \max_{\cG^i_k|l_\alpha\in\cG^i_k\cap l_\beta\in\cG^i_k}     \omega(\cG^i_k)(\alpha,\beta)<0\,.
    \end{equs} 
\end{corollary}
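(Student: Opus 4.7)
The plan is as follows. From the preceding multiscale lemma we already have the bound
\begin{equs}
\tA(G) \lesssim \sum_{k_1,\ldots,k_{|L(G)|} \geqslant -1}\; \prod_{i\geqslant -1}\prod_{k=1}^{C(\cG^i)} 2^{\omega(\cG^i_k)}\,,
\end{equs}
so it suffices to prove that, under the three negativity hypotheses in the statement, this multi-sum over scale attributions is finite. The central structural observation is that, for each attribution $(k_l)_{l\in L(G)}$, the family $\{\cG^i_k : i\geqslant -1,\; k\in[C(\cG^i)]\}$ carries a natural rooted-forest structure under inclusion: as $i$ grows, the internal link set $L^{int}(\cG^i)=\{l: k_l\geqslant i\}$ shrinks, so every connected component $\cG^{i+1}_{k'}$ is contained in a unique $\cG^i_k$. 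The roots of the forest are the connected components of $G=\cG^{-1}$, and the branches die out once $i$ exceeds $\max_l k_l$.

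I would then perform the sum inductively over this forest, from the leaves up to the roots, by the standard geometric-series argument of constructive field theory. Fix a leaf quasi-local subgraph $\cG^{i_0}_k$: its \emph{relative} scale, defined as the jump between the scale of its parent in the forest and $i_0$, appears in the product through a factor that behaves like a geometric series of ratio $2^{\omega(\cG^{i_0}_k)}$; by the negativity hypothesis this ratio is strictly less than $1$, so the sum over the relative scale converges and leaves behind a smaller forest. Iterating this procedure up the forest yields a finite, explicit bound. The three cases listed in the statement precisely correspond to the possible configurations of the distinguished links $l_\alpha,l_\beta$ inside $\cG^i_k$; the fourth case (neither link inside) yields a degree that is independent of the parameters $\alpha,\beta$ and must also be assumed or shown to be negative, which will be an automatic by-product of the power-counting already encoded in the definition of $\omega$ for the subgraphs arising from $\G_\tau$.

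The main obstacle is the bookkeeping: at each step of the induction one must track which of the links $l_\alpha, l_\beta$ belong to the current subgraph, and one must correctly interpret the renormalization correction $-2\,\1\{G=\mfM^1\text{ or }\mfM^2\}$ in $\omega$. This correction is what turns an a priori divergent tadpole or snowball subgraph into a convergent one after the Taylor-subtraction performed at the level of the amplitudes of $\Tilde\mfM^1(G)$ and $\mfM^2(G)$; concretely the extra $2^{-2}$ factor on scales where such a subgraph appears as a quasi-local one is exactly what allows the corresponding geometric series to converge. Once this bookkeeping is set up carefully, the induction terminates with a bound that is uniform in the regulator $N$, which in view of \eqref{eq:BoundUnif1}--\eqref{eq:boundUnif3} gives the desired finiteness of $\tA(G)$ and hence proves the corollary.
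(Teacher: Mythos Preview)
Your proposal is correct and is precisely the standard Gallavotti--Nicol\`o tree argument that the paper has in mind; indeed the paper gives no proof at all for this corollary, simply writing ``We immediately have the following corollary'' after the multiscale lemma. Your sketch supplies exactly the missing detail: the nested inclusion of the $\cG^i_k$ forms a rooted forest, and summing the geometric series in the scale jumps from leaves to root converges because each ratio is $2^{\omega(\cG^i_k)}<1$. Your observation about the ``fourth case'' (subgraphs containing neither $l_\alpha$ nor $l_\beta$) is also right and worth making explicit: in the super-renormalizable regime $d\leqslant 4$ the only subgraphs with non-negative unrenormalized degree are the melonic tadpole $\mfM^1$ and snowball $\mfM^2$, and these receive the $-2$ correction in $\omega$, so every such subgraph is automatically convergent --- this is why the paper omits that condition from the statement.
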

The proof of the regularities of the different stochastic objects reduces to identifying the worst of the possible subgraphs (including the full graphs) of all the graphs in $\G_\tau$ containing $l_\alpha$ and/or $l_\beta$. For the smallest objects, we exhibit all the possible contraction.
For bigger objects, we will describe the most divergent subgraph, and show that it is indeed maximal. All the worst contribution have a connected boundary graph if they are not a vacuum graph, so that the superficial degree of divergence of a subgraph $G$ is fully characterized by the triplet $\big(|V(G)|,|L^{ext}(G)|,\delta(G)\big)$. We therefore often use this quantity to refer to the graph, in order to lighten the notations. Moreover, we call $n$-point graph a graph $G$ such that $|L^{ext}(G)|=n$, and vacuum graph a zero-point graph. For example, we will call a melonic two-vertex four-point graph a graph $(2,4,0)$.

\subsection{Moment estimates for 
\TitleEquation{\Xtwom}{Xtwom}, 
\enskip \TitleEquation{\Xtwonm}{Xtwonm},
\enskip \TitleEquation{\Xthreeloc}{Xthreeloc},
\enskip \TitleEquation{\XdotX}{XdotX},
\enskip \TitleEquation{\XdotX}{XdotX},
\enskip \TitleEquation{\XtwoPictwo}{XtwoPictwo} }\label{sec:proofs}
In this section, we complete the diagrammatic estimates for the listed stochastic objects. 

As described in Lemma~\ref{lem:amplitude_bound}, BG objects obey the same power-counting, and using a Kolmogorov estimate for space regularity gives the necessary estimates on the symbols for Lemma~\ref{lem:BGreg}. 

The last estimate from Lemma~\ref{prop:stochastic_1} is proven in Section~\ref{subsubsec:regX3}, while the rest of the estimates follow from Lemma~\ref{lem:randop1} which is proven in Sections~\ref{subsubsec:regX2m}, \ref{subsubsec:regX2nm} and \ref{subsubsec:regQuadratic}. 
Lemma~\ref{lem:randfi2} is proven in Section~\ref{subsubsec:regQuintic}, Lemma~\ref{lem:randfi4} in Section~\ref{subsubsec:S} and Lemma~\ref{lem:randop2} in \ref{subsubsec:regQuartic}.

In Section~\ref{subsubsec:regRoughAnsatz} we argue that the relevant estimates in Lemmas~\ref{lem:randop2} and \ref{lem:randop_4} hold for the random operators defined by replacing instances of $\X$ with the random shift $\bX$. 
As described in Lemma~\ref{lem:amplitude_bound}, BG objects obey the same power-counting, and using a Kolmogorov estimate for space regularity gives Lemma~\ref{lem:BGreg}.

\subsubsection{The melonic second Wick power \TitleEquation{\Xtwom}{Xtwom}}\label{subsubsec:regX2m}
The renormalization of $\Xtwom$ has been described in Lemma~\ref{lem:CT1}, but now we turn to quantifying its regularity \dash to do this we study all the possible subgraphs of all the graphs that can be made of its skeleton graph.
\begin{figure}[H]
    \centering 
\tikzsetnextfilename{fig20}  
\begin{tikzpicture}
\draw[gray] (0,0)--(0,1);
\draw[gray] (1,0)--(1,1);
\draw[gray] (0,1)..controls(.3,1.2)and(.7,1.2)..(1,1);
\draw[gray] (0,1)..controls(.3,.8)and(.7,.8)..(1,1);
\draw[gray] (0,0)..controls(.3,.2)and(.7,.2)..(1,0);
\draw[gray] (0,0)..controls(.3,-.2)and(.7,-.2)..(1,0);
\draw[black,thick,densely dotted] (0,0)--(-.3,-.3) node[black,circle, fill=black, draw, solid, inner sep=0pt, minimum size=2pt] {};
\draw[black,thick,densely dotted] (0,1)--(-.3,1.3) node[black,circle, fill, draw, solid, inner sep=0pt, minimum size=2pt] {};
\draw[black,thick,densely dotted] (1,1)--(1.3,1.3) node[black,circle, fill, draw, solid, inner sep=0pt, minimum size=2pt] {};
\draw[black,thick,densely dotted] (1,0)--(1.3,-.3) node[black,circle, fill, draw, solid, inner sep=0pt, minimum size=2pt] {};
\node[black] at (-.2,.5){$l_\alpha$};
\node[black] at (1.2,.5){$l_\beta$};
\end{tikzpicture}
 \caption{Skeleton graph of the melonic second Wick power}
\end{figure}
This skeleton graph can give rise to the three following contractions:
\begin{figure}[H]
    \centering  
    \tikzsetnextfilename{fig21} 
   \begin{tikzpicture}
\draw[gray] (0,0)--(0,1);
\draw[gray] (1,0)--(1,1);
\draw[gray] (0,1)..controls(.3,1.2)and(.7,1.2)..(1,1);
\draw[gray] (0,1)..controls(.3,.8)and(.7,.8)..(1,1);
\draw[gray] (0,0)..controls(.3,.2)and(.7,.2)..(1,0);
\draw[gray] (0,0)..controls(.3,-.2)and(.7,-.2)..(1,0);
\draw[black,thick,densely dotted] (0,1)..controls(0.05,1.7)and(.95,1.7)..(1,1);
\draw[black,thick,densely dotted] (0,0)..controls(0.05,-.7)and(.95,-.7)..(1,0);
\node[black] at (-.2,.5){$l_\alpha$};
\node[black] at (1.2,.5){$l_\beta$};
\draw[gray] (2,0)--(2,1);
\draw[gray] (3,0)--(3,1);
\draw[gray] (2,1)..controls(2.3,1.2)and(2.7,1.2)..(3,1);
\draw[gray] (2,1)..controls(2.3,.8)and(2.7,.8)..(3,1);
\draw[gray] (2,0)..controls(2.3,.2)and(2.7,.2)..(3,0);
\draw[gray] (2,0)..controls(2.3,-.2)and(2.7,-.2)..(3,0);
\draw[black,thick,densely dotted] (2,0)..controls(1.3,0.05)and(1.3,.95)..(2,1);
\draw[black,thick,densely dotted] (3,0)..controls(3.7,0.05)and(3.7,.95)..(3,1);
\node[black] at (1.8,0.5){$l_\alpha$};
\node[black] at (3.2,0.5){$l_\beta$};
\draw[gray] (4,0)--(4,1);
\draw[gray] (5,0)--(5,1);
\draw[gray] (4,1)..controls(4.3,1.2)and(4.7,1.2)..(5,1);
\draw[gray] (4,1)..controls(4.3,.8)and(4.7,.8)..(5,1);
\draw[gray] (4,0)..controls(4.3,.2)and(4.7,.2)..(5,0);
\draw[gray] (4,0)..controls(4.3,-.2)and(4.7,-.2)..(5,0);
\draw[black,thick,densely dotted] (4,0)--(5,1);
\draw[black,thick,densely dotted] (5,0)--(4,1);
\node[black] at (3.8,.5){$l_\alpha$};
\node[black] at (5.2,.5){$l_\beta$};
\end{tikzpicture} 
 \caption{The three contractions: they have respectively degree 0, $d-2$ and $d-1$}\label{fig:Xtwom}
\end{figure}
This object is small enough for the bound \eqref{eq:A(G)} to be estimated without multiscale analysis, we write the amplitudes of the first two graphs in Figure~\ref{fig:Xtwom}.

The melonic contraction $(1,0,0)$ is thus the maximal subgraph that contains both $l_\alpha$ and $l_\beta$, and its amplitude \eqref{eq:A(G)} rewrites as
\begin{equs}
    \sum_{m\in\Z,a\in\Z^{d-1},b\in\Z^{d-1}}\angp{m}{-2\alpha+2\beta}\frac{\angp{m}{2}\wedge\angp{(a,m)}{2}}{\angp{(a,m)}{2}\angp{a}{2}}\frac{\angp{m}{2}\wedge\angp{(b,m)}{2}}{\angp{(b,m)}{2}\angp{b}{2}}\,.
\end{equs}
We see that the sum is convergent if and only if
\begin{equs}
    \beta<\alpha+|\Xtwom|\,.
\end{equs}
The only contraction with a subgraph containing only $l_\alpha/l_\beta$ is the graph  $(1,0,d-2)$, which has a subgraph $(1,2,d-2)$ that contains $l_\alpha/l_\beta$. Its amplitude is bounded as 
\begin{equs}
    \sum_{m\in\Z,n\in\Z,a\in\Z^{d-1}}\angp{(a,m)}{-2}\angp{(a,n)}{-2}\angp{m}{-2\alpha}\angp{n}{2\beta}\,.
\end{equs}
For this sum to be convergent, the conditions $2\beta-2\alpha+d-3<0$,
\begin{equs}
    \beta<1/2,\text{ and }\alpha>-1/2
\end{equs}
have to be fulfilled. Indeed, it may be that the sums over $n$ and $a$ are convergent, even without the factor $\angp{(a,m)}{-2}$, so that we would be left with $\sum_{m\in\Z}\angp{m}{-2-2\alpha}$, which indeed requires $\alpha<-\frac{1}{2}$.
\subsubsection{The non-melonic second Wick power \TitleEquation{\Xtwonm}{Xtwonm}}\label{subsubsec:regX2nm}
For the rest of our estimates we leverage the multiscale bound of Corollary~\ref{coro:multiscale}. 
We therefore identify all the possible graphs made with the skeleton graph, along with all their possible subgraphs, and compute their divergence degree using \eqref{eq:divergencedegree}.

This object doesn't involve any renormalization and we turn to its regularity properties.
\begin{figure}[H]
    \centering  
\tikzsetnextfilename{fig22}     
\begin{tikzpicture}
\draw[gray] (0,0)--(0,1);
\draw[gray] (1,0)--(1,1);
\draw[gray] (0,1)..controls(.3,1.2)and(.7,1.2)..(1,1);
\draw[gray] (0,1)..controls(.3,.8)and(.7,.8)..(1,1);
\draw[gray] (0,0)..controls(.3,.2)and(.7,.2)..(1,0);
\draw[gray] (0,0)..controls(.3,-.2)and(.7,-.2)..(1,0);
\draw[black,thick,densely dotted] (0,0)--(-.3,-.3) node[black,circle, fill=black, draw, solid, inner sep=0pt, minimum size=2pt] {};
\draw[black,thick,densely dotted] (0,1)--(-.3,1.3) node[black,circle, fill, draw, solid, inner sep=0pt, minimum size=2pt] {};
\draw[black,thick,densely dotted] (1,1)--(1.3,1.3) node[black,circle, fill, draw, solid, inner sep=0pt, minimum size=2pt] {};
\draw[black,thick,densely dotted] (1,0)--(1.3,-.3) node[black,circle, fill, draw, solid, inner sep=0pt, minimum size=2pt] {};
\node[black] at (.5,1.35){$l_\alpha$};
\node[black] at (.5,-.35){$l_\beta$};
\end{tikzpicture}
 \caption{Skeleton graph of the non-melonic second Wick power}
\end{figure}
As its melonic counterpart, this symbol gives rise to the three one-vertex vacuum graphs.
\begin{figure}[H]
    \centering
    \tikzsetnextfilename{fig23}
    \begin{tikzpicture}
\draw[gray] (0,0)--(0,1);
\draw[gray] (1,0)--(1,1);
\draw[gray] (0,1)..controls(.3,1.2)and(.7,1.2)..(1,1);
\draw[gray] (0,1)..controls(.3,.8)and(.7,.8)..(1,1);
\draw[gray] (0,0)..controls(.3,.2)and(.7,.2)..(1,0);
\draw[gray] (0,0)..controls(.3,-.2)and(.7,-.2)..(1,0);
\draw[black,thick,densely dotted] (0,1)..controls(0.05,1.7)and(.95,1.7)..(1,1);
\draw[black,thick,densely dotted] (0,0)..controls(0.05,-.7)and(.95,-.7)..(1,0);
\node[black] at (2.5,1.35){$l_\alpha$};
\node[black] at (2.5,-.35){$l_\beta$};
\draw[gray] (2,0)--(2,1);
\draw[gray] (3,0)--(3,1);
\draw[gray] (2,1)..controls(2.3,1.2)and(2.7,1.2)..(3,1);
\draw[gray] (2,1)..controls(2.3,.8)and(2.7,.8)..(3,1);
\draw[gray] (2,0)..controls(2.3,.2)and(2.7,.2)..(3,0);
\draw[gray] (2,0)..controls(2.3,-.2)and(2.7,-.2)..(3,0);
\draw[black,thick,densely dotted] (2,0)..controls(1.3,0.05)and(1.3,.95)..(2,1);
\draw[black,thick,densely dotted] (3,0)..controls(3.7,0.05)and(3.7,.95)..(3,1);
\node[black] at (.5,1.35){$l_\alpha$};
\node[black] at (.5,-.35){$l_\beta$};
\draw[gray] (4,0)--(4,1);
\draw[gray] (5,0)--(5,1);
\draw[gray] (4,1)..controls(4.3,1.2)and(4.7,1.2)..(5,1);
\draw[gray] (4,1)..controls(4.3,.8)and(4.7,.8)..(5,1);
\draw[gray] (4,0)..controls(4.3,.2)and(4.7,.2)..(5,0);
\draw[gray] (4,0)..controls(4.3,-.2)and(4.7,-.2)..(5,0);
\draw[black,thick,densely dotted] (4,0)--(5,1);
\draw[black,thick,densely dotted] (5,0)--(4,1);
\node[black] at (4.5,1.35){$l_\alpha$};
\node[black] at (4.5,-.35){$l_\beta$};
\end{tikzpicture} 
 \caption{The three contractions}
\end{figure}
Once again, the melonic contraction $(1,0,0)$ is the maximal subgraph that contains both $l_\beta$ and $l_\alpha$ \dash this yields the constraint
 \begin{equs}
    \beta<\alpha+|\Xtwonm|\,.
\end{equs}
It turns out that it also has subgraph $(1,2,0)$ that only contains $l_\alpha/l_\beta$ (observe that here, if there is a subgraph that seems to be a melonic tadpole, it is actually not the case, because it containes $l_\alpha/l_\beta$, so that it does not require renormalization, and does not benefit from the +2 effect in the power counting coming from renormalization) and yields the constraints
\begin{equs}
    \beta<-\frac{d-3}{2} \text{ and }\alpha>\frac{d-3}{2}\,.
\end{equs}
Note that this is the only subgraph of the three graphs that contains $l_\alpha/l_\beta$.
\subsubsection{The cubic random field  \TitleEquation{\Xthreeloc}{Xthreeloc}}\label{subsubsec:regX3}
The renormalization of $\Xthreeloc$ is given by a``Wick'' renormalization coming from the rightmost two factors of $\X$ in this product, corresponding to the graph in $\mfM^{1}$ and the counterterm $\mfC^1$.
Below we draw the corresponding skeleton graph. 
\begin{figure}[H]
    \centering 
\tikzsetnextfilename{fig24}    
\begin{tikzpicture}
\draw[gray] (0,0)--(0,1);
\draw[gray] (1,0)--(1,1);
\draw[gray] (0,1)..controls(.3,1.2)and(.7,1.2)..(1,1);
\draw[gray] (0,1)..controls(.3,.8)and(.7,.8)..(1,1);
\draw[gray] (0,0)..controls(.3,.2)and(.7,.2)..(1,0);
\draw[gray] (0,0)..controls(.3,-.2)and(.7,-.2)..(1,0);
\draw[black,thick,densely dotted] (0,0)--(-.3,-.3) node[black,circle, fill=black, draw, solid, inner sep=0pt, minimum size=2pt] {};
\draw[black,thick,densely dotted] (0,1)--(-.3,1.3) node[black,circle, fill, draw, solid, inner sep=0pt, minimum size=2pt] {};
\draw[black,thick,densely dotted] (1,1)--(1.3,1.3) node[black,circle, fill, draw, solid, inner sep=0pt, minimum size=2pt] {};
\draw[gray] (2,0)--(2,1);
\draw[gray] (3,0)--(3,1);
\draw[gray] (2,1)..controls(2.3,1.2)and(2.7,1.2)..(3,1);
\draw[gray] (2,1)..controls(2.3,.8)and(2.7,.8)..(3,1);
\draw[gray] (2,0)..controls(2.3,.2)and(2.7,.2)..(3,0);
\draw[gray] (2,0)..controls(2.3,-.2)and(2.7,-.2)..(3,0);
\draw[black,thick,densely dotted] (1,0)..controls(1.3,-.3)and(1.7,-.3)..(2,0) ;
\draw[black,thick,densely dotted] (2,1)--(1.7,1.3) node[black,circle, fill, draw, solid, inner sep=0pt, minimum size=2pt] {};
\draw[black,thick,densely dotted] (3,1)--(3.3,1.3) node[black,circle, fill, draw, solid, inner sep=0pt, minimum size=2pt] {};
\draw[black,thick,densely dotted] (3,0)--(3.3,-.3) node[black,circle, fill, draw, solid, inner sep=0pt, minimum size=2pt] {};
\node[black] at (1.5,-.5){$l_\beta$};
\end{tikzpicture}
\caption{Skeleton graph of the third Wick power}
\end{figure}
The contractions are all the connected two-vertex vacuum graphs, and the melonic one $(2,0,0)$ is therefore maximal \dash this yields the condition \begin{equs}
     \beta<|\Xthreeloc|\,.
\end{equs} 
Since $l_\beta$ connects two different vertices, it cannot belong to a $(1,2,0)$ subgraph, and $(2,2,0)$ graphs have a better power counting than $(2,0,0)$ graphs, which is why we don't have to show all the possible contractions. 
\subsubsection{The quadratic random operator  \TitleEquation{\XdotX}{XdotX}}\label{subsubsec:regQuadratic}
This stochastic estimate does not involve any renormalization.
\begin{figure}[H]
    \centering  
\tikzsetnextfilename{fig25} 
\begin{tikzpicture}
\draw[gray] (0,0)--(0,1);
\draw[gray] (1,0)--(1,1);
\draw[gray] (0,1)..controls(.3,1.2)and(.7,1.2)..(1,1);
\draw[gray] (0,1)..controls(.3,.8)and(.7,.8)..(1,1);
\draw[gray] (0,0)..controls(.3,.2)and(.7,.2)..(1,0);
\draw[gray] (0,0)..controls(.3,-.2)and(.7,-.2)..(1,0);
\draw[black,thick,densely dotted] (0,0)--(-.3,-.3) node[black,circle, fill=black, draw, solid, inner sep=0pt, minimum size=2pt] {};
\draw[black,thick,densely dotted] (0,1)..controls(.5,2)and(2.5,2)..(3,1);
\draw[black,thick,densely dotted] (1,1)--(1.3,1.3) node[black,circle, fill, draw, solid, inner sep=0pt, minimum size=2pt] {};
\draw[gray] (2,0)--(2,1);
\draw[gray] (3,0)--(3,1);
\draw[gray] (2,1)..controls(2.3,1.2)and(2.7,1.2)..(3,1);
\draw[gray] (2,1)..controls(2.3,.8)and(2.7,.8)..(3,1);
\draw[gray] (2,0)..controls(2.3,.2)and(2.7,.2)..(3,0);
\draw[gray] (2,0)..controls(2.3,-.2)and(2.7,-.2)..(3,0);
\draw[black,thick,densely dotted] (1,0)..controls(1.3,-.3)and(1.7,-.3)..(2,0) ;
\draw[black,thick,densely dotted] (2,1)--(1.7,1.3) node[black,circle, fill, draw, solid, inner sep=0pt, minimum size=2pt] {};
\draw[black,thick,densely dotted] (3,0)--(3.3,-.3) node[black,circle, fill, draw, solid, inner sep=0pt, minimum size=2pt] {};
\node[black] at (1.5,-.5){$l_\beta$};
\node[black] at (1.5,2){$l_\alpha$};
\end{tikzpicture}   
 \caption{Skeleton graph of the quadratic random operator}
\end{figure}
We deduce that there are three different contractions with this random operator, but two of them are trivial and have only $d$ faces, and thus no non-trivial subgraphs. The third one is $(2,0,d-2)$ which is of maximal degree, and is the maximal graph containing both $l_\alpha$ and $l_\beta$ since no melonic subgraph can contain both $l_\alpha$ and $l_\beta$ \dash this gives the condition 
\begin{equs}
    \beta<\alpha+|\XdotX|\,.
\end{equs}
Moreover, this graph has a $(2,4,0)$ subgraph that contains only $l_\alpha/l_\beta$ and is therefore maximal (since the skeleton graph implies that $l_\alpha/l_\beta$ cannot belong to any $(2,2,0)$ subgraph) \dash this gives constraints
\begin{equs}
     \beta<-\frac{d-3}{2} \text{ and }\alpha>\frac{d-3}{2}\,.
\end{equs}
\subsubsection{The quartic random operator \TitleEquation{\Sym}{Sym}}\label{subsubsec:regQuartic}
This random operator does not involve any renormalization, below we draw the corresponding skeleton graph. 
\begin{figure}[H]
    \centering    
\tikzsetnextfilename{fig26}
\begin{tikzpicture}
\draw[gray] (0,0)--(0,1);
\draw[gray] (1,0)--(1,1);
\draw[gray] (0,1)..controls(.3,1.2)and(.7,1.2)..(1,1);
\draw[gray] (0,1)..controls(.3,.8)and(.7,.8)..(1,1);
\draw[gray] (0,0)..controls(.3,.2)and(.7,.2)..(1,0);
\draw[gray] (0,0)..controls(.3,-.2)and(.7,-.2)..(1,0);
\draw[black,thick,densely dotted] (0,0)--(-.3,-.3) node[black,circle, fill=black, draw, solid, inner sep=0pt, minimum size=2pt] {};
\draw[gray] (2,0)--(2,1);
\draw[gray] (3,0)--(3,1);
\draw[gray] (2,1)..controls(2.3,1.2)and(2.7,1.2)..(3,1);
\draw[gray] (2,1)..controls(2.3,.8)and(2.7,.8)..(3,1);
\draw[gray] (2,0)..controls(2.3,.2)and(2.7,.2)..(3,0);
\draw[gray] (2,0)..controls(2.3,-.2)and(2.7,-.2)..(3,0);
\draw[black,thick,densely dotted] (1,0)..controls(1.3,-.3)and(1.7,-.3)..(2,0) ;
\draw[black,thick,densely dotted] (2,1)--(1.7,1.3) node[black,circle, fill, draw, solid, inner sep=0pt, minimum size=2pt] {};
\draw[black,thick,densely dotted] (2,2)--(1.7,1.7) node[black,circle, fill, draw, solid, inner sep=0pt, minimum size=2pt] {};
\draw[black,thick,densely dotted] (3,0)--(3.3,-.3) node[black,circle, fill, draw, solid, inner sep=0pt, minimum size=2pt] {};
\node[black] at (1.5,-.5){$l_\beta$};
\node[black] at (1.5,3.5){$l_\alpha$};
\draw[gray] (2,2)--(2,3);
\draw[gray] (3,2)--(3,3);
\draw[gray] (2,3)..controls(2.3,3.2)and(2.7,3.2)..(3,3);
\draw[gray] (2,3)..controls(2.3,2.8)and(2.7,2.8)..(3,3);
\draw[gray] (2,2)..controls(2.3,2.2)and(2.7,2.2)..(3,2);
\draw[gray] (2,2)..controls(2.3,1.8)and(2.7,1.8)..(3,2);
\draw[gray] (0,2)--(0,3);
\draw[gray] (1,2)--(1,3);
\draw[gray] (0,3)..controls(.3,3.2)and(.7,3.2)..(1,3);
\draw[gray] (0,3)..controls(.3,2.8)and(.7,2.8)..(1,3);
\draw[gray] (0,2)..controls(.3,2.2)and(.7,2.2)..(1,2);
\draw[gray] (0,2)..controls(.3,1.8)and(.7,1.8)..(1,2);
\draw[black,thick,densely dotted] (0,1)..controls(-.3,1.3)and(-.3,1.7)..(0,2) ;
\draw[black,thick,densely dotted] (3,1)..controls(3.3,1.3)and(3.3,1.7)..(3,2) ;
\draw[black,thick,densely dotted] (1,3)..controls(1.3,3.3)and(1.7,3.3)..(2,3) ;
\draw[black,thick,densely dotted] (1,1)--(1.3,1.3) node[black,circle, fill, draw, solid, inner sep=0pt, minimum size=2pt] {};
\draw[black,thick,densely dotted] (1,2)--(1.3,1.7) node[black,circle, fill, draw, solid, inner sep=0pt, minimum size=2pt] {};
\draw[black,thick,densely dotted] (0,3)--(-.3,3.3) node[black,circle, fill=black, draw, solid, inner sep=0pt, minimum size=2pt] {};
\draw[black,thick,densely dotted] (3,3)--(3.3,3.3) node[black,circle, fill=black, draw, solid, inner sep=0pt, minimum size=2pt] {};
\end{tikzpicture}   
\caption{Skeleton graph of the quartic random operator}
\end{figure}
The vacuum contraction of highest degree ($d-2$) itself yields the following constraint on $\alpha$ and $\beta$:
\begin{equs}
    \beta<\alpha+|\Sym|-2(1-\eta)\,.
\end{equs}
It is maximal since inspecting the skeleton graph shows that any melonic subgraph $G$ containing both $l_\alpha$ and $l_\beta$ would a least be $(2,4,0)$, so that their power counting would be better, since improving $|L^{ext}(G)|=$ by 2 yields a factor $d-3$ and improving $C(\partial G)$ by one yields a factor $1$, which at least compensates the gain of $d-2$ due to the improvement of the degree.

This contraction has the same $(2,4,0)$ subgraph as $\XdotX$ containing either $l_\alpha$ or $l_\beta$, and this subgraph is still maximal since $l_\beta$ cannot belong to any $(2,2,0)$ subgraph. Again, this gives the conditions
\begin{equs}
     \beta<-\frac{d-3}{2} \text{ and }\alpha>\frac{d-3}{2}\,.
\end{equs}

\subsection{Estimates on \TitleEquation{\XtwoPictwo}{XtwoPictwo}, \enskip \TitleEquation{\cS}{cS}, \enskip and objects with the rough shift $\bX$}\label{sec:proofs2}
We now discuss obtaining estimates on $\XtwoPictwo$, $\cS$ and also estimates on random operators built using the rough shift.
For these objects we use a combination of stochastic and deterministic estimates. 
In particular, in Section~\ref{subsubsec:regRoughAnsatz} we argue that the relevant estimates in Lemmas~\ref{lem:randop2} and \ref{lem:randop_4} hold for the random operators defined by replacing instances of $\X$ with the random shift $\bX$.

\subsubsection{The quintic random field \TitleEquation{\XtwoPictwo}{XtwoPictwo}}\label{subsubsec:regQuintic} 
Recall the product denoted by the root in symbol $\XtwoPictwo$ actually corresponds to three terms due to the three ways to order this product -- see Definition~\ref{def:randomfields}.

The product $\cN(\Pictwo, \X,\X)$ requires a ``Wick'' renormalization coming from the two factors of $\X$ in this product to the graph in $\mfM^{1}$ and counter-term $\mfC^{1}$. 

The other two products $\cN(\X,\Pictwo,\X)$ and $\cN(\X,\X,\Pictwo,\X)$ contain a melonic pairing of $\X$ and $\Pictwo$ that is divergent in $d=4$ and corresponds to graphs in $\mfM^2$ and the counterterm $\frac{1}{2}\mfC^2$ (recall Lemma~\ref{def:CT2}). 

Some care has to be taken here since the regularity of $\XtwoPictwo$ is not determined by its power counting $|\XtwoPictwo|=-\frac{5d-18}{2}$. The skeleton graphs coming from all the three products in $\XtwoPictwo$ have ten leads, and thus gives rise to 945 graphs. However, we do not need to draw them because some of them are melonic \dash $(4,0,0)$ \dash this gives the condition
\begin{equs}
    \beta<|\XtwoPictwo|\,.
\end{equs}
It turns out that these melonic vacuum graphs are maximal only in $d=4$. Indeed, $(2,2,0)$ and $(2,4,0)$ subgraphs have a worst power counting, and some of these melonic vacuum graphs have $(2,2,0)$ and $(2,4,0)$ subgraphs containing $l_\beta$. This yields the conditions $\beta<-\frac{(d-3)}{2}$ and $\beta<-(d-3)$, which becomes better behaved only when $d \geqslant 4$ but dominates when $d \in \{2,3\}$ (however in the latter case we don't need the estimate $\XtwoPictwo$).

\subsubsection{The septic random field \TitleEquation{\cS}{cS}}\label{subsubsec:S}
This symbol requires care, one reason being that its regularity is not given by our usual power counting $|\cS|=-\frac{7d-28}{2}$ just as was the case for $\XtwoPictwo$.

However, a second issue here is that we won't obtain the needed regularity estimate on $\CS$ solely through a stochastic estimate. In particular, we split $\cS$ as $\cS=\cS^{\det}+\cS^{\sto}$ with 
\begin{equs}
    \cS^{\det}&\eqdef \cN(\Picthree,\X,\X)-\mfC^1\Picthree\,,\\\cS^{\sto}&\eqdef \big(\cN(\Pictwo,\X,\Pictwo)-\frac12\mfC^2\Pictwo\big)+\cN(\X,\X,\Picthree)+20 \text{ terms}\,.
\end{equs}
Here $\cS^{\sto}$ will be treated as a whole using a stochastic estimate (similarly to the previous stochastic objects).
However, trying to use a stochastic estimate to control $\cS^{\det}$ would yield a regularity estimate with exponent $-(d-3)$ coming from a subgraph $(2,2,0)$, which is worst than expected and not enough to close our argument when $d=4$. 
We instead write $\cS^{\det}$ in terms of smaller random operators and combine their estimates, writing 
\[
\cS^{\det}(x)=\sum_{c=1}^d\Xtwom^c \big(\Picthree(\cdot,x_{\hc}) \big)(x_c)\;.
\]
By \eqref{eq:bilocbesov-} and \eqref{eq:bilocbesov+}, we have
\begin{equs}
    \Vert \Xtwom^c(\Picthree)\Vert_{C_T\cC^{-\frac{d-3}{2}-3\epsilon}}&\lesssim
    \Vert \Xtwom^c(\Picthree)\Vert_{C_TL^\infty_{x_{\hc}}\cC_{x_c}^{-\frac{d-3}{2}-2\epsilon}}\\&\lesssim\Vert\Xtwom^c\Vert_{\cL(C_T\cC^{-(d-5)-2\epsilon},C_T\cC^{-\frac{d-3}{2}-2\epsilon})}\Vert\Picthree\Vert_{C_TL^\infty_{x_{\hc}}\cC_{x_c}^{-(d-5)-2\epsilon}}\\   &\lesssim\Vert\Xtwom^c\Vert_{\cL(C_T\cC^{-(d-5)-2\epsilon},C_T\cC^{-\frac{d-3}{2}-2\epsilon})}\Vert\Picthree\Vert_{C_T\cC^{-(d-5)-\epsilon}}\,.
\end{equs}
The estimate \eqref{eq:regXtwoPictwo} then implies that $\Picthree$ is of regularity $-(d-5)>-\frac{1}{2}$. 
We combine this with the random operator estimate \ref{eq:regXtwom} that states that for $\alpha>-\frac{1}{2}$, $\Xtwom$ goes from $C_TH^\alpha$ to $C_T\cC^{\beta-\epsilon}$ with $\beta=\min\big(\frac{1}{2},\alpha-\frac{2d-5}{2}\big)\geqslant-\frac{d-3}{2}$ for $\alpha=-(d-5)$. Thus, we can conclude that the right hand side is finite, so that $\cS^{\det}$ is indeed of regularity $-\frac{d-3}{2}-$.

We now turn to showing that $\cS^{\sto}$ is of regularity $-\frac{d-3}{2}-$.
$\cS^{\sto}$ is the sum of 22 terms, but all of them have an important difference with $\cN(\Picthree,\X,\X)-\mfC^1\Picthree$ that we describe here. The skeleton graphs of all these 22 terms give rise to some maximal $(6,0,0)$ graph which would yield $\beta<|\cS|$. However, one has to take care the contractions do not contain some $(2,2,0)$ or $(2,4,0)$ subgraphs, that would yields a worse power counting. It turns out that some of these maximal $(6,0,0)$ graph do have a $(2,4,0)$ subgraph containing $l_\beta$, which thus yields the sharper condition\begin{equs}
    \beta<-\frac{d-3}{2}\,.
\end{equs}
However, this constraint is maximal since no contraction can produce $(2,2,0)$ subgraph with $l_\beta$ inside it, in view of the position of $l_\beta$. This is the key difference betwenn the 22 terms in $\mcS^\sto$ (like for instance $\cN(\X,\X,\Picthree)$) and $\cN(\Picthree,\X,\X)-\mfC^1\Picthree$, because the latter can give rise to a $(2,2,0)$ subgraph, and would thus be of regularity $-(d-3)$, which is why we had to deal with it deterministically.

\subsubsection{The random operators made of the rough shift $\bX$}\label{subsubsec:regRoughAnsatz}
In $d=4$, we are interested in the previous random operators defined with $\X$ substituted with $\bX$. 
Our regularity estimates for the more basic random fields implies that $\bX$ shares the regularity of $\X$. 
Moreover, going though the proofs of the regularities of the random operators with one or several occurrence of $\X$ substituted with $\Pictwo$ or $\Picthree$ always yields better results, because this will give rise to bigger graphs that have a better power counting, which is the key feature of superrenormalizability/subcriticality.

For the sake of completeness, we provide here a short argument that confirms that these objects share the regularity of their counterparts built on the free field. 
Indeed, like we did above for $\cS$, we can decompose a random operator $\tau\in\{\pXtwom,\pXtwonm,\pXdotX,\pSym\}$ made with $\bX$ as $\tau^{\sto}+\tau^{\det}$ where $\tau^{\sto}$ will be controlled with a stochastic estimate on the whole object, while $\tau^{\det}$ will be treated using deterministic estimates.

One key input for this deterministic estimate will be stochastic estimates on the smaller random operators 
$(\bX-\X)^{(k)}$ defined by setting 
\[
(\bX-\X)^{(k)}_{N}(f)\eqdef \int_{\T^{d-k}} 
(-\Pictwo_N+\Picthree_N)(\cdot,y) f(y)\rmd y
\quad
\text{ for }
\quad
f:\R_{\geqslant 0}\times\T^{d-k}\rightarrow\R\;. 
\]
Denoting $|\bX-\X|=-\frac{3d-12}{2}$, our key estimate is then
\begin{lemma}
For $d\in\{2,3,4\}$ and all $\alpha>\frac{3d-12-k}{2}$, 
\begin{equs}
\label{eq:bX-X_rand_op}  
\sup_{N\in\N}\E[\Vert(\bX-\X)^{(k)}_{N}&\Vert^p_{\cL(C_TH^{\alpha}(\T^{d-k}),C_T\cC^{\beta}(\T^k)}<\infty\,\\
\text{ where }&
\beta = \min\Big(-\frac{2d+k-12}{2},\alpha+|\bX-\X| \Big)-\epsilon\;.
\end{equs}
\end{lemma}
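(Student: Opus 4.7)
The plan is to decompose $\bX-\X=-\Pictwo+\Picthree$ and, by the triangle inequality in operator norm, reduce the claim to proving the same bound separately for the random operators $\Pictwo^{(k)}_N(f)=\int_{\T^{d-k}}\Pictwo_N(\cdot,y)f(y)\rmd y$ and $\Picthree^{(k)}_N(f)=\int_{\T^{d-k}}\Picthree_N(\cdot,y)f(y)\rmd y$. Since $\Pictwo=\underline{\cL}^{-1}\Xthreeloc$ and $\Picthree=\cL^{-1}\XtwoPictwo$ both lie in a finite-order inhomogeneous Gaussian chaos uniformly in $N$, the Kolmogorov criterion for random operators (Lemma~\ref{lem:randomopsob}) and hypercontractivity apply to each.

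The appearance of a minimum in the statement of $\beta$ mirrors the structure of \eqref{eq:X_rand_op}: one regime is purely deterministic, the other is stochastic. In the \emph{deterministic regime} (small $\alpha$), I would use Lemma~\ref{lem:stoob} together with the Schauder estimate \eqref{eq:schauder2} to control $\bX-\X$ uniformly in $N$ as an element of $C_T\cC^{|\bX-\X|-\epsilon}(\T^d)$, and then proceed exactly as in the proof of Lemma~\ref{lem:mixednonlin}: the bilocal Besov embeddings of Appendix~\ref{app:bilocbesov} give
\begin{equs}
    \Vert (\bX-\X)^{(k)}(f)\Vert_{\cC^{\alpha+|\bX-\X|-\epsilon}(\T^k)}\lesssim \Vert \bX-\X\Vert_{L^\infty_{y}\cC_{x}^{|\bX-\X|-\epsilon/2}}\Vert f\Vert_{H^\alpha}
\end{equs}
whenever $\alpha>\frac{3d-12-k}{2}$, this condition being exactly the threshold at which the pairing of $f\in H^\alpha(\T^{d-k})$ with the distribution $(\bX-\X)(x,\cdot)\in \cC^{|\bX-\X|-\epsilon}$ in the inner variables is well-defined.

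In the \emph{stochastic regime} (large $\alpha$), I would run the diagrammatic machinery of Sections~6.3--6.5 on $\Pictwo^{(k)}$ and $\Picthree^{(k)}$ separately. Their skeleton graphs are obtained from the skeletons of $\Xthreeloc$ and $\XtwoPictwo$ respectively by replacing the root with the pair of dotted lines $l_\alpha,l_\beta$ (as in Figure~\ref{fig:gluing}), with $l_\alpha$ carrying the inner Sobolev weight $\ang{m_{l_\alpha}}{-2\alpha}$ attached to the integration variable over $\T^{d-k}$. Applying Lemma~\ref{lem:amplitude_bound} and Corollary~\ref{coro:multiscale} then reduces the claim to a check of the renormalized superficial degree of divergence $\omega$ defined in \eqref{eq:divergencedegree} on every connected subgraph containing $l_\alpha$, $l_\beta$, or both. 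The dominant vacuum subgraph is melonic (analogous to the treatment of $\XtwoPictwo$ in Section~\ref{subsubsec:regQuintic}) and produces precisely the outer regularity $-\frac{2d+k-12}{2}$, while the subgraphs containing only $l_\alpha$ or only $l_\beta$ reproduce the $\alpha>\frac{3d-12-k}{2}$ constraint.

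The main obstacle will be the careful bookkeeping in the stochastic regime, in two respects. First, because $\bX-\X$ is itself built out of renormalized subtrees, the relevant tensor graphs contain nested instances of the divergent structures $\mfM^1$ and $\mfM^2$; I will need to verify that the multiscale analysis proceeds uniformly in $N$ after accounting for these renormalizations via the amplitudes $\mfR^{1,c}$ and $\mfR^{2,c,c'}$ from Section~\ref{subsubsec:strandedgraphs}. Second, as already observed for $\XtwoPictwo$ and $\cS^{\sto}$ (Sections~\ref{subsubsec:regQuintic} and~\ref{subsubsec:S}), the naive power-counting $\alpha+|\bX-\X|$ can be \emph{improved} by passing through a melonic vacuum graph, but only if $l_\beta$ does not sit inside a divergent $(2,2,0)$ or $(2,4,0)$ subgraph. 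The key structural point here is that the position of $l_\beta$ at the root of $\Pictwo^{(k)}$ and $\Picthree^{(k)}$ excludes such subgraphs, so the claimed outer bound $-\frac{2d+k-12}{2}$ is indeed what the multiscale analysis produces, and the dimensional restriction $d\in\{2,3,4\}$ enters only to ensure that all auxiliary regularity estimates on $\Pictwo$ and $\Picthree$ used as input (via Lemma~\ref{lem:stoob}) are available.
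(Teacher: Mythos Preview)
Your stochastic-regime outline is essentially the paper's proof: one runs the diagrammatic machinery of Sections~\ref{sec:6_3}--\ref{sec:6_5} directly on $(\bX-\X)^{(k)}$, exactly as was done for $\X^{(k)}$ in \eqref{eq:lollipopproof}--\eqref{eq:lollipopproofBis}. The skeleton graphs arise from gluing two copies of $\Pictwo$, two of $\Picthree$, or one of each, and the dominant contribution is the melonic two-vertex vacuum graph from the $\Pictwo\times\Pictwo$ covariance, just as you say. One minor correction: the root here is a \emph{link} (not a vertex as in Figure~\ref{fig:gluing} for type-II objects) whose $d$ strands split into $k$ carrying $l_\beta$ and $d-k$ carrying $l_\alpha$.

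The genuine gap is your ``deterministic regime''. You misread the origin of the minimum in $\beta$: all three constraints --- on $\alpha$ alone, on $\beta$ alone, and on $\beta-\alpha$ --- come out of the \emph{same} stochastic multiscale analysis, exactly as they already did for $\X^{(k)}$ in \eqref{eq:lollipopproofBis}. The bound $\beta<\alpha+|\bX-\X|$ is the full-graph constraint; the bounds $\beta<-\tfrac{2d+k-12}{2}$ and $\alpha>\tfrac{3d-12-k}{2}$ come from its subgraphs. There is no separate deterministic input. Concretely, your deterministic inequality fails: from the field regularity $\bX-\X\in\cC^{|\bX-\X|-\epsilon}(\T^d)$ the bilocal embedding of Lemma~\ref{lem:bilocreg+} only applies when the target exponent is nonnegative, which already breaks for $d=4$ where $|\bX-\X|=0$; and even when it does apply, pairing with $f$ in the $y$-variables yields at best $\beta<|\bX-\X|$ with no $\alpha$-gain in the $x$-variables. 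The random-operator mapping $H^\alpha\to\cC^{\alpha+|\bX-\X|-\epsilon}$ is a genuinely stochastic statement that cannot be recovered from the field regularity of $\bX-\X$ alone. If you drop the deterministic regime and let the diagrammatic analysis produce all three constraints (as it does automatically), your argument coincides with the paper's.
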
 
\begin{proof}
The proof generalizes that for $\X^{(k)}$ -- see \eqref{eq:lollipopproof}. When performing the covariance computations for $\X^{(k)}$ and $(\bX-\X)^{(k)}$, one encounters a class of graphs similar to the graphs in $G_\tau$ for $\tau$ an object of type II, but with a root which is given by a link instead of a vertex. This link is attached to two other links at its two extremities, and its strands divide into $k$ strands corresponding to $l_\beta$ and $d-k$ strands corresponding to $l_\alpha$. In the case of $\tau=\X^{(k)}$, there is a unique $G\in\G_\tau$ which is the graph for which the root is attached to a single link connected to both its extremities. This graph has three subgraphs (including itself), hence the estimates \eqref{eq:lollipopproofBis}. 

In the case of $\tau=(\bX-\X)^{(k)}$ , the skeleton graphs obtained when gluing together two copies of $\Pictwo$, one of $\Pictwo$ with one of $\Picthree$, and two copies of $\Picthree$, can give rise to several graphs, all with several subgraphs. Again, we can read from the skeleton graphs the numbers of external links, the numbers of vertices, and the degrees of all the possible graphs in $\G_\tau$ and their subgraphs, and therefore estimate their contributions. The most divergent graph is the melonic two-vertex graph arising when gluing two copies of $\Pictwo$. It also contains the subgraphs dictating the additional conditions on $\alpha$ and $\beta$.

\end{proof}
For every $\tau\in\{ \pXtwonm,\pXdotX,\pSym \}$,
we take $\tau^{\sto}$ to be the term containing only $\X$'s (for instance $\pSym^{\sto}=\Sym$) that has been constructed in the previous sections, and $\tau^{\det}$ the sum of all the remaining cross terms that contain at least one $\bX-\X$. Regarding $\pXtwom$, we chose 
\begin{equs}\label{eq:stoObUse}    \pXtwom^{\sto}:f\mapsto\Xtwom(f)+\cN(f,\Pictwo,\X)+\cN(f,\X,\Pictwo)-{\mfC^2}f\;.
\end{equs}
We have the following lemma about the regularity of $\pXtwom^{\sto}$.
\begin{lemma}
    In $d=4$, for all $\alpha>-\frac{1}{2}$, it holds
\begin{equ}
\sup_{N\in\N}\E[\Vert  \pXtwom^{\sto}_N-\Xtwom_N\Vert^p_{\cL(C_TH^{\alpha}(\T),C_T\cC^{\min(\frac{1}{2},\alpha-\frac12)-\epsilon}(\T))}]<\infty\,.
\end{equ}
\end{lemma}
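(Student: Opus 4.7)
The plan is to apply the random-operator Kolmogorov criterion of Lemma~\ref{lem:randomopsob} and then carry out the diagrammatic bounds of Sections~\ref{sec:6_3}--\ref{sec:6_5} exactly as was done for $\Xtwom$. First, I would rewrite
\begin{equs}
\tau_N(f) \eqdef \pXtwom^{\sto}_N(f) - \Xtwom_N(f) = \cN_N(f,\Pictwo_N,\X_N) + \cN_N(f,\X_N,\Pictwo_N) - \mfC^2_N\, f\;,
\end{equs}
observe that $\tau_N$ is stationary in space (in the sense of Definition~\ref{def:timevar}) and, since $\Pictwo_N$ is cubic in $\X_N$, sits in inhomogeneous Gaussian chaos of order at most~$4$. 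By Lemma~\ref{lem:randomopsob} together with hypercontractivity and Remark~\ref{rem:continuityintime} (to upgrade from a fixed-time bound to the $C_T$-norm), it suffices to show that the second-moment sum
\begin{equs}
\sum_{m\in\Z,\,n\in\Z}\angp{m}{2\beta}\angp{n}{-2\alpha}\,\E\!\left[\hat{\tau}_{m,n}(t)\,\hat{\tau}_{-m,-n}(t)\right]
\end{equs}
is finite uniformly in $N$ and $t$ for every $\beta<\min(\tfrac12,\alpha-\tfrac12)$ with $\alpha>-\tfrac12$.

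Next, I would expand the covariance by Wick's rule into a finite sum over stranded graphs $G\in\G_\tau$ obtained by gluing two copies of the integrand at the root vertex of color $c$, tagged with $l_\alpha$ and $l_\beta$ as in Figure~\ref{fig:gluing}. The divergent subgraphs arising from this Wick expansion are precisely the melonic $\mfM^2$-type pairings between $\X$ and $\Pictwo$ (in both orderings), and the identification of $\mfC^2_N$ in Lemma~\ref{def:CT2} was made exactly so that these divergences cancel. Consequently Lemma~\ref{lem:amplitude_bound} applies and gives $A^{u,t}_{T,N}(G)\lesssim \tilde{A}(G)$ uniformly in $N$ and $t$ for every contraction graph $G$.

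I would then invoke the multiscale bound of Corollary~\ref{coro:multiscale} and check that every subgraph $\cG^i_k$ containing $l_\alpha$ and/or $l_\beta$ has strictly negative renormalized superficial degree of divergence under the stated constraints. In $d=4$ this analysis runs parallel to the one carried out for $\Xtwom$ in Section~\ref{subsubsec:regX2m}: the maximal melonic vacuum subgraph containing both $l_\alpha$ and $l_\beta$ now yields the improved constraint $\beta<\alpha-\tfrac12$ instead of $\alpha-\tfrac32$, the improvement of $1$ being the regularity gain from replacing one factor of $\X$ by $\Pictwo$ (whose power counting is $|\Pictwo|=|\Xthreeloc|+2=0$ in $d=4$); the extremal non-melonic $(2,4,0)$-type subgraph containing only $l_\alpha/l_\beta$ produces the universal non-local constraints $\beta<\tfrac12$ and $\alpha>-\tfrac12$. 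Taking the worst of these yields exactly the claimed outer regularity $\min(\tfrac12,\alpha-\tfrac12)-\epsilon$.

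The main obstacle is the combinatorial bookkeeping of all contraction graphs produced by the two asymmetric orderings $\cN(f,\Pictwo,\X)$ and $\cN(f,\X,\Pictwo)$ once two copies are glued at the root: one must verify that every divergent $\mfM^2$ subgraph appearing in the Wick expansion is indeed matched by the counterterm $\mfC^2_N$ with the correct multiplicity, leaving no residual divergence. This is essentially the same check as in the proof of Lemma~\ref{def:CT2}, lifted to the two-copy gluing used in the Kolmogorov bound; once this bookkeeping is dispatched, the power counting closes as above and an identical Wick-level argument delivers the H\"older continuity in time via Remark~\ref{rem:continuityintime}.
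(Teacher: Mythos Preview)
Your overall approach matches the paper's: reduce to the second-moment sum via Lemma~\ref{lem:randomopsob}, expand by Wick's rule, apply Lemma~\ref{lem:amplitude_bound} and Corollary~\ref{coro:multiscale}, and identify the extremal subgraphs. You also correctly identify the maximal melonic vacuum graph (the $(3,0,0)$ graph in $d=4$) as the source of the constraint $\beta<\alpha-\tfrac12$, and your heuristic for the gain of $1$ over $|\Xtwom|$ is right.

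There is one concrete error. The constraints $\beta<\tfrac12$ and $\alpha>-\tfrac12$ do \emph{not} come from a $(2,4,0)$ subgraph. In $d=4$ a melonic two-vertex four-point subgraph containing only $l_\beta$ has, by \eqref{eq:divergencedegree}, degree of divergence $4-2-2-0-1+2(1+\beta)=1+2\beta$, which would force $\beta<-\tfrac12$, not $\beta<\tfrac12$. The correct extremal subgraph is the one-vertex non-melonic subgraph of type $(1,2,d-2)$, obtained by contracting the single $\X$ leg from each of the two copies across the root; this is exactly the same subgraph that appears in the analysis of $\Xtwom$ in Section~\ref{subsubsec:regX2m}, and it yields the claimed $\beta<\tfrac12$ and $\alpha>-\tfrac12$ by the same computation done there. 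Your pointer to Section~\ref{subsubsec:regX2m} is thus right, but you transcribed the wrong triple.

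On the bookkeeping you flag at the end: the paper sidesteps this entirely by splitting $\tau$ as the sum of $f\mapsto\cN(f,\Pictwo,\X)-\tfrac12\mfC^2 f$ and $f\mapsto\cN(f,\X,\Pictwo)-\tfrac12\mfC^2 f$ and bounding each piece separately. Each piece then carries exactly one $\mfM^2$-type divergence with the matching $\tfrac12\mfC^2$ counterterm already packaged in, so no cross-ordering multiplicity check is needed. In this decomposition the $(1,2,d-2)$ subgraph containing only $l_\beta$ appears in the first piece (the two $\X$'s sit on the $l_\beta$ side of the root) and the one containing only $l_\alpha$ in the second, which is also why the two single-label constraints arise from different summands.
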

\begin{proof}
    We deal separately with $f\mapsto \cN(f,\Pictwo,\X)-\frac{\mfC^2}{2}f $ and $f\mapsto \cN(f,\X,\Pictwo) -\frac{\mfC^2}{2}f $.
   \begin{figure}[H]
    \centering   
\tikzsetnextfilename{fig27}
\begin{tikzpicture}
\draw[gray] (0,0)--(0,1);
\draw[gray] (1,0)--(1,1);
\draw[gray] (0,1)..controls(.3,1.2)and(.7,1.2)..(1,1);
\draw[gray] (0,1)..controls(.3,.8)and(.7,.8)..(1,1);
\draw[gray] (0,0)..controls(.3,.2)and(.7,.2)..(1,0);
\draw[gray] (0,0)..controls(.3,-.2)and(.7,-.2)..(1,0);
\draw[black,thick,densely dotted] (1,1)--(1.3,1.3) node[black,circle, fill, draw, solid, inner sep=0pt, minimum size=2pt] {};
\draw[black,thick,densely dotted] (1,0)--(1.3,-.3) node[black,circle, fill, draw, solid, inner sep=0pt, minimum size=2pt] {};
\draw[black](-.3,1.3)--(-.5,1.5)node[black,circle, fill, draw, solid, inner sep=0pt, minimum size=2pt] {};
\draw[black](-.3,1.3)--(-.35,1.53)node[black,circle, fill, draw, solid, inner sep=0pt, minimum size=2pt] {};
\draw[black](-.3,1.3)--(-.53,1.35)node[black,circle, fill, draw, solid, inner sep=0pt, minimum size=2pt] {};
\draw[black] (0,1)--(-.3,1.3) node[draw,black,diamond, fill=blue, solid, inner sep=0pt, minimum size=3pt] {};
\node[black] at (-.2,.5){$l_\alpha$};
\node[black] at (1.2,.5){$l_\beta$};
\draw[black](-.3,-.3)--(-.5,-.5)node[black,circle, fill, draw, solid, inner sep=0pt, minimum size=2pt] {};
\draw[black](-.3,-.3)--(-.35,-.53)node[black,circle, fill, draw, solid, inner sep=0pt, minimum size=2pt] {};
\draw[black](-.3,-.3)--(-.53,-.35)node[black,circle, fill, draw, solid, inner sep=0pt, minimum size=2pt] {};
\draw[black] (0,0)--(-.3,-.3) node[draw,black,diamond, fill=blue, solid, inner sep=0pt, minimum size=3pt] {};
\end{tikzpicture}
 \caption{Schematic representation of the skeleton graph of $f\mapsto \cN(f,\Pictwo,\X)-\frac{\mfC^2}{2}f$. The one of $f\mapsto \cN(f,\X,\Pictwo)-\frac{\mfC^2}{2}f$ is obtained by exchanging $l_\alpha$ and $l_\beta$. }
\end{figure} 
    
Both contributions generate all the vacuum three-vertex graphs, so that the most divergent possible graph or subgraph that contains both $l_\alpha$ and $l_\beta$ is the melonic graph $(3,0,0)$ that brings the constrain $\beta<\alpha-\frac{1}{2}$. 
The first contribution has a subgraph $(1,2,d-2)$ (obtained by gluing the two noises coming from the two terms $\X$) containing only $l_\beta$, and the second one the same subgraph containing only $l_\alpha$, which yields the additional constraints. 
    These subgraphs are maximal because in view of their positions, $l_\alpha$ can not belong to a melonic subgraph that would not also contain $l_\beta$. 
\end{proof}
We can now deal with $\tau^{\det}$, the terms constructed thanks to a deterministic analysis. The idea is to express all the contributions in $\tau^{\det}$ as some compositions of random operators, and to show that this composition exists. 
For example, the worst term in $\pXtwonm^{\det}$ is given by $f\mapsto\cN(\X,\bX-\X,f)$ which we thus reexpress as 
\begin{equs}
\cN(\X,\bX-\X,f)(x)=\sum_{c\in[4]}\X^{(3)}\big((\bX-\X)^{(1)}(f)(\cdot,x_c)\big)(x_{\hc})\,.
    \end{equs}
 If $f\in H^\alpha$ with $\alpha>\frac{1}{2}$, \eqref{eq:bX-X_rand_op} implies that $(\bX-\X)^{(1)}(f)(y_c,x_c)$ will be of regularity $\min(\frac{1}{2},\alpha-\frac12)$ in the direction of $y_c$, which is bigger than $-\frac12$, so that one can use \eqref{eq:X_rand_op} to obtain than $\cN(\X,\bX-\X,f)$ is of regularity $\min(-\frac{1}{2},\alpha-\frac32)$. Therefore, $f\mapsto\cN(\X,\bX-\X,f)$ is indeed better behaved than $\Xtwonm(f)$. All the remaining contributions in $\pXtwonm^{\det}$ are even better behaved, and can be treated in the same way. The proof is also similar for all the terms in $\pXtwom^{\det}$ and $\pXdotX^{\det}$, that again we write as some compositions of operators.

To help makes things clear, we detail how this argument works in the case of the biggest operator $\pSym$. 
The worst terms in $\pSym^{\det}$ are 
\[
f\mapsto\cN(\bX-\X,\cL^{-1}\XdotX(f),\X) \quad
\text{and}
\quad
f\mapsto\XdotX\big(\cL^{-1}\cN(\bX-\X,f,\X)\big)\;.
\]
If $f\in H^\alpha$ with $\alpha>\frac{1}{2}$, by the estimate \eqref{eq:regXdotX}, we have that $\XdotX(f)$ is of regularity $\min(-\frac12,\alpha-2)$, so that $\cL^{-1}\XdotX(f)$ is of regularity $\min(\frac32,\alpha)>\frac12$. 
It follows by the deterministic estimate stated in Lemma~\ref{lem:A2} that one can take the non-melonic pairing of $\cL^{-1}\XdotX(f)$ with $\bX-\X$ which is of regularity $-\epsilon$ as soon as $\epsilon < 1/2$.

On the other hand, one can rewrite 
$$\cN(\bX-\X,\cL^{-1}\XdotX(f),\X)(x)=\sum_{c\in[4]}\X^{(1)}\big((\bX-\X,\cL^{-1}\XdotX(f))_{L^2(\T)}(x_{\hc},\cdot)\big)(x_c)\,.$$ Now, since $\cL^{-1}\XdotX(f)$ is of regularity $>\frac{1}{2}$ (and hence so is $(\bX-\X,\cL^{-1}\XdotX(f))_{L^2(\T)}(x_{\hc},y_{\hc})$ in the direction $y_{\hc}$), one can use the estimate \eqref{eq:X_rand_op} to obtain that $\cN(\bX-\X,\cL^{-1}\XdotX(f),\X)$ is as expected of regularity $\min(0,\alpha-1)$.

Regarding $\XdotX\big(\cL^{-1}\cN(\bX-\X,f,\X)\big)$ for $f\in H^\alpha$ with $\alpha>\frac12$, we still can perform the non-melonic pairing of $f$ with $\bX-\X$ using Lemma~\ref{lem:A2}, and $\X^{(1)}$ can still act on the non-melonic pairing of $f$ with $\bX-\X$ and we can conclude since $\cL^{-1}$ increases the regularity by 2 while $\XdotX$ decreases it by two. All the other terms are better behaved, and follow in the same way, which confirms the fact that the objects made with the rough shift share the analytic properties of the objects made with $\X$.


\begin{appendix}
\section{Besov spaces}\label{app:besov}
We first give a precise definition of the Besov spaces we use in our analysis. 
We define Littlewood-Paley blocks, which are Fourier multipliers $(\Delta^{j})_{j=-1}^{\infty}$, by setting $\Delta^{-1} = \bone_{[0,2^{-1})}(|\nabla|_\infty)$ and, for $j \in \N$, $\Delta^j = \bone_{[2^{-1},1)}(2^{-j}|\nabla|_\infty)$ (we have $\Pi_N=\sum_{i\leqslant \log_2N}\Delta^i$ for $N$ dyadic). 

We then define two norms on $C^{\infty}(\T^d)$ by setting
\[
  \Vert\bigcdot\Vert_{H^\alpha(\T^d)}\eqdef \Vert\na^\alpha\bigcdot\Vert_{L^2(\T^d)} \text{ and } \Vert \bigcdot \Vert_{B^\alpha_{p,q}(\T^d)}\eqdef \Big\Vert 2^{i\alpha}\Vert \Delta^i\bigcdot\Vert_{L^p(\T^d)}\Big\Vert_{\ell^q_{i\geqslant-1}}\,,
\]
where for a countable set $A$ we write $\ell^q_{A} = \ell^{q}(\R^A)$. 
We define $B^{\alpha}_{p,q}$ to be the completion of $C^{\infty}(\T^d)$  under the norm $ \Vert \bigcdot \Vert_{B^\alpha_{p,q}(\T^d)}$. 
We make a frequent use of the fact that $B^\alpha_{2,2}=H^\alpha$, and of the following facts about Besov spaces.
\begin{lemma}
    Pick $\alpha\in\R$, $\delta>0$ and $p,q,p_+,p_-,q_+,q_-\in[1,\infty]$, $p_-<p_+$, $q_-<q_+$. Then, the following embeddings are compact:
    \begin{equs}
        &B^{\alpha}_{p,q}\hookrightarrow B^{\alpha-\delta}_{p,q}\,, \,\, B^{\alpha}_{p_+,q}\hookrightarrow B^{\alpha}_{p_-,q}\,,\,\, B^{\alpha}_{p,q_-}\hookrightarrow B^{\alpha}_{p,q_+}\,,\\&B^{\alpha}_{p_-,q}\hookrightarrow B^{\alpha-d(\frac{1}{p_-}-\frac{1}{p_+})}_{p_+,q}\,,\,\,B^{\alpha}_{p,q_+}\hookrightarrow B^{\alpha-\delta}_{p,q_-}\,.
    \end{equs}
\end{lemma}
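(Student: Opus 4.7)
The plan is to prove all five embeddings directly from the Littlewood--Paley characterization, working block by block. Recall that $\Vert f \Vert_{B^\alpha_{p,q}(\T^d)} = \Vert (2^{j\alpha} \Vert \Delta^j f \Vert_{L^p(\T^d)})_{j\geqslant -1} \Vert_{\ell^q}$, so every continuous inclusion reduces to a pointwise estimate on $\Vert \Delta^j f \Vert_{L^p}$ combined with an $\ell^q$-comparison, while compactness reduces to a tail-estimate after a standard diagonal extraction.

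First I would establish each inclusion as a continuous embedding. The loss of regularity $B^{\alpha}_{p,q}\hookrightarrow B^{\alpha-\delta}_{p,q}$ follows from the pointwise bound $2^{j(\alpha - \delta)} \leqslant 2^{\delta}\cdot 2^{j\alpha}$ for $j \geqslant -1$. The inclusion $B^{\alpha}_{p_+,q}\hookrightarrow B^{\alpha}_{p_-,q}$ uses H\"older's inequality on the compact torus, $\Vert \Delta^j f \Vert_{L^{p_-}} \lesssim \Vert \Delta^j f \Vert_{L^{p_+}}$. The inclusion $B^{\alpha}_{p,q_-}\hookrightarrow B^{\alpha}_{p,q_+}$ is the standard embedding $\ell^{q_-} \hookrightarrow \ell^{q_+}$ applied to $(2^{j\alpha} \Vert \Delta^j f \Vert_{L^p})_j$. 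The Sobolev-type embedding $B^{\alpha}_{p_-,q}\hookrightarrow B^{\alpha - d(1/p_- - 1/p_+)}_{p_+,q}$ uses Bernstein's inequality for functions with Fourier support in $\{|k|_\infty < 2^j\}$, which gives $\Vert \Delta^j f \Vert_{L^{p_+}} \lesssim 2^{jd(1/p_- - 1/p_+)} \Vert \Delta^j f \Vert_{L^{p_-}}$, absorbing exactly the scaling factor in the target norm. The last inclusion $B^{\alpha}_{p,q_+}\hookrightarrow B^{\alpha-\delta}_{p,q_-}$ is obtained by applying H\"older's inequality in $\ell^q$ against the summable weight sequence $(2^{-j\delta/2})_j$, trading half the regularity drop for the improvement in $\ell^q$-summability.

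For the compactness statement, the decisive observation is that on $\T^d$ each block $\Delta^j f$ lives in the finite-dimensional subspace spanned by Fourier modes with $|k|_\infty \in [2^{j-1}, 2^j) \cap \Z^d$. Given a bounded sequence $(f_n)_{n \in \N}$ in the source space, each sequence $(\Delta^j f_n)_n$ is therefore bounded in a finite-dimensional space and so is relatively compact in every $L^p(\T^d)$. A standard diagonal extraction then produces a subsequence $(f_{n_k})$ such that $(\Delta^j f_{n_k})_k$ is Cauchy in the target $L^p$ norm for every fixed $j$, and an $\epsilon/3$ argument reduces the problem to uniform control of the high-frequency tail.

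The main technical step is exactly this uniform tail control. For the embeddings exhibiting a strict regularity drop $\delta > 0$ (the first and the last), one has the direct estimate
\begin{equs}
\sum_{j \geqslant J} 2^{j(\alpha - \delta) q} \Vert \Delta^j f_{n_k} \Vert_{L^p}^{q} \lesssim 2^{-J \delta q} \Vert f_{n_k} \Vert_{B^{\alpha}_{p,q}}^{q},
\end{equs}
which vanishes uniformly in $k$ as $J \to \infty$, concluding compactness. For the three remaining inclusions, where only the integrability index is strictly improved, one extracts compactness by factoring the embedding through a strict compact link: any bounded set in the source sits inside a bounded set of an intermediate Besov space from which one can apply a strict regularity-loss embedding (already known to be compact by the argument above) to reach the target. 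This composition step, which requires a careful choice of intermediate exponents in each case, is the principal bookkeeping hurdle of the proof.
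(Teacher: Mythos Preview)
The paper states this lemma without proof, treating it as a standard collection of Besov facts on the torus, so there is no argument in the paper to compare against. Your continuity arguments for all five embeddings are correct and standard.

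There is, however, a genuine gap in your compactness argument for the second, third, and fourth embeddings --- and the gap cannot be filled, because those three embeddings are continuous but \emph{not} compact. Your proposed factoring through a ``strict compact link'' cannot work: obtaining compactness via your tail estimate would require an intermediate space of regularity strictly greater than $\alpha$, and none of $B^\alpha_{p_+,q}$, $B^\alpha_{p,q_-}$, $B^\alpha_{p_-,q}$ embeds into any Besov space of regularity strictly above $\alpha$. A concrete counterexample for the second and third embeddings: set $f_n(x)=2^{-n\alpha}e^{i2^{n}x_1}$. Each $f_n$ has a single nonzero Littlewood--Paley block and $\|f_n\|_{B^\alpha_{p,q}}$ is the same finite constant for every choice of $p,q$; thus $(f_n)$ is bounded in the source, yet $\|f_n-f_m\|_{B^\alpha_{p,q}}$ is bounded below uniformly for $n\neq m$, so no subsequence is Cauchy in the target. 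The fourth embedding is the critical Besov--Sobolev embedding, whose non-compactness is the Besov analogue of the failure of Rellich--Kondrachov at the critical exponent; the counterexample there is a concentrating bump sequence rather than a single Fourier mode.

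In short, the lemma as stated in the paper is inaccurate: embeddings one and five are compact and your tail argument handles them correctly, while embeddings two, three, and four are merely continuous. This does not affect the paper's results, since the only places compactness is actually invoked (e.g.\ $H^{-1-\epsilon}\hookrightarrow H^{-1-2\epsilon}$ and $\cC^{-\frac{d-2}{2}-\epsilon}\hookrightarrow \cC^{-\frac{d-2}{2}-2\epsilon}$ in the tightness arguments) all involve a strict regularity drop and are covered by your correct argument for the first embedding.
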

\begin{lemma}\label{lem:A2}
Pick $\alpha\in\R$ and $p,p',q,q'\in[1,\infty]$, such that $\frac{1}{p} + \frac{1}{p'} = \frac{1}{q} + \frac{1}{q'} = 1$. Then, the $L^2$ pairing can be extended to a bilinear map $B^\alpha_{p,q}\times B^{-\alpha}_{p',q'}\rightarrow\R$.
\end{lemma}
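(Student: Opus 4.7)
The plan is to use the fact that the Littlewood-Paley blocks $(\Delta^j)_{j\geqslant-1}$ defined in Appendix~\ref{app:besov} are Fourier multipliers with \emph{pairwise disjoint} Fourier supports (on the annuli $\{|m|_\infty \in [2^{j-1}, 2^j)\}$ for $j\geqslant 0$, and the ball $\{|m|_\infty < 2^{-1}\}$ for $j=-1$), together with a standard double Hölder inequality.

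First I would define the candidate pairing on smooth functions. For $f,g \in C^\infty(\T^d)$, Parseval's identity together with the disjointness of the Fourier supports of the $\Delta^j$'s yields the orthogonality identity
\begin{equs}
(f,g)_{L^2(\T^d)} = \sum_{j\geqslant -1} (\Delta^j f,\Delta^j g)_{L^2(\T^d)}\,.
\end{equs}
Next I would bound each summand using Hölder's inequality in $L^p(\T^d)$ with exponents $p,p'$:
\begin{equs}
|(\Delta^j f,\Delta^j g)_{L^2(\T^d)}| \leqslant \|\Delta^j f\|_{L^p(\T^d)}\|\Delta^j g\|_{L^{p'}(\T^d)}\,.
\end{equs}
Inserting the trivial factorisation $1 = 2^{j\alpha}\cdot 2^{-j\alpha}$ and applying Hölder's inequality on the sequence spaces $\ell^q_{j\geqslant -1},\ell^{q'}_{j\geqslant -1}$ then gives
\begin{equs}
\sum_{j\geqslant-1}|(\Delta^j f,\Delta^j g)_{L^2(\T^d)}|
&\leqslant \sum_{j\geqslant-1}\bigl(2^{j\alpha}\|\Delta^j f\|_{L^p}\bigr)\bigl(2^{-j\alpha}\|\Delta^j g\|_{L^{p'}}\bigr)\\
&\leqslant \bigl\|2^{j\alpha}\|\Delta^j f\|_{L^p}\bigr\|_{\ell^q_{j\geqslant-1}}\bigl\|2^{-j\alpha}\|\Delta^j g\|_{L^{p'}}\bigr\|_{\ell^{q'}_{j\geqslant-1}}\\
&= \|f\|_{B^\alpha_{p,q}(\T^d)}\|g\|_{B^{-\alpha}_{p',q'}(\T^d)}\,.
\end{equs}
This shows that the restriction of the $L^2$ pairing to $C^\infty(\T^d)\times C^\infty(\T^d)$ is continuous for the $B^{\alpha}_{p,q}\times B^{-\alpha}_{p',q'}$-topology.

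Finally, since $B^\alpha_{p,q}(\T^d)$ and $B^{-\alpha}_{p',q'}(\T^d)$ are defined as completions of $C^\infty(\T^d)$ under their respective norms, the density of $C^\infty$ together with the above bilinear estimate allows one to extend the pairing uniquely and continuously to $B^\alpha_{p,q}(\T^d)\times B^{-\alpha}_{p',q'}(\T^d)\to\R$, with operator norm at most $1$. The main (and only) observation to be careful about is that the Littlewood-Paley partition used here has strictly disjoint frequency supports, which bypasses the usual need for a ``fattened'' block $\tilde\Delta^j$ appearing in standard references. No step should be a serious obstacle.
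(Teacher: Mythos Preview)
Your proof is correct. The paper states this lemma as a standard fact about Besov spaces and does not supply a proof, so there is nothing to compare against; your argument exploits precisely the feature of the paper's setup (sharp Littlewood--Paley blocks with pairwise disjoint Fourier supports, and $B^\alpha_{p,q}$ defined as the completion of $C^\infty(\T^d)$) that makes the double H\"older argument go through without any almost-orthogonality considerations.
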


Finally, we have the following standard heat kernel and Schauder estimates. 
\begin{lemma}
For $\alpha, \beta \in\R$ and $t > 0$ one has that $P_t:\cC^\alpha\rightarrow\cC^\beta$ is bounded with 
\begin{equs}\label{eq:schauder1}
    \Vert P_t( u )\Vert_{\cC^\beta}\lesssim t^{-\frac{\beta-\alpha}{2}} \Vert  u \Vert_{\cC^\alpha}\,.
\end{equs}
\end{lemma}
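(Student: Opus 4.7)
The plan is a standard Littlewood--Paley calculation that exploits the fact that $P_t = e^{-t(1-\Delta)}$ is the Fourier multiplier $m \mapsto e^{-t(1+|m|^2)}$, and therefore commutes with every Littlewood--Paley projection $\Delta^j$. The characterisation $\|v\|_{\cC^\beta} = \sup_{j\geqslant -1} 2^{j\beta}\|\Delta^j v\|_{L^\infty}$ reduces the problem to a blockwise smoothing estimate plus a deterministic supremum over dyadic scales.

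First I would establish the blockwise bound
\[
\|\Delta^j P_t u\|_{L^\infty(\T^d)} \lesssim e^{-c t 2^{2j}} \|\Delta^j u\|_{L^\infty(\T^d)}, \qquad j\geqslant 0,
\]
together with $\|\Delta^{-1} P_t u\|_{L^\infty} \lesssim e^{-t}\|\Delta^{-1} u\|_{L^\infty}$ for the low-frequency block. Since $\Delta^j u$ has Fourier support in $\{|m|_\infty \sim 2^j\}$, one can write $\Delta^j P_t u = K_{j,t} \ast \Delta^j u$ where $K_{j,t}$ is the smooth periodic kernel with Fourier multiplier $\hat{\Delta}^j_m e^{-t(1+|m|^2)}$. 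Rescaling $m \mapsto 2^j m$, so that the support becomes an annulus on scale $1$, and extracting the factor $e^{-c t 2^{2j}}$ from the multiplier before bounding the remaining piece trivially in $L^1$ gives $\|K_{j,t}\|_{L^1(\T^d)} \lesssim e^{-c t 2^{2j}}$. Young's convolution inequality then yields the displayed estimate.

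Second, combining this with $\|\Delta^j u\|_{L^\infty} \leqslant 2^{-j\alpha}\|u\|_{\cC^\alpha}$ gives
\[
2^{j\beta}\|\Delta^j P_t u\|_{L^\infty} \lesssim 2^{j(\beta-\alpha)}\, e^{-ct 2^{2j}}\, \|u\|_{\cC^\alpha},
\]
and the proof is reduced to the elementary bound $\sup_{j\geqslant -1} 2^{j(\beta-\alpha)} e^{-ct 2^{2j}} \lesssim t^{-(\beta-\alpha)/2}$. When $\beta > \alpha$, the substitution $y = t^{1/2}2^{j}$ rewrites the left side as $t^{-(\beta-\alpha)/2}\sup_{y>0} y^{\beta-\alpha} e^{-cy^2}$, which is finite since $y\mapsto y^{\beta-\alpha} e^{-cy^2}$ is bounded on $(0,\infty)$. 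When $\beta \leqslant \alpha$ the supremum is bounded by a constant, which is of the correct form on any bounded range of $t$; alternatively, the continuous embedding $\cC^\alpha \hookrightarrow \cC^\beta$ combined with the fact that $P_t$ is a contraction on each $\cC^\alpha$ gives the result directly. This closes the proof, and there is no real obstacle: the only nontrivial point is the kernel estimate $\|K_{j,t}\|_{L^1} \lesssim e^{-c t 2^{2j}}$, for which the dyadic rescaling described above suffices.
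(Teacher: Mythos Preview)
Your argument is correct and is exactly the standard Littlewood--Paley proof of this heat-kernel smoothing estimate. The paper itself does not give a proof of this lemma; it is simply recorded in the appendix as a well-known fact, so there is nothing to compare against beyond noting that your approach is the expected one.
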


\begin{lemma}
For any $\alpha \in (-2,0) \setminus \N$ and $\epsilon>0$, 
\begin{equs}\label{eq:schauder2}
    \Vert \cL^{-1}( u) \Vert_{C_T\cC^{\alpha+2-\epsilon}}\lesssim T^{\frac{\epsilon}{2}}\Vert  u \Vert_{C_T\cC^\alpha}\,.
\end{equs}
\end{lemma}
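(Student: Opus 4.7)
The plan is to reduce the estimate to the fixed-time heat kernel bound \eqref{eq:schauder1} and a direct integration in the time variable. Recall that for $F \in C_T \cC^\alpha$ one has, by definition,
\begin{equs}
\cL^{-1}F(\cdot,t) = \int_0^t P_{t-s} \big(F(\cdot,s)\big)\, \mathrm{d}s\,,
\end{equs}
so the goal is to control $\|\cL^{-1}F(\cdot,t)\|_{\cC^{\alpha+2-\epsilon}}$ uniformly for $t \in [0,T]$.

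First I would apply \eqref{eq:schauder1} pointwise in $s$: taking outer regularity $\beta = \alpha + 2 - \epsilon$ and inner regularity $\alpha$, one gets
\begin{equs}
\| P_{t-s} \big(F(\cdot,s)\big)\|_{\cC^{\alpha+2-\epsilon}} \lesssim (t-s)^{-\frac{2-\epsilon}{2}} \|F(\cdot,s)\|_{\cC^\alpha} \lesssim (t-s)^{-1+\frac{\epsilon}{2}} \|F\|_{C_T\cC^\alpha}\,.
\end{equs}
Plugging this into the Duhamel integral and using that the norm is sublinear under the integral, I obtain
\begin{equs}
\|\cL^{-1}F(\cdot,t)\|_{\cC^{\alpha+2-\epsilon}} \lesssim \|F\|_{C_T\cC^\alpha} \int_0^t (t-s)^{-1+\frac{\epsilon}{2}} \, \mathrm{d}s = \frac{2}{\epsilon}\, t^{\frac{\epsilon}{2}}\, \|F\|_{C_T\cC^\alpha}\,,
\end{equs}
which, after taking the supremum over $t \in [0,T]$, yields the claimed bound with an implicit constant depending on $\epsilon$.

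There is essentially no obstacle: the exponent $\epsilon/2 > 0$ is exactly what is needed to ensure the singularity $(t-s)^{-1+\epsilon/2}$ is integrable at $s=t$, and the hypothesis $\alpha \in (-2,0) \setminus \mathbb{N}$ is only there so that $\alpha + 2 - \epsilon$ stays in the ``nice'' range of H\"older--Besov exponents (away from integer values where the definition of $\cC^\beta = B^\beta_{\infty,\infty}$ is less standard) and so that $\eqref{eq:schauder1}$ applies with a genuine regularity gain. The small factor $T^{\epsilon/2}$ is precisely the gain from restricting to a finite time interval, and it is crucial for the contraction arguments in Propositions~\ref{prop:3dfixedpoint} and~\ref{prop:4dfixedpoint} where $T$ is chosen small.
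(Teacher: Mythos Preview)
Your proof is correct and is exactly the standard argument: apply the pointwise heat kernel smoothing estimate \eqref{eq:schauder1} with $\beta=\alpha+2-\epsilon$ and integrate the resulting $(t-s)^{-1+\epsilon/2}$ singularity over $[0,t]$. The paper does not actually supply a proof of this lemma; it is stated in the appendix as a standard fact alongside \eqref{eq:schauder1}, so there is nothing further to compare.
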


\subsection{Bilocal Besov regularity}\label{app:bilocbesov}
We studying mixed terms in the equation such as $\XXv$ or $\Xvv$ by treating them as distributions over $\T^n_x\times\T^m_y$. 
In our analysis it is key to keep track of their anisotropic behavior, for instance being bounded in the the direction $y$ but only being $\cC^{\alpha}$ for $\alpha < 0$ in $x$. 
While such distributions certainly belong to $\cC^\alpha_{x,y} \eqdef \cC^\alpha  (\T^n_x\times\T^m_y)$, it will be better to work in a space like $L_y^\infty\cC_x^\alpha$. The following lemma gives straightforward but useful estimates for these spaces.  

\begin{lemma}\label{lem:bilocreg+}
Let $\alpha\geqslant0$, and $\epsilon>0$. Then, for $u\in \cC^{\alpha+\epsilon}(\T^n_x\times\T^m_y)$,
\begin{equs}\label{eq:bilocbesov+}
\Vert u\Vert_{L^\infty_y\cC^\alpha_x}= \bV \Vert u \Vert_{\cC^\alpha_x}\bV_{L^\infty_y}\lesssim \Vert u\Vert_{\cC^{\alpha+\epsilon}_{x,y}}\,.
\end{equs}
\end{lemma}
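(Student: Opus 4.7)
The plan is to unpack both norms via Littlewood-Paley decomposition and exploit the Fourier-support mismatch between a block in the $x$-variable only and a block in the full $(x,y)$-variable.

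First, I would observe that by definition $\Vert u\Vert_{L^\infty_y\cC^\alpha_x} = \sup_{i \geqslant -1} 2^{i\alpha}\,\sup_{y\in\T^m}\Vert \Delta^i_x u(\cdot,y)\Vert_{L^\infty_x}$, where $\Delta^i_x$ is the Littlewood-Paley block acting only in the $x$-variable (Fourier multiplier $\bone_{[2^{-1},1)}(2^{-i}|\nabla_x|_\infty)$ for $i\geqslant 0$ and $\bone_{[0,2^{-1})}(|\nabla_x|_\infty)$ for $i=-1$). Similarly, on $\T^{n+m}$ the blocks $\Delta^j_{x,y}$ project onto Fourier modes $(k,l)\in\Z^n\times\Z^m$ with $|(k,l)|_\infty\sim 2^j$.

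Next, I would write $u=\sum_{j\geqslant-1}\Delta^j_{x,y}u$ and apply $\Delta^i_x$. The key observation is a Fourier support condition: $\Delta^i_x\Delta^j_{x,y}u$ has Fourier support contained in $\{|k|_\infty\sim 2^i\}\cap\{|(k,l)|_\infty\sim 2^j\}$, and since $|(k,l)|_\infty\geqslant |k|_\infty$ this intersection is empty unless $2^j\gtrsim 2^i$, i.e.\ unless $j\geqslant i-c$ for some fixed $c\in\N$ (coming from the dyadic geometry of the cutoffs). Hence
\begin{equation*}
\Vert \Delta^i_x u(\cdot,y)\Vert_{L^\infty_x}\leqslant \sum_{j\geqslant i-c}\Vert \Delta^i_x\Delta^j_{x,y}u\Vert_{L^\infty_{x,y}}\lesssim \sum_{j\geqslant i-c}\Vert \Delta^j_{x,y}u\Vert_{L^\infty_{x,y}}\,,
\end{equation*}
using that $\Delta^i_x$ is bounded on $L^\infty$ uniformly in $i$ (Young's inequality applied to its convolution kernel, whose $L^1$ norm is uniformly controlled).

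Finally, bounding each LP block on the right by $\Vert \Delta^j_{x,y}u\Vert_{L^\infty_{x,y}}\leqslant 2^{-j(\alpha+\epsilon)}\Vert u\Vert_{\cC^{\alpha+\epsilon}_{x,y}}$ and summing the geometric series in $j\geqslant i-c$ yields $\Vert \Delta^i_x u(\cdot,y)\Vert_{L^\infty_x}\lesssim 2^{-i(\alpha+\epsilon)}\Vert u\Vert_{\cC^{\alpha+\epsilon}_{x,y}}$ uniformly in $y$. Multiplying by $2^{i\alpha}$ and taking the supremum in $i\geqslant -1$ gives $\Vert u\Vert_{L^\infty_y\cC^\alpha_x}\lesssim \Vert u\Vert_{\cC^{\alpha+\epsilon}_{x,y}}$, which is the desired bound.

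There is no real obstacle: the only mild point to verify carefully is the Fourier-support inclusion for blocks defined by $|\cdot|_\infty$ rather than $|\cdot|_2$, but this is immediate since $|k|_\infty\leqslant|(k,l)|_\infty$. The assumption $\alpha\geqslant 0$ is used only to ensure that $2^{i\alpha}$ contributes a positive weight so that the exponent $2^{-i\epsilon}$ gives summability\slash uniform control (for $\alpha<0$ the argument actually still works, but we do not need it).
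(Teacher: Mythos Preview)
Your proof is correct and follows essentially the same approach as the paper: both use the Fourier-support observation that $\Delta^i_x\Delta^j_{x,y}u$ vanishes unless $j\geqslant i$ (you allow a harmless constant $c$), followed by the uniform $L^\infty$ boundedness of $\Delta^i_x$ and a geometric sum. The only cosmetic difference is that the paper routes the final step through the intermediate norm $\Vert u\Vert_{B^\alpha_{\infty,1}}$ and the embedding $\cC^{\alpha+\epsilon}\hookrightarrow B^\alpha_{\infty,1}$, whereas you sum the series directly.
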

\begin{proof}
For a function of two variables $(x,y)$, we let $\Delta^i_x u(x,y)$ stand for the Littlewood-
Paley block $\Delta^i$ applied only in the variable $x$. When no subscript is present, it is understood that $\Delta^j\equiv\Delta^j_{x,y}$ acts on both variables $x$ and $y$.
\begin{equs}
    \bV \Vert u \Vert_{\cC^\alpha_x}\bV_{L^\infty_y} &=\sup_{y\in\T^m} \Vert u(y)\Vert_{\cC^{\alpha}_{x}}=\sup_{x,y\in\T^n\times\T^m}\sup_{i\geqslant -1} 2^{\alpha i} |  \Delta^i_x u (x,y)|\\&=\sup_{x,y\in\T^n\times\T^m}\sup_{i\geqslant -1} 2^{\alpha i} |  \Delta^i_x \sum_{j\geqslant i}\Delta^j u (x,y)|\leqslant \sup_{i\geqslant -1}2^{\alpha i}\sum_{j\geqslant i}\Vert \Delta^i_x\Delta^j u\Vert_{L^\infty_{x,y}}\\&\leqslant \sup_{i\geqslant -1}\sum_{j\geqslant i}2^{\alpha j}\Vert \Delta^j u\Vert_{L^\infty_{x,y}}\,,
\end{equs}
where we used the fact that since $\Delta^j u $ is bounded, $\Vert \Delta^i_x(\Delta^j u)\Vert_{L^\infty}\lesssim  \Vert \Delta^j u\Vert_{L^\infty}$, as well as the positivity of $\alpha$. Now, since all the terms in the sum over $j$ are positive, the supremum over $i$ is reached when $i=-1$, which implies that 
\begin{equs}
    \bV \Vert u \Vert_{\cC^\alpha_x}\bV_{L^\infty_y}\lesssim\sum_{j\geqslant -1}2^{\alpha j}\Vert \Delta^j u\Vert_{L^\infty_{x,y}}=\Vert u  \Vert_{B^{\alpha}_{\infty,1}(\T^n_x\times\T^m_y)}\,,
\end{equs}
and the claim follows using the embedding $\cC^{\alpha+\epsilon}\hookrightarrow B^{\alpha}_{\infty,1}$.
\end{proof}
We also have a Sobolev space version of the previous lemma:
\begin{lemma}
  Let $\alpha\geqslant0$, and $\epsilon>0$. Then the following inequality holds for $u\in H^{\alpha+\epsilon}(\T^n_x\times\T^m_y)$:
  \begin{equs}\label{eq:sobobilocal}
\Vert u\Vert_{L^2_y H_x^{\alpha}  } =   \bV\Vert u \Vert_{H^\alpha_x}\bV_{L^2_y}\lesssim\Vert u\Vert_{H_{x,y}^{\alpha+\epsilon}}\,.
  \end{equs}
\end{lemma}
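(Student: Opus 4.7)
The plan is to exploit the Hilbert structure of $H^\alpha$, which makes the $L^2_y$-integration commute much more cleanly than in the $L^\infty_y$ setting of Lemma~\ref{lem:bilocreg+}. In fact, I expect the $\epsilon$-loss in the statement to be unnecessary, and a proof should go through with $\epsilon=0$ directly; the displayed inequality then follows a fortiori from the continuous embedding $H^{\alpha+\epsilon}\hookrightarrow H^\alpha$.

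The cleanest route is via the Fourier characterization. First, I would apply Plancherel in the $x$-variable to rewrite, for each fixed $y$,
\begin{equs}
\Vert u(\cdot,y)\Vert_{H^\alpha_x}^2=\sum_{k\in\Z^n}\angp{k}{2\alpha}|\hat u_x(k,y)|^2\,,
\end{equs}
where $\hat u_x(k,y)$ denotes the Fourier transform of $u$ in the $x$-variable only. Integrating over $y\in\T^m$ and then applying Plancherel in $y$ yields
\begin{equs}
\Vert u\Vert_{L^2_y H^\alpha_x}^2=\sum_{k\in\Z^n,\ell\in\Z^m}\angp{k}{2\alpha}|\hat u(k,\ell)|^2\,,
\end{equs}
while
\begin{equs}
\Vert u\Vert_{H^{\alpha}_{x,y}}^2=\sum_{k\in\Z^n,\ell\in\Z^m}\angp{(k,\ell)}{2\alpha}|\hat u(k,\ell)|^2\,.
\end{equs}
Since $\angp{k}{}\leqslant\angp{(k,\ell)}{}$ and $\alpha\geqslant0$, the inequality $\angp{k}{2\alpha}\leqslant\angp{(k,\ell)}{2\alpha}$ gives the bound $\Vert u\Vert_{L^2_y H^\alpha_x}\leqslant\Vert u\Vert_{H^\alpha_{x,y}}\leqslant\Vert u\Vert_{H^{\alpha+\epsilon}_{x,y}}$.

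Alternatively, to stay closer in spirit to the Littlewood--Paley argument used in Lemma~\ref{lem:bilocreg+}, one can write $\Vert u\Vert_{L^2_y H^\alpha_x}^2=\sum_{i\geqslant-1}2^{2\alpha i}\Vert\Delta^i_x u\Vert_{L^2_{x,y}}^2$ by Fubini, expand $u=\sum_j\Delta^j u$ with $\Delta^j\equiv\Delta^j_{x,y}$, and observe from Fourier support considerations that $\Delta^i_x\Delta^j u\equiv 0$ unless $j\gtrsim i$, while the pieces $(\Delta^i_x\Delta^j u)_j$ for fixed $i$ have pairwise disjoint Fourier supports. Orthogonality in $L^2_{x,y}$ then gives $\Vert\Delta^i_x u\Vert_{L^2_{x,y}}^2\leqslant\sum_{j\geqslant i-c}\Vert\Delta^j u\Vert_{L^2_{x,y}}^2$, and swapping the order of summation followed by the geometric estimate $\sum_{-1\leqslant i\leqslant j+c}2^{2\alpha i}\lesssim 2^{2\alpha j}$ (valid for $\alpha>0$, with the limiting case $\alpha=0$ handled by Fubini) produces the same conclusion.

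There is no real obstacle here; the only point requiring a modicum of care is the Fourier support statement $\Delta^i_x\Delta^j u=0$ for $j<i-c$ in the Littlewood--Paley route, which follows because $|\xi_x|_\infty\sim 2^i$ forces $|(\xi_x,\xi_y)|_\infty\geqslant 2^{i-1}$. The $\epsilon$ on the right-hand side is in any case superfluous once one exploits orthogonality, in contrast with the $L^\infty_y\cC^\alpha_x$ setting of Lemma~\ref{lem:bilocreg+}, where passing from $B^\alpha_{\infty,1}$ to $\cC^{\alpha+\epsilon}$ genuinely costs an $\epsilon$.
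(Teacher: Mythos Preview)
Your proof is correct and in fact sharper than the paper's. Your first route via Plancherel is the most direct: it shows the estimate holds already with $\epsilon=0$, and the stated inequality follows from $H^{\alpha+\epsilon}\hookrightarrow H^\alpha$. The paper instead mirrors the argument of Lemma~\ref{lem:bilocreg+}: it writes $\Delta^i_x u=\Delta^i_x\sum_{j\geqslant i}\Delta^j u$, applies the \emph{triangle inequality} $\Vert\Delta^i_x u\Vert_{L^2_{x,y}}\leqslant\sum_{j\geqslant i}\Vert\Delta^i_x\Delta^j u\Vert_{L^2_{x,y}}$, and then needs an extra $2^{-\epsilon i/2}$ factor to make the resulting $\ell^2_i$-norm finite, landing in $B^{\alpha+\epsilon/2}_{2,1}$ before using $H^{\alpha+\epsilon}\hookrightarrow B^{\alpha+\epsilon/2}_{2,1}$. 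Your Littlewood--Paley alternative avoids this loss by exploiting the $L^2_{x,y}$-orthogonality of the pieces $(\Delta^i_x\Delta^j u)_j$ (valid here because the paper's Littlewood--Paley blocks are sharp Fourier projectors), which converts the triangle inequality into a genuine Pythagorean identity. The paper's approach has the virtue of being a near-verbatim transcription of the $L^\infty_y\cC^\alpha_x$ argument, but in the $L^2$ setting your argument is both simpler and optimal.
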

\begin{proof}
Recall the notation $\Delta^i_x u(x,y)$ for the Littlewood-
Paley block acting the on the variable $x$. We have
\begin{equs}
  \Big\Vert \Vert u \Vert_{H^\alpha_x}\Big\Vert_{L^2_y}&=\Big(\int\sum_{i\geqslant-1}2^{2\alpha i}\Vert \Delta^i_xu\Vert_{L^2_x}^2(y)\rmd y \Big)^\frac{1}{2}\\ &=\Big(\sum_{i\geqslant-1}2^{2\alpha i}\int\Vert \Delta^i_xu\Vert_{L^2_x}^2(y)\rmd y \Big)^\frac{1}{2} \\&=\Big(\sum_{i\geqslant-1}2^{2\alpha i}\Vert \Delta^i_xu\Vert_{L^2_{x,y}}^2  \Big)^\frac{1}{2}=\bV 2^{\alpha i}\Vert \Delta^i_xu\Vert_{L^2_{x,y}}\bV_{\ell^2_i}\\&=\bV2^{\alpha i}\Vert \Delta^i_x\sum_{j\geqslant i}\Delta^ju\Vert_{L^2_{x,y}}\bV_{\ell^2_i}\leqslant \bV 2^{\alpha i}\sum_{j\geqslant i}\Vert \Delta^i_x\Delta^ju\Vert_{L^2_{x,y}}\bV_{\ell^2_i}\\&= \bV 2^{-\frac{\epsilon}{2} i}\sum_{j\geqslant i}2^{(\alpha+\frac{\epsilon}{2})j}\Vert \Delta^i_x\Delta^ju\Vert_{L^2_{x,y}}\bV_{\ell^2_i}\,.
\end{equs}
where we used the fact that since $\Delta^j u $ is bounded, $\Vert \Delta^i_x(\Delta^j u)\Vert_{L^2_{x,y}}\lesssim  \Vert \Delta^j u\Vert_{L^2_{x,y}}$, as well as the positivity of $\alpha+\frac{\epsilon}{2}$. Now, since all the terms of the sum over $j$ are positive, the sum over $j\geqslant i$ is bounded by the sum over $j\geqslant -1$, which implies that 
\begin{equs}
    \bV \Vert u \Vert_{H^\alpha_x}\bV_{L^2_y}\leqslant \bV 2^{-\frac{\epsilon}{2} i}\sum_{j\geqslant -1}2^{(\alpha+\frac{\epsilon}{2})j}\Vert \Delta^ju\Vert_{L^2_{x,y}}\bV_{\ell^2_i}=\Vert2^{-\frac{\epsilon}{2} i}\Vert_{\ell^2_i}\Vert u\Vert_{B_{2,1}^{\alpha+\frac{\epsilon}{2}}(\T^n_x\times\T^m_y)}\,,
\end{equs}
and the claim follows using the embedding $H^{\alpha+\epsilon}\hookrightarrow B^{\alpha+\frac{\epsilon}{2}}_{2,1}$.
\end{proof}
Finally, the following lemma deals with distributions of negative bilocal H\"older-Besov regularity: 
\begin{lemma}\label{lem:bilocreg-}
Let $\alpha\leqslant0$, and $\epsilon>0$. Then the following inequality holds for $u\in L^\infty_y\cC_x^{\alpha+\epsilon}(\T^n_x\times\T^m_y)$:
\begin{equs}\label{eq:bilocbesov-}
 \Vert u \Vert_{\cC^\alpha_{x,y}}\lesssim\Big\Vert \Vert u\Vert_{\cC^{\alpha+\epsilon}_{x}}\bV_{L^\infty_y}=\Vert u\Vert_{L^\infty_y\cC_x^{\alpha+\epsilon}}\,.
\end{equs}
\end{lemma}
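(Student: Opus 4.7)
The plan is to dualize the proof of Lemma~\ref{lem:bilocreg+}: instead of decomposing the joint Littlewood--Paley block in terms of itself, I will estimate each joint block $\Delta^j u$ by a telescoping of the partial blocks $\Delta^i_x u$, for which the bilocal norm directly gives control at fixed~$y$. Concretely, I will use the identity $u = \sum_{i\geqslant -1}\Delta^i_x u$ and note that $\Delta^j \Delta^i_x = 0$ unless $i\lesssim j$: the joint block $\Delta^j$ restricts $|(m,n)|_\infty$ to the dyadic annulus of size $2^j$, so $|m|_\infty \leqslant 2^j$, which forces the partial $x$-block parameter $i$ to satisfy $i\leqslant j+O(1)$.

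First I would write, for each $j\geqslant -1$,
\begin{equs}
\Vert \Delta^j u\Vert_{L^\infty_{x,y}}
\leqslant \sum_{i\leqslant j+O(1)} \Vert \Delta^j \Delta^i_x u\Vert_{L^\infty_{x,y}}
\lesssim \sum_{i\leqslant j+O(1)} \Vert \Delta^i_x u\Vert_{L^\infty_{x,y}},
\end{equs}
using that $\Delta^j$ is bounded on $L^\infty$ uniformly in $j$. Then the fixed-$y$ estimate $\Vert\Delta^i_x f\Vert_{L^\infty_x}\lesssim 2^{-i(\alpha+\epsilon)}\Vert f\Vert_{\cC^{\alpha+\epsilon}_x}$ applied with $f = u(\cdot,y)$ and followed by taking $\sup_y$ yields
\begin{equs}
\Vert \Delta^j u\Vert_{L^\infty_{x,y}}
\lesssim \Big(\sum_{i=-1}^{j+O(1)} 2^{-i(\alpha+\epsilon)}\Big)\, \Vert u\Vert_{L^\infty_y \cC^{\alpha+\epsilon}_x}.
\end{equs}

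Multiplying by $2^{j\alpha}$ and taking the supremum over $j$ reduces the proof to showing that $\sup_{j\geqslant -1} 2^{j\alpha}\sum_{i=-1}^{j+O(1)} 2^{-i(\alpha+\epsilon)}<\infty$. This is a routine check split by the sign of $\alpha+\epsilon$: if $\alpha+\epsilon>0$ the geometric sum is $O(1)$ and $2^{j\alpha}$ is bounded since $\alpha\leqslant 0$; if $\alpha+\epsilon<0$ the sum is $O(2^{-j(\alpha+\epsilon)})$ and the product is $O(2^{-j\epsilon})$, uniformly bounded; the borderline $\alpha+\epsilon=0$ gives at worst $j\cdot 2^{-j\epsilon}$, again bounded. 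The only structural point worth emphasising is the asymmetric frequency-support observation justifying the truncation $i\lesssim j$; everything else reduces to the one-sided (in $y$) H\"older--Besov characterisation of $\cC^{\alpha+\epsilon}_x$. No separate obstacle arises, so the argument is essentially complete once this Fourier-support observation is written down carefully.
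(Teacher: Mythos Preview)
Your proof is correct and follows essentially the same route as the paper's. Both arguments rest on the same Fourier-support observation that $\Delta^j\Delta^i_x = 0$ unless $i\leqslant j+O(1)$, then bound $\|\Delta^j u\|_{L^\infty_{x,y}}$ by summing the partial $x$-blocks. The only cosmetic difference is bookkeeping: the paper pushes the factor $2^{j\alpha}$ inside the sum (using $\alpha\leqslant 0$) to land on a $B^\alpha_{\infty,1}$-type norm in $x$ and then invokes the embedding $\cC^{\alpha+\epsilon}\hookrightarrow B^\alpha_{\infty,1}$, whereas you apply the $\cC^{\alpha+\epsilon}$ bound on each $\Delta^i_x u$ first and sum the resulting geometric series by a direct case analysis in the sign of $\alpha+\epsilon$.
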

\begin{proof}
Again, $\Delta^j_x$ is a Littlewood-
Paley block in $x$ only, as opposed to $\Delta^i$. We start from
\begin{equs}
     \Vert u \Vert_{\cC^\alpha_{x,y}}&=
     \sup_{i\geqslant -1}2^{\alpha i} \Vert \Delta^i u\Vert_{L^\infty_{x,y}}     =\sup_{y\in\T^m}\sup_{i\geqslant -1}2^{\alpha i} \Vert \Delta^i\sum_{j\leqslant i}\Delta^j_x u(y)\Vert_{L^\infty_{x}}  \\
     &\lesssim \sup_{y\in\T^m}\sup_{i\geqslant -1}2^{\alpha i} \sum_{j\leqslant i}\Vert \Delta^i\Delta^j_x u(y)\Vert_{L^\infty_{x}} 
     \lesssim  \sup_{y\in\T^m} \sup_{i\geqslant -1} \sum_{j\leqslant i}2^{\alpha j}\Vert \Delta^j_x u(y)\Vert_{L^\infty_{x}}\,,
\end{equs}
where we used the fact that $\alpha$ is negative and that $\Delta^j_x u $ being bounded in $x$ implies $\Vert \Delta^i(\Delta^j_x u)(y)\Vert_{L^\infty_x}$ $\lesssim$ $ \Vert \Delta^j_x u(y)\Vert_{L^\infty_x}$.
Now, since all the terms in the sum over $j$ are positive, the supremum over $i$ is reached when $i=\infty$, which implies that
\begin{equs}
    \Vert u \Vert_{\cC^\alpha_{x,y}}\lesssim\sum_{j\geqslant -1}2^{\alpha j}\Vert \Delta^j u\Vert_{L^\infty_{x,y}}=\Vert u  \Vert_{B^{\alpha}_{\infty,1}(\T^n_x\times\T^m_y)}\,,
\end{equs}
and the claim follows using the embedding $\cC^{\alpha+\epsilon}\hookrightarrow B^{\alpha}_{\infty,1}$.
\end{proof}

\section{Facts about the nonlinearity}\label{sec:nonlin_facts}
In this section, we finally prove some fact about the non-local interaction appearing in the $\mathrm{T}^4_{d}$ field theory. 
We first show that it is built from a norm in a wider family of norms over $\cC^\infty(\T^d)$ which we denote as the $M^p_c$ norms. 
We then consider the analog of the Sobolev norms constructed with the $M^p_c$ norms, and establish some interpolation inequalities and embeddings for these norms. 
Finally, we prove that it verifies two important Cauchy-Schwarz inequalities. 
 
Throughout this section, we fix $d \in \N$ with $d \geqslant 2$ and always take $c\in[d]$.

\begin{definition}[$M^p$ spaces]
Let $ u \in \cC^{\infty}(\T^d)$. 
We define a bounded and positive operator $M_c( u )\in \cL(\ell^{2}(\Z))$ on the space of square integrable complex-valued sequences indexed by $\Z$ by setting, for $a_{c},b_{c} \in \Z$, the corresponding matrix entry of $M_{c}( u )$ to be given by 
\begin{equs}    M_c( u )_{a_c,b_c}\eqdef\sum_{m_1,...,m_d\in\Z}\hat{ u }_{m_1,...,a_c,...,m_d}\hat{ u }_{m_1,...,b_c,...,m_d}\,.
\end{equs}
For $p\in(1,+\infty)$ we define the Banach space $M^p_c(\T^d)$ as the completion of $\cC^{\infty}(\T^d)$ under the norm:
\begin{equs}
    \Vert  u  \Vert_{M_c^p}\eqdef \mathrm{Tr} \Big( M_c( u )^{\frac{p}{2}} \Big)^{\frac{1}{p}}\,.
    \end{equs}
\end{definition}
Positivity of the above is straightforward to check. 
We also define what we call the $M^p$ norm as 
\begin{equ}
\Vert  u \Vert^p_{M^p}\eqdef \displaystyle\sum_{c=1}^d \Vert  u \Vert^p_{M^p_c}\;.
\end{equ}

\begin{remark}
    The name tensor field theory stems from the fact that a Fourier cut-off distribution $u$ over $\T^d$ can be seen as a tensor $\widehat{\Pi_N u}=(\hat{u}_m)_{m\in\Z_N^d}$. 
    Then, for $p$ an even integer, $\Vert u\Vert_{M^p_c}^p$ is a trace invariant of $\hat{u}$, that is a function of $\hat{u}$ invariant under the action of $\mathrm{O}(2N+1)^{\otimes d}$ on the indices of $\hat{u}$.
\end{remark}
\begin{lemma}
Let $1\leqslant p<q<\infty$. Then we have the compact embedding 
\begin{equs}
    \Vert  u  \Vert_{M_c^q}\lesssim \Vert  u  \Vert_{M_c^p}\,,
\end{equs}
which directly follows from the embedding $\ell^p(\Z)\hookrightarrow\ell^q(\Z)$. \end{lemma}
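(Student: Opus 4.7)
The plan is to reduce the norm $\Vert u \Vert_{M^p_c}$ to a weighted $\ell^p$ norm of the eigenvalues of the operator $M_c(u)$, and then to invoke the standard inclusion $\ell^p \hookrightarrow \ell^q$.

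First I would observe that for any $u \in \cC^\infty(\T^d)$ the operator $M_c(u)$ is positive, self-adjoint and of trace class on $\ell^2(\Z)$. Positivity and self-adjointness are immediate from the definition: writing $m = (m_{\hat c}, m_c)$ and using that $u$ is real-valued so $\hat u_{-m} = \overline{\hat u_m}$, the entries $M_c(u)_{a_c, b_c} = \sum_{m_{\hat c}} \hat u_{(m_{\hat c}, a_c)} \, \overline{\hat u_{(m_{\hat c}, b_c)}}$ form a manifestly non-negative Gram matrix. Trace-class follows from smoothness of $u$, since $\mathrm{Tr}(M_c(u)) = \sum_m |\hat u_m|^2 = \Vert u \Vert_{L^2}^2 < \infty$. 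By the spectral theorem, $M_c(u)$ therefore admits a sequence of non-negative eigenvalues $(\lambda_k)_{k \in \N}$ (counted with multiplicity) with $\sum_k \lambda_k < \infty$.

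Next I would rewrite the $M^p_c$ norm in terms of this spectrum. By functional calculus,
\begin{equs}
\Vert u \Vert_{M^p_c}^p = \mathrm{Tr}\bigl(M_c(u)^{p/2}\bigr) = \sum_{k \in \N} \lambda_k^{p/2}\,.
\end{equs}
Setting $\mu_k \eqdef \lambda_k^{1/2} \geqslant 0$, this identifies
\begin{equs}
\Vert u \Vert_{M^p_c} = \bigl\Vert (\mu_k)_{k\in\N} \bigr\Vert_{\ell^p(\N)}\,.
\end{equs}
Equivalently, $\Vert u \Vert_{M^p_c}$ is the Schatten-$p$ norm of $M_c(u)^{1/2}$.

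With this identification, the inequality $\Vert u \Vert_{M^q_c} \lesssim \Vert u \Vert_{M^p_c}$ for $p < q$ is now nothing more than the classical fact that $\Vert (\mu_k) \Vert_{\ell^q} \leqslant \Vert (\mu_k) \Vert_{\ell^p}$, applied to the sequence $(\mu_k)$ just constructed. The main conceptual step is the spectral-theoretic reformulation of the norm in the first two paragraphs; the $\ell^p$ embedding itself is elementary and requires no further work. I note that compactness of the embedding $M^p_c \hookrightarrow M^q_c$ does not in fact follow from this argument, as the inclusion $\ell^p(\N) \hookrightarrow \ell^q(\N)$ is only continuous and not compact; the word ``compact'' in the statement should be read as ``continuous''.
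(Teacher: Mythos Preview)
Your proof is correct and follows the same approach as the paper: the paper's own argument is the one-line remark ``which directly follows from the embedding $\ell^p(\Z)\hookrightarrow\ell^q(\Z)$'', and you have simply supplied the missing step, namely the spectral identification of $\Vert u\Vert_{M^p_c}$ with the $\ell^p$ norm of the singular values of $M_c(u)^{1/2}$. Your observation that the embedding is merely continuous rather than compact is also well-taken.
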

\begin{remark}
For any $c\in[d]$,  $\mathrm{Tr}M_c( u )=\Vert u \Vert_{L^2(\T^d)}^2<\infty$ by Parseval, 
so the space $M_c^2 (\T^d)$ is just $L^2(\T^d)$ \dash in particular,  $   \cI( u )^{\frac14}\lesssim\Vert u \Vert_{L^2}$. 
\end{remark}
Next we state some interpolation and embedding inequalities. 
\begin{lemma}[$M^4_c-$Sobolev interpolation 1]
For any $\beta\geqslant \alpha \geqslant 0$,
    \begin{equs}\label{eq:interpol}
        \Vert \langle \nabla_c\rangle^\alpha u\Vert_{M^4_c}\lesssim \Vert u\Vert_{M^4_c}^{1-\frac{\alpha}{\beta}}\Vert \langle\nabla_c\rangle^{\beta}u\Vert^{\frac{\alpha}{\beta}}_{M^4_c}\,.
    \end{equs}
\end{lemma}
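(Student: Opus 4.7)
The plan is to reduce the inequality to a weighted Hölder inequality on the entries of $M_c(u)$. The starting observation is that $\Vert u\Vert_{M^4_c}^4 = \mathrm{Tr}(M_c(u)^2)$ can be rewritten as a Hilbert–Schmidt sum. Indeed, the definition gives $M_c(u)_{a,b} = M_c(u)_{b,a}$ (symmetry in the $c$-th Fourier variable), and for $u$ real-valued the reality condition $\hat u_{-m} = \overline{\hat u_m}$ together with the change of summation variable $\mu\mapsto -\mu$ yields $\overline{M_c(u)_{a,b}} = M_c(u)_{a,b}$. Consequently
\begin{equs}
\Vert u\Vert_{M^4_c}^4 = \mathrm{Tr}(M_c(u)^2) = \sum_{a,b\in\Z} M_c(u)_{a,b}\,M_c(u)_{b,a} = \sum_{a,b\in\Z} |M_c(u)_{a,b}|^2\,.
\end{equs}

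Next, since $\widehat{\langle\nabla_c\rangle^\gamma u}_{\mu,a} = \langle a\rangle^\gamma \hat u_{\mu,a}$ for any $\gamma\geqslant 0$, the definition of $M_c$ yields
\begin{equs}
M_c(\langle\nabla_c\rangle^\gamma u)_{a,b} = \langle a\rangle^\gamma \langle b\rangle^\gamma \, M_c(u)_{a,b}\,,
\end{equs}
so that
\begin{equs}\label{eq:Mcexpansion}
\Vert \langle\nabla_c\rangle^\gamma u\Vert_{M^4_c}^4 = \sum_{a,b\in\Z} \langle a\rangle^{2\gamma}\langle b\rangle^{2\gamma}\,|M_c(u)_{a,b}|^2\,.
\end{equs}

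Setting $\theta = \alpha/\beta \in [0,1]$, the inequality to prove becomes, after raising to the fourth power,
\begin{equs}
\sum_{a,b} \langle a\rangle^{2\alpha}\langle b\rangle^{2\alpha}|M_c(u)_{a,b}|^2 \leqslant \Big(\sum_{a,b}|M_c(u)_{a,b}|^2\Big)^{1-\theta} \Big(\sum_{a,b}\langle a\rangle^{2\beta}\langle b\rangle^{2\beta}|M_c(u)_{a,b}|^2\Big)^{\theta}\,.
\end{equs}
This is an immediate consequence of Hölder's inequality on $\ell^1(\Z\times\Z)$ applied to the pointwise factorization
\begin{equs}
\langle a\rangle^{2\alpha}\langle b\rangle^{2\alpha}|M_c(u)_{a,b}|^2 = \big(|M_c(u)_{a,b}|^2\big)^{1-\theta}\big(\langle a\rangle^{2\beta}\langle b\rangle^{2\beta}|M_c(u)_{a,b}|^2\big)^{\theta}\,,
\end{equs}
using the conjugate exponents $1/(1-\theta)$ and $1/\theta$. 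Taking fourth roots yields \eqref{eq:interpol}.

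There is essentially no obstacle: the only bookkeeping is the first step, rewriting $\mathrm{Tr}(M_c(u)^2)$ as $\sum|M_c(u)_{a,b}|^2$. Once one identifies $\Vert\,\cdot\,\Vert_{M^4_c}^4$ as a weighted $\ell^2$-type expression in the matrix entries via \eqref{eq:Mcexpansion}, the interpolation is just the log-convexity of weighted $\ell^p$ norms in the weight.
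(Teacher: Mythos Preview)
Your proof is correct and follows essentially the same route as the paper's: both rewrite $\Vert\langle\nabla_c\rangle^\gamma u\Vert_{M^4_c}^4$ as the weighted Hilbert--Schmidt sum $\sum_{a,b}\langle a\rangle^{2\gamma}\langle b\rangle^{2\gamma}\,M_c(u)_{a,b}^2$ and then apply H\"older's inequality with conjugate exponents $(1-\alpha/\beta)^{-1}$ and $\beta/\alpha$. One small bookkeeping remark: your change of variable $\mu\mapsto-\mu$ actually yields $\overline{M_c(u)_{a,b}}=M_c(u)_{-a,-b}$ rather than $M_c(u)_{a,b}$, but this is harmless here and is really an artefact of the paper's definition of $M_c$ (which, to be a genuine positive operator, ought to carry a complex conjugate on one factor, making it a Gram matrix); with that understood, the identity $\mathrm{Tr}(M_c^2)=\sum_{a,b}|M_c(u)_{a,b}|^2$ and your H\"older step go through verbatim.
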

\begin{proof}
In the proof, we denote $M_c=M_c(u)$. Using H\"older's inequality, we have
\begin{equs}
    \Vert \langle \nabla_c\rangle^\alpha u\Vert_{M^4_c}^4&=\sum_{p_c,q_c} \langle p_c\rangle^{2\alpha}\langle q_c\rangle^{2\alpha}M_c(p_c,q_c)^2\\&=\sum_{p_c,q_c} M_c(p_c,q_c)^{2(1-\frac\alpha\beta)}\Big( \langle p_c\rangle^{2}\langle q_c\rangle^{2}M_c(p_c,q_c)^{\frac{2}{\beta}}\Big)^{\alpha}\\&\leqslant\Vert M_c(p_c,q_c)^{2(1-\frac\alpha\beta)}\Vert_{\ell^{\frac{1}{1-\frac{\alpha}{\beta}}}_{p_c,q_c}}\Vert \Big( \langle p_c\rangle^{2}\langle q_c\rangle^{2}M_c(p_c,q_c)^{\frac{2}{\beta}}\Big)^{\alpha}\Vert_{\ell^{\frac{\beta}{\alpha}}_{p_c,q_c}}\\&=\Vert M_c(p_c,q_c)\Vert_{\ell_{p_c,q_c}^{2}}^{2(1-\frac{\alpha}{\beta})}\Vert\langle p_c\rangle^{\beta}\langle q_c\rangle^{\beta}M_c(p_c,q_c)\Vert_{\ell^2_{p_c,q_c}}^{2\frac{\alpha}{\beta}}\,.
\end{equs}
\end{proof}
\begin{lemma}[$M^4_c-$Sobolev embedding]
    Let $\alpha>\frac{1}{4}$. It holds
    \begin{equs}\label{eq:M4sobemb}
        \Vert u\Vert_{L^2}\lesssim\Vert \angp{\nabla_c}{\alpha}u\Vert_{M^4_c} \,.
        \end{equs}
\end{lemma}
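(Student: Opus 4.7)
The plan is to unfold the $M^4_c$ norm via the Hilbert--Schmidt/Frobenius identity $\mathrm{Tr}\,M_c(u)^2=\sum_{a_c,b_c}|M_c(u)_{a_c,b_c}|^2$ and then pull out the $L^2$-norm as the trace of the same operator, using Cauchy--Schwarz with weights $\angp{a_c}{\pm 2\alpha}$ to connect the two. The condition $\alpha>\tfrac{1}{4}$ will enter as the summability threshold of $\sum_{a_c\in\Z}\angp{a_c}{-4\alpha}$, which is exactly the dimensional count for a one-dimensional Sobolev embedding $H^\alpha(\T)\hookrightarrow L^2(\T)\cap\ell^2$-type estimate along the $c$-direction.

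First, I would observe that the action of $\angp{\nabla_c}{\alpha}$ on $u$ conjugates $M_c$ by the diagonal operator with entries $\angp{a_c}{\alpha}$, so that
\begin{equs}
\Vert \angp{\nabla_c}{\alpha}u\Vert^4_{M^4_c}
=\mathrm{Tr}\,M_c(\angp{\nabla_c}{\alpha}u)^2
=\sum_{a_c,b_c\in\Z}\angp{a_c}{2\alpha}\angp{b_c}{2\alpha}\,\bigl|M_c(u)_{a_c,b_c}\bigr|^2\,.
\end{equs}
Separately, Parseval gives
\begin{equs}
\Vert u\Vert_{L^2}^2=\sum_{m\in\Z^d}|\hat u_m|^2=\sum_{a_c\in\Z}M_c(u)_{a_c,a_c}\,,
\end{equs}
since the diagonal entry $M_c(u)_{a_c,a_c}=\sum_{m_{\hat c}}|\hat u_{(m_{\hat c},a_c)}|^2$ is exactly the slice of $|\hat u|^2$ at fixed $c$-frequency $a_c$. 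In particular $M_c(u)_{a_c,a_c}\geqslant 0$.

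Next, I would insert $1=\angp{a_c}{-2\alpha}\cdot\angp{a_c}{2\alpha}$ under the diagonal sum and apply Cauchy--Schwarz in $\ell^2(\Z)$:
\begin{equs}
\sum_{a_c}M_c(u)_{a_c,a_c}
\leqslant\Big(\sum_{a_c}\angp{a_c}{-4\alpha}\Big)^{\!1/2}\Big(\sum_{a_c}\angp{a_c}{4\alpha}M_c(u)_{a_c,a_c}^2\Big)^{\!1/2}\,.
\end{equs}
The first factor is finite precisely when $4\alpha>1$, which is the hypothesis $\alpha>\tfrac{1}{4}$. For the second factor I would restrict the full double sum in the expression for $\Vert\angp{\nabla_c}{\alpha}u\Vert_{M^4_c}^4$ to the diagonal $a_c=b_c$, where the weights combine to $\angp{a_c}{4\alpha}$ and each summand is nonnegative, giving
\begin{equs}
\sum_{a_c}\angp{a_c}{4\alpha}M_c(u)_{a_c,a_c}^2\leqslant\Vert \angp{\nabla_c}{\alpha}u\Vert^4_{M^4_c}\,.
\end{equs}

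Combining the two displays yields $\Vert u\Vert_{L^2}^2\lesssim\Vert\angp{\nabla_c}{\alpha}u\Vert_{M^4_c}^2$, which is the desired embedding. The only delicate point is the threshold $\alpha>\tfrac14$, which is sharp here since it reflects the $1$-dimensional Sobolev summability along the single direction $c$ (the other $d-1$ directions are already absorbed into the positive operator $M_c$ by the partial trace defining its diagonal). No additional combinatorial or analytic obstacle is expected beyond this scale count.
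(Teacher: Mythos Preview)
Your proof is correct and is essentially the same as the paper's: both express $\Vert u\Vert_{L^2}^2$ as the trace $\sum_{a_c}M_c(u)_{a_c,a_c}$, insert the weights $\angp{a_c}{\pm2\alpha}$, and apply Cauchy--Schwarz, with the condition $\alpha>\tfrac14$ entering as the summability of $\sum_{a_c}\angp{a_c}{-4\alpha}$. The only cosmetic difference is that the paper applies Cauchy--Schwarz over the double sum $\sum_{p_c,q_c}M_c(p_c,q_c)\delta_{p_c,q_c}$ (so the full $M^4_c$ norm appears directly), whereas you first apply Cauchy--Schwarz on the diagonal sum and then bound the resulting diagonal quantity by the full double sum via positivity; these two steps are equivalent.
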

\begin{proof}
Again denoting $M_c=M_c(u)$, by Cauchy-Schwarz we have
\begin{equs}
    \Vert u \Vert_{L^2}^2&=\mathrm{Tr}M_c=\sum_{p_c}M_c(p_c,p_c)=\sum_{p_c,q_c} M_c(p_c,q_c)\delta_{p_c,q_c}\\
    &=\sum_{p_c,q_c} M_c(p_c,q_c)\angp{p_c}{\alpha}\angp{q_c}{\alpha}\delta_{p_c,q_c}\angp{p_c}{-\alpha}\angp{q_c}{-\alpha}\\
    &\leqslant \Big(\sum_{p_c,q_c} M_c(p_c,q_c)^2\angp{p_c}{2\alpha}\angp{q_c}{2\alpha}\Big)^{\frac{1}{2}}
     \Big(\sum_{p_c,q_c}\delta_{p_c,q_c}^2 \angp{p_c}{-2\alpha}\angp{q_c}{-2\alpha}\Big)^{\frac{1}{2}}\\
     &\leqslant \Vert \angp{\nabla_c}{\alpha}{u} \Vert^2_{M^4_c} \Big(\sum_{p_c} \angp{p_c}{-4\alpha}\Big)^{\frac{1}{2}}\lesssim\Vert \angp{\nabla_c}{\alpha}u\Vert^2_{M^4_c}\,.
\end{equs}
\end{proof}
\begin{corollary}[$M^p_c-$Sobolev interpolation 2]
For all $\theta\in(\frac{1}{4},1]$, we have
 \begin{equs}\label{eq:interpol2}
     \Vert u\Vert_{L^2}\lesssim \Vert u\Vert_{M^4_c}^{1-\theta}\Vert u\Vert_{H^{1}}^{\theta}\,.
 \end{equs}
\end{corollary}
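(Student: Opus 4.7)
The plan is to combine the two previously established lemmas -- the $M^4_c$--Sobolev embedding \eqref{eq:M4sobemb} and the $M^4_c$--Sobolev interpolation \eqref{eq:interpol} -- bootstrapping the $M^4_c$ side of the bound back to an $H^1$ norm via the elementary embedding $L^2 = M^2_c \hookrightarrow M^4_c$ coming from the Remark before \eqref{eq:interpol} (i.e. $\ell^2(\Z)\hookrightarrow\ell^4(\Z)$ applied to the eigenvalues of $M_c(u)$).

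Concretely, fix $\theta\in(\tfrac14,1)$ and choose parameters $\alpha=\theta$ and $\beta=1$; then $\beta\geqslant\alpha>\tfrac14$, so both \eqref{eq:M4sobemb} and \eqref{eq:interpol} apply. First, \eqref{eq:M4sobemb} yields
\begin{equs}
\Vert u\Vert_{L^2}\lesssim \Vert \angp{\nabla_c}{\theta}u\Vert_{M^4_c}\,.
\end{equs}
Next, \eqref{eq:interpol} with these choices of $\alpha,\beta$ gives
\begin{equs}
\Vert \angp{\nabla_c}{\theta} u\Vert_{M^4_c}
\lesssim
\Vert u\Vert_{M^4_c}^{1-\theta}\,\Vert \angp{\nabla_c}{}u\Vert_{M^4_c}^{\theta}\,.
\end{equs}
Finally, the $M^q_c\hookrightarrow M^p_c$ comparison (taking $p=2$, $q=4$ and using the identification $M^2_c=L^2$ from the Remark) gives
\begin{equs}
\Vert \angp{\nabla_c}{}u\Vert_{M^4_c}\lesssim \Vert \angp{\nabla_c}{}u\Vert_{L^2}\leqslant \Vert u\Vert_{H^1}\,.
\end{equs}
Chaining the three inequalities above yields \eqref{eq:interpol2} for $\theta\in(\tfrac14,1)$. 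The endpoint $\theta=1$ is just the standard embedding $H^1\hookrightarrow L^2$.

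There is essentially no obstacle here: the only point to check is the compatibility of parameters, namely that one can pick $\alpha=\theta$ and $\beta=1$ simultaneously satisfying $\beta\geqslant\alpha$ (this is precisely $\theta\leqslant1$) and $\alpha>\tfrac14$ (this is precisely the hypothesis $\theta>\tfrac14$). The restriction $\theta>\tfrac14$ is inherent to \eqref{eq:M4sobemb} and therefore cannot be relaxed by this argument.
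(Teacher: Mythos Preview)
Your proof is correct and follows essentially the same route as the paper: chain the $M^4_c$--Sobolev embedding \eqref{eq:M4sobemb}, the $M^4_c$--interpolation \eqref{eq:interpol}, and the comparison $\Vert \cdot\Vert_{M^4_c}\lesssim\Vert \cdot\Vert_{L^2}$. The only difference is the choice of parameters: the paper takes $\alpha=\tfrac14+\epsilon$, $\beta=1-\epsilon$ (invoking \eqref{eq:sobobilocal} in the last step and then letting $\epsilon$ vary), whereas you take $\alpha=\theta$, $\beta=1$ directly, which is slightly cleaner and avoids the $\epsilon$-bookkeeping altogether.
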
        
\begin{proof}
   Let $\theta_\epsilon=\frac{\frac{1}{4}+\epsilon}{1-\epsilon}$. 
Using \eqref{eq:M4sobemb}, \eqref{eq:interpol}, and \eqref{eq:sobobilocal}, uniform in $\epsilon > 0$, we have
    \begin{equs}
        \Vert u\Vert_{L^2}&\lesssim \Vert \angp{\nabla_c}{\frac{1}{4}+\epsilon}u\Vert_{M^4_c}\lesssim \Vert u\Vert^{1-\theta_{\epsilon}}_{M^4_c}\Vert \angp{\nabla_c}{1-\epsilon}u\Vert^{\theta_{\epsilon}}_{M^4_c}\\
        &\lesssim \Vert u\Vert^{1-\theta_\epsilon}_{M^4_c}\Vert \angp{\nabla_c}{1-\epsilon}u\Vert^{\theta_\epsilon}_{L^2}\lesssim
        \Vert u\Vert^{1-\theta_\epsilon}_{M^4_c}\Vert u\Vert_{H^1}^{\theta_\epsilon}\,.
    \end{equs}
We finish by taking $\epsilon \downarrow 0$.
\end{proof} 

\begin{lemma}[Cauchy-Schwarz inequalities]
    Fix two smooth functions $\phi,\psi$ on $\T^d$. It holds
    \begin{equs}        \label{eq:CS1}|\big(\cN^c(\phi,\psi,\phi),\psi\big)_{L^2(\T^d)}|&\leqslant    \big(\cN^c(\psi,\psi,\phi),\phi\big)_{L^2(\T^d)}     \,,\\    \label{eq:CS2}|\big(\cN^c(\phi,\psi,\phi),\phi\big)_{L^2(\T^d)}|&\leqslant    \big(\cN^c(\psi,\psi,\phi),\phi\big)^{\frac12}_{L^2(\T^d)}  \|\phi\|_{M_c^4(\T^d)}   \,.
    \end{equs}
\end{lemma}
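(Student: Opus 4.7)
The plan is to reduce both inequalities to the Cauchy--Schwarz inequality in $L^2(\T_c\times\T_c)$ by integrating out the $d-1$ coordinates transverse to the $c$-direction. Writing $x=(u,a)$ and $y=(v,b)$ with $u,v\in\T_{\hat c}^{d-1}$ and $a,b\in\T_c$, one has $\chi^c(x,y)=(u,b)$ and $\bar\chi^c(x,y)=(v,a)$. I would then introduce the bilinear ``kernels''
\begin{equs}
\Phi(a,b)\eqdef\int_{\T^{d-1}_{\hat c}}\phi(u,a)\phi(u,b)\,du,\qquad
K(a,b)\eqdef\int_{\T^{d-1}_{\hat c}}\phi(u,a)\psi(u,b)\,du,
\end{equs}
noting that $\Phi$ is real, symmetric, and positive semidefinite (a Gram kernel), while $K$ has no built-in symmetry.

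The next step is purely bookkeeping via Fubini and a renaming $u\leftrightarrow v$: one unpacks the three pairings appearing in the statement and checks that
\begin{equs}
\bigl(\cN^c(\phi,\psi,\phi),\psi\bigr)_{L^2(\T^d)}&=\int_{\T^2} K(a,b)K(b,a)\,da\,db,\\
\bigl(\cN^c(\psi,\psi,\phi),\phi\bigr)_{L^2(\T^d)}&=\int_{\T^2} K(a,b)^2\,da\,db,\\
\bigl(\cN^c(\phi,\psi,\phi),\phi\bigr)_{L^2(\T^d)}&=\int_{\T^2}\Phi(a,b)K(a,b)\,da\,db.
\end{equs}
In parallel I would verify that the same change of coordinates applied to $\cI^c(\phi)=\|\phi\|_{M^4_c}^4$ yields the identity $\|\phi\|_{M^4_c(\T^d)}^4=\int_{\T^2}\Phi(a,b)^2\,da\,db$, which is just the observation that the four-point non-local vertex collapses to $\Phi(a,b)\Phi(b,a)=\Phi(a,b)^2$ after integrating out $u$ and $v$.

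With these identities in hand, both inequalities are immediate applications of the scalar Cauchy--Schwarz inequality in $L^2(\T^2,da\,db)$. For \eqref{eq:CS1}, apply it to the pair $(a,b)\mapsto K(a,b)$ and $(a,b)\mapsto K(b,a)$ and use the trivial identity $\int K(b,a)^2\,da\,db=\int K(a,b)^2\,da\,db$ (relabelling) to see that the resulting bound is exactly $(\cN^c(\psi,\psi,\phi),\phi)$. For \eqref{eq:CS2}, apply it to $\Phi$ and $K$ and recognize $\int\Phi^2=\|\phi\|_{M^4_c}^4$ on the first factor and $\int K^2=(\cN^c(\psi,\psi,\phi),\phi)$ on the second.

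There is no real obstacle here beyond the careful accounting in the first step: making sure the various Fubini regroupings really do collapse to products of \emph{two} instances of $\Phi$ or $K$ with matched arguments. Once each pairing is written in the compact form above, the inequalities follow in one line. No renormalization, cut-off, or singular analysis is required; the argument is valid for any smooth $\phi,\psi$ on $\T^d$ and extends by density to the natural Banach spaces on which all quantities are finite.
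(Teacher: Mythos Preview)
Your proof is correct and essentially identical to the paper's: the paper introduces the kernels $\theta(x_c,y_c)=\int\phi(x_c,\cdot)\psi(y_c,\cdot)$ and $\varphi(x_c,y_c)=\int\phi(x_c,\cdot)\phi(y_c,\cdot)$, which are exactly your $K$ and $\Phi$, rewrites the three pairings as $\int\theta(x_c,y_c)\theta(y_c,x_c)$, $\int\theta^2$, and $\int\theta\varphi$, and applies Cauchy--Schwarz in $L^2(\T^2)$. Your presentation is in fact slightly more detailed in the Fubini bookkeeping, but the argument is the same.
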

\begin{proof}
    The proof immediately follow from an application of the Cauchy-Schwarz inequality, after rewriting a bit the pairings. For the first one, denoting by $\theta(x_c,y_c)=\big(\phi(x_c,\cdot),\psi(y_c,\cdot)\big)_{L^2(\T_{\hc}^{d-1})}$ the melonic pairing of $\phi$ with $\psi$, we have by Cauchy-Schwarz inequality
\begin{equs}    |\big(\cN^c(\phi,\psi,\phi),\psi\big)_{L^2(\T^d)}|=\Big|\int_{\T^2}\theta(x_c,y_c)\theta(y_c,x_c)\rmd x_c\rmd y_c\Big|\lesssim \Big|\int_{\T^2}\theta ^2(x_c,y_c)\rmd x_c\rmd y_c\Big|=\big(\cN^c(\psi,\psi,\phi),\phi\big)_{L^2(\T^d)}\,.
\end{equs}
Similarly, writing $\varphi=(x_c,y_c)=\big(\phi(x_c,\cdot),\phi(y_c,\cdot)\big)_{L^2(\T_{\hc}^{d-1})}$, we have
\begin{equs}    |\big(\cN^c(\phi,\psi,\phi),\psi\big)_{L^2(\T^d)}|=\Big|\int_{\T^2}\theta(x_c,y_c)\varphi(y_c,x_c)\rmd x_c\rmd y_c\Big|\lesssim \Big|\int_{\T^2}\theta ^2(x_c,y_c)\rmd x_c\rmd y_c\Big|^{\frac12}\Big|\int_{\T^2}\varphi ^2(x_c,y_c)\rmd x_c\rmd y_c\Big|^{\frac12}\,,
\end{equs}
which yields the desired result.
\end{proof}

\section{Controlling mixed terms in $d=4$}\label{sec:mixedterms}
In this section, we gather the bounds over the mixed term of the solution to the Langevin dynamic/the Barashkov \& Gubinelli drift and the stochastic objects, that we need in order to study the global in time existence of the Langevin dynamic as well as the renormalization of the Bou\'e-Dupuis formula, in Propositions~\ref{prop:apriori} and~\ref{prop:BG_energy}. 
For the sake of concreteness, we state these results in $d=4$ but they also carry over as (much less optimal) estimates for $d< 4$.

The regularity exponents for both of the above settings are the same and the arguments are essentially the same as well, therefore we  adopt a notation that allows us to state the needed mixed term estimates for both settings. 

Throughout this section, $v$ will be a smooth function $[0,T]\times\T^d\rightarrow\R$, and we will write $v_t=v(t,\cdot)$. For such a function $v$, we use the notation $\cK(v_t)=\Vert v_t\Vert^2_{H^{1-\epsilon}(\T^4)}$ and $\cI(v_t)=\Vert v_t\Vert^4_{M^{4}(\T^4)}$. 
We also recall that for $d\in\{3,4\}$, $\bbX_d^{\LD}=\bbX^{\LD}_{f,d}\cup\bbX^{\LD}_{o,d}$ is the collection of all stochastic objects necessary to handle the fixed point problem in dimension $d$, and that we see it as an element of the product of the Banach spaces in which the objects live. We denote $\bbX_d^{\LD}(t)\eqdef\{\tau(t)|\tau\in\bbX_d^{\LD}\}$ which we endow with the norm:
\begin{equs}
    \Vert \bbX_d^{\LD}(t)\Vert=\max\Big(  \max_{\tau\in\bbX^{\LD}_{f,d}}\Vert \tau(t)\Vert_{\cC^{\beta_\tau-\epsilon}},\max_{\tau\in\bbX^{\LD}_{o,d}}\Vert \tau(t)\Vert_{\cL(C_tH^{\alpha_\tau},\cC^{\beta_\tau-\epsilon})} \Big)\,,
\end{equs}
where $\alpha_\tau$ and $\beta_\tau$ are the inner and outer regularities of $\tau$ as stated in Lemmas~\ref{lem:stoob} and \ref{lem:ranop}. We also recall that $\bbX^{\LD}_d$ is endowed with the norm $\Vert\cdot\Vert_T$ introduced in $d=3$ in \eqref{eq:normbbX} and the we extended to the case $d=4$ in Section~\ref{sec:T44}. Note that for every $t\in[0, T]$, it holds $\Vert\bbX_d^{\LD}(t)\Vert\lesssim\Vert\bbX_d^{\LD}\Vert_T$.

In the sequel, we often drop the subscript and superscript notation, writing $\|\bbX(t)\|$ to refer to both $\Vert \bbX_d^{\LD}(t)\Vert$ and $\Vert \bbX^{\BG}(t)\Vert$ as given in \eqref{eq:bbXnormBG}. 

Throughout this section, we write  $A(t)\lesssim_{\bbX} B(t)$ whenever there exists two positive constants $C$ and $c$ such that $A(t) \leqslant C \Vert \bbX(t)\Vert^c B(t)$.

\begin{lemma} 
Recall the random field $\cS_{t}$ defined in Definition~\ref{def:randomfields} as an LD object and Definition~\ref{def:BGrandomfields} as a BG object. There exists two positive constants $C,\kappa$ such that it holds
   \begin{equs}\label{eq:Sv}
       |(\cS_t,v_t)|\leqslant C(\delta^{-1}\Vert\bbX(t)\Vert)^\kappa+\delta \cK(v_t)
   \end{equs} 
uniformly in $t\in[0,T]$ and $\delta\in(0,1]$.
\end{lemma}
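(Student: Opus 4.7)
The plan is to exploit the fact, established in Lemma~\ref{lem:randfi4} (for the LD object) and Definition~\ref{def:BGrandomfields} together with Lemma~\ref{lem:BGreg} (for the BG object), that $\cS_t$ has spatial regularity $-\frac12 - \epsilon$, and to match this against the $H^{1-\epsilon}$-control of $v_t$ provided by $\cK(v_t)$, which leaves a comfortable regularity buffer.

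First I would transfer $\cS_t$ from a H\"older--Besov space to the Sobolev scale by means of the compact embedding $\cC^{-\frac12-\epsilon}(\T^4) \hookrightarrow H^{-\frac12-2\epsilon}(\T^4)$, yielding
\[
\|\cS_t\|_{H^{-\frac12-2\epsilon}} \lesssim \|\cS_t\|_{\cC^{-\frac12-\epsilon}} \leq \|\bbX(t)\|
\]
by definition of the enhanced-noise norm. Next, I would apply the duality pairing between $H^{-\frac12-2\epsilon}$ and $H^{\frac12+2\epsilon}$ given in Lemma~\ref{lem:A2}, to obtain
\[
|(\cS_t,v_t)| \leq \|\cS_t\|_{H^{-\frac12-2\epsilon}} \|v_t\|_{H^{\frac12+2\epsilon}}.
\]
Since $\frac12+2\epsilon < 1-\epsilon$ for $\epsilon$ small enough, the embedding $H^{1-\epsilon} \hookrightarrow H^{\frac12+2\epsilon}$ bounds $\|v_t\|_{H^{\frac12+2\epsilon}}$ by $\cK(v_t)^{1/2}$, so
\[
|(\cS_t,v_t)| \lesssim \|\bbX(t)\| \, \cK(v_t)^{1/2}.
\]

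Finally, I would conclude with the weighted Young inequality $ab \leq (4\delta)^{-1} a^2 + \delta b^2$ applied with $a = C\|\bbX(t)\|$ and $b = \cK(v_t)^{1/2}$, giving
\[
|(\cS_t,v_t)| \leq \tfrac{C^2}{4\delta}\|\bbX(t)\|^2 + \delta \cK(v_t),
\]
which is the claimed inequality with $\kappa = 2$.

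There is essentially no obstacle here: the key input is the improved regularity $-\frac12 - \epsilon$ of $\cS_t$, obtained by pushing the Da Prato--Debussche expansion far enough (through the shifts by $\Pictwo$ and $\Picthree$) so that all remaining purely stochastic terms admit a direct duality pairing against $H^{1-\epsilon}$. The only minor point to watch is choosing the exponent drops in the Besov--Sobolev embedding to stay consistent with the single small parameter $\epsilon>0$ used throughout Section~\ref{sec:T44}.
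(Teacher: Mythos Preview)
Your proof is correct and follows essentially the same approach as the paper. The only cosmetic difference is that the paper pairs $\cS_t\in\cC^{-\frac12-\epsilon}$ directly against $v_t\in B^{\frac12+\epsilon}_{1,1}$ via Lemma~\ref{lem:A2} and then embeds $H^{\frac12+2\epsilon}\hookrightarrow B^{\frac12+\epsilon}_{1,1}$, whereas you first embed $\cC^{-\frac12-\epsilon}\hookrightarrow H^{-\frac12-2\epsilon}$ and use the $H^{-s}\times H^{s}$ duality; both routes arrive at $|(\cS_t,v_t)|\lesssim_{\bbX}\cK(v_t)^{1/2}$ and conclude by Young's inequality.
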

\begin{proof}
    \begin{equs}
         |(\cS_t,v_t)|&\leqslant \Vert\cS(t)\Vert_{\cC^{-\frac12-\epsilon}}\Vert v_t\Vert_{B^{\frac12+\epsilon}_{1,1}}\lesssim_{\bbX} \Vert v_t\Vert_{H^{\frac{1}{2}+2\epsilon}}\lesssim_{\bbX}\cK( v_t)^\frac{1}{2}\,,
    \end{equs}
    and we conclude using Young's inequality.
\end{proof}
\begin{lemma}\label{lem:CSinaction}
 There exists two positive constants $C,\kappa$ such that it holds
 \begin{equs}
     \label{eq:vXXv}\bv\vXXv(t)\bv&\leqslant C(\delta^{-1}\Vert\bbX(t)\Vert)^\kappa+\delta\big(\cI(v_t)+\cK(v_t)\big)\,,\\\label{eq:XXvv}\bv\XXvv(t)\bv&\leqslant C(\delta^{-1}\Vert\bbX(t)\Vert)^\kappa+\delta\big(\cI(v_t)+\cK(v_t)\big)\,,\\\label{eq:XvXv}\bv\XvXv(t)\bv&\leqslant C(\delta^{-1}\Vert\bbX(t)\Vert)^\kappa+\delta\big(\cI(v_t)+\cK(v_t)\big)\,,
 \end{equs}
uniformly in $t\in[0,T]$ and $\delta\in(0,1]$.
\end{lemma}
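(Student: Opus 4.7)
The plan is to combine the random operator estimates of Lemma~\ref{lem:randop2} with the bilocal Sobolev embedding \eqref{eq:sobobilocal}, reducing each of the three bounds to Young's inequality applied to $\cK(v_t)$; no use of $\cI(v_t)$ will actually be required. The conceptual point giving the lemma its name is that the most singular term $\XvXv(t)$ need not be controlled directly at all: Cauchy--Schwarz in the form of \eqref{eq:CS1} lets us trade it for the better-behaved $\XXvv(t)$, whose estimate rests on the un-renormalized random operator $\pXtwonm$ of \eqref{eq:boundpXtwonm} rather than on the fragile $\pXdotX$ of \eqref{eq:boundpXdotX}.

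First I would dispatch $\vXXv$. Slicing the renormalized melonic product color by color gives
\begin{equs}
 (\pvXX(t),v_t)_{L^2(\T^4)}=\sum_{c=1}^{4}\int_{\T^3}\bigl(v_t(x_{\hc},\cdot),\,\pXtwom^c_t[v_t(x_{\hc},\cdot)]\bigr)_{L^2(\T)}\,\rmd x_{\hc}\,.
\end{equs}
Choosing $\alpha=\frac{3}{4}+\epsilon$, the bound \eqref{eq:boundpXtwom} places $\pXtwom^c_t$ in $\mcL(H^\alpha(\T),\cC^{\alpha-3/2-\epsilon}(\T))\subset\mcL(H^\alpha(\T),H^{-\alpha}(\T))$ with operator norm $\lesssim_{\bbX}1$. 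Hence each inner $L^2(\T)$ pairing is bounded by $\lesssim_{\bbX}\|v_t(x_{\hc},\cdot)\|_{H^{3/4+\epsilon}(\T)}^{2}$; integrating in $x_{\hc}$, \eqref{eq:sobobilocal} followed by the embedding $H^{3/4+2\epsilon}\hookrightarrow H^{1-\epsilon}$ yields $|\vXXv(t)|\lesssim_{\bbX}\cK(v_t)$, and Young's inequality then produces \eqref{eq:vXXv}.

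The argument for $\XXvv$ is structurally identical but sliced in the opposite direction,
\begin{equs}
 (\pXXv(t),v_t)_{L^2(\T^4)}=\sum_{c=1}^{4}\int_{\T}\bigl(v_t(\cdot,x_c),\,\pXtwonm^c_t[v_t(\cdot,x_c)]\bigr)_{L^2(\T^3)}\,\rmd x_c\,,
\end{equs}
and the same choice $\alpha=\frac{3}{4}+\epsilon$ meets both constraints of \eqref{eq:boundpXtwonm} (namely $\alpha>1/2$ and $\cC^{\min(-1/2,\alpha-3/2)-\epsilon}(\T^3)\hookrightarrow H^{-\alpha}(\T^3)$). The three-dimensional analog of \eqref{eq:sobobilocal} then yields $|\XXvv(t)|\lesssim_{\bbX}\cK(v_t)$, and \eqref{eq:XXvv} follows from Young.

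Finally, $\XvXv$ reduces to $\XXvv$ by Cauchy--Schwarz. Applying \eqref{eq:CS1} for each color $c$ with $\phi=\bX_t$ and $\psi=v_t$ yields
\begin{equs}
 \bigl|(\cN^c(\bX_t,v_t,\bX_t),v_t)_{L^2(\T^4)}\bigr|\leqslant(\cN^c(v_t,v_t,\bX_t),\bX_t)_{L^2(\T^4)}\,,
\end{equs}
and the substitution $x\leftrightarrow y$ in the quadruple integral shows the identity $(\cN^c(v_t,v_t,\bX_t),\bX_t)_{L^2}=(\cN^c(\bX_t,\bX_t,v_t),v_t)_{L^2}$. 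Summing over $c\in[4]$ gives $|\XvXv(t)|\leqslant|\XXvv(t)|$, so \eqref{eq:XvXv} is an immediate consequence of \eqref{eq:XXvv}. There is no genuine obstacle in the argument; the only mild subtlety is the Besov-to-Sobolev exponent drop $\cC^{s}\hookrightarrow H^{s-\epsilon'}$ implicitly used in the embeddings above, but this costs only an arbitrarily small $\epsilon'$ and does not interfere with the choice of $\alpha$.
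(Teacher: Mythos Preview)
Your slicing of the bilinear form and the Cauchy--Schwarz reduction of $\XvXv$ to $\XXvv$ are both correct and match the paper. The gap is in the last step, where you claim that Young's inequality turns $|\vXXv(t)|\lesssim_{\bbX}\cK(v_t)$ into \eqref{eq:vXXv}. It does not: your bound reads $|\vXXv(t)|\leqslant C_0\Vert\bbX(t)\Vert^{c_0}\cK(v_t)$, and since the exponent on $\cK(v_t)$ is exactly~$1$ there is no room to split off $\delta\cK(v_t)$ for arbitrary $\delta\in(0,1]$. Concretely, for any $\delta<C_0\Vert\bbX(t)\Vert^{c_0}$ and $\cK(v_t)$ large, the inequality $C_0\Vert\bbX(t)\Vert^{c_0}\cK(v_t)\leqslant C(\delta^{-1}\Vert\bbX(t)\Vert)^\kappa+\delta\cK(v_t)$ fails. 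The same defect propagates to your treatment of $\XXvv$ and hence $\XvXv$.

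The paper's route is precisely designed to create this slack. After arriving at the operator bound, instead of discarding regularity down to $H^{1-\epsilon}$, the paper first integrates out $x_{\hc}$ to identify the remaining factor as $\Vert\langle\nabla_c\rangle^{3/4+\epsilon/2}v_t\Vert_{M^4_c}^2$, and then invokes the $M^4_c$--Sobolev interpolation \eqref{eq:interpol} to obtain a bound of the form $\cI(v_t)^a\cK(v_t)^b$ with $a+b<1$. Only then does Young's inequality (with three factors and exponents $\tfrac{1}{1-a-b},\tfrac1a,\tfrac1b$) produce the stated estimate for every $\delta$. So your announced economy---``no use of $\cI(v_t)$ will actually be required''---is exactly what breaks the argument: the $\cI$ factor is what buys the strict inequality $a+b<1$. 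A minimal repair of your proof is to interpolate $\Vert v_t\Vert_{H^{3/4+2\epsilon}}^2\lesssim\Vert v_t\Vert_{L^2}^{2\theta}\cK(v_t)^{1-\theta}$ for some $\theta>0$ and then control $\Vert v_t\Vert_{L^2}$ via \eqref{eq:interpol2}, which again brings $\cI(v_t)$ back into the estimate.
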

\begin{proof}
We first prove \eqref{eq:vXXv}. We view $\pXtwom^c$ as a time indexed space operator, and at fixed $t\geqslant0$ we denote its kernel by $\pXtwom^c_t(x_c,y_c)$. As a first step, observe that
\begin{equs}
\vXXv^c(t)&=\int  \pXtwom^c_t(x_c,y_c) v_t(x_{\hc},x_c)v_t(x_{\hc},y_c)\rmd x_{\hc}\rmd x_c\rmd y_c\\&=\int  \langle\nabla_{x_c}\rangle^{-\frac{3}{4}-\frac\epsilon2}\langle\nabla_{y_c}\rangle^{-\frac{3}{4}-\frac\epsilon2}\pXtwom^c_t(x_c,y_c)
\langle\nabla_c\rangle^{\frac{3}{4}+\frac\epsilon2}v_t(x_{\hc},x_c)\langle\nabla_c\rangle^{\frac{3}{4}+\frac\epsilon2}v_t(x_{\hc},y_c)\rmd x_{\hc} \rmd x_c\rmd y_c\\&=\int \langle\nabla\rangle^{-\frac{3}{4}-\frac\epsilon2}\circ\pXtwom^c_t\circ\langle\nabla\rangle^{-\frac{3}{4}-\frac\epsilon2}\Big(\int \langle\nabla_c\rangle^{\frac{3}{4}+\frac\epsilon2}  v_t(x_{\hc},x_c)\langle\nabla_c\rangle^{\frac{3}{4}+\frac{\epsilon}{2}} v_t(x_{\hc},\cdot)\rmd x_{\hc}\Big)(x_c)\rmd x_c\,,
\end{equs}
where the notation $\circ$ denotes the composition of operators. Therefore, 
\begin{equs}
\bv\vXXv^c(t)\bv&\leqslant \Big\Vert \langle\nabla\rangle^{-\frac{3}{4}-\frac\epsilon2}\circ\pXtwom^c_t\circ\langle\nabla\rangle^{-\frac{3}{4}-\frac\epsilon2}\Big(\Vert \int\langle\nabla_c\rangle^{\frac{3}{4}+\frac\epsilon2}  v_t(x_{\hc},x_c)\langle\nabla_c\rangle^{\frac{3}{4}+\frac\epsilon2} v_t(x_{\hc},\cdot)\rmd x_{\hc}\Vert_{L^2_{x_c}}\Big)\Big\Vert_{L^2}\\&\leqslant\Vert  \langle\nabla\rangle^{-\frac{3}{4}-\frac\epsilon2}\circ\pXtwom^c_t\circ\langle\nabla\rangle^{-\frac{3}{4}-\frac\epsilon2} \Vert_{\cL(C_TL^2,C_TL^2)} \bV  \Vert \int\langle\nabla_c\rangle^{\frac{3}{4}+\frac\epsilon2}  v_t(x_{\hc},x_c)\langle\nabla_c\rangle^{\frac{3}{4}+\frac\epsilon2} v_t(x_{\hc},y_c)\rmd x_{\hc}\Vert_{L^2_{x_c}} \bV_{L^2_{y_c}}\\&\leqslant \Vert  \langle\nabla\rangle^{-\frac{3}{4}-\frac\epsilon2}\circ\pXtwom^c_t\circ\langle\nabla\rangle^{-\frac{3}{4}-\frac\epsilon2} \Vert_{\cL(C_TL^2,C_TL^2)} \bV \int\langle\nabla_c\rangle^{\frac{3}{4}+\frac\epsilon2}  v_t(x_{\hc},x_c)\langle\nabla_c\rangle^{\frac{3}{4}+\frac\epsilon2} v_t(x_{\hc},y_c)\rmd x_{\hc} \bV_{L^2_{x_c,y_c}}\,.
\end{equs}
Here, we use the fact that for every $m\in\R$, $\langle \nabla\rangle^m$ is a continuous operator $H^{s}\rightarrow H^{s-m}$ for all $s\in \R$. Hence,
\begin{equs}
    \Vert  &\langle\nabla\rangle^{-\frac{3}{4}-\frac\epsilon2}\circ\pXtwom^c_t\circ\langle\nabla\rangle^{-\frac{3}{4}-\frac\epsilon2} \Vert_{\cL(C_TL^2,C_TL^2)}\\&\leqslant \Vert\langle\nabla\rangle^{-\frac{3}{4}-\frac\epsilon2}\Vert_{\cL(L^2,H^{\frac{3}{4}+\frac\epsilon2})}\Vert\pXtwom(t)\Vert_{\cL(C_tH^{\frac34+\frac\epsilon2},H^{-\frac34-\frac\epsilon2})}\Vert\langle\nabla\rangle^{-\frac{3}{4}-\frac\epsilon2}\Vert_{\cL(H^{-\frac{3}{4}-\frac\epsilon2},L^2)}\\&\lesssim\Vert\pXtwom(t)\Vert_{\cL(C_tH^{\frac34+\frac\epsilon2},H^{-\frac34-\frac\epsilon2})}\lesssim_{\bbX}1\,.
\end{equs}
Moreover, using the interpolation formula \eqref{eq:interpol}, we have that
\begin{equs}
     &\bV \int\langle\nabla_c\rangle^{\frac{3}{4}+\frac\epsilon2}  v_t(x_{\hc},x_c)\langle\nabla_c\rangle^{\frac{3}{4}+\frac\epsilon2} v_t(x_{\hc},y_c)\rmd x_{\hc} \bV_{L^2_{x_c,y_c}}^{\frac{1}{2}}=\Vert \langle\nabla_c\rangle^{\frac{3}{4}+\frac\epsilon2} v_t\Vert_{M^4_c}\lesssim \Vert  v_t\Vert_{M^4_c}^{\frac{1-10\epsilon}{4-8\epsilon}}\Vert \langle\nabla_c\rangle^{1-2\epsilon}  v_t\Vert_{M^4_c}^{\frac{3+2\epsilon}{4-8\epsilon}}\,,
\end{equs}
and $\Vert \langle\nabla_c\rangle^{1-2\epsilon}  v_t\Vert_{M^4_c}\lesssim \Vert \langle\nabla_c\rangle^{1-2\epsilon}  v_t\Vert_{L^2}=\Vert  v_t \Vert_{L^2_{x_{\hc}}H^{1-2\epsilon}_{x_c}}\lesssim\Vert  v_t\Vert_{H^{1-\epsilon} }  $ by \eqref{eq:sobobilocal}, which yields 
\begin{equs}
   \bv\vXXv (t) \bv\lesssim_{\bbX}   \cI(v_t)^{\frac{1-10\epsilon}{16-32\epsilon}}\cK(v_t)^{\frac{3+2\epsilon}{8-16\epsilon}}\,.
\end{equs}
Since the sum of the exponents of $\cI(v_t)$ and $\cK(v_t)$ is smaller than $1$, the claim follows by Young's inequality.

The proof of \eqref{eq:XXvv} is very similar and we obtain
\begin{equs}\label{eq:interm}
   \bv\XXvv (t)\bv\lesssim_{\bbX} \cI(v_t)^{\frac{1-10\epsilon}{16-32\epsilon}}\cK(v_t)^{\frac{3+2\epsilon}{8-16\epsilon}}\,,
\end{equs}
while \eqref{eq:XvXv} follows from \eqref{eq:XXvv} by the Cauchy-Schwarz inequality \eqref{eq:CS1} we implies that
\begin{equs}
    \bv\XvXv(t)\bv\leqslant\bv\XXvv(t)\bv\,.
\end{equs}
At this stage, we can conclude using the fact that $\pXtwonm$ is a stochastic object that can be defined without any renormalization (in the sense that $\pXtwonm(f)=\cN(\bX,\bX,f)$). 
\end{proof}
\begin{remark}
    Note that in the above proof, when applying the Cauchy-Schwarz inequality, it is of utmost importance that one can leverage the fact that $\pXtwonm(f)=\cN(\bX,\bX,f)$. Indeed, it is of course also true that 
    \begin{equs}           \bv\XvXv(t)\bv\leqslant\big(\cN(v_t,\bX_t,\bX_t),v_t\big)\,,
    \end{equs}
    but $\big(\cN(v_t,\bX_t,\bX_t),v_t\big)\neq \vXXv$, and it is therefore not controlled uniformly in the cut-off.
\end{remark}
\begin{lemma} 
There exists two positive constants $C,\kappa$ such that it holds
 \begin{equs}
     \label{eq:Xvvv}\bv\Xvvv(t)\bv&\leqslant C(\delta^{-1}\Vert\bbX(t)\Vert)^\kappa+\delta\big(\cI(v_t)+\cK(v_t)\big)
\end{equs}
uniformly in $t\in[0,T]$ and $\delta\in(0,1]$.
\end{lemma}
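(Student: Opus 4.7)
The plan is to use the tensor Cauchy--Schwarz inequality \eqref{eq:CS2} to reduce this cubic-in-$v$ expression to the quadratic quantity $\XXvv$ already controlled in Lemma~\ref{lem:CSinaction}. Specifically, I would apply \eqref{eq:CS2} for each color $c \in [d]$ with $\phi = v_t$ and $\psi = \bX_t$ to obtain
\begin{equs}
|\Xvvv^c(t)| = \bigl|\bigl(\cN^c(v_t, \bX_t, v_t), v_t\bigr)\bigr| \leq \bigl(\cN^c(\bX_t, \bX_t, v_t), v_t\bigr)^{1/2}\|v_t\|_{M^4_c} = \bigl(\XXvv^c(t)\bigr)^{1/2}\|v_t\|_{M^4_c}.
\end{equs}
This is the conceptual heart of the argument: although the exterior pairing $\pvXv = \cN(v,\bX, v)$ is analytically difficult to estimate directly (being the symmetric counterpart of the problematic $\pXvX$ term discussed at the end of Section~\ref{subsec:4dfixedpoint}), its duality pairing with $v$ is tamed by the algebraic Cauchy--Schwarz identity, converting it into a quantity built from the better-behaved non-melonic product $\pXXv$.

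Two successive applications of Young's inequality would then convert the right-hand side into a controllable form. Picking $\mu \in (0,1]$, I would write
\begin{equs}
|\Xvvv^c(t)| \leq \frac{1}{4\mu}\XXvv^c(t) + \mu\|v_t\|_{M^4_c}^2 \leq \frac{1}{4\mu}\XXvv^c(t) + \frac{\mu}{2}\cI^c(v_t) + \frac{\mu}{2},
\end{equs}
where the second step uses the elementary bound $\|v_t\|_{M^4_c}^2 \leq \cI^c(v_t)/2 + 1/2$.

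To conclude, I would take $\mu$ of order $\delta$ and apply \eqref{eq:XXvv} to $\XXvv^c(t)$ with a smaller parameter $\delta' \sim \delta^2$. With these choices, every coefficient in front of $\cI(v_t)$ and $\cK(v_t)$ is bounded by a constant multiple of $\delta$, while the purely stochastic contribution takes the form $C(\delta^{-1}\|\bbX(t)\|)^{\kappa'}$ for new constants $C,\kappa' > 0$ depending only on those in \eqref{eq:XXvv}. Summing over $c \in [d]$ and readjusting $\delta$ by an absolute constant then yields the claimed bound. No substantial obstacle is expected: the argument is essentially a reorganization of the estimate already carried out for $\XXvv$, and it is the same observation -- that problematic exterior pairings with $\bX$ become harmless once tested against $v$ via Cauchy--Schwarz -- that underlies the a priori estimates of Section~\ref{subsec:L2apriori}.
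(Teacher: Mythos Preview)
Your approach is correct and uses the same key idea as the paper, namely the Cauchy--Schwarz inequality \eqref{eq:CS2}. The paper proceeds slightly more directly: rather than first separating via Young and then invoking the packaged bound \eqref{eq:XXvv}, it substitutes the intermediate estimate \eqref{eq:interm} (that is, $|\XXvv|\lesssim_{\bbX}\cI^{a}\cK^{b}$ with $a+b<1$) into the Cauchy--Schwarz bound to obtain $|\Xvvv|\lesssim_{\bbX}\cI^{1/2+a/2}\cK^{b/2}$ and then applies Young once. One small correction: the right-hand side of \eqref{eq:CS2} should carry $\|\phi\|_{M^4_c}^{2}$ rather than $\|\phi\|_{M^4_c}$ (as is confirmed by the paper's own application, which produces the factor $|\vvvv|^{1/2}=\cI(v_t)^{1/2}=\|v_t\|_{M^4}^{2}$); with this fix your second Young step $\|v_t\|_{M^4_c}^2\leqslant\tfrac12\cI^c(v_t)+\tfrac12$ becomes superfluous and the argument simplifies.
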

\begin{proof}
Using the Cauchy-Schwarz inequality \eqref{eq:CS2}, and again leveraging the fact that $\pXtwonm$ can be defined without any new renormalization, we have
\begin{equs}
    \bv\Xvvv(t)\bv\leqslant \bv\vvvv(t)\bv^{\frac{1}{2}} \bv\XXvv(t)\bv^{\frac{1}{2}}=\cI(v_t)^{\frac{1}{2}}\bv\XXvv(t)\bv^{\frac{1}{2}}\,.
\end{equs} 
Now, using \eqref{eq:interm} yields
\begin{equs}
    \bv\Xvvv(t)\bv\lesssim_{\bbX}  \cI(v_t)^{\frac{1}{2}+\frac{1-10\epsilon}{32-64\epsilon}}\cK(v_t)^{\frac{3+2\epsilon}{16-32\epsilon}}\,,
\end{equs}
and since the sum of the exponents of $\cI(v)$ and $\cK(v)$ is smaller than one, the claim follows by Young's inequality.
\end{proof}

\end{appendix}

\bibliographystyle{Martin}
\bibliography{refs.bib} 

\end{document}